\newtheorem{theorem}{Theorem}[section]
\newtheorem{remark}[theorem]{Remark}
\newtheorem{lemma}[theorem]{Lemma}
\newtheorem{corollary}[theorem]{Corollary}
\newtheorem{proposition}[theorem]{Proposition}
\newtheorem{assumption}[theorem]{Assumption}
\newtheorem{prop}[theorem]{Proposition}
\newtheorem{hypothesis}[theorem]{Hypothesis}
\newtheorem{example}[theorem]{Example}
\newtheorem{claim_01}{Claim}
\newtheorem{claim_02}{Claim}
\newtheorem{claim_03}{Claim}
\newtheorem{case}{Case}
\theoremstyle{definition}
\newtheorem{definition}[theorem]{Definition}
\theoremstyle{remark}
\newcommand*\patchAmsMathEnvironmentForLineno[1]{%
  \expandafter\let\csname old#1\expandafter\endcsname\csname #1\endcsname
  \expandafter\let\csname oldend#1\expandafter\endcsname\csname end#1\endcsname
  \renewenvironment{#1}%
     {\linenomath\csname old#1\endcsname}%
     {\csname oldend#1\endcsname\endlinenomath}}%
\newcommand*\patchBothAmsMathEnvironmentsForLineno[1]{%
  \patchAmsMathEnvironmentForLineno{#1}%
  \patchAmsMathEnvironmentForLineno{#1*}}%
\newcommand{\keywords}[1]{\textbf{Keywords}: #1}
\newcommand{\AMS}[1]{\textbf{AMS}: #1}
\title{On Residual Minimization for PDEs: Failure of PINN, Modified Equation, and Implicit Bias}
\author[a,b \footnote{luotao41@sjtu.edu.cn}]{Tao Luo}
\author[a \footnote{zhouqixuan@sjtu.edu.cn}]{Qixuan Zhou}
\affil[a]{School of Mathematical Sciences, Shanghai, 200240, China.}
\affil[b]{Institute of Natural Sciences,  CMA-Shanghai, MOE-LSC and Qing Yuan Research Institute, Shanghai Jiao Tong University, Shanghai, 200240, China}
\begin{document}

\maketitle
\allowdisplaybreaks

\begin{abstract}
    As a popular and easy-to-implement machine learning method for solving differential equations, the physics-informed neural network (PINN) sometimes may fail and find poor solutions which bias against the exact ones. In this paper, we establish a framework of modified equation to explain the failure phenomenon and characterize the implicit bias of a general residual minimization (RM) method. We provide a simple way to derive the modified equation which models the numerical solution obtained by RM methods. Next, we show the modified solution deviates from the original exact solution. The proof uses a by-product of this paper, that is, a necessary and sufficient condition on characterizing the singularity of the coefficients. This equivalent condition can be extended to other types of equations in the future. Finally, we prove, as a complete characterization of the implicit bias, that RM method implicitly biases the numerical solution against the exact solution and towards a modified solution. In this work, we focus on elliptic equations with discontinuous coefficients, but our approach can be extended to other types of equations and our understanding of the implicit bias may shed light on further development of deep learning based methods for solving equations.
\end{abstract}

\keywords{residual minimization, PINN, implicit bias, modified equation}

\AMS{35D30, 35D35, 35R05, 35R06, 65N15}

%


\tableofcontents

\section{Introduction}
The application of machine learning, particularly deep neural networks (DNNs), has gained significant attention in recent years for solving partial differential equations (PDEs)~\cite{weinan2017deep, weinan2018deep,he2020relu,li2020amulti, liu2020multi,raissi2019physics,zhang2022mod}. Machine learning techniques show great potential in addressing challenging problems~\cite{weinan2021dawning}. Compared with traditional numerical schemes, such as finite difference, finite elements methods, and spectral methods, which are often limited by the ``curse of dimensionality", DNNs have demonstrated success in solving many high-dimensional problems~\cite{han2018solving}. Although the traditional numerical methods are powerful for low-dimensional problems, it can be challenging to design a proper scheme to solve low-dimensional problems with low-regularity solutions or boundaries~\cite{li2006immersed,osher2004level,osher1988fronts,peskin1972flow,peskin2002immersed, sethian1999level}. Therefore, DNNs are also promising in solving low-dimensional problems with low-regularity solutions or complex boundaries, such as problems with discontinuous elastic or dielectric constants in composite materials. 

A widely used approach to solving PDEs is to utilize DNNs to parameterize the solution and optimize the parameters in an objective function which usually formulated as a least-squares or variational loss function (also known as risk function). The physics-informed neural network (PINN) method was first proposed in the 1990s~\cite{lee1990neural}, then also studied by Sirignano and Spiliopoulos under the name Deep Galerkin Method (DGM)~\cite{sirignano2018DGM}, and later popularized and known as PINN by Raissi et al.~\cite{raissi2019physics}. In this method, a DNN is trained to minimize the sum of the residual of the PDE and the residual of the boundary condition. The Deep Ritz method (DRM) was proposed by~\cite{weinan2017deep}, where a variational formulation is used to obtain a neural network solution by minimizing an energy functional. Alongside PINN and DRM, many other methods have been proposed or developed for solving PDE problems, for example, \cite{ming2021deep,lu2021deepxde,bao2021weak}. For further advances in PINN, we refer readers to the review articles \cite{cai2022physics, lawal2022physics} and the references therein.
For completeness, let us mention that the operator learning also seems promising in both solving PDE problems and their inverse problems~\cite{goswami2020physics,li2020neural,li2021fourier, lu2019deeponet}. Despite the increasing variety of PDE-solving methods, PINN has gained substantial attention due to its simplicity and ease of implementation. The PINN risk function is simply the residual of the PDE, without requiring additional knowledge such as the variational form in DRM, which is difficult or even impossible to obtain in many problems.

A theoretical study of DNNs can have significant implications and applications to design DNN-based PDE solvers. For instance, the universal approximation theorem~\cite{cybenko1989approximation} highlights the strength of wide (two-layer) NNs in approximating functions. Recent research in this direction includes the convergence rate with respect to network size, which has been explored in~\cite{lu2021deep}, and studies on the rate of DNN approximation with respect to depth and width for PINNs when solving second-order elliptic equations with Dirichlet boundary condition~\cite{jiao2022arate}. Several works have also focused on the generalization error of PINNs when the exact solution to the PDE has high regularity. For example, Lu et al.~\cite{lu2021priori} derive the generalization error bounds of two-layer neural networks in the framework of DRM for solving equation and static Schr\"odinger
equation on the $d$-dimensional unit hypercube; Mishra and Molinaro \cite{mishra2020estimates} provide upper bounds on the generalization
error of PINNs approximating solutions of the forward problem for PDEs; and Shin et al.~\cite{shin2020convergence} prove that the minimizers of PINN converge uniformly to the exact solution under the case of second order elliptic and parabolic PDEs. 

As low-regularity problem plays the role of potential application of DNNs in solving (probably low-dimensional) PDEs, in this paper, we investigate the use of neural networks with a least-squares risk function to solve linear and quasilinear elliptic PDEs with solutions that exhibit low regularity. This is a challenging problem due to several features inherent in the problem. Firstly, the exact solution is less regular, as it is affected by the discontinuity of coefficients. Additionally, the function space of neural networks is typically higher in regularity, as certain order derivatives are required in neural network methods, such as the derivative in the PDE and gradient training. Finally, the risk function is discretized, which brings another type of complexity to the problem. We conduct a detailed analysis of the neural network solution that results from these interactions between the neural network and the PDE, with the aim of gaining a deeper understanding of whether and how neural networks can be effectively used in the context of less regular solutions to PDEs.

In this work, we first point out key observations from one-dimensional numerical experiments using PINN, and then we develop continuum model and prove theorems to explain the observed phenomena. Roughly speaking, for an elliptic equation with discontinuous coefficients, the PINN finds a numerical solution which deviates from the exact solution. With strong numerical evidences, we model this numerical solution by a modified solution, where the latter satisfies a modified equation. We will derive this modified equation in a very simple and heuristic way. It is proved that the modified solution severely deviates from the original exact solution in a quite generic way. Moreover, we obtain necessary and sufficient condition under which this deviation occurs. Furthermore, we prove theorems to explain the implicit bias phenomenon that even given a good initial guess (such that the NN function is sufficiently close to the exact solution), after training, the numerical solution still deviates from the original exact solution and approximates to the modified solution.
Besides, we extend most of our results to the case of quasilinear elliptic equations. 

Our results are independent of the structure of neural network and work for any RM methods, not being exclusive to the PINN. They are starkly different from previous results that focus on the high-regularity problem and show PINN solution can converge to the exact solution as the sample number and the network size increase~\cite{chen2021representation, lu2021priori, mishra2020estimates, shin2020convergence}. In contrast, we point out that there is an essential gap between the exact solution and the limit of numerical solution, and that the RM method in solving low-regularity PDEs may implicit bias the numerical solution against the exact solution and has a non-convergence issue. By unravelling the theoretical mechanisms, our work not only explains the implicit bias phenomena but also provides a theoretical guidance for further design of DNN algorithms in solving low-regularity PDEs. 

We believe this phenomenon of ``failure'' is more or less known to many researchers, and our main contribution of this work is to design a mathematical framework for systematically studying this failure and hence to shed light on understanding the implicit bias of the RM methods. In particular, let us highlight our understanding of the implicit bias of RM method: \textbf{RM method implicitly biases the numerical solution towards the solution to a modified equation.}
We expect our approach will be quite general and can be developed for many other problem concerning the implicit bias of optimization algorithms for deep learning problems. 

The rest of this paper is organized as follows. A brief introduction to DNNs and PINN and the failure of PINN example can be found in Section~\ref{sec::Preliminaries}. In Section~\ref{sec::MainContribution}, we summarize our main contribution with thorough discussion. Moreover, for readers' convenience, two flow charts are provided at the end of this section showing the connection of main results. In Section~\ref{sec::RemovSing}, we obtain a necessary and sufficient condition of removable singularity for equations with BV functions. Section~\ref{sec::Lin} proves that the deviation of the modified solution occurs generically and can be large and that the RM methods implicit bias the RM solution towards the modified solution. Hence this explains the failure phenomenon. In Section~\ref{sec::ExtQausiLinEllipEq}, we extend some of the results to the case of quasilinear elliptic equations. In appendix, we collect notations, recall the definition of BV function, and rephrase some well-known existence theorem for linear and quasilinear elliptic equations from the literature. We also give a short proof of the failure example in one-dimension and complete the proof of extensions to the quasilinear case.

\section{Preliminaries}\label{sec::Preliminaries}
We begin this preliminary section by introducing basic concepts of deep neural networks and deep learning based methods for solving PDEs, in particular, the method of physics-informed neural networks (PINN). Next, by a one-dimensional example, we illustrate that PINN can fail even in an extremely simple situation. We hope the reader keep this inspiring example in their mind when reading the analytic details in the rest part of this paper. The last subsection is left to depict the general setting, namely the linear elliptic equations and systems with BV coefficients, under which we will derive a continuum model for the numerical solution and thus prove theorems based on this model. 

\subsection{Deep neural networks (DNN)}\label{sec::DNN}

An \textbf{$L$-layer fully-connected neural network} function $u_\theta:\sR^d\to\sR^{d'}$ is defined as for each $x\in \sR^d$
\begin{equation}
    u_\theta(x) = W^{[L-1]} \sigma(W^{[L-2]}\sigma(\cdots (W^{[1]} \sigma(W^{[0]} x + b^{[0]} ) + b^{[1]} )\cdots)+b^{[L-2]})+b^{[L-1]}, 
\end{equation}
where the matrix $W^{[l]} \in \sR^{m_{l+1}\times m_{l}}$ and the vector $b^{[l]}\in\sR^{m_{l+1}}$ are called \textbf{parameters}, $m_l\in \sN^+$ is the width of the $l$-th layer, and the (nonlinear) function $\sigma:\sR\to\sR$ is known as the \textbf{activation function}. With a little bit abuse of notation, $\sigma$ applied on a vector means entry-wise operation, namely $(\sigma(z))_i=\sigma(z_i)$ for any subscript $i$.
The matrices are usually reshaped and concatenated into a column vector $\theta$, that is, $\theta=\operatorname{vec}\left(\{W^{[l]}\}_{l=0}^{L-1},\{b^{[l]}\}_{l=0}^{L-1}\right)$. Note that the input dimension $d=m_0$ and the output dimension $d'=m_{L}$.

\subsection{Residual minimization (RM) and physics-informed neural networks (PINN)}\label{sec::RMPINN}	
The physics-informed neural network (PINN) is a popular method for solving PDEs via neural networks. It is proposed by many research groups. Among them, the first one might belong to Sirignano and Spiliopoulos~\cite{sirignano2018DGM}, although they use a different name, the deep Galerkin method. The most famous work is perhaps the one written by Raissi, Perdikaris and Karniadakis~\cite{raissi2019physics}, and it gives rise to the more commonly-used name, the PINN. For further developments of the PINN, we refer the readers to the review paper~\cite{cai2022physics} and the references therein. 

In our numerical experiments to be presented in the next subsection, the PINN is used to solve a given \textbf{boundary value problem} (BVP) of a partial differential equations (PDE). We emphasize that the phenomenon recognized and analyzed in this work will remain the same if we replace the PINN by any other residual minimization method. Here the residual of an equation refers to the difference between the left-hand-side and the right-hand-side, and we say \textbf{residual minimization} because these PINN type methods follow a common approach, that is, minimizing the residual risk of both the PDE and boundary conditions.  The term residual minimization is also used by other researchers, for example,~\cite{shin2020error}. Besides, some works refer the PINN to the least-squares method, for example,~\cite{cai2020deep}.

For a given function $w$, a residual minimization method for solving (the BVP of) an equation is to minimize the following \textbf{population risk} 
\begin{equation}\label{eq::PopRM4OriginalEq}
    R(w)=\int_{\Omega}(Lw-f)^2\diff{x} + \gamma \int_{\partial\Omega}(Bw-g)^2\diff{x}.
\end{equation}
Here $L$ is the differential operator, $B$ is the boundary condition operator, and $f$ (respectively, $g$) is a given function defined in the interior (respectively, on the boundary) of the domain $\Omega$. The factor $\gamma$ is the weight for adjusting the importance of the boundary constraints versus the one in the interior of the domain.
But in practical applications, one often uses the \textbf{empirical risk} (also known as the objective function in optimization) as:
\begin{align}
    R_{S}(w)
    &= R_{S,\mathrm{int}}(w)+\gamma R_{S,\mathrm{bd}}(w)\label{eq::EmpiRM4OriginalEq}\\
    R_{S,\mathrm{int}}(w)&= \frac{\abs{\Omega}}{n_{\mathrm{int}}}\sum_{x\in S_{\mathrm{int}}}(Lw(x)-f(x))^2\\
    R_{S,\mathrm{bd}}(w)&=\frac{\abs{\partial\Omega}}{n_{\mathrm{bd}}}\sum_{x\in S_{\mathrm{bd}}}(Bw(x)-g(x))^2,
\end{align}
where $R_{S,\mathrm{int}}(w)$ is the \textbf{interior empirical risk}, $R_{S,\mathrm{bd}}(w)$ is the \textbf{boundary empirical risk}, $n_{\mathrm{int}}\in\sN^+$ and $n_{\mathrm{bd}}\in\sN^+$ are the numbers of samples in the interior dataset $S_{\mathrm{int}}$ and the boundary dataset $S_{\mathrm{bd}}$, respectively; $\abs{\Omega}$ and $\abs{\partial\Omega}$ are the Lebesgue measure $\fL^d$ of $\Omega$ and Hausdorff measure $\fH^{d-1}$ of $\partial\Omega$, respectively.

In the method of PINN, the output is a neural network and denoted by $u_{\theta}(x)$. Thus we usually abuse notation and write 
\begin{equation}
    R(\theta)=R(u_{\theta})=\int_{\Omega}(Lu_{\theta}(x)-f(x))^2\diff{x} + \gamma \int_{\partial\Omega}(Bu_{\theta}(x)-g(x))^2\diff{x}.
\end{equation}
Similarly, for empirical risks, we write $R_S(\theta)=R_S(u_{\theta})$, $R_{S,\mathrm{int}}(\theta)=R_{S,\mathrm{int}}(u_{\theta})$, and $R_{S,\mathrm{bd}}(\theta)=R_{S,\mathrm{bd}}(u_{\theta})$.

Given an initial parameter $\theta_0$, then the parameters will be updated by first order optimization methods such as the \textbf{gradient descent} which is the forward Euler scheme of the (negative) gradient flow with step size $\Delta t$ (also known as the \textbf{learning rate}) as follows
\begin{equation}
    \theta_{k+1}= \theta_{k}-\Delta t D_{\theta}{R_{S}}(\theta_{k}).
\end{equation}
The samples $S$ can be chosen as quadrature method in low dimensions, say no larger than three, while it can be chosen randomly as Monte Carlo sampling method in any dimensions including very high-dimension cases. 
In implementation, the sample $S$ can vary from one iteration to another.

\subsection{Numerical example: failure of PINN}\label{sec::FailExPINN}

In this subsection, we demonstrate that using PINN to solve PDEs which has no strong (or classical) solutions could be problematic and cause a non-infinitesimal error. This is illustrated by a one-dimensional example via numerical experiments. 
Although the example is simple, it provides us correct and insightful intuition. Throughout this work, we will revisit this example several times and explain new understanding with our theorems to be proved in later sections. We also emphasize that this kind of failure of PINN in learning the solution to certain PDE is essential and usually can not be resolved by merely selecting the network architecture, adjusting the optimization algorithm, or tuning hyperparameters of the network.

Let us first briefly mention the setup of the experiment. The considered equation is one-dimensional and reads as
\begin{equation}\label{eq::1dEq}
    \left\{
    \begin{aligned}
        L u =-D_{x}(AD_{x} u) &=f\quad \text{in}\  \Omega=(-1,1), \\
         u &=0\quad \text{on}\  \partial\Omega=\{-1,1\},
    \end{aligned}
    \right.
\end{equation}
where the coefficient function $A$ and the interior data $f$ in~\eqref{eq::1dEq} are both piece-wise continuous and read as
\begin{equation}\label{eq::1dCoeff}
    A(x)=\left\{
    \begin{aligned}
        & \tfrac{1}{2}, & & x\in (-1,0), \\
        &1, & & x\in [0,1),
    \end{aligned}
    \right.\quad 
    f(x)=\left\{
    \begin{aligned}
        & 0, & & x\in (-1,0), \\
        & -2, & & x\in [0,1).
    \end{aligned}
    \right.
\end{equation}
Clearly, there is no strong (or classical) solution, while the weak solution $u\in H^1((-1,1))$ to this equation is
\begin{equation}\label{eq::SolTo1dEq}
    u(x)=
    \left\{
    \begin{aligned}
        & -\tfrac{2}{3}x-\tfrac{2}{3}, & & x\in (-1,0), \\
        & x^2-\tfrac{1}{3}x-\tfrac{2}{3}, & & x\in [0,1).
    \end{aligned}
    \right.
\end{equation}
In the series of numerical experiments, we use a $1$-$256$-$256$-$256$-$1$ residual network (ResNet)~\cite{e2019apriori}.
The empirical risk, interior empirical risk, and boundary empirical risk for a given function $w$ read as
\begin{align*}
    R_{S}(w)
    &=R_{S,\mathrm{int}}(w)+\gamma R_{S,\mathrm{bd}}(w),\\
    R_{S,\mathrm{int}}(w) 
    &= \frac{\abs{\Omega}}{n_{\mathrm{int}}}\sum_{x\in S_{\mathrm{int}}}[D_{x}(A(x)D_{x} w(x))+f(x)]^2,\\
    R_{S,\mathrm{bd}}(w)
    &= \frac{\abs{\partial\Omega}}{n_{\mathrm{bd}}}[(w(-1))^2+(w(1))^2] ,
\end{align*}
where we use $\abs{\Omega}=2$, $\abs{\partial\Omega}=2$, $n_{\mathrm{bd}}=2$, and $\gamma=1$ in the experiments. We choose $1000$ uniformly-sampled points in the interior of the region (namely $n_{\mathrm{int}}=1000$). We use ``$\tanh$'' activation function, Adam optimizer, and the Xavier initialization, where the variance of each entry of $W^{[l]}$ is $\frac{2}{m_{l-1}+m_{l}}$ and $m_l$ represents the width of $l$-th layer. 

Under the above setting, we obtain the network function $u_{\theta}$ numerically via the PINN (or more generally the RM method) after training process. This solution $u_{\theta}$ will be called the RM solution to the original equation. We stress that the PINN as one realization of the RM method, is not essential and can be replaced by any other RM method. In practice, $u_\theta$ is obtained when the risk function $R_S$ is small and does not decay anymore along the training. We plot the RM solution $u_{\theta}$ and compare it with $u$ in Figure \ref{fig::deviation} (a). Obviously, the RM method fails to find the exact solution $u$. The gap between $u$ and $u_{\theta}$ is as large as magnitude of $u$. Thus we call this phenomenon the \textbf{failure of RM method} (or \textbf{failure of PINN}).
By the way, we also notice that the first order derivative of $u_{\theta}$ seems to be piece-wisely parallel to that of $u$ (see Figure \ref{fig::deviation} (b)). This suggests us to focus on the derivatives and regularity of the solutions.

\begin{figure}
	\centering	
	\subfigure[Comparison of solutions]{\includegraphics[scale=0.35]{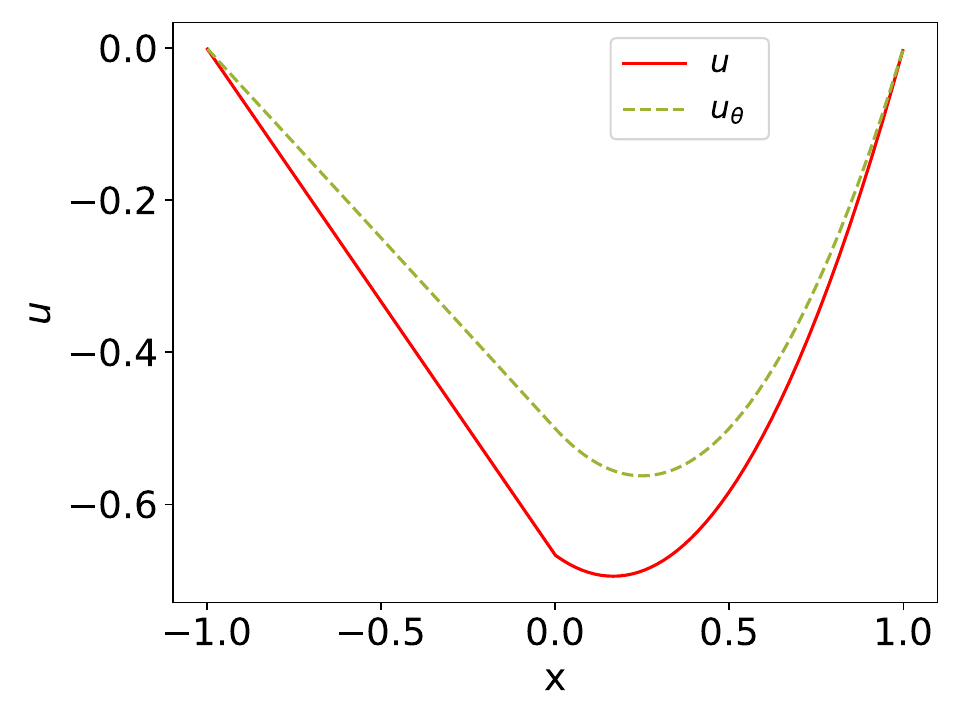}}
	\subfigure[Comparison of the first order derivatives]{
	\includegraphics[scale=0.35]{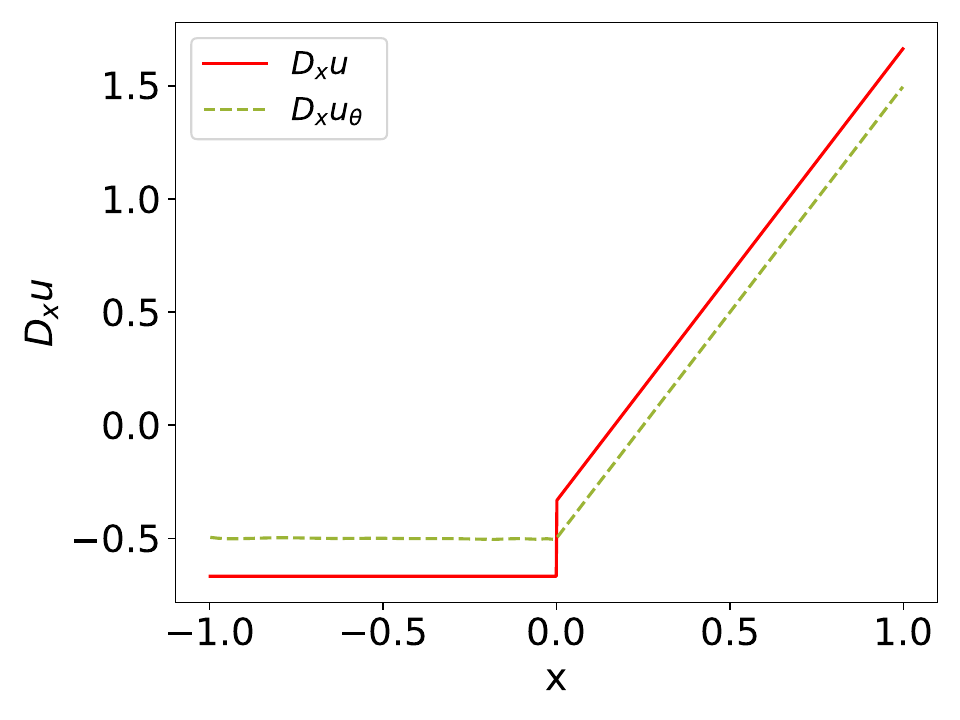}}
	\caption{PINN cannot find the exact solution to  equation~\eqref{eq::1dEq} with the coefficient function given by~\eqref{eq::1dCoeff}.}
	\label{fig::deviation}
\end{figure}

The solution $u$ at all points except for $x=0$ is smooth (even in $C^\infty$ locally), while the first order derivative of the solution $u$ at $x=0$ has a jump. Therefore, whether $x=0$ contributes in the numerical method is decisive. Here comes the key observation that in practical experiments the $x=0$ point can only be sampled with nearly zero probability as long as the distribution for sampling is absolutely continuous with respect to the Lebesgue measure. Hence the point $x=0$ almost never contributes to numerical experiments! By the product rule, it holds that $D_{x}(AD_{x}u)=AD^2_{x}u+(D_{x}A)D_{x}u$ on $(-1,1)\backslash \{0\}$. Since the derivative $D_{x}A(x)=0$ for the piece-wise constant function $A$ and for all $x\in (-1,1)\backslash \{0\}$, the left-hand-side of the PDE is effectively equal to $AD^2_{x}u$ on the whole interval $(-1,1)$. Therefore, in the numerical experiments, the empirical risk function equals to an effective empirical risk function with probability nearly one, namely
\begin{equation}
    R_S(u_\theta)=\tilde{R}_S(u_\theta),
\end{equation}
where the latter at a given function $w$ reads as
\begin{equation}\label{eq::EffEmpRisk}
    \tilde{R}_S(w)=\frac{2}{n_{\mathrm{int}}}\sum_{x\in S_{\mathrm{int}}}(A(x)D^2_{x}w(x)+f(x))^2+\gamma[(w(-1))^2+(w(1))^2].
\end{equation} 
This effective empirical risk function is in turn the empirical risk function of the RM method for the modified equation
\begin{equation}\label{eq::1dModEq}
    \left\{
    \begin{aligned}
        \tilde{L} \tilde{u} =-AD^2_{x}\tilde{u} &=f\quad \text{in}\  \Omega=(-1,1), \\
         \tilde{u} &=0\quad \text{on}\  \partial\Omega=\{-1,1\}.
    \end{aligned}
    \right.
\end{equation}
whose population risk function at a given function $w$ reads as
\begin{equation}\label{eq::EffRisk}
    \tilde{R}(w) = \int_{-1}^{1}(A(x)D_x^2w(x)+f(x))^2\diff{x}+\gamma(w^2(-1)+w^2(1)).
\end{equation}
In other words, the risk \eqref{eq::EffEmpRisk} is the discretization of \eqref{eq::EffRisk}.

The exact solution to the modified equation~\eqref{eq::1dModEq} is denoted by $\tilde{u}$ and explicitly reads as
\begin{equation*}
    \tilde{u}= \left\{
    \begin{aligned}
        & -\tfrac{1}{2}x-\tfrac{1}{2}, & & x\in (-1,0), \\
        & x^2-\tfrac{1}{2}x-\tfrac{1}{2}, & & x\in [0,1).
    \end{aligned}
    \right.
\end{equation*}

To sum up, now we have altogether three solutions: $u$ (the exact solution to the original equation \eqref{eq::1dEq}), $u_\theta$ (the RM solution  to the original equation \eqref{eq::1dEq}) and $\tilde{u}$ (the exact solution to the modified equation \eqref{eq::1dModEq}). For completeness, we can also consider the RM solution to the modified equation \eqref{eq::1dModEq}, and we denote it as $\tilde{u}_{\theta}$. 

\begin{figure}[H]
	\centering	
	\subfigure[Comparison of solutions]{\includegraphics[scale=0.35]{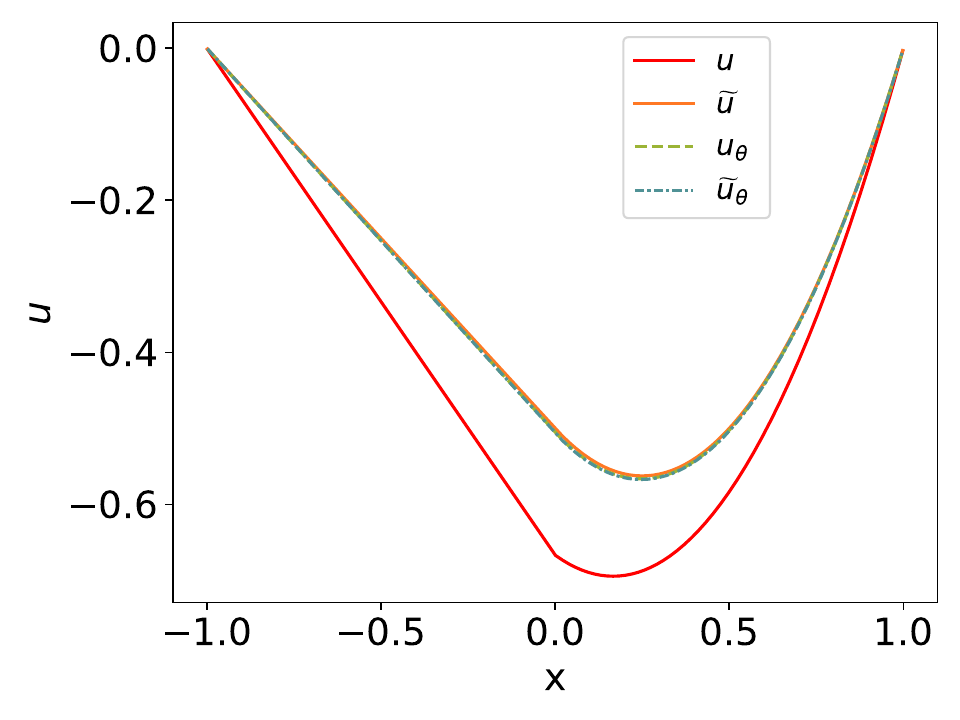}}
	\subfigure[Comparison of the first order derivatives]{
	\includegraphics[scale=0.35]{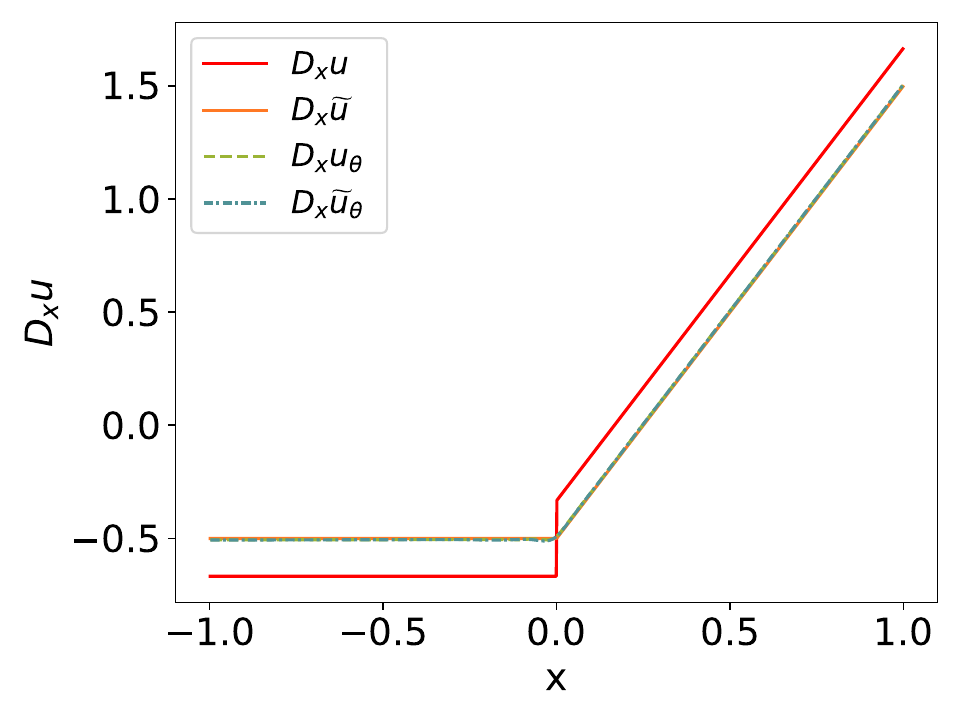}}

    \subfigure[empirical risk $R_S(\theta)$]{\includegraphics[scale=0.35]{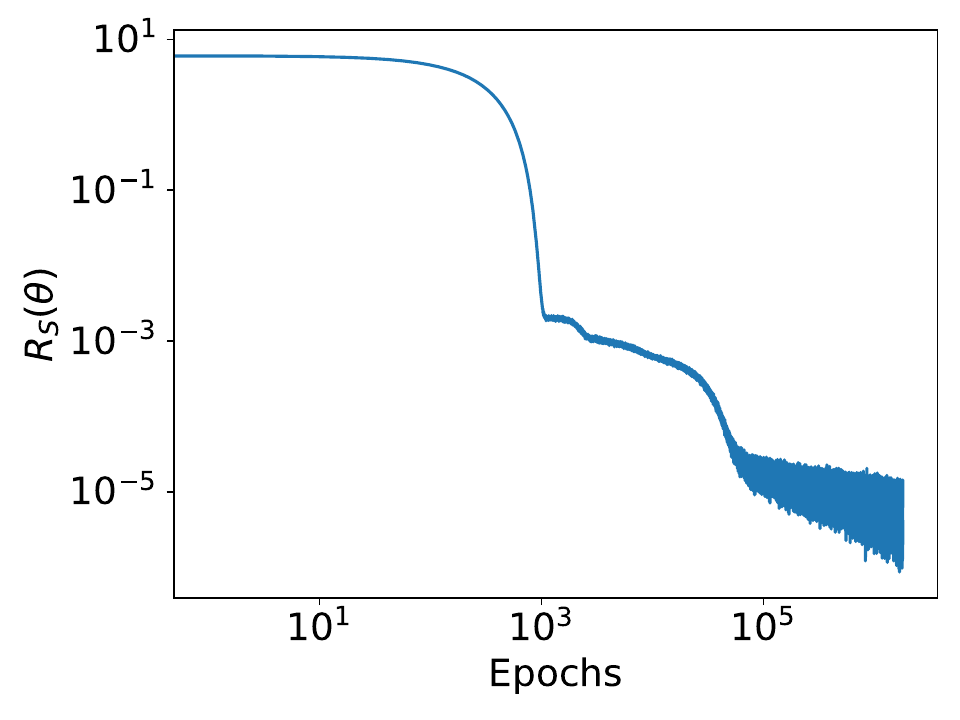}}
	\subfigure[empirical risk $\tilde{R}_S(\theta)$]{
	\includegraphics[scale=0.35]{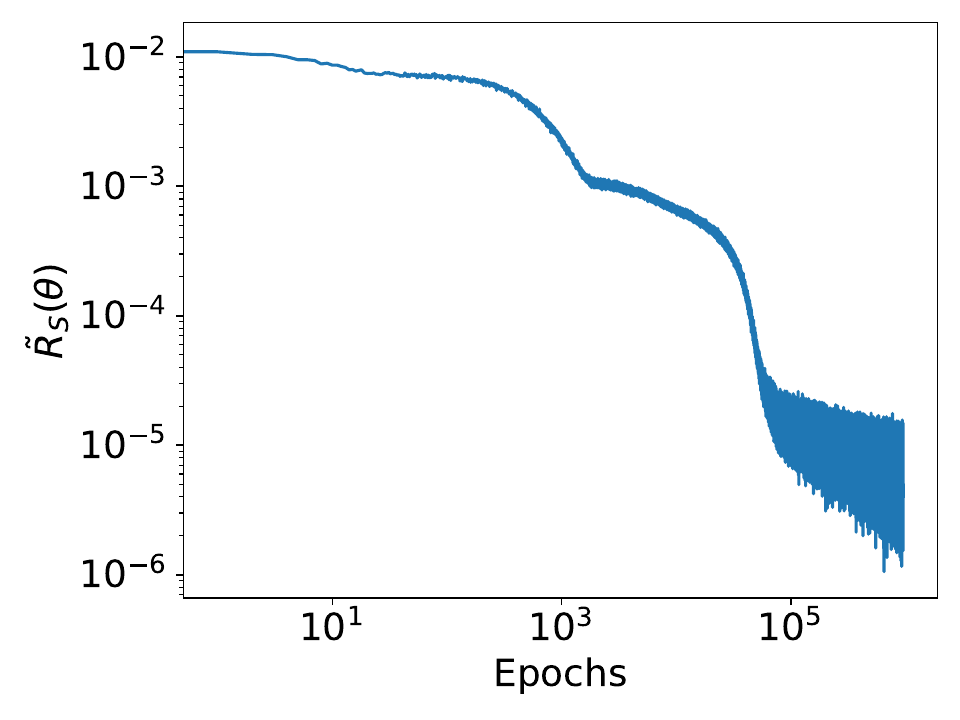}}
	\caption{PINN can fit the exact solution to the equation~\eqref{eq::1dEq} with coefficients as~\eqref{eq::OneDimNonDeviate}.}
	\label{fig::modifedloss}
\end{figure}

Figure \ref{fig::modifedloss} and Table \ref{tab:Dif&RelatDif} show a detailed and quantitative comparison between all these four solutions. 
Throughout the paper, a Banach space $Y$ equipped with the norm $\norm{\cdot}_{Y}$ will be written as an ordered pair $(Y,\norm{\cdot}_{Y})$ when it is needed to emphasize the norm. If the norm is obvious from the context, we will simply denote it as $Y$. In this work, we mainly focus on Hilbert spaces such as $L^2(\Omega)$, $H^1(\Omega)$, and $H^2(\Omega)$. However, to verify our intuition, we also add one row of the $L^{\infty}(\Omega)$ (relative) deviation to Table \ref{tab:Dif&RelatDif}. 

Notice that $u$ and $u_{\theta}$ can both access very small empirical risk, while they have a finite gap in terms of $L^\infty(\Omega)$ norm as well as $H^1(\Omega)$ (and hence $L^2(\Omega)$) norm. In some sense, it indicates the non-uniqueness of the solutions as the local/global minima of the empirical risk function. It seems that the method bias to a special solution in some implicit way. This leads to one of the central problems, the implicit bias problem, of deep learning methods for solving PDEs --- why a method find such a particular solution from the infinitely many minima. This implicit bias is obviously connected to the success or failure of the methods, and hence it will be the central object of this research work. 

A take-away-message is the relation $u_{\theta}\approx \tilde{u}_{\theta}\approx \tilde{u}\neq u$ and the smallness $R_S(u_{\theta})\ll 1$ and $\tilde{R}_S(\tilde{u}_{\theta})\ll 1$. Roughly speaking, the failure of RM method occurs and the RM solution can be modelled by the modified equation. Looking more carefully at Figure~\ref{fig::modifedloss} (b), we observe that the first-order derivative of the RM solution $u_{\theta}$ (as well as $\tilde{u}_{\theta}$) is piece-wisely parallel to the one of the exact solution $u$. Therefore, except for finitely many points, that is, the only point $x=0$ in this case, the second-order derivatives of the RM solution and the exact solution are the same. Since the point $x=0$ can not be sampled with nearly probability one, it is expected that the RM solution have the possibility to achieve very small empirical risk. This is validated in practical experiments. 
Figure~\ref{fig::modifedloss} (c) (or (d), respectively) shows the evolution of the empirical risk $R_S$ (or $\tilde{R}_S$, respectively) along training dynamics of the RM method applied to the problem~\eqref{eq::1dEq} (or \eqref{eq::1dModEq}, respectively) with the coefficient function given by~\eqref{eq::1dCoeff}. At the initial stage of the training, the empirical risk is of order one, while at the final stage this risk can reduce to $10^{-5}$ or $10^{-6}$. 

\begin{table}[H]
 \begin{longtable}{p{0.4cm}<{\raggedright}p{3.5cm}<{\raggedright}p{2.7cm}<{\raggedright}p{2.7cm}<{\raggedright}p{2.7cm}<{\raggedright}} 
        \toprule[1.5pt] 
       & $\norm{\cdot}_Y$&  $Y=L^{\infty}(\Omega)$ & $Y=L^{2}(\Omega)$ & $Y=H^1(\Omega)$ 
        \\\toprule[1.5pt] 
       \multirow{5}{*}{(a)} & $\norm{u-\tilde{u}}_Y$
        & $\frac{1}{6}\approx 1.667\times 10^{-1}$ & $\frac{\sqrt{6}}{18}\approx 1.361\times 10^{-1}$ & $\frac{\sqrt{6}}{9} \approx 2.722\times 10^{-1}$  
        \\ 
        \multirow{3}{*}{} & $\norm{u-\tilde{u}}_Y/\norm{\tilde{u}}_Y$
        & $\frac{8}{27}\approx 2.963\times 10^{-1}$ & $\frac{\sqrt{170}}{51}\approx 2.557\times 10^{-1}$ & $\frac{2\sqrt{10}}{3\sqrt{67}}\approx 2.576\times 10^{-1}$ 
        \\
        \multirow{3}{*}{} & $\norm{u-\tilde{u}}_Y/\norm{u}_Y$
        & $\frac{6}{25}\approx 2.400\times 10^{-1}$ & $\frac{\sqrt{5}}{\sqrt{119}}\approx 2.050\times 10^{-1}$ & $\frac{2\sqrt{5}}{\sqrt{449}}\approx 2.111\times 10^{-1}$ 
        \\ \hline
         \multirow{2}{*}{(b)} & $\norm{u_{\theta}-u}_Y$ & $1.646\times 10^{-1}$ & $1.345\times 10^{-1}$ & $2.691\times 10^{-1}$
        \\
         \multirow{2}{*}{} & $\norm{u_{\theta}-u}_Y/\norm{u}_Y$ & $2.371\times 10^{-1}$ & $2.026\times 10^{-1}$ & $2.087\times 10^{-1}$
        \\ \hline
         \multirow{2}{*}{(c)} & $\norm{u_{\theta}-\tilde{u}}_Y$ & $2.880\times 10^{-3}$ & $2.044\times 10^{-3}$ & $2.281\times 10^{-3}$
        \\
         \multirow{2}{*}{} & $\norm{u_{\theta}-\tilde{u}}_Y/\norm{\tilde{u}}_Y$ & $5.120\times 10^{-3}$ & $3.839\times 10^{-3}$ & $2.161\times 10^{-3}$
        \\ \hline
         \multirow{3}{*}{(d)} & $\norm{u_{\theta}-\tilde{u}_{\theta}}_Y$ & $1.465\times 10^{-3}$ & $1.267\times 10^{-3}$ & $5.702\times 10^{-3}$
        \\
         \multirow{3}{*}{} & $\norm{u_{\theta}-\tilde{u}_{\theta}}_Y/\norm{\tilde{u}_{\theta}}_Y$ & $2.598\times 10^{-3}$ & $2.372\times 10^{-3}$ & $5.384\times 10^{-3}$
        \\
        \multirow{3}{*}{} & $\norm{u_{\theta}-\tilde{u}_{\theta}}_Y/\norm{u_{\theta}}_Y$ & $2.592\times 10^{-3}$ & $2.371\times 10^{-3}$ & $5.374\times 10^{-3}$
        \\
        \toprule[1.5pt] 
    \end{longtable}    
    \caption{The deviation and relative deviation from one solution to another under 
    and $L^{\infty}(\Omega)$, $L^2(\Omega)$  and $H^1(\Omega)$ norms. In part (a), the comparison is between two exact solutions $u$ and $\tilde{u}$. In part (b) and (c), the RM solution $u_{\theta}$ is compared with $u$ and $\tilde{u}$, respectively. In part (d), the comparison is between two RM solutions $u_{\theta}$ and $\tilde{u}_{\theta}$.}\label{tab:Dif&RelatDif} 
\end{table} 

We believe that such example where $u_{\theta}$ fails to learn $u$ has been seen by many researchers. But our approach is novel and we establish a complete framework to handle such problems with focus on $\tilde{u}$. 
This eventually leads to the understanding of the failure and implicit bias of the RM methods.

\subsection{Linear elliptic equations with BV coefficients}\label{sec::LinEllipEqSysBVCoeff}
In this subsection, we introduce the general setting on the (systems of) elliptic PDEs used for the main results of this work. Some assumptions are given for the linear elliptic equations and systems. Although main contributions (See detailed description in Section~\ref{sec::MainContribution}) of this work focus on these linear problems, we nevertheless stress that some key results will be extended to the quasilinear setting in Section~\ref{sec::ExtQausiLinEllipEq}, and hopefully it can be transferred to more general setting and other PDEs in the future.

For the linear case, we consider the system of elliptic equations written in the divergence form:
\begin{equation}\label{eq::OriginEq}
    \left\{
    \begin{aligned}
        L u &=f & & \text{in}\ \Omega, \\
         u &=0 & & \text{on}\ \partial\Omega,
    \end{aligned}
    \right.
\end{equation}
with 
\begin{equation*}
     (L u)^{\alpha} = -\sum_{\beta=1}^{d'}\divg\cdot(A^{\alpha\beta}(x)Du^{\beta})
     =-\sum_{\beta=1}^{d'}\sum_{i,j=1}^{d}D_{i}(A_{ij}^{\alpha\beta}D_{j}u^{\beta}),
\end{equation*}
where $\alpha,\beta\in \{1,2,\ldots,d'\}$, $i, j\in \{1,2,\ldots,d\}$, $\Omega\subseteq \sR^d$ is a bounded domain with $C^{1,1}$ boundary, measurable functions $A^{\alpha\beta}\in S^{d\times d}$ are also symmetric in $\alpha,\beta$, namely $A^{\alpha\beta}=A^{\beta\alpha}$, and $f\in L^2(\Omega;\sR^{d'})$. 

Now we mention the basic assumption to be used throughout the paper. 

\begin{assumption}[BV coefficients]\label{assum::BVCoeff} 
    Let $L$ be the operator defined in~\eqref{eq::OriginEq}. Assume that for each $\alpha, \beta\in\{1,\ldots,d'\}$, there exist a scalar function $\chi^{\alpha\beta}\in SBV^{\infty}(\Omega)$ with $\fH^{d-1}(J_{\chi^{\alpha\beta}})< +\infty$ and a matrix-valued function $\bar{A}^{\alpha\beta}\in C^{1}(\bar{\Omega};S^{d\times d})$ such that $A^{\alpha\beta} = \chi^{\alpha\beta}\bar{A}^{\alpha\beta}$. Also assume that there is some $\alpha_0,\beta_0\in\{1,\ldots,d'\}$ satisfying $\fH^{d-1}(J_{\chi^{\alpha_0\beta_0}})>0$. Furthermore, we assume there are constants $\chi_{\min},\chi_{\max},\bar{\lambda},\bar{\Lambda}>0$ such that for each $\alpha,\beta\in\{1,\ldots,d'\}$ and for all $\xi\in\sR^d$, $x \in \Omega$
    \begin{align}\label{eq::SpecRad}
        &\chi_{\min}\leq \chi^{\alpha\beta}(x)\leq \chi_{\max},\\
        &\bar{\lambda}\abs{\xi}^2\leq \xi^{\T}\bar{A}^{\alpha\beta}(x)\xi\leq \bar{\Lambda}\abs{\xi}^2.
    \end{align}
\end{assumption}

\begin{figure}[H]
	\centering	
	\includegraphics[scale=0.55]{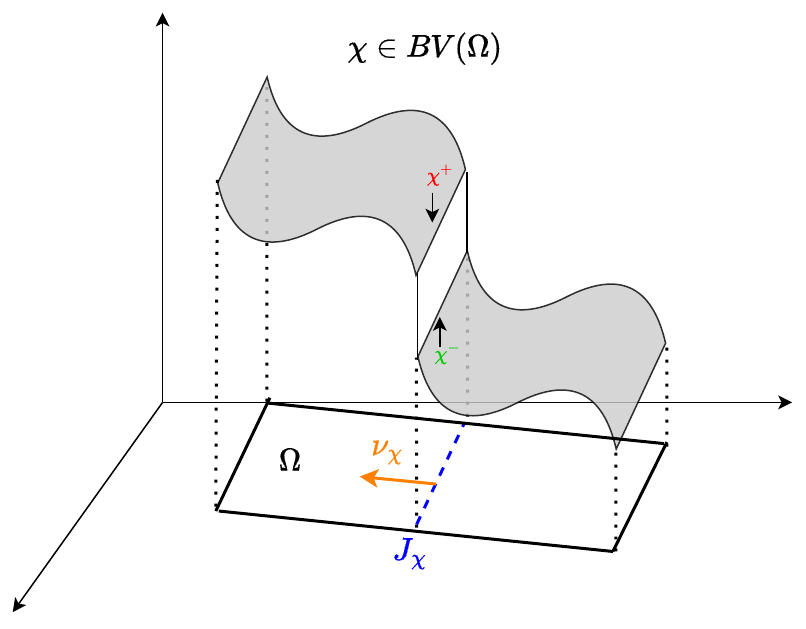}
	\caption{Illustration of a function of bounded variation $\chi$. The set of approximate jump points $J_\chi$ is determined by the triplet $(\chi^+,\chi^-,\nu_\chi)$. See details in Appendix~\ref{sec::BVFun}.}
	\label{fig::SBVFun}
\end{figure}

Here an $SBV^\infty$ function is a \textbf{special function of bounded variation} (SBV) whose absolutely continuous part of the gradient has an $L^\infty$ density. The precise definition and basic properties of SBV functions are given in Appendix~\ref{sec::BVFun}. Figure~\ref{fig::SBVFun} is an illustration of a SBV function. It is no harm for the readers to think each $\chi^{\alpha\beta}$ as a piece-wise constant function throughout the paper. 

Here we give some comments on our main assumption.

\begin{remark}[Hadamard--Legendre condition]\label{rmk::HadamardLegendreCond}
    Assumption~\ref{assum::BVCoeff} implies the \textbf{Hadamard--Legendre condition}, which is a standard condition for the existence of solution to systems of elliptic PDEs.
    Let $L$ be the operator defined in~\eqref{eq::OriginEq}. We say that $\{A^{\alpha\beta}\}_{\alpha,\beta=1}^{d'}$ satisfy the Hadamard--Legendre condition if there exist constants $\lambda,\Lambda>0$ such that for all $ \xi^{\alpha} \in \sR^{d}, x \in \Omega$
    \begin{equation}\label{eq::HadamardLegendreCond} 
        \lambda\abs{\xi}^{2}\leq \sum_{\alpha,\beta=1}^{d'}(\xi^{\alpha})^\T A^{\alpha\beta}(x)\xi^{\beta} \leq \Lambda\abs{\xi}^{2}.
    \end{equation} 
    Here $\abs{\xi}^2=\sum_{\alpha=1}^{d'}\abs{\xi^{\alpha}}^2$.
\end{remark}

\begin{remark}[uniform ellipticity condition]\label{rmk::UnifEllipCond}
    When $d'=1$, the superscripts $\alpha,\beta$ can only take value $1$. Thus for simplicity of notation, we will drop the superscripts throughout the paper when $d'=1$.
    In particular, for the case $d'=1$, the Hadamard--Legendre condition in Remark~\ref{rmk::HadamardLegendreCond} coincides with the uniform ellipticity condition, that is to say, there exist constants $\lambda,\Lambda>0$ satisfying 
    \begin{equation}\label{eq::UnifEllipCond} 
        \lambda\abs{\xi}^{2}\leq \xi^\T A(x)\xi \leq \Lambda\abs{\xi}^{2}
    \end{equation}
    for all $\xi \in \sR^{d}, x \in \Omega$.
\end{remark}

We also remark that in the proofs throughout the paper, the constant $C$ may be different from line to line, but we usually keep track of its dependence on basic constants such as $\chi_{\min}$ or $\bar{\Lambda}$ and thus make the paper more readable. In the proofs, the expression $U \prec\zeta\prec U'$ means $\zeta=1$ in $U$, $\zeta=0$ outside $U'$ and $\zeta\in C_c^{\infty}(\sR^d)$, where $U, U'$ are bounded open sets and $U$ is compactly contained in $U'$.

\section{Main contributions}\label{sec::MainContribution}
In this section, we describe our main contributions of this paper. After the introduction to each contribution point, several related theorems will be mentioned in an intuitive way. Most of them not only work for elliptic equations, but also work for elliptic systems, although some technical conditions may be inevitably assumed for the latter.

From now on, the term ``residual minimization'' (or in short ``RM'') is used to replace ``PINN'' in the main results because these analyses provided in this and later sections work for general residual minimization methods, and are not exclusive for PINN. In particular, the DNN representation is not explicitly used in the analysis. Nevertheless, the type of the risk function (also known as loss function) is more responsible to the failure or success of the machine learning based PDE solvers. 

In Section~\ref{sec::DeriveModEq}, we propose a hypothesis that $\tilde{u}$ approximates $u_{\theta}$ well and derive the modified equation for $\tilde{u}$, in a general setting, to model the numerical solution obtained by RM method. This hypothesis serves as our starting point of the analysis and understanding of the implicit bias of RM method. 
In Section~\ref{sec::CharacterRemovSing}, we provide an if-and-only-if condition to characterize the singularity which appears naturally because of the discontinuous coefficients in the equations. In particular, this condition characterizes whether $u$ is equal to $\tilde{u}$ or not.
Next, in Section~\ref{sec::RMInvSubSpace}, we introduce the RM-invariant subspace $\Ker(T-I)$, defined as the set of all $f$ which leads to $u=\tilde{u}$. This subspace $\Ker(T-I)$ allow us to identify the occurrence of deviation of the numerical solution. We will show the deviation occurs generically and the relative deviation, even for the data near the RM-invariant subspace, is not small.
The last contribution point is mentioned in Section~\ref{sec::ImplicitBiasness},
where we prove that the exact solution is unstable and hence the RM method implicit bias the exact solution towards the solution to the modified equation. In the last subsection, we present the connections of the main contributions, as well as preliminaries and by-products, by two flow charts.

\subsection{Modeling numerical solution by modified equation}\label{sec::DeriveModEq}
In any sense, it is very difficult, if not impossible, to study $u_{\theta}$ directly. Fortunately, as shown in Figure \ref{fig::modifedloss} (a), we have the key observation: $u_{\theta}\approx \tilde{u}_{\theta}\approx \tilde{u}$, that is, the RM solutions $u_{\theta}$ and $\tilde{u}_{\theta}$ both looks very close to the exact solution $\tilde{u}$, and they are almost indistinguishable. 
More precisely, part (c) and part (d) of Table \ref{tab:Dif&RelatDif} provide more quantitative evidences to show that the (relative) distances among $u_{\theta}$, $\tilde{u}_{\theta}$, and $\tilde{u}$ are very small in either $L^\infty(\Omega)$, $L^2(\Omega)$, or $H^1(\Omega)$ norm. 
This closeness is already intuitively explained in Section~\ref{sec::FailExPINN} and it provides us a solid evidence to model $u_{\theta}$ by using $\tilde{u}$ for the one-dimensional example. 

We would like to extend this idea and model $u_{\theta}$ by using $\tilde{u}$ to more general equations and to the case of system (that is $d'>1$). Let us start with $d'=1$ and a general coefficient $A$. The derivation is almost the same as the one in Section \ref{sec::FailExPINN}. But here the coefficient $A$ may not be piece-wise constant. Moreover, $A$ is a $d\times d$ matrix-valued function instead of a scalar function. By the product rule, the divergence operator in~\eqref{eq::OriginEq} is applied to $A$ and $Du$ respectively. Furthermore, by the decomposition of SBV function (See Definition~\ref{def::SBVFun}) with $d'=1$, we have
\begin{equation}
    Lu=-\sum_{i,j=1}^dA_{ij}D_{ij}u -\sum_{i,j=1}^d(D_{i}^{\mathrm{a}}A_{ij}+D_{i}^{\mathrm{j}}A_{ij})D_{j}u.
\end{equation}
Here $D_{i}^{\mathrm{j}}A_{ij}$ is supported on an $\fL^d$ null set $J_{\chi}$. Since the number of samples is at most countable in any practical applications, the probability of selecting some points in the support of $D_{i}^{\mathrm{j}}A_{ij}$ is zero. Unless the algorithm is specifically designed, the contribution of $D_{i}^{\mathrm{j}}A_{ij}$ to the risk function is zero. In other words, $D_{i}^{\mathrm{j}}A_{ij}$ will not affect the optimization process. As a result, we simply omit $D_{i}^{\mathrm{j}}A_{ij}$ and obtain the approximate model~\eqref{eq::ModEq} for studying the RM methods.

For the general setting with $d'\geq 1$ (that is including systems), we follow the same idea and thus arrive at the modified equation for~\eqref{eq::OriginEq} as follows
\begin{equation}\label{eq::ModEq}
    \left\{
    \begin{aligned}
        \tilde{L} \tilde{u} &=f & & \text{in}\  \Omega, \\
        \tilde{u} &=0 & & \text{on}\  \partial\Omega,
    \end{aligned}
    \right.
\end{equation}
where for each $\alpha\in\{1,\ldots,d'\}$
\begin{equation}
    (\tilde{L} \tilde{u})^{\alpha} =-\sum_{\beta=1}^{d'}\sum_{i,j=1}^{d}A_{ij}^{\alpha\beta}D_{ij}\tilde{u}^{\beta} -\sum_{\beta=1}^{d'}\sum_{i,j=1}^{d}D_{i}^{\mathrm{a}}A_{ij}^{\alpha\beta}D_{j}\tilde{u}^{\beta}.
\end{equation}

Throughout the paper, we call $\tilde{L}$ the modified operator of $L$ and denote $u$ the solution to~\eqref{eq::OriginEq}, and $\tilde{u}$ the solution to~\eqref{eq::ModEq}. Also, let $u_{\theta}$ and $\tilde{u}_{\theta}$ be the RM solutions, that is, the numerical solutions under RM methods (such as PINN), to the original equation~\eqref{eq::OriginEq} and modified equation~\eqref{eq::ModEq}, respectively. When the equation is clear from the context, we call $\tilde{u}$ the solution to the modified equation (or simply, the modified solution) and call $u_{\theta}$ the numerical solution (or RM solution). 

The previous numerical experiments suggest us to make the following hypothesis to model the numerical solution .

\begin{hypothesis}[modified solution approximates RM solution]\label{hypo::ModSolApproxRMSol}
    The RM solution to the problem \eqref{eq::OriginEq} can be approximated by the solution to its modified equation \eqref{eq::ModEq}. More precisely, for any given $\eps>0$, the RM method can find the a numerical solution $u_{\theta}$ such that $\norm{u_{\theta}-\tilde{u}}_{H^1(\Omega)}\leq \eps$ for all meaningful $f$. 
\end{hypothesis}

Here we neglect the details of the neural networks such as how to design the network architecture, how to tune the hyper-parameters, and how to train the neural network parameter. These will be left to the future research. This hypothesis will be the foundation of our work, and from now on, we will focus on $\tilde{u}$ which is more amenable because it satisfies a modified equation \eqref{eq::ModEq}. We emphasize that Hypothesis \ref{hypo::ModSolApproxRMSol} and our point of view on the modelling of the RM solution are novel. For the modified problem, we obtain a series of theorems. These with Hypothesis \ref{hypo::ModSolApproxRMSol} lead to the understanding of the behavior and properties of the RM solution, in particular, its implicit bias. 

We will work on both elliptic equations and systems. To prove the results in the case of elliptic systems, we need a further technical condition as follows.
\begin{assumption}[\emph{A priori} estimates for linear system]\label{assum::AprioriEstLinSys}
    Assume that for all $\tilde{u}\in H_0^1(\Omega;\sR^{d'})\cap H^2(\Omega;\sR^{d'})$, there is constant $C>0$ such that $\norm{\tilde{u}}_{H^2(\Omega;\sR^{d'})}\le C\norm{\tilde{L}\tilde{u}}_{L^2(\Omega;\sR^{d'})}$, where $\tilde{L}$ is defined as in~\eqref{eq::ModEq}.
\end{assumption}

We remark that Assumption~\ref{assum::BVCoeff} with $d'=1$ implies Assumption~\ref{assum::AprioriEstLinSys} (See Theorem \ref{thm::ExistAprioEstLinEllipEq&Sys}).

\subsection{Characterizing removable singularity}\label{sec::CharacterRemovSing}

Intuitively, it is clear that the failure of PINN, or more generally, the RM method, is due to the singularities in the coefficients of the PDE. Moreover, whether the singularity exists is highly related to whether the exact solution coincides with the modified solution, where the later is introduced in the above subsection.
In particular, if the singularity is removable then there is no such failure. 
In order to study the removable singularity in the coefficients, we introduce a quantity $\mu$ based on which we can establish necessary and sufficient condition. 

Given any $\fH^{d-1}$ measurable set $B\subseteq\Omega$, $\chi\in SBV^\infty(\Omega)$, $\Upsilon: H_0^1(\Omega) \to L^{\infty}(\Omega; S^{d \times d})$, and $\varphi\in C^1(\Omega)$, we define
\begin{equation}\label{eq::Def4mu}
    \mu(B;\chi,\Upsilon,\varphi)=\int_{B\cap J_{\chi}}(\chi^+-\chi^-)\nu_{\chi}^{\T} \Upsilon[\varphi] D\varphi\diff{\fH^{d-1}}.
\end{equation}
For example, if $B=\Omega$, $\varphi\in C_c^1(\Omega)$, and $\Upsilon[w]=\bar{A}$ for all $w\in H_0^1(\Omega)$, then we have 
\begin{equation*}
    \mu(\Omega;\chi,\bar{A},\varphi)=\int_{J_{\chi}}(\chi^+-\chi^-)\nu_{\chi}^{\T} \bar{A}D\varphi\diff{\fH^{d-1}}.
\end{equation*}

We thus have an essential result (Theorems~\ref{thm::NecSufCondRemovSing} and~\ref{thm::NecSufCondRemovSing4Sys}) to characterize the removable singularity in terms of $\mu$. Applying these theorems, we can consequently find smooth function $v_{\delta}$ such that 
\begin{equation*}
    \mu(\Omega;\chi,\bar{A},v_{\delta})\neq 0.
\end{equation*}
See Theorems~\ref{thm::NecSufCondRemovSingLin} and~\ref{thm::NecSufCondRemovSingLinSys}.

\subsection{Identifying the occurrence of deviation}\label{sec::RMInvSubSpace}
We study the occurrence of deviation $u\neq \tilde{u}$.
Thanks to Theorems~\ref{thm::NecSufCondRemovSingLin} and~\ref{thm::NecSufCondRemovSingLinSys}, we prove in Theorem \ref{thm::WorstCaseDeviLinEllipEq} that for specific interior data $f$ the deviation occurs. To step further, we ask whether this occurrence of deviation is generic and whether it is large in some sense. Affirmative answers to these questions will be obtained by studying the RM-invariant subspace $\Ker(T-I)$. The $\Ker(T-I)$ basically identifies the interior $f$ where $u=\tilde{u}$.

To define $\Ker(T-I)$, we should study the properties of $\tilde{L}$ first. Let us consider the largest possible domain of $\tilde{L}$, which is naturally a subset of $H^1_0(\Omega;\sR^{d'})$. In fact, it should be a subset of $H^2(\Omega;\sR^{d'})$. Otherwise, we have $v\in H^1_0(\Omega;\sR^{d'})\backslash H^2(\Omega;\sR^{d'})$, and hence $D_{ij}v^{\beta}\in H^{-1}(\Omega)$ and $A^{\alpha\beta}_{ij}\in SBV(\Omega)$ imply that the point-wise product $A^{\alpha\beta}_{ij}D_{ij}v^{\beta}$ may not be a classical function. Thus the largest possible domain of $\tilde{L}$ is 
\begin{equation}
    \operatorname{dom}(\tilde{L})=H^1_0(\Omega;\sR^{d'})\cap H^{2}(\Omega;\sR^{d'}).
\end{equation}
Consequently, the image of $\tilde{L}$, denoted by $X$, is 
\begin{equation}\label{eq::Range4data}
    X=\{\tilde{L}w\colon\,w\in H^1_0(\Omega;\sR^{d'})\cap H^{2}(\Omega;\sR^{d'}) \}.
\end{equation}

Next, we introduce the \textbf{RM-transformation} $T$. Suppose that Assumptions~\ref{assum::BVCoeff} and~\ref{assum::AprioriEstLinSys} hold. Let $u$ and $\tilde{u}$ be solutions to~\eqref{eq::OriginEq} and~\eqref{eq::ModEq} with data $f\in L^2(\Omega;\sR^{d'})$, respectively. Then we can define the operator $T$ 
\begin{align}\label{eq::RMTransformation}
    T: X&\to H^{-1}(\Omega;\sR^{d'})\\
    f&\mapsto Tf= L\tilde{u}.    
\end{align}
Clearly, $\tilde{u}$ is the weak solution to
\begin{equation}\label{eq::RMTrans}
    \left\{\begin{aligned}
    L \tilde{u} &= T f & & \text {in}\  \Omega, \\
    \tilde{u} &=0 & & \text {on}\  \partial\Omega.
    \end{aligned}\right.
\end{equation}
In particular, when $d'=1$, Assumption~\ref{assum::AprioriEstLinSys} holds automatically by Assumption~\ref{assum::BVCoeff}. Hence, for the single elliptic equation case, namely $d'=1$, we have $X=L^2(\Omega)$ and $T:L^2(\Omega)\to H^{-1}(\Omega)$.

Let $\sigma(T)$ be the spectrum of $T$. Then we will show in Theorem~\ref{thm::EigenValueEigenSpace} that the only eigenvalue of $T$ is $1$, namely $\sigma(T)=\{1\}$. Thus to identify the occurrence of deviation $u\neq \tilde{u}$, we only need to characterize the invariant subspace of $X$ under RM-transformation $T$. 
This naturally leads to the following kernel $\Ker_{X}(T-I)$, that is, the eigenspace of $T$ corresponding to the eigenvalue $1$ restricted to $X$:
\begin{equation}
    \Ker(T-I)=\Ker_{X}(T-I)=\{f\in X \colon\, Tf=f\}=\{\tilde{L}w\in X\colon\,\tilde{L}w= Lw\}.
\end{equation}
When the $X$ is clear from the context, we will drop it in the subscript and denote the kernel as $\Ker(T-I)$. We also denote its complement with respect to the whole space $L^2(\Omega;\sR^{d'})$ as $(\Ker(K-I))^c=L^2(\Omega;\sR^{d'})\backslash\Ker(T-I)$.

Let us explain why the space $\Ker(T-I)$ can characterize the occurrence of the deviation. If there is a non-zero $f\in X\backslash \Ker(T-I)$, then the unique solution $\tilde{u}$ to~\eqref{eq::ModEq} (namely $\tilde{L}\tilde{u}=f$) satisfies $\tilde{L}\tilde{u}\neq L\tilde{u}$. In other words, $f\neq L\tilde{u}$, and hence $\tilde{u}$ deviates from $u$, implying there is a deviation. Therefore, to understand the implicit bias of RM method, we only need to study the properties of $\Ker(T-I)$. In particular, Theorem \ref{thm::EigenValueEigenSpace} shows that under a mild condition that the jump is not omnipresent, the complement of the kernel $(\Ker(T-I))^{c}$ is open and dense
(see also Theorem \ref{thm::EigenValueEigenSpace} for the case of system). As a direct result, for almost all $f\in L^2(\Omega;\sR^{d'})$, the  deviation occurs. Furthermore, Theorem \ref{thm::UnbddRLinEllipEq} shows the relative deviation $\norm{u-\tilde{u}}_{H^1(\Omega;\sR^{d'})}/\norm{\tilde{u}}_{H^1(\Omega;\sR^{d'})}$ can be even unbounded.

Now, with these theorems on the relation between $\tilde{u}$ and $u$, we are ready to explain the phenomenon $u_{\theta}\neq u$ shown in the previous numerical experiments. Let us recall the phenomenon, discuss first intuitively, and then give a more quantitative explanation.

Recall that, according to Figure~\ref{fig::modifedloss} (a), the RM solution $u_{\theta}$ is entirely deviated from the exact solution $u$. More precisely, the part (b) of Table \ref{tab:Dif&RelatDif} show that the (relative) numerical errors between $u$ and $u_{\theta}$ are not small in both $L^\infty(\Omega)$ and $L^2(\Omega)$ (and hence $H^1(\Omega)$) norm. 

We first provide an intuitive understanding on the non-zero difference between the exact solution and the RM solution.  
For the exact solution $u$, the term $AD_{x}u$ has to be continuous, otherwise its derivative would be a Dirac-like function. Roughly speaking, the Dirac-like function is only non-zero at a single point but its integration on the whole space is non-zero. Therefore, even in the weak sense, the effect of Dirac function can not be ignored. However, the source term $f$ is a classical function which is defined pointwisely. In particular, it is not a Dirac-like function. Note that $A$ is discontinuous. In order to make $AD_{x}u$  be continuous, $D_{x}u$ has to be discontinuous as shown in Figure~\ref{fig::deviation}(b).    
For the RM solution $u_{\theta}$, as the isolated discontinuities of $A$ can not be exactly sampled, any solution, whose first-order derivative is piece-wisely parallel to that of the exact solution, is a solution that minimizes the empirical loss. The frequency principle~\cite{luo2019theory, xu2019frequency, xu2019training, zhang2021linear} shows that deep neural network implicitly prefers a low-frequency function to fit training data. Roughly speaking, compared with all feasible solutions, the one with continuous first-order derivative is a function has low frequency, which is learned by RM as shown in Figure~\ref{fig::deviation}(b). The rigorous connect between the frequency principle and the implicit bias of RM method is beyond the scope of this paper, and will be left to the future work.

Next, we conclude the contribution on the occurrence of the deviation with a more quantitative remark which explains the phenomenon $u_{\theta}\neq u$.
\begin{remark}[error of RM solution]\label{rmk::AbsErrRMSol}
    Theorem~\ref{thm::WorstCaseDeviLinEllipEq} leads to a finite error of RM solution. In fact, if we assume Hypothesis \eqref{hypo::ModSolApproxRMSol} with $\eps\leq \frac{C}{2}$, then we can numerically achieve $u_{\theta}$ such that $\norm{\tilde{u}-u_{\theta}}_{L^2(\Omega;\sR^{d'})}\leq \frac{C}{2}$. Consequently, the deviation $\norm{u-\tilde{u}}_{H^1(\Omega;\sR^{d'})}\geq C$ leads to the estimate $$\norm{u-u_{\theta}}_{L^2(\Omega;\sR^{d'})}\geq \norm{u-\tilde{u}}_{L^2(\Omega;\sR^{d'})}-\norm{\tilde{u}-u_{\theta}}_{L^2(\Omega;\sR^{d'})}\geq \frac{C}{2},$$ which implies a finite (non-infinitesimal) numerical error when the gap $\norm{u-\tilde{u}}_{L^2(\Omega;\sR^{d'})}$ takes a non-infinitesimal value.
\end{remark}

\subsection{Understanding the implicit bias}\label{sec::ImplicitBiasness}
Compared to the above results on the deviation, it is more important to understand the implicit bias of the RM method. The latter has to be more dynamical. One may ask whether the dynamics (and more precisely the initialization of the dynamics) matters. In particular, shall we still expect the failure of RM method for the previous example, if we take the initial output function $u_{\theta(0)}$ being sufficiently close to the exact solution $u$? We should study this in both numerical and theoretical way, and eventually this leads to the understanding of the implicit bias of RM methods.

Thanks to the well-known universal approximation theorem, the exact solution $u$ from the above failure example can be approximated well by, for example, a two-layer neural network. Thus we can first use supervised learning to find sufficiently good parameter and then apply RM methods with such good initialization.

In numerical experiments, we still take the example \eqref{eq::1dEq} and the results are shown in Figure \ref{fig::deviationofwelltrained}. We use supervised learning to find neural network function $u_{\theta}^{\mathrm{SV}}$ with parameter $\theta^{\mathrm{SV}}$, where the empirical risk function reads as
\begin{equation*}
    R_{S}^{\mathrm{SV}}(u_{\theta}^{\mathrm{SV}})=\frac{\abs{\Omega}}{n_{\mathrm{int}}}\sum_{x\in S_{\mathrm{int}}}\left((u_{\theta}^{\mathrm{SV}}(x)-u(x))^2+(D_xu_{\theta}^{\mathrm{SV}}(x)-D_xu(x))^2\right)+\gamma  R_{S,\mathrm{bd}}(u_{\theta}^{\mathrm{SV}}).
\end{equation*}
By Figure~\ref{fig::deviationofwelltrained} (a), $u_{\theta}^{\mathrm{SV}}$ almost overlaps $u$ as is expected. 
Next, let $\theta^{\mathrm{SV}}$ be the initial parameter and apply the RM method to the original equation \eqref{eq::1dEq} and the modified equation \eqref{eq::1dModEq}, respectively.
In other words, we train the neural network with $R_{S}$ and $\tilde{R}_{S}$, respectively, until the risk is small and does not decay anymore. After training, the output functions are denoted as $u_{\theta}^{\mathrm{SV}\to\mathrm{RM}}$ and $\tilde{u}_{\theta}^{\mathrm{SV}\to\mathrm{RM}}$, respectively. We observe that both output functions are very close to $\tilde{u}$, as shown in Figure~\ref{fig::deviationofwelltrained} (b). For a comprehensive comparison, we also plot $u_{\theta}$ and $\tilde{u}_{\theta}$ and their derivatives in Figure~\ref{fig::deviationofwelltrained}.
Roughly speaking, we have $u\approx u_{\theta}^{\mathrm{SV}}$ and $u_{\theta}\approx \tilde{u}_{\theta}\approx \tilde{u}\approx u_{\theta}^{\mathrm{SV}\to\mathrm{RM}}\approx \tilde{u}_{\theta}^{\mathrm{SV}\to\mathrm{RM}}$.
Therefore, \textbf{even given a sufficiently good initialization, the RM method implicitly biases the numerical solution against the exact solution $u$ and towards the solution $\tilde{u}$ to the modified equation.}

\begin{figure}[H]
	\centering	
	\subfigure[Output at initial]{\includegraphics[scale=0.35]{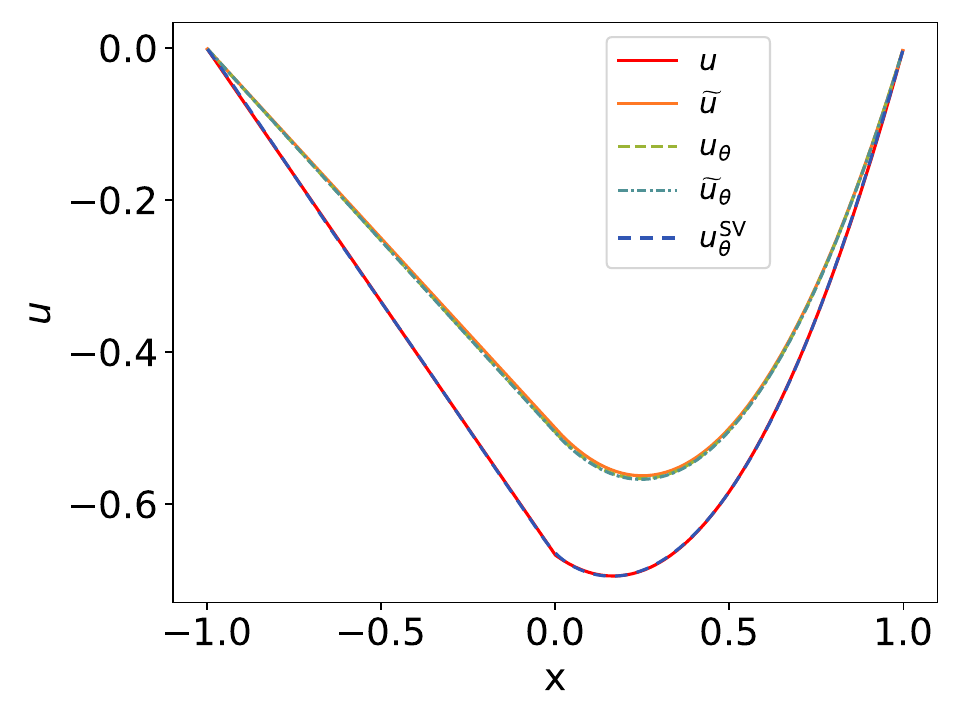}}
         \subfigure[Output at end]{\includegraphics[scale=0.35]{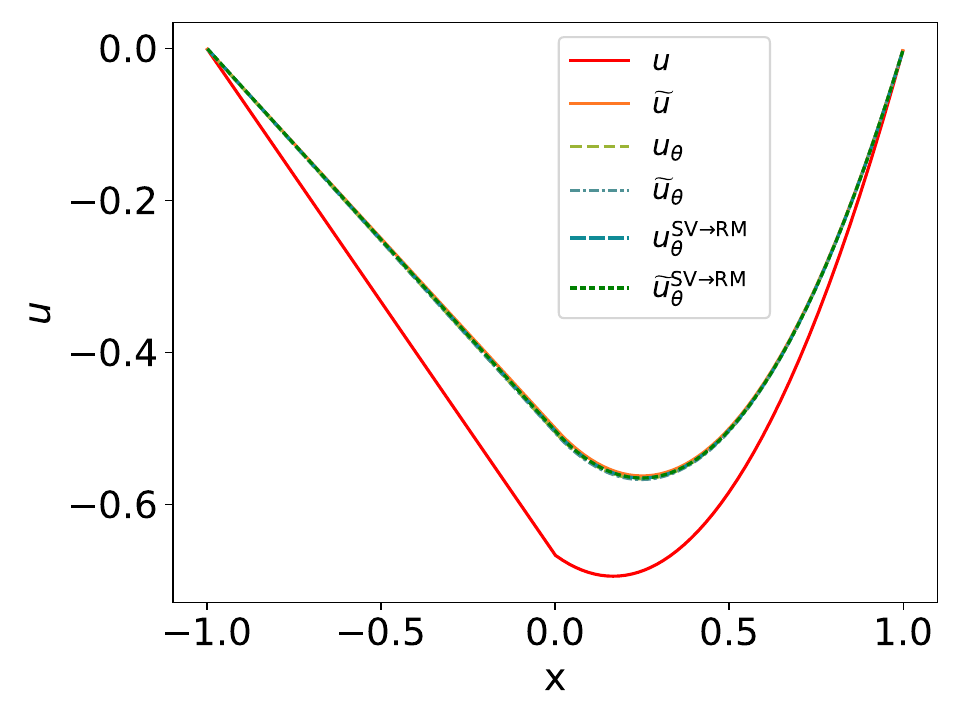}}
 
	\subfigure[Derivative of output at initial]{
	\includegraphics[scale=0.35]{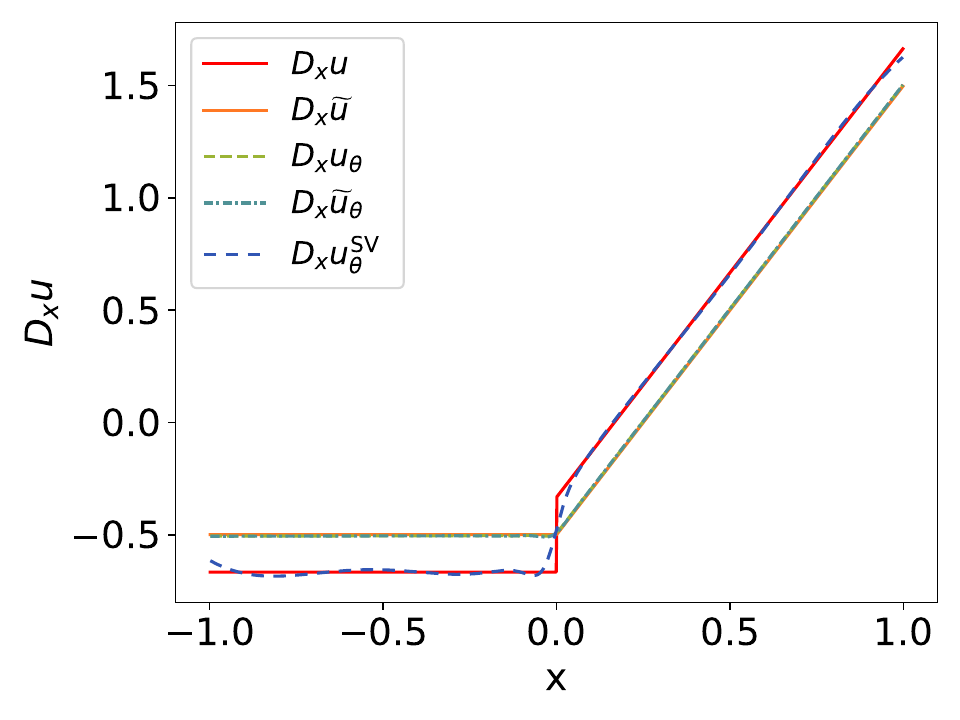}}
	\subfigure[Derivative of output at end]{
	\includegraphics[scale=0.35]{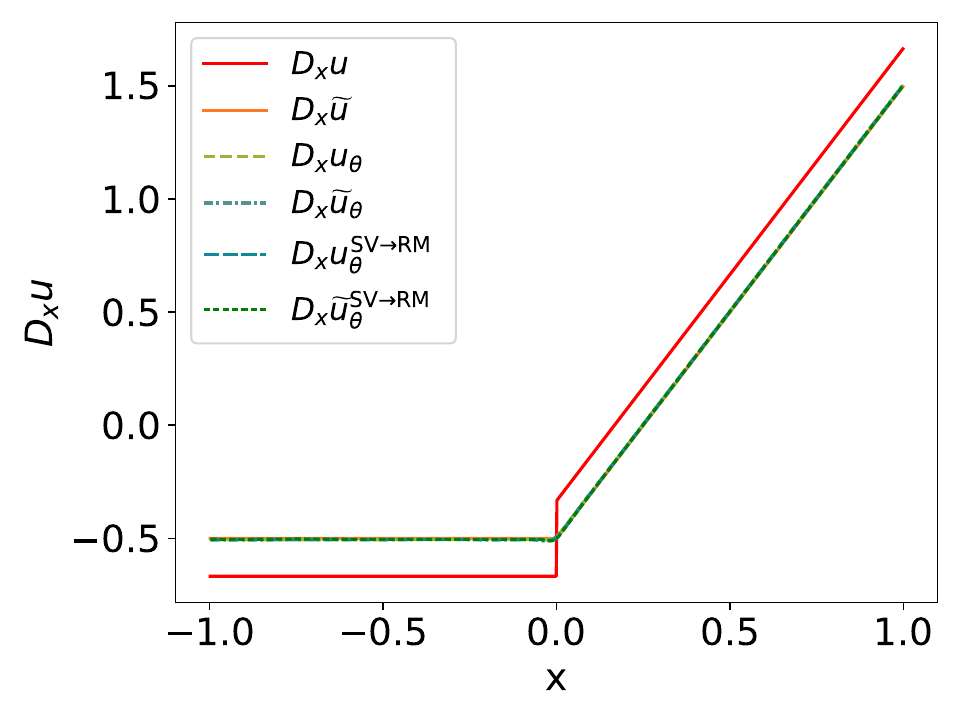}}

    \subfigure[$u$ v.s. $u_{\theta}^{\text{SV}}$ v.s. $\tilde{u}$]{
	\includegraphics[scale=0.22]{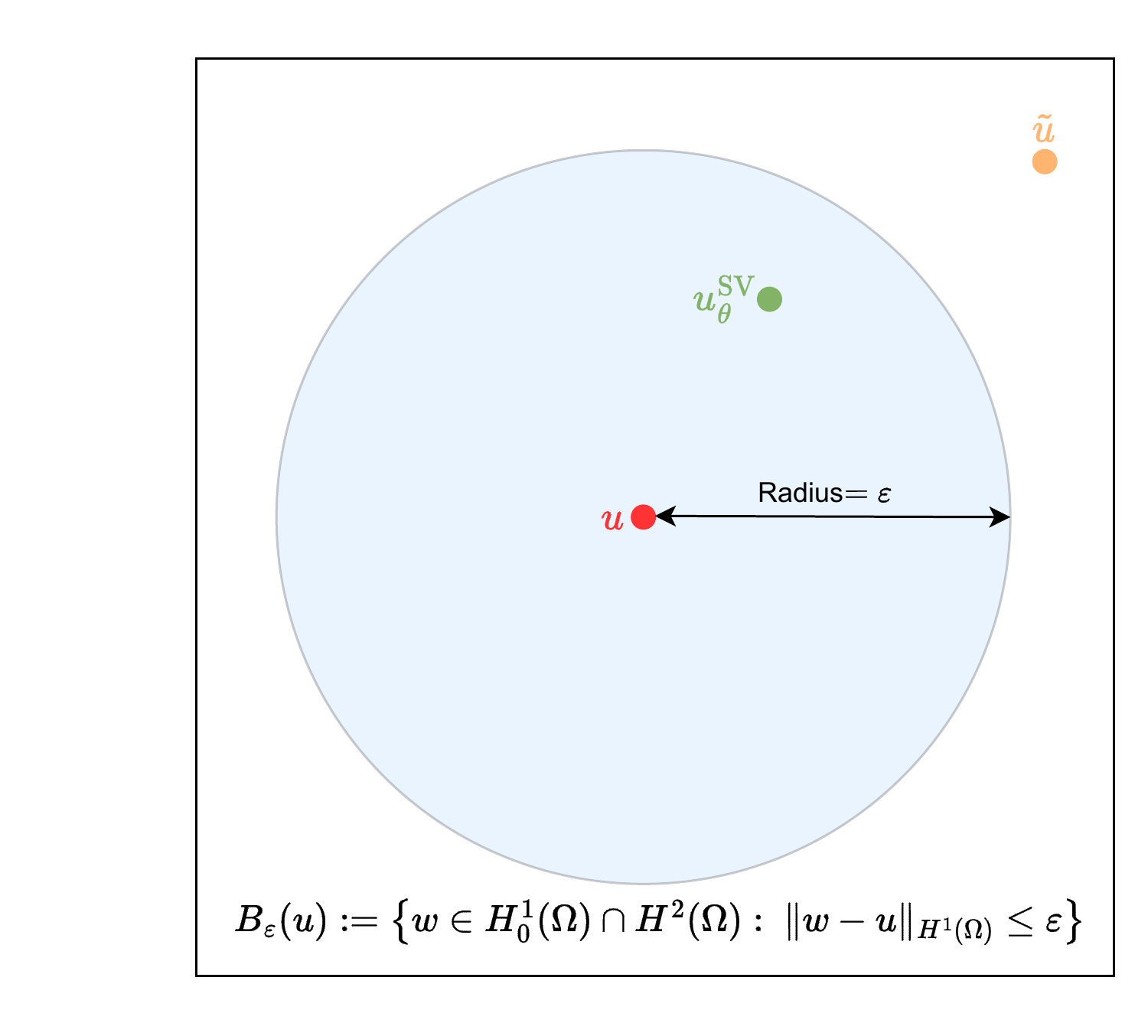}}\hspace{+3mm}
	\subfigure[$u$ v.s. $u_{\theta}^{\text{SV}\to \text{RM}}$ v.s. $\tilde{u}_{\theta}^{\text{SV}\to \text{RM}}$ v.s. $\tilde{u}$]{
	\includegraphics[scale=0.22]{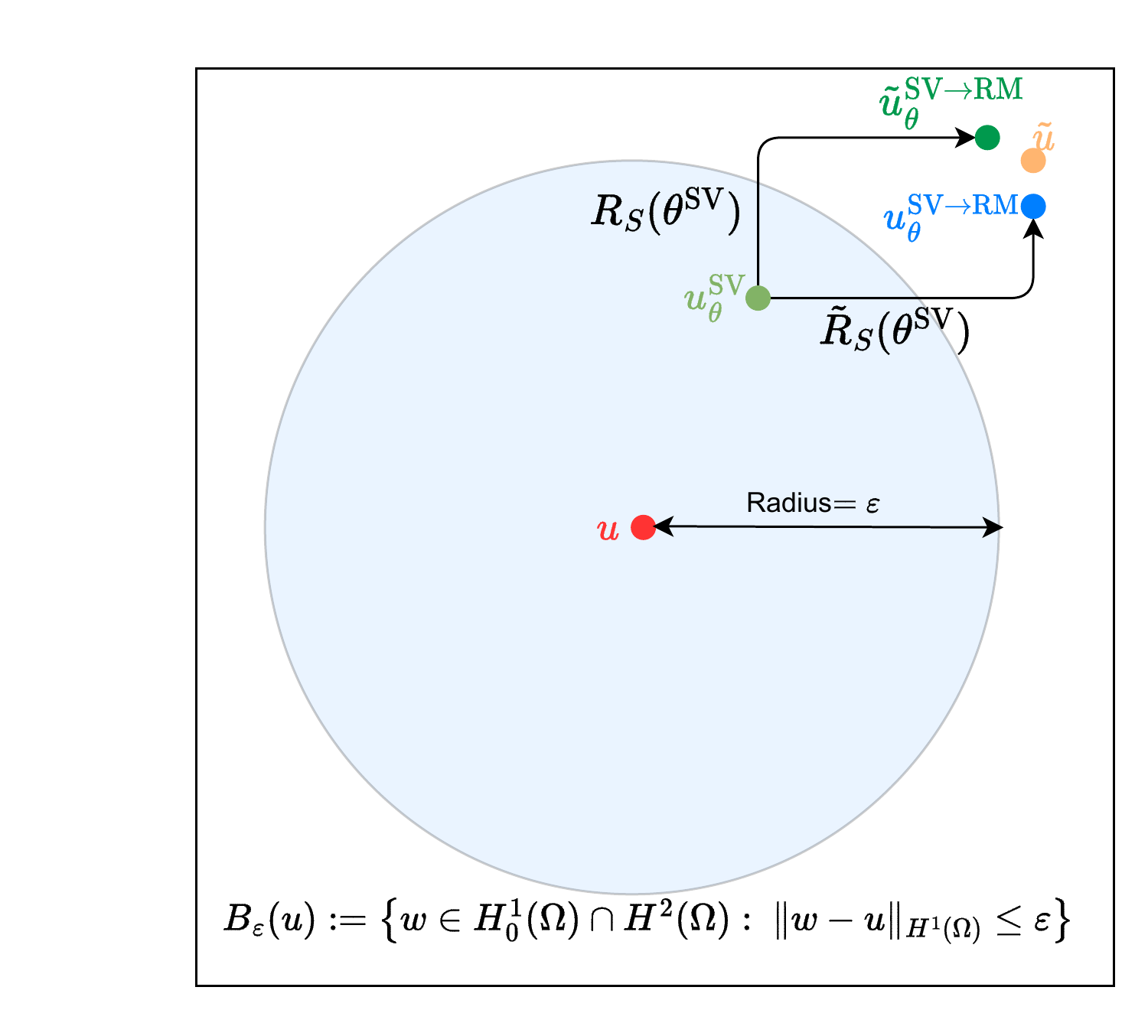}}
	\caption{evolution of modified solution with supervised initialization.} 
	\label{fig::deviationofwelltrained}
\end{figure}
We prove Theorems~\ref{thm::ImplicitBiasLinSys} and Proposition~\ref{prop::Bias} which essentially explains the implicit bias. In the following remark, we provide such theoretical explanation to the implicit bias phenomenon shown in Figure~\ref{fig::deviationofwelltrained}.

\begin{remark}[implicit bias of RM method]\label{rmk::ImplicitBias}
    (\romannumeral1) The effective risk is large at $u_{\theta}^{\mathrm{SV}}$. More precisely, $\tilde{R}_S(u_{\theta}^{\mathrm{SV}})\approx\tilde{R}(u_{\theta}^{\mathrm{SV}})\geq C_0$ for some finite $C_0>0$. Hence the risk is very likely to be decreased along the training dynamics. This explains the RM method implicitly biases against $u_{\theta}^{\mathrm{SV}}$.
    
    (\romannumeral2) Suppose that the RM method achieves a very small (effective) empirical risk after training. That is $\tilde{R}_S(u_{\theta}^{\mathrm{SV}\to\mathrm{RM}})\leq \eps$ for very small $\eps>0$. Then $$\norm{u_{\theta}^{\mathrm{SV}\to\mathrm{RM}}-\tilde{u}}_{H^1(\Omega;\sR^{d'})}\leq C\sqrt{\tilde{R}(u_{\theta}^{\mathrm{SV}\to\mathrm{RM}})}\approx C\sqrt{\tilde{R}_S(u_{\theta}^{\mathrm{SV}\to\mathrm{RM}})}\leq C\sqrt{\eps}.$$
    This explains the RM method implicitly biases towards $\tilde{u}$. 
\end{remark}

This contribution can also be understood as follows.
From the Observation (\romannumeral2), we have the commonly-seen phenomenon that $R(u_{\theta})\ll 1$ while $\norm{u-u_{\theta}}\ge C$ for some finite $C>0$. Now these experiments and phenomenon answers the reverse statement is also true: if $\norm{u-u_{\theta}}\ll 1$, then $R(u_{\theta})\ge C_0$ for some constant $C_0$. It shows that the exact solution is unstable in the sense of RM method and implicitly biases towards the solution to the modified equation.

\subsection{Connection of the contributions}
We conclude the main contribution section by presenting two figures within which the readers may find the connections between our main results mentioned above as well as more preliminary lemmas, propositions, etc. The arrows show the logic flow and various colors correspond to different types of the results.
\begin{figure}[H]
    \centering
    {\includegraphics[scale=0.65]{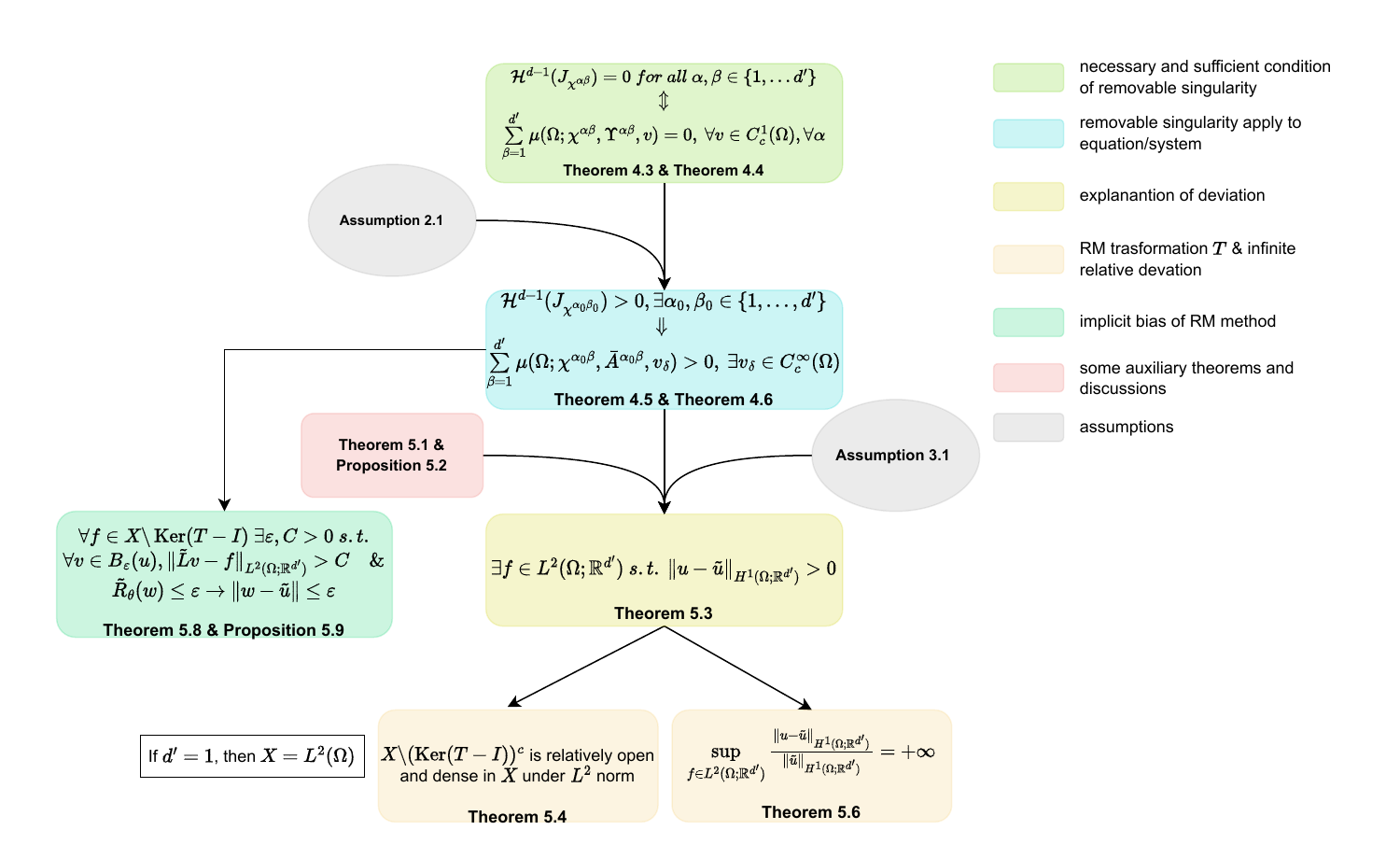}}
    \label{fig::ThmDiagram4EllipEq}
    \caption{The schematic proof of linear elliptic equation/system}
\end{figure}

\begin{figure}[H]
	\centering	
	{\includegraphics[scale=0.7]{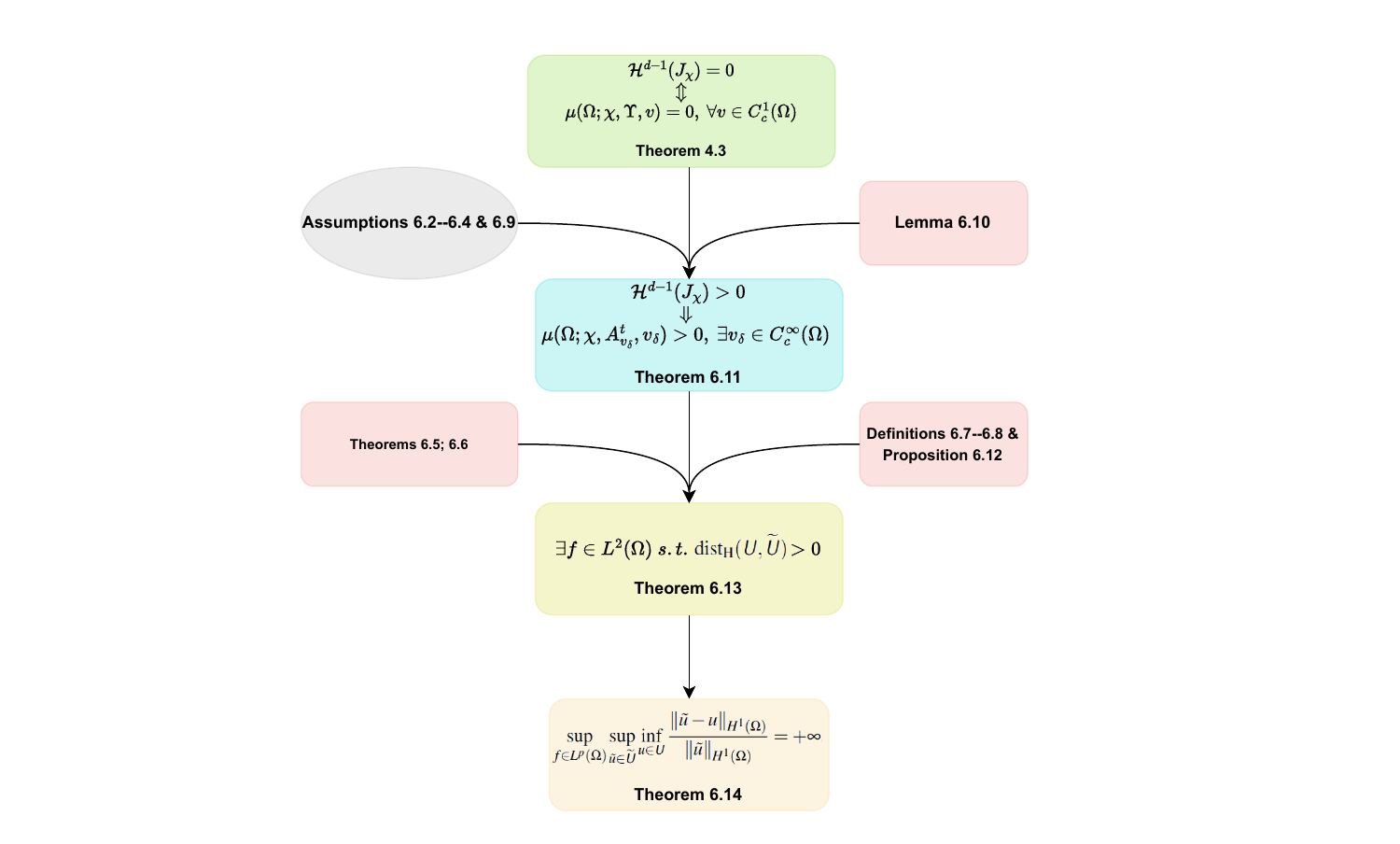}}
	\caption{The schematic proof of quasilinear elliptic equation}
	\label{fig::ThmDiagram4EllipSys}
\end{figure}

\section{Removability of singularity}\label{sec::RemovSing}
Let us explain the title of this section. For $\chi\in SBV(\Omega)$, we say the set of singularity $J_{\chi}$ is removable if $\fH^{d-1}(J_{\chi})=0$.
In this section, we obtain the equivalence between the removability of singularity and the condition $\mu=0$. The quantity $\mu$ is essentially an integral and defined by \eqref{eq::Def4mu} in Section~\ref{sec::CharacterRemovSing}. The advantage of using $\mu$, as what we do in latter sections, is that it allows us to estimate the pairing $\langle Lu-L\tilde{u},\varphi\rangle_{H^{-1}(\Omega),H^1_0(\Omega)}$ in a quantitative way, and hence we can estimate the difference $\norm{u-\tilde{u}}_{H^1(\Omega)}$.

\subsection{Necessary and sufficient condition of removable singularity}\label{sec::ThmRemovSing}
We begin with two simple lemmas: one to bound $\chi^{\pm}$ (defined in Definition~\ref{def::ApproxJump}) and the other to construct a smooth cutoff function. The latter one (namely Lemma \ref{lem::SmoothCutoffFunContrDer}) is standard, but we provide the proof for completeness.
\begin{lemma}[boundedness of $\chi^{\pm}$ on jump set]\label{lem::BddJump}
    If $\chi\in L^{\infty}(\Omega)\cap  SBV^{\infty}(\Omega)$ with $\fH^{d-1}(J_{\chi})>0$, then $\Abs{\chi^\pm(x)} \leq \norm{\chi}_{L^{\infty}(\Omega)}$ for all $x\in J_{\chi}$.
\end{lemma}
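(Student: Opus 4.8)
The statement is essentially a pointwise bound on the one-sided approximate limits $\chi^+$ and $\chi^-$ of an $L^\infty$ function at its jump points, so the natural route is to go back to the definition of the approximate jump triplet $(\chi^+,\chi^-,\nu_\chi)$ (Definition~\ref{def::ApproxJump}, in the Appendix). Recall that $x\in J_\chi$ means there exist a unit vector $\nu=\nu_\chi(x)$ and two real numbers $\chi^+(x)\neq\chi^-(x)$ such that, writing $B_r^\pm(x,\nu)$ for the two half-balls $\{y\in B_r(x):\pm(y-x)\cdot\nu>0\}$, one has
\begin{equation*}
    \lim_{r\to 0^+}\fint_{B_r^\pm(x,\nu)}\abs{\chi(y)-\chi^\pm(x)}\diff{y}=0.
\end{equation*}

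The plan is to argue by contradiction at a fixed $x\in J_\chi$. Suppose $\abs{\chi^+(x)}>\norm{\chi}_{L^\infty(\Omega)}$; set $\eta:=\abs{\chi^+(x)}-\norm{\chi}_{L^\infty(\Omega)}>0$. Since $\abs{\chi(y)}\le\norm{\chi}_{L^\infty(\Omega)}$ for $\fL^d$-a.e.\ $y$, the reverse triangle inequality gives $\abs{\chi(y)-\chi^+(x)}\ge\abs{\chi^+(x)}-\abs{\chi(y)}\ge\eta$ for a.e.\ $y$, hence in particular $\fint_{B_r^+(x,\nu)}\abs{\chi(y)-\chi^+(x)}\diff{y}\ge\eta$ for every $r>0$ (the half-ball has positive Lebesgue measure). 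Letting $r\to 0^+$ contradicts the defining limit above. The same argument with $\chi^-$ and $B_r^-(x,\nu)$ handles the minus sign, which finishes the proof. One should note that $\fH^{d-1}(J_\chi)>0$ is only used to guarantee $J_\chi\neq\emptyset$ so that the claim is non-vacuous; the bound itself is purely local at each point of $J_\chi$ and needs only $\chi\in L^\infty(\Omega)$.

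There is essentially no obstacle here — the only point requiring a line of care is making sure the half-balls used in the approximate-limit definition have positive measure (so that an a.e.\ lower bound on the integrand survives averaging), which is immediate since a half-ball of radius $r$ has measure $\tfrac12\omega_d r^d>0$. If the paper's Definition~\ref{def::ApproxJump} instead normalizes by $\fL^d(B_r(x))$ rather than $\fL^d(B_r^\pm(x,\nu))$, the same estimate goes through with $\eta$ replaced by $\eta/2$, still yielding the contradiction. I would also remark that the $SBV^\infty$ hypothesis plays no role in this particular lemma; it is listed for uniformity with the rest of the section.
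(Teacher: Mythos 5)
Your argument is correct and is essentially the same as the paper's: both proceed by contradiction from the averaged one-sided limit in Definition~\ref{def::ApproxJump}, and your reverse-triangle-inequality step simply makes explicit the pointwise lower bound $\abs{\chi(y)-\chi^{\pm}(x)}\ge \abs{\chi^{\pm}(x)}-\norm{\chi}_{L^{\infty}(\Omega)}>0$ that the paper leaves implicit. Your side remarks (that the $SBV^{\infty}$ hypothesis plays no role, and that $\fH^{d-1}(J_{\chi})>0$ only ensures the claim is non-vacuous) are also accurate.
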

\begin{proof}
    We prove by contradiction. Suppose there is an $x_0\in J_{\chi}$ satisfying $\Abs{\chi^\pm(x_0)}>\norm{\chi}_{L^{\infty}(\Omega)}$. Denote $B^\pm_{\rho}(x_0,\nu)=\{x\in B_{\rho}(x_0)\colon\,\pm (x_0-x)^{\T}\nu>0\}$. Thus we have
    \begin{equation*}
        \lim_{\rho\to 0} \frac{1}{\Abs{B^\pm_{\rho}(x_0,\nu)}}\int_{B^\pm_{\rho}(x_0,\nu)} \Abs{\chi(y)-\chi^{\pm}(x_0)}\diff{y}>0,    
    \end{equation*}
    which contradicts the definition of $\chi^\pm$.
\end{proof}

\begin{lemma}[smooth cutoff function with controlled derivative]\label{lem::SmoothCutoffFunContrDer}
     Suppose that bounded open sets $U\subseteq U'\subseteq \sR^d$ satisfy $\dist(U,\partial U')>2\delta>0$. Then there is a cutoff function $\zeta\in C_c^{\infty}(U')$ such that $U \prec\zeta\prec U'$ and $\norm{D\zeta}_{L^{\infty}(U')}\leq \frac{C}{\delta}$, where $C$ depends only on $d$.
\end{lemma}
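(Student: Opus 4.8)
The plan is to obtain $\zeta$ by mollifying the indicator function of an intermediate neighbourhood squeezed between $U$ and $U'$; convolution with a smooth kernel automatically yields $\zeta\in C^\infty$, and the derivative bound is read off from the scaling of the kernel. Concretely, I would first fix a nonnegative radial mollifier $\eta\in C_c^\infty(\sR^d)$ supported in the unit ball with $\int_{\sR^d}\eta\diff{x}=1$, put $\eta_\delta(x)=\delta^{-d}\eta(x/\delta)$, and record the scaling facts $\operatorname{supp}\eta_\delta\subseteq B_\delta(0)$, $\int_{\sR^d}\eta_\delta\diff{x}=1$, and
\[
\int_{\sR^d}\Abs{D\eta_\delta(x)}\diff{x}=\delta^{-1}\int_{\sR^d}\Abs{D\eta(x)}\diff{x}=:\frac{C_d}{\delta},
\]
where $C_d$ depends only on $d$.

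Next I would introduce the open $\delta$-neighbourhood $V=\{x\in\sR^d\colon\,\dist(x,U)<\delta\}$ and set $\zeta=\mathbf{1}_V*\eta_\delta$. The hypothesis $\dist(U,\partial U')>2\delta$ is exactly what is needed to get the nesting $U\subseteq V\subseteq V+B_\delta(0)\subseteq U'$ with room to spare: since $\bar{U}$ is compact and $\dist(\bar{U},\partial U')=\dist(U,\partial U')>2\delta$, every $x$ with $\dist(x,\bar{U})\le 2\delta$ lies in $U'$ --- otherwise the segment joining $x$ to a nearest point of $\bar{U}$ would meet $\partial U'$ within distance $2\delta$ of $\bar{U}$, a contradiction --- so the closed $2\delta$-enlargement of $\bar{U}$ is a compact subset of $U'$. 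Then I would verify the four defining properties: (i) $\zeta\in C^\infty(\sR^d)$ and $0\le\zeta\le 1$ are standard properties of convolution; (ii) for $x\in U$ one has $B_\delta(x)\subseteq V$, so $\zeta(x)=\int_{B_\delta(0)}\eta_\delta(y)\diff{y}=1$, i.e. $\zeta\equiv 1$ on $U$; (iii) $\zeta(x)\ne 0$ forces $x\in V+B_\delta(0)\subseteq\{z\colon\,\dist(z,\bar{U})\le 2\delta\}\subseteq U'$, so $\operatorname{supp}\zeta$ is compact and contained in $U'$; hence $U\prec\zeta\prec U'$.

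Finally, differentiating under the integral sign gives $D\zeta=\mathbf{1}_V*D\eta_\delta$, so for every $x$,
\[
\Abs{D\zeta(x)}\le\int_{\sR^d}\mathbf{1}_V(x-y)\,\Abs{D\eta_\delta(y)}\diff{y}\le\int_{\sR^d}\Abs{D\eta_\delta(y)}\diff{y}=\frac{C_d}{\delta},
\]
which yields $\norm{D\zeta}_{L^\infty(U')}\le C/\delta$ with $C=C_d$ depending only on $d$. There is no genuine obstacle here; the only points that need any care are the bookkeeping of the nested neighbourhoods $U\subseteq V\subseteq V+B_\delta(0)\subseteq U'$ (which is where the factor $2\delta$ in the hypothesis is used) and the scaling identity for $\int\Abs{D\eta_\delta}$, which is what converts the mollifier into the claimed $1/\delta$ bound.
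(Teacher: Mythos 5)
Your proposal is correct and follows essentially the same route as the paper: mollify the indicator of the $\delta$-enlargement $V=\{\dist(\cdot,U)<\delta\}$ with a standard mollifier $\eta_\delta$ supported in $B_\delta(0)$, use $U\subseteq V\subseteq V+B_\delta(0)\subseteq U'$ to get $U\prec\zeta\prec U'$, and read the $C/\delta$ bound off Young's inequality via $\|D\eta_\delta\|_{L^1}=\delta^{-1}\|D\eta\|_{L^1}$. The only difference is cosmetic: the paper asserts the nesting and the property $U\prec\zeta\prec U'$ as clear, whereas you spell out the verification; the content is identical.
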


\begin{proof}
    Define $\rho(x)=\frac{1}{C}\exp(\frac{1}{\abs{x}^2-1})$ for $x\in B_1(0)$ and $\rho(x)=0$ for $x\in \sR^d\backslash B_1(0)$, where the constant $C=\int_{B_1(0)}\exp(\frac{1}{\abs{x}^2-1})\diff{x}$ only depends on $d$. Thus $\int_{\sR^d}\rho(x)\diff{x}=1$ and $\rho\in C_c^\infty(\sR^d)$ with compact support $\overline{B_1(0)}$.
    For each $\delta>0$, define $\rho_{\delta}(x)=\delta^{-d}\rho(\delta^{-1}x)$ and $\sone_{U_\delta}$ the characteristic function of $U_{\delta}$, where $U_{\delta}=\{x\in \sR^d\colon\,\operatorname{dist}(x,U)<\delta\}$. Let $\zeta = \rho_{\delta}*\sone_{U_{\delta}}$. It is clear that $U \prec\zeta\prec U'$. 
    Therefore, we have for all $x\in U'$
    \begin{align*}
        \Abs{D\zeta(x)}=\Abs{(\sone_{U_\delta} * D\rho_{\delta})(x)}
        &=\Abs{\int_{B_{\delta}(0)}\sone_{U_{\delta}}(x-y)\delta^{-d-1}D\rho(\delta^{-1}y)\diff{y}}\\
        &\leq \delta^{-1}\int_{B_1(0)}\Abs{D\rho(y)}\diff{y}
        \le\frac{C}{\delta},
    \end{align*}
    where $C$ only depends on $d$.
\end{proof}

\begin{figure}[H]
	\centering	
	\includegraphics[scale=0.45]{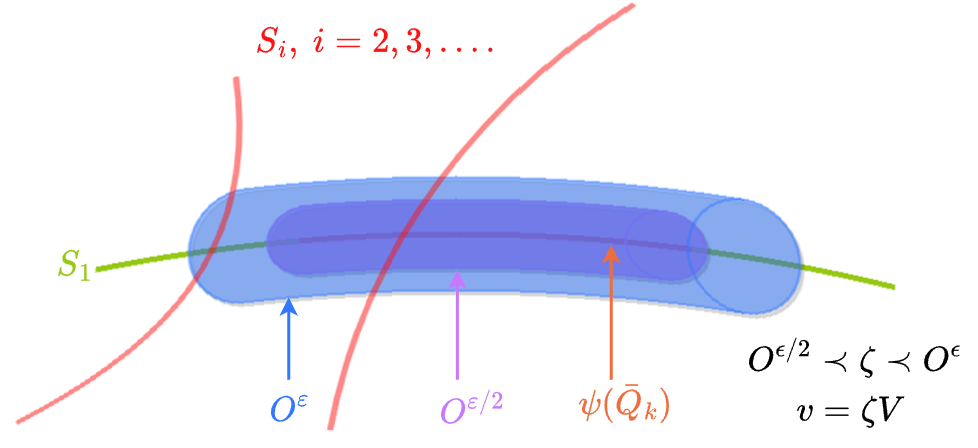}
	\caption{Schematic diagram of the proof of Theorem~\ref{thm::NecSufCondRemovSing}}
	\label{fig::SchematicDiagProof}
\end{figure}

Now we are ready to obtain the first important result in this work. That is a necessary and sufficient condition for the removable singularities of SBV functions. Some key quantities and sets constructed in the proof of Theorem~\ref{thm::NecSufCondRemovSing} are illustrated with various colors in the schematic diagram, namely Figure~\ref{fig::SchematicDiagProof}. To clarify our ideas, we also make three claims which play the roles as milestones on the road of our proof.
    
\begin{theorem}[necessary and sufficient condition of removable singularity]\label{thm::NecSufCondRemovSing}
    Let $\Upsilon: H_0^1(\Omega) \to L^{\infty}(\Omega; S^{d \times d})$ and $\chi\in L^{\infty}(\Omega)\cap SBV(\Omega)$ with $\fH^{d-1}(J_{\chi})< +\infty$. Suppose there exist constants $\lambda_0,\Lambda_0>0$ satisfying
    \begin{equation*}
        \lambda_0\abs{\xi}^2\le\xi^{\T}\Upsilon[w](x)\xi\le \Lambda_0\abs{\xi}^2
    \end{equation*}
    for all $w\in H_0^1(\Omega)$, $\xi\in \sR^d$, and $x\in \Omega$.
    We have $\fH^{d-1}(J_{\chi})=0$ if and only if     
    \begin{equation}\label{eq::JumpPartRemov}
        \mu(\Omega;\chi,\Upsilon,v)=0,\quad \forall v\in C_c^{1}(\Omega).
    \end{equation}
\end{theorem}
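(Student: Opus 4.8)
The ``only if'' direction is immediate: if $\fH^{d-1}(J_\chi)=0$, the integral defining $\mu(\Omega;\chi,\Upsilon,v)$ in \eqref{eq::Def4mu} is over an $\fH^{d-1}$-null set and hence vanishes for every $v$, so \eqref{eq::JumpPartRemov} holds. The content lies in the converse, which I would prove by contraposition: assuming $\fH^{d-1}(J_\chi)>0$, I will produce a single $v\in C_c^\infty(\Omega)\subseteq C_c^1(\Omega)$ with $\mu(\Omega;\chi,\Upsilon,v)\neq 0$. Since the jump-part density $(\chi^+-\chi^-)\nu_\chi$ of $D\chi$ does not depend on which of the two orientations is chosen for $\nu_\chi$ (see Appendix~\ref{sec::BVFun}), I may fix that orientation so that $\chi^+-\chi^->0$ on $J_\chi$; by Lemma~\ref{lem::BddJump} this quantity also satisfies $\chi^+-\chi^-\le 2\norm{\chi}_{L^\infty(\Omega)}$. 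The first step is to isolate a \emph{good piece} of the jump set. Because $\chi\in BV(\Omega)$, the set $J_\chi$ is countably $\fH^{d-1}$-rectifiable, and $\fH^{d-1}(J_\chi)<+\infty$ makes the restriction of $\fH^{d-1}$ to $J_\chi$ a Radon measure. Combining $\fH^{d-1}(J_\chi)>0$, the decomposition $J_\chi=\bigcup_{N}\{\chi^+-\chi^-\ge 1/N\}$, and Lusin's theorem applied to the $\fH^{d-1}$-measurable map $x\mapsto\nu_\chi(x)$, I select a compact $K\subseteq J_\chi$ with $\fH^{d-1}(K)>0$ such that $K$ lies in a single $C^1$ hypersurface $\Gamma$, $\nu_\chi|_K$ is continuous, and $\chi^+-\chi^-\ge\delta_0>0$ on $K$ for a fixed $\delta_0$. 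By standard density theory for rectifiable sets and the Besicovitch differentiation theorem applied to the mutually singular Radon measures carried by $K$ and by $J_\chi\setminus K$, I then fix a point $x_0\in K$ at which $K$ has $(d-1)$-density $1$, $J_\chi\setminus K$ has $(d-1)$-density $0$, and $\nu_\chi(x_0)=\nu_\Gamma(x_0)$ (after choosing the orientation of $\Gamma$). After translating and rotating, take $x_0=0$ and $\nu_\chi(0)=e_d$, so that near $0$ the surface $\Gamma$ is a graph $x_d=\gamma(x')$, $x'\in\sR^{d-1}$, with $\gamma(0)=0$ and $\nabla\gamma(0)=0$; in particular $\omega(\rho):=\sup_{|x'|\le\rho}|\gamma(x')|=o(\rho)$.

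The second step is to build a \emph{tangentially rescaled} test function. Fix cutoffs $\eta\in C_c^\infty(\sR^{d-1})$ with $\eta\equiv 1$ on $B_1^{d-1}(0)$, $\operatorname{supp}\eta\subseteq B_2^{d-1}(0)$, $0\le\eta\le 1$, and $\psi\in C_c^\infty(\sR)$ with $\psi\equiv 1$ on $[-1,1]$, $\operatorname{supp}\psi\subseteq[-2,2]$, $0\le\psi\le 1$. For $\rho>0$ small, choose $h=h(\rho)$ with $\omega(2\rho)\le h\le\rho$ (possible since $\omega(2\rho)=o(\rho)$) and set
\[
  v_\rho(x)=\eta\!\Big(\frac{x'}{\rho}\Big)\,x_d\,\psi\!\Big(\frac{x_d}{h}\Big),
\]
which is $C_c^\infty$ with $\operatorname{supp}v_\rho\subseteq B_{3\rho}(0)\subseteq\Omega$ for $\rho$ small. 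A direct computation yields three facts: $\norm{Dv_\rho}_{L^\infty(\Omega)}\le C_*$ with $C_*$ independent of $\rho$; on $\Gamma\cap\{|x'|\le\rho\}$ one has $Dv_\rho\equiv e_d$ (there $x_d=\gamma(x')$ has $|x_d|\le h$, so $\psi(\cdot/h)\equiv 1$, $\psi'(\cdot/h)\equiv 0$, while $\eta(\cdot/\rho)\equiv 1$ with vanishing gradient); and on $\Gamma\cap\{\rho<|x'|\le 2\rho\}$ one has $Dv_\rho=\eta(x'/\rho)e_d+w$ with $|w|\le\epsilon_1(\rho):=C\,\omega(2\rho)/\rho\to 0$. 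The last fact is the crux: scaling the tangential cutoff at the \emph{same} rate $\rho$ as the localization forces the error term $x_d\,\rho^{-1}\nabla\eta(x'/\rho)$ to be negligible on the first-order-flat surface $\Gamma$, so that $Dv_\rho$ stays (almost) aligned with $e_d$ throughout the support, not merely on the inner ball.

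The third step is the estimate of $\mu$. Since $\operatorname{supp}v_\rho\subseteq B_{3\rho}(0)$ and $J_\chi\cap\operatorname{supp}v_\rho\subseteq \big(K\cap\{|x'|\le 2\rho\}\big)\cup\big((J_\chi\setminus K)\cap B_{3\rho}(0)\big)$, I split $\mu(\Omega;\chi,\Upsilon,v_\rho)=I_1+I_2+I_3$ over $K\cap\{|x'|\le\rho\}$, over $K\cap\{\rho<|x'|\le 2\rho\}$, and over $(J_\chi\setminus K)\cap B_{3\rho}(0)$. On the first region $Dv_\rho=e_d$, and $|\nu_\chi-e_d|\le\lambda_0/(2\Lambda_0)$ there for $\rho$ small by uniform continuity of $\nu_\chi|_K$; using $\xi^{\T}\Upsilon[v_\rho]\xi\ge\lambda_0|\xi|^2$ and the operator-norm bound $\Lambda_0$, the integrand is $\ge\tfrac12\delta_0\lambda_0$, hence $I_1\ge\tfrac12\delta_0\lambda_0\,\fH^{d-1}(K\cap B_\rho(0))\ge c\,\rho^{d-1}$ by density $1$ of $K$ at $0$. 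On the second region, $\eta(x'/\rho)\ge 0$ and $\nu_\chi^{\T}\Upsilon[v_\rho]e_d\ge\lambda_0/2>0$, so the dominant term is nonnegative and the integrand is $\ge-2\norm{\chi}_{L^\infty(\Omega)}\Lambda_0\,\epsilon_1(\rho)$; since $\fH^{d-1}(K\cap\{\rho<|x'|\le 2\rho\})\le C\rho^{d-1}$ (surface measure of a $C^1$ graph in a ball), one gets $I_2\ge-C'\epsilon_1(\rho)\rho^{d-1}$. On the third region I use only $|\text{integrand}|\le 2\norm{\chi}_{L^\infty(\Omega)}\Lambda_0 C_*$ together with density $0$ of $J_\chi\setminus K$ at $0$, giving $|I_3|\le C_*\,o(\rho^{d-1})$. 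Summing, $\mu(\Omega;\chi,\Upsilon,v_\rho)\ge\rho^{d-1}\big(c-C'\epsilon_1(\rho)-o(1)\big)>0$ for all $\rho$ small enough, which contradicts \eqref{eq::JumpPartRemov}. Therefore $\fH^{d-1}(J_\chi)=0$, completing the converse direction.

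The hard part is precisely the bookkeeping in the last two steps: both the transition annulus $\{\rho<|x'|\le 2\rho\}$ of the cutoff and the part $J_\chi\setminus K$ of the jump set lying outside the good piece carry $\fH^{d-1}$-measure of order $\rho^{d-1}$, the same order as the dominant positive term, so a naive cutoff would not close the argument. The resolution is the double device used above: (i) rescale the cutoff \emph{tangentially} at rate $\rho$, which keeps $Dv_\rho$ aligned with $e_d$ on $\Gamma$ even in the transition region and thereby makes the integrand there \emph{approximately nonnegative} rather than merely bounded; and (ii) localize at a point $x_0$ of $\fH^{d-1}$-density $0$ for $J_\chi\setminus K$, so the genuinely uncontrolled region contributes only $o(\rho^{d-1})$. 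The remaining ingredients — rectifiability of $J_\chi$, the identity $\nu_\chi=\pm\nu_\Gamma$ at $\fH^{d-1}$-a.e. point of $J_\chi\cap\Gamma$, and the simultaneous availability of a single point $x_0$ at which all required densities behave well — are routine consequences of the structure theory of $BV$ functions and standard density theory for rectifiable sets.
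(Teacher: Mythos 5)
Your proof is correct, and it takes a genuinely different route from the paper's. Both proofs treat the necessity direction identically (the integral is over an $\fH^{d-1}$-null set) and both rely in the sufficiency direction on the structure theory of $BV$ functions and on the uniform ellipticity of $\Upsilon$ in the same essential way, but the localization machinery differs. The paper's proof is combinatorial: after fixing a $C^1$ hypersurface $S$ carrying positive measure of $J_\chi^+$ and defining the normal-coordinate function $V=\Phi_d$ in local graph coordinates, it covers a good piece of $J_\chi^+\cap S$ by almost-disjoint dyadic cubes, picks one on which $\mu(\Psi(\bar Q_k);\chi,\Upsilon,V)>0$, multiplies $V$ by a cutoff $\zeta$ on an $\eps$-neighborhood of $\Psi(\bar Q_k)$, and splits $\mu(O^\eps;\cdot)$ into an on-surface piece close to the positive quantity and an off-surface piece small by monotonicity of $\fH^{d-1}$. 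You instead localize measure-theoretically: after extracting a compact $K\subseteq J_\chi$ on a single $C^1$ hypersurface with continuous normal and jump bounded below, you fix a point of $(d-1)$-density one for $K$ and density zero for $J_\chi\setminus K$ (Besicovitch differentiation of mutually singular Radon measures), rotate and build the tangentially rescaled test function $v_\rho(x)=\eta(x'/\rho)\,x_d\,\psi(x_d/h(\rho))$. The crucial device --- taking the transverse cutoff scale $h(\rho)$ between the flatness modulus $\omega(2\rho)$ and $\rho$ --- keeps $Dv_\rho$ essentially aligned with $e_d$ on the hypersurface throughout the support, so the transition annulus contributes an \emph{approximately nonnegative} quantity rather than a bounded error of the same order $\rho^{d-1}$ as the main term, while the bad set $J_\chi\setminus K$ contributes only $o(\rho^{d-1})$. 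What your approach buys is a cleaner, self-contained density-point argument that avoids the dyadic additivity step; what the paper's approach buys is that it stays elementary (no differentiation theorems, only monotonicity of $\fH^{d-1}$). One small remark: to guarantee that $\nu_\chi|_K$ coincides with a single continuous orientation of $\nu_\Gamma$, you should split $K$ into the two measurable pieces $\{\nu_\chi=\nu_\Gamma\}$ and $\{\nu_\chi=-\nu_\Gamma\}$, retain the one of positive measure, and only then invoke Lusin; otherwise the pointwise reorientation chosen to make $\chi^+-\chi^->0$ could oscillate relative to $\nu_\Gamma$. This is an easy fix and does not affect the argument.
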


Before we prove this theorem, we would like to stress  that the sufficient part of this theorem is non-trivial.
Even for a special case where $\Upsilon[w]=\bar{A}$ holds for all $w\in H^1_0(\Omega)$, the result of Theorem \ref{thm::NecSufCondRemovSing} is not standard.
By the definition of $\mu$ in \eqref{eq::Def4mu}, the quantity in \eqref{eq::JumpPartRemov} with $\Upsilon[w]=\bar{A}$ reads as
\begin{equation*}
    \mu(\Omega;\chi,\bar{A},v)=\int_{J_{\chi}}(\chi^+-\chi^-)\nu_{\chi}^{\T} \bar{A}Dv\diff{\fH^{d-1}}=\bar{\mu}(Dv),
\end{equation*}
where $\bar{\mu}$ is a Radon measure on $\Omega$ such that for all $\bar{\varphi}\in C(\Omega;\sR^{d})$
\begin{equation*}
    \bar{\mu}(\bar{\varphi})=\int_{J_{\chi}}(\chi^+-\chi^-)\nu_{\chi}^{\T} \bar{A} \bar{\varphi} \diff{\fH^{d-1}}.
\end{equation*}
At first glance, it looks like a version of the fundamental lemma of the calculus of variation (sometimes also known as the Du Bois-Reymond lemma), which says that if a Radon measure $\hat{\mu}$ satisfying $\hat{\mu}(\hat{\varphi})=0$ for all $\hat{\varphi}\in C_c^\infty(\Omega)$, then $\hat{\mu}=0$ on $\Omega$.
However, the setting in Theorem \ref{thm::NecSufCondRemovSing} is quite different. In fact $Dv$ is in the form of a gradient. Therefore, what we have is not the fact that $\bar{\mu}(\bar{\varphi})=0$ for all $\bar{\varphi}\in C_c^\infty(\Omega;\sR^{d})$, but only the condition that $\bar{\mu}(Dv)=0$ for all $v\in C_c^{1}(\Omega)$. In general, such condition will not imply $\bar{\mu}=0$. In our proof of Theorem \ref{thm::NecSufCondRemovSing}, we essentially make use of the geometric structure of the jump part of BV function.

Besides, $\Upsilon$ takes a quite general form depending on $v$ and not necessarily being constant $\bar{A}$. This general form is necessary to develop our theory into the quasilinear case (See Section \ref{sec::ExtQausiLinEllipEq}).
\begin{proof}
    The necessary part of the statement is easy. In fact, by Corollary~\ref{cor::PropOfSBV}, $\fH^{d-1}(J_{\chi})=0$ implies $\chi\in W^{1,1}(\Omega)$, and hence $\mu(\Omega;\chi,\Upsilon,v)=0$ holds.    
    In the rest of the proof, we show by the method of contradiction that the condition~\eqref{eq::JumpPartRemov} is also sufficient. Suppose that $\mu(\Omega;\chi,\Upsilon,v)=0$ for all $v\in C_c^1(\Omega)$ and $\fH^{d-1}(J_{\chi})>0$.
    
    \begin{claim_01}
        There exist an $\fH^{d-1}$ measurable set $\Sigma$, a $C^1$-hypersurfaces $S$, and a function $V$ such that $\Sigma\subseteq J_{\chi}\cap S$ and
        \begin{equation*}
            \mu(\Sigma;\chi,\Upsilon,V)>0.
        \end{equation*}
    \end{claim_01}

    Let $J_{\chi}^{\pm}=\{x\in J_{\chi}\colon\,\chi^+(x)-\chi^-(x)\gtrless 0\}$. Clearly, $J_{\chi}^{\pm}$ are $\fH^{d-1}$ measurable sets.
    Noticing $\fH^{d-1}(J_{\chi})=\fH^{d-1}(J_{\chi}^+)+\fH^{d-1}(J_{\chi}^-)>0$, without loss of generality, we suppose that $\fH^{d-1}(J_{\chi}^+)>0$. By the structure theorem of $J_{\chi}$, namely Theorem~\ref{thm::PropOfApproxJumpSetForBV}, there is a countable sequence of $C^1$-hypersurfaces $\{S_i\}_{i=1}^{\infty}$ satisfying $\fH^{d-1}(J_{\chi}\backslash(\bigcup\nolimits_{i=1}^{\infty}S_i))=0$.
    Thus we can pick an $S$ from $\{S_i\}_{i=1}^{\infty}$ satisfying $\fH^{d-1}(J_{\chi}^+\cap S)>0$. 

    Note that $S$ is a $C^1$-hypersurface. Thus for each $z\in S$, there exist a $C^1$ function $h$, a permutation of coordinates mapping $\tau: \sR^d\to\sR^d, x\mapsto \tau(x)$, and an open ball $B_{r_z}(z'_{\tau})\subseteq \sR^{d-1}$ with $r_z>0$ such that $S$ can locally be represented by the graph of $h$: $(x_{\tau})_{d}=h(x'_{\tau})$, $x'_{\tau}\in B_{r_z}(z'_{\tau})$. Here the shorthand notation $z'_{\tau}$ means $((z_{\tau})_{1},\cdots,(z_{\tau})_{d-1})$. We also write $x_{\tau}=\tau(x)$ for simplicity. Consider the transformation $\phi: B_{r_z}(z'_{\tau})\times\sR\to B_{r_z}(z'_{\tau})\times\sR$, $x_{\tau}\mapsto y=\phi(x)$ defined by
    \begin{equation}
        \left\{\begin{array}{l}
        y_{i}=\phi_{i}(x_{\tau})=(x_{\tau})_{i},\quad i\in\{1, \ldots, d-1\}, \\
        y_{d}=\phi_{d}(x_{\tau})=(x_{\tau})_{d}-h\left(x_{\tau}'\right),
        \end{array}\right.
    \end{equation}
    and its inverse transformation $\phi=\psi^{-1}$
    \begin{equation}
        \left\{\begin{array}{l}
        (x_{\tau})_{i}= \psi_{i}(y)=y_{i},\quad i\in\{1, \ldots, d-1\},\\
        (x_{\tau})_{d}= \psi_{d}(y)=y_{d}+h\left(y'\right).
        \end{array}\right.
    \end{equation}
    In short, we write
    \begin{align}
        y
        &=\phi(x_{\tau})=(\phi_1(x_{\tau}),\ldots,\phi_d(x_{\tau})),\\
        x_{\tau}
        &=\psi(y)=(\psi_1(x_{\tau}),\ldots,\psi_d(x_{\tau})).
    \end{align}
    By this construction, we have $\Abs{\det D\phi}=\Abs{\det D\psi}=1$.

    Let $B_z=B_{r_z}(z'_{\tau})\times [-r_z,r_z]$. For each $y\in B_z$, define
    \begin{align*}
        \Psi(y)&=\tau^{-1}\circ\psi(y)=\tau^{-1}(y_1,\ldots,y_{d-1},y_d+h(y_1,\ldots,y_{d-1})).
    \end{align*}
    Thus $\Psi$ is a $C^1$ diffeomorphism between $B_z$. Denote $U_z=\Psi(B_z)$. If we define for $x\in U_z$
    \begin{align*}
        \Phi(x)
        &=\phi\circ\tau(x)\\
        &=\left((x_{\tau})_{1},\ldots,(x_{\tau})_{d-1},(x_{\tau})_{d}-h\left((x_{\tau})_{1},\ldots,(x_{\tau})_{d-1}\right)\right),
    \end{align*}
    then $\Phi\circ\Psi=I$ on $B_z$ and $\Psi\circ\Phi=I$ on $U_z$ with $\Abs{\det D\Phi}=\Abs{\det D\Psi}=1$. See Figure~\ref{fig::CoordTrans} for the relations between these transformations.

    \begin{figure}[H]
	\centering	
	\includegraphics[scale=0.45]{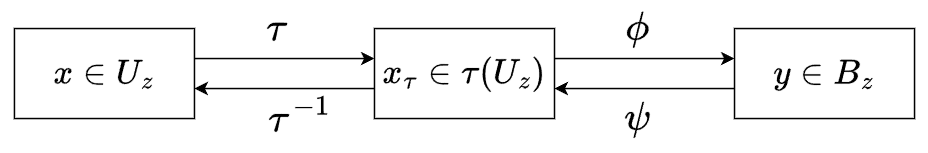}
	\caption{Sketch map of coordinate transformations between $U_z$ and $B_z$.}
	\label{fig::CoordTrans}
    \end{figure}

    Let $V(x)=\Phi_d(x)$ for all $x\in U_z$. Clearly, $V(x)=0$ on $U_z\cap 
 S$.
    By choosing $r_z$ small enough, we can require that $0<\abs{DV(x)}<C$ on $U_z$, where $C$ only depends on $d$ and $J_{\chi}$. Let us choose an appropriate $U_z$ such that $\fH^{d-1}(\Psi(B_{r_z/2}(z'_{\tau})\times [-\frac{r_z}{2},\frac{r_z}{2}])\cap S)>0$. Thanks to Lindelöf's lemma, such $U_z$ exists because the manifold $S$ is second countable and a countable union of such $\Psi(B_{r_z/2}(z'_{\tau})\times [-\frac{r_z}{2},\frac{r_z}{2}])$ must cover $S$, and hence also cover $J_{\chi}^+\cap S$. Let $\Sigma=J_{\chi}^+\cap \Psi(B_{r_z/2}(z'_{\tau})\times \{0\})\subseteq J_{\chi}\cap S$ and $\Sigma'=J_{\chi}^+ \cap U_z\cap S.$
    
    Note that $DV= \Abs{DV}\nu_{\chi}$ on $\Sigma'$ and
    $\chi^+-\chi^->0$ on $\Sigma$. Therefore
    \begin{equation*}
        \mu(\Sigma;\chi,\Upsilon,V)=\int_{\Sigma}(\chi^+-\chi^-)\nu_{\chi}^{\T} \Upsilon[V]\nu_\chi\Abs{DV}\diff{\fH^{d-1}}>0,
    \end{equation*}
    where we use the uniform ellipticity of $\Upsilon[V]$. 

    \begin{claim_01}
        There is a ``$(d-1)$-dimensional cube'' $Q_{k}\subseteq B_{r_z/2}(z_{\tau}')$ with $\bar{Q}_k=Q_k\times\{0\}$ satisfying
        \begin{equation*}
            \mu(\Psi(\bar{Q}_k);\chi,\Upsilon,V)>0.
        \end{equation*}
    \end{claim_01} 
    
    For each $\delta>0$, there exists countably many dyadic cubes $\{Q_{k}\}_{k=1}^{\infty}$ which are almost disjoint and satisfy $Q_k\subseteq B_{r_z/2}(z'_{\tau})$, $\Sigma\subseteq \Psi\left(\bigcup_{k=1}^{\infty}\bar{Q}_{k}\right)$,
    and $\fH^{d-1}\left(\Psi(\bigcup_{k=1}^{\infty}\bar{Q}_{k})\backslash \Sigma\right)< \delta$. Thus we have
    \begin{align}\label{eq::DyadicCube}
        \sum_{k=1}^{\infty}\mu(\Psi(\bar{Q}_k);\chi,\Upsilon,V)
        &=\mu\left(\Psi(\textstyle \bigcup_{k=1}^{\infty}\bar{Q}_{k});\chi,\Upsilon,V\right)\nonumber\\
        &=\mu(\Sigma;\chi,\Upsilon,V)+\mu\left(\Psi(\textstyle \bigcup_{k=1}^{\infty}\bar{Q}_{k})\backslash \Sigma;\chi,\Upsilon,V\right)\nonumber\\ 
        &\geq \mu(\Sigma;\chi,\Upsilon,V)-2\norm{\chi}_{L^{\infty}(\Omega)}\Lambda_0\norm{DV}_{L^{\infty}(U_z)}\fH^{d-1}\left(\Psi(\textstyle \bigcup_{k=1}^{\infty}\bar{Q}_{k})\backslash \Sigma\right)\nonumber\\ 
        &>0,
    \end{align}
    where the first equality is due to that $\{Q_{k}\}_{k=1}^{\infty}$ are almost disjoint, in the third inequality we use Lemma~\ref{lem::BddJump} and uniform ellipticity, and $\delta$ is taken small enough in the last line. Thus there is some $Q_k$ satisfying $\mu(\Psi(\bar{Q}_k);\chi,\Upsilon,V)>0$.
    
    \begin{claim_01}
        There is a function $v\in C_c^1(\Omega)$ such that
        \begin{equation*}
            \mu(\Omega;\chi,\Upsilon,v)>0.
        \end{equation*}
    \end{claim_01}

    This claim contradicts $\mu(\Omega;\chi,\Upsilon,v)=0$, and hence the statement of the theorem is completed. 
    Towards the claim, we consider the $\eps$-neighborhood of $\Psi(\bar{Q}_k)$ as follows
    \begin{equation*}
        O^{\eps}=\{y\in \Omega\colon\,\Abs{y-x}<\eps  \text{ for some } x\in \Psi(\bar{Q}_k)\},    
    \end{equation*}
    where $\eps < \frac{1}{2}\dist\left(\Psi(\bar{Q}_k), \partial U_z\right)$. This is admissible since     \begin{equation}\label{eq::DistCubeLin}
        \dist\left(\Psi(\bar{Q}_k),\partial U_z\right)> \dist\left(\Psi( B_{r_z/2}(z'_{\tau})\times [-\frac{r_z}{2},\frac{r_z}{2}]), \partial U_z\right)\geq 0.
    \end{equation}
    The neighborhood $O^{\eps/2}$ is defined similarly. By Lemma~\ref{lem::SmoothCutoffFunContrDer} with  $\dist(O^{\eps/2},\partial O^{\eps})>\frac{\eps}{4}$, there is a cutoff function $\zeta\in C_c^{\infty}(O^{\eps})$ such that
    \begin{equation}\label{eq::D_etaUpperBound}
        O^{\eps/2} \prec \zeta \prec O^{\eps} \text{ and }\norm{D\zeta}_{L^{\infty}(O^{\eps})}\leq \frac{C}{\eps},
    \end{equation}  
    where $C$ depends only on $d$.
    Let $v = \zeta V$.    
    Note that $\mu\left(O^{\eps};\chi,\Upsilon,v\right)=\mu\left(\Omega;\chi,\Upsilon,v\right)$ due to $v\in C_c^1(O^{\eps})$. It is sufficient to consider the integration $\mu\left(O^{\eps};\chi,\Upsilon,v\right)$.  
    Let $O_{S}^{\eps}=O^{\eps}\cap S$. Take the decomposition as shown in Figure~\ref{fig::SchematicDiagProof}:
    \begin{equation*}
        \mu\left(O^{\eps};\chi,\Upsilon,v\right)
        =\mu\left(O_{S}^{\eps};\chi,\Upsilon,v\right)+\mu\left(O^{\eps}\backslash O_{S}^{\eps};\chi,\Upsilon,v\right).
    \end{equation*}
    We will later show that the first integration has a lower bound, that is,
    $$
        \mu\left(O_{S}^{\eps};\chi,\Upsilon,v\right)\geq \frac{3}{4} \mu(\Psi(\bar{Q}_k);\chi,\Upsilon,V),
    $$
    while the second integration has a small contribution, that is,
    $$
        \Abs{\mu\left(O^{\eps}\backslash O_{S}^{\eps};\chi,\Upsilon,v\right)}\leq \frac{1}{2}\mu(\Psi(\bar{Q}_k);\chi,\Upsilon,V).
    $$
    These estimates with the decomposition together lead to 
    $$
    \mu\left(O^{\eps};\chi,\Upsilon,v\right)\geq \frac{1}{4}\mu(\Psi(\bar{Q}_k);\chi,\Upsilon,V)>0,
    $$
    and therefore the proof is completed.
    
    Notice that $V(x) = 0$ for all $x\in \Psi(B_{r_z}(z'_{\tau})\times\{0\})$. Thus
    \begin{equation}\label{eq::V(x)UpperBound}
        V(x)\leq \eps\norm{DV}_{L^{\infty}(O^{\eps})} <C\eps,\quad \forall x\in O^{\eps},
    \end{equation}
    where $C$ depends on $d$ and $J_{\chi}$.
    This with \eqref{eq::D_etaUpperBound} leads to
    \begin{equation}\label{eq::BoundDeriOfC1TestFunc}
        \Abs{Dv(x)}\leq \Abs{D\zeta(x)}\Abs{V(x)}+\Abs{\zeta(x)}\Abs{DV(x)}<C,\quad \forall x\in O^{\eps},
    \end{equation} 
    where $C$ depends on $d$ and $J_{\chi}$.
    By monotonicity of the measure $\fH^{d-1}$, we have
    \begin{equation*}
        \lim_{\eps\to 0} \fH^{d-1}\left(O_{S}^{\eps}\cap J_{\chi}\right)= \fH^{d-1}\left(\Psi(\bar{Q}_k)\cap J_{\chi}\right).    
    \end{equation*}
    Hence for small enough constant $\eps_1>0$, we obtain
    \begin{align*}
        \mu\left(O_{S}^{\eps};\chi,\Upsilon,v\right)
        &=\mu\left(\Psi(\bar{Q}_k);\chi,\Upsilon,v\right)+\mu\left(O_{S}^{\eps}\backslash \Psi(\bar{Q}_k);\chi,\Upsilon,v\right)\\
        &=\mu\left(\Psi(\bar{Q}_k);\chi,\Upsilon,V\right)+ \int_{(O_{S}^{\eps}\backslash \Psi(\bar{Q}_k))\cap J_{\chi}}(\chi^+-\chi^-)\nu_{\chi}^{\T} \Upsilon[v]Dv\diff{\fH^{d-1}}\\
        &\geq \mu\left(\Psi(\bar{Q}_k);\chi,\Upsilon,V\right)-2\norm{\chi}_{L^{\infty}(\Omega)}\Lambda_0\norm{Dv}_{L^{\infty}(O^{\eps})}\fH^{d-1}\left((O_{S}^{\eps}\backslash \Psi(\bar{Q}_k))\cap J_{\chi}\right)\\
        &\geq \frac{3}{4}\mu\left(\Psi(\bar{Q}_k);\chi,\Upsilon,V\right),
    \end{align*}
    where the second equality is due to $v=V$ on $\Psi(\bar{Q}_k)\cap J_{\chi}$ and the last step holds by choosing $\eps<\eps_1$ .
    
    Similarly, by monotonicity of the measure $\fH^{d-1}$, we have
    \begin{equation*}
        \lim_{\eps\to 0}\fH^{d-1}\left((O^{\eps}\backslash O_{S}^{\eps})\cap J_{\chi}\right)=0.
    \end{equation*}
    By choosing $\eps<\eps_2$ for small enough constant $\eps_2>0$ and combining~\eqref{eq::BoundDeriOfC1TestFunc}, we obtain 
    \begin{align*}
        \Abs{\mu\left(O^{\eps}\backslash O_{S}^{\eps};\chi,\Upsilon,v\right)}
        &\leq 2\norm{\chi}_{L^{\infty}(\Omega)}\Lambda_0\norm{Dv}_{L^{\infty}(O^{\eps})}\fH^{d-1}\left((O^{\eps}\backslash O_{S}^{\eps})\cap J_{\chi}\right)\\
        &\leq \frac{1}{2}\mu\left(\Psi(\bar{Q}_k);\chi,\Upsilon, V\right).
    \end{align*}
    As discussed above, this completes the whole proof.
\end{proof}

Now we extend this result to the case of systems. 

\begin{theorem}[necessary and sufficient condition of removable singularity for system]\label{thm::NecSufCondRemovSing4Sys}
    Let $\Upsilon^{\alpha\beta}: H_0^1(\Omega) \to L^{\infty}(\Omega; S^{d \times d})$ and $\chi^{\alpha\beta}\in L^{\infty}(\Omega)\cap SBV(\Omega)$ with $\fH^{d-1}(J_{\chi^{\alpha\beta}})< +\infty$ for each $\alpha, \beta\in\{1,\ldots,d'\}$ with $d'\geq 1$.
    Suppose that there exist $\lambda_0,\Lambda_0>0$ such that for all $\alpha,\beta\in\{1,\ldots,d'\}$, $w\in H^1_0(\Omega)$, $\xi\in \sR^{d}$, $x\in \Omega$
    \begin{equation*}
         \lambda_0\abs{\xi}^{2}\leq \xi^\T \Upsilon^{\alpha\beta}[w]\xi \leq \Lambda_0\abs{\xi}^{2}.
    \end{equation*}

    Then $\fH^{d-1}(J_{\chi^{\alpha\beta}})=0$ for all $\alpha, \beta\in\{1,\ldots,d'\}$ if and only if
    \begin{equation}~\label{eq::JumpPartRemovSys}
        \sum_{\beta=1}^{d'}\mu(\Omega;\chi^{\alpha\beta},\Upsilon^{\alpha\beta},v^{\beta})=0,\quad \forall v\in C^1_c(\Omega;\sR^{d'}),\,\,\forall \alpha\in\{1,2,\ldots,d'\}.
    \end{equation}
\end{theorem}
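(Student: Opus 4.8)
The plan is to deduce Theorem~\ref{thm::NecSufCondRemovSing4Sys} from its scalar counterpart, Theorem~\ref{thm::NecSufCondRemovSing}, by exploiting the freedom to choose the vector-valued test function $v\in C^1_c(\Omega;\sR^{d'})$ component by component. Essentially, the system condition~\eqref{eq::JumpPartRemovSys} decouples into $(d')^2$ scalar conditions, one for each pair $(\alpha,\beta)$, and each scalar condition is exactly the hypothesis of Theorem~\ref{thm::NecSufCondRemovSing}.

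For the necessity direction I would argue as in the scalar case: if $\fH^{d-1}(J_{\chi^{\alpha\beta}})=0$ for all $\alpha,\beta$, then Corollary~\ref{cor::PropOfSBV} gives $\chi^{\alpha\beta}\in W^{1,1}(\Omega)$, so each integral $\mu(\Omega;\chi^{\alpha\beta},\Upsilon^{\alpha\beta},v^\beta)$ over the (now $\fH^{d-1}$-null) jump set vanishes, and hence~\eqref{eq::JumpPartRemovSys} holds trivially.

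For the sufficiency direction, suppose~\eqref{eq::JumpPartRemovSys} holds and fix an arbitrary pair $\alpha_*,\beta_*\in\{1,\ldots,d'\}$. Given any $w\in C^1_c(\Omega)$, I would apply~\eqref{eq::JumpPartRemovSys} with $\alpha=\alpha_*$ and with the ``single-component'' test function $v$ whose $\beta_*$-th entry is $w$ and whose other entries are $0$; this is admissible since such $v$ lies in $C^1_c(\Omega;\sR^{d'})$. Because the integrand of $\mu$ in~\eqref{eq::Def4mu} contains the factor $D\varphi$, one has $\mu(\Omega;\chi,\Upsilon,0)=0$, so every term of the sum except the $\beta_*$-th drops out, leaving $\mu(\Omega;\chi^{\alpha_*\beta_*},\Upsilon^{\alpha_*\beta_*},w)=0$ for all $w\in C^1_c(\Omega)$. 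The pair $(\chi^{\alpha_*\beta_*},\Upsilon^{\alpha_*\beta_*})$ satisfies every hypothesis of Theorem~\ref{thm::NecSufCondRemovSing}, with uniform ellipticity constants $\lambda_0,\Lambda_0$, so Theorem~\ref{thm::NecSufCondRemovSing} gives $\fH^{d-1}(J_{\chi^{\alpha_*\beta_*}})=0$. Since $\alpha_*,\beta_*$ were arbitrary, the proof is complete.

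I do not expect a serious obstacle here: the genuine analytic difficulty (the structure theorem for $J_\chi$, the dyadic-cube decomposition, the cutoff construction of Lemma~\ref{lem::SmoothCutoffFunContrDer}) has already been handled in the proof of Theorem~\ref{thm::NecSufCondRemovSing}, and the only thing to verify carefully is that the coupling over the index $\beta$ in~\eqref{eq::JumpPartRemovSys} is harmless once one restricts to test functions supported in a single component --- which it is, precisely because $\mu$ vanishes on the zero function.
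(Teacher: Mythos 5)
Your proposal is correct, and it streamlines the argument. The paper's proof of Theorem~\ref{thm::NecSufCondRemovSing4Sys} proceeds by contradiction and then re-walks the three-claim structure of Theorem~\ref{thm::NecSufCondRemovSing}, carrying the index $\beta_0$ through each claim, with the ``key modification'' being the construction of a vector-valued $V=(0,\ldots,V^{\beta_0},\ldots,0)^\T$ supported in a single component. You make the same key observation --- a test function with a single non-zero component collapses the sum over $\beta$, because $\mu(\Omega;\chi,\Upsilon,0)=0$ (as $D0=0$) --- but you use it \emph{before} invoking Theorem~\ref{thm::NecSufCondRemovSing}, so that the system condition \eqref{eq::JumpPartRemovSys} decouples into $(d')^2$ independent scalar hypotheses, each of which is verbatim the hypothesis of the scalar theorem. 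You then apply Theorem~\ref{thm::NecSufCondRemovSing} as a black box. The two proofs are built on the same insight, but your organization is more modular: you get the conclusion without reopening the scalar proof, whereas the paper re-enters the contradiction argument and re-states (without proving) the adapted claims. The one point to be careful about --- and you flag it correctly --- is that $\Upsilon^{\alpha\beta}[\cdot]$ takes a single scalar argument $v^\beta$, so the ellipticity bounds $\lambda_0,\Lambda_0$ transfer directly and the cross-terms with $\beta\neq\beta_*$ really do vanish identically; had the paper defined $\Upsilon^{\alpha\beta}$ as a function of the full vector $v$, the reduction would not decouple so cleanly.
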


\begin{proof}
    The proof is a standard adaptation of proof of Theorem~\ref{thm::NecSufCondRemovSing}. We highlight some key modification, omit the details, and outline the proof by just stating three claims which are counterpart of those in Theorem~\ref{thm::NecSufCondRemovSing}. The necessary part is clear. As above, we show by the method of contradiction that the condition~\eqref{eq::JumpPartRemovSys} is also sufficient still by dividing this proof into three parts. Suppose that for some $\alpha_0, \beta_0\in\{1,\ldots,d'\}$, $\sum_{\beta=1}^{d'}\mu(\Omega;\chi^{\alpha_0\beta},\Upsilon^{\alpha_0\beta}, v^{\beta})=0$ for all $v\in C_c^1(\Omega;\sR^{d'})$ and $\fH^{d-1}(J_{\chi^{\alpha_0\beta_0}})>0$.
    
    \begin{claim_02}
        There are $\alpha_0,\beta_0 \in \{1,\ldots,d'\}$, an $\fH^{d-1}$ measurable set $\Sigma$, a $C^1$-hypersurface $S$ and a function $V\in C^1(\Omega;\sR^{d'})$ such that $\Sigma\subseteq J_{\chi^{\alpha_0\beta_0}}\cap S$ and
        \begin{equation*}
            \mu(\Sigma;\chi^{\alpha_0\beta_0},\Upsilon^{\alpha_0\beta_0},V^{\beta_0})>0
        \end{equation*}
    \end{claim_02}

    Following the proof of Claim 1 in Theorem~\ref{thm::NecSufCondRemovSing} with $J_{\chi}$ replaced by $J_{\chi^{\alpha_0\beta_0}}$, we can construct $V^{\beta_0}(x)=\Phi_d(x)$ for all $x\in U_z$. Thus by defining $V=(0,\ldots,V^{\beta_0},\ldots,0)^{\T}\in C^1(U_z;\sR^{d'})$, it is easy to check that
    \begin{equation*}
        \mu(\Sigma;\chi^{\alpha_0\beta_0},\Upsilon^{\alpha_0\beta_0},V^{\beta_0})>0.
    \end{equation*}
    Comparing with the proof of Theorem~\ref{thm::NecSufCondRemovSing}, we emphasize that the new ingredient the definition of $V$ which is a vector valued function with only one non-zero entry. By this construction of $V$, we further see that for all $\fH^{d-1}$ measurable set $B$,
    \begin{equation*}
        \sum_{\beta=1}^{d'}\mu(B;\chi^{\alpha_0\beta},\Upsilon^{\alpha_0\beta},V^{\beta})=\mu(B;\chi^{\alpha_0\beta_0},\Upsilon^{\alpha_0\beta_0},V^{\beta_0}).
    \end{equation*}

    For the rest of the proof, we simply state the two claims and omit their proofs for which one can refer to the proof of Theorem~\ref{thm::NecSufCondRemovSing}.

    \begin{claim_02}
        There are $\alpha_0, \beta_0\in \{1,\ldots,d'\}$ and a ``$(d-1)$-dimensional cube'' $Q_{k}\subseteq B_{r_z/2}(z_{\tau}')$ with $\bar{Q}_k=Q_k\times\{0\}$ such that 
        \begin{equation*}
            \mu(\Psi(\bar{Q}_k);\chi^{\alpha_0\beta_0},\Upsilon^{\alpha_0,\beta_0},V^{\beta_0})>0.
        \end{equation*}
    \end{claim_02}

    \begin{claim_02}
        There are $\alpha_0, \beta_0\in \{1,\ldots,d'\}$ and a function $v\in C_c^1(\Omega;\sR^{d'})$ such that
        \begin{equation*}
            \sum_{\beta=1}^{d'}\mu(\Omega;\chi^{\alpha_0\beta},\Upsilon^{\alpha_0,\beta},v^{\beta})=\mu(\Omega;\chi^{\alpha_0\beta_0},\Upsilon^{\alpha_0,\beta_0},v^{\beta_0})>0. 
        \end{equation*}
    \end{claim_02}
\end{proof}

\subsection{Singularity in linear elliptic equations}\label{sec::SingLinEllipEq}

Now we apply the removable singularity theorems (Theorem~\ref{thm::NecSufCondRemovSing} and Theorem~\ref{thm::NecSufCondRemovSing4Sys}) to the case of linear elliptic equations as well as systems.

In Theorem~\ref{thm::NecSufCondRemovSingLin} (as well as its counterpart for system, namely Theorem~\ref{thm::NecSufCondRemovSingLinSys}), we construct a more smooth function $v_{\delta}$ in $C_c^{\infty}(\Omega)$ instead of a function $v$ just in $C_c^1(\Omega)$, as in the statement of Theorem~\ref{thm::NecSufCondRemovSing}. On the one hand, to guarantee the well-definedness of $\tilde{L}v_{\delta}$ in $L^2$ space, $v_{\delta}$ needs to be sufficiently regular, at least belonging to $W^{2,p}(\Omega)$. On the other hand, to quantify the deviation of the numerical solution, we have to achieve a particular solution $v_{\delta}$ to the modified equation \eqref{eq::ModEq} with a carefully chosen $f$. Therefore, this improvement on the regularity of $v_{\delta}$ is inevitable.

\begin{theorem}[characterization of removable singularity with smooth function for equation]\label{thm::NecSufCondRemovSingLin}
    Suppose that Assumption~\ref{assum::BVCoeff} holds with $d'=1$. Then there exists a $v_{\delta}\in C_c^{\infty}(\Omega)$ such that 
    \begin{equation}\label{eq::RemovSingLinEq}
        \mu(\Omega;\chi,\bar{A},v_{\delta})> 0.
    \end{equation} 
\end{theorem}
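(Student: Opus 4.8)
The plan is to derive this from Theorem~\ref{thm::NecSufCondRemovSing} and then upgrade the regularity of the resulting test function from $C^1_c$ to $C^\infty_c$ by mollification.

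\emph{Step 1 (invoking Theorem~\ref{thm::NecSufCondRemovSing}).} First I would apply Theorem~\ref{thm::NecSufCondRemovSing} with the constant choice $\Upsilon[w] = \bar{A}$ for every $w \in H^1_0(\Omega)$. Assumption~\ref{assum::BVCoeff} with $d'=1$ supplies every hypothesis: $\chi \in L^\infty(\Omega) \cap SBV(\Omega)$ with $\fH^{d-1}(J_\chi) < +\infty$ since $\chi \in SBV^\infty(\Omega)$ and $\chi_{\min} \le \chi \le \chi_{\max}$; moreover $\bar{A} \in C^1(\bar{\Omega}; S^{d\times d})$ satisfies $\bar{\lambda}\abs{\xi}^2 \le \xi^\T \bar{A}(x)\xi \le \bar{\Lambda}\abs{\xi}^2$, so the two-sided bound in Theorem~\ref{thm::NecSufCondRemovSing} holds with $\lambda_0 = \bar{\lambda}$ and $\Lambda_0 = \bar{\Lambda}$. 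Since, for $d'=1$ (where necessarily $\alpha_0 = \beta_0 = 1$), the same assumption also forces $\fH^{d-1}(J_\chi) > 0$, the characterization of Theorem~\ref{thm::NecSufCondRemovSing} — used in its nontrivial sufficiency direction — shows that $\mu(\Omega;\chi,\bar{A},v) = 0$ cannot hold for all $v \in C^1_c(\Omega)$. Hence there exists $v \in C^1_c(\Omega)$ with $\mu(\Omega;\chi,\bar{A},v) \neq 0$, and after replacing $v$ by $-v$ if necessary we may assume $\mu(\Omega;\chi,\bar{A},v) > 0$.

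\emph{Step 2 (smoothing).} Next I would let $\rho_\delta$ be the standard mollifier from the proof of Lemma~\ref{lem::SmoothCutoffFunContrDer} and set $v_\delta = v * \rho_\delta$. Since the support of $v$ is compactly contained in $\Omega$, for every sufficiently small $\delta > 0$ we have $v_\delta \in C^\infty_c(\Omega)$, and $Dv_\delta = (Dv)*\rho_\delta \to Dv$ uniformly on $\Omega$ as $\delta \to 0^+$, because $Dv$ is continuous with compact support, hence uniformly continuous. With $\Upsilon \equiv \bar{A}$ the functional $\mu(\Omega;\chi,\bar{A},\cdot)$ is linear in its last argument, so
\[
    \mu(\Omega;\chi,\bar{A},v_\delta) - \mu(\Omega;\chi,\bar{A},v) = \int_{J_\chi}(\chi^+-\chi^-)\,\nu_\chi^\T \bar{A}\,(Dv_\delta - Dv)\diff{\fH^{d-1}}.
\]
Using $\abs{\nu_\chi} = 1$, the symmetry and ellipticity of $\bar{A}$ (whence $\abs{\bar{A}(x)\zeta} \le \bar{\Lambda}\abs{\zeta}$), and the bound $\abs{\chi^\pm} \le \norm{\chi}_{L^\infty(\Omega)} \le \chi_{\max}$ on $J_\chi$ from Lemma~\ref{lem::BddJump}, this difference is bounded in absolute value by $2\chi_{\max}\bar{\Lambda}\,\fH^{d-1}(J_\chi)\,\norm{Dv_\delta - Dv}_{L^\infty(\Omega)}$, which tends to $0$ as $\delta \to 0^+$ because $\fH^{d-1}(J_\chi) < +\infty$. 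Choosing $\delta$ small enough that this quantity is at most $\tfrac12\mu(\Omega;\chi,\bar{A},v)$ yields $\mu(\Omega;\chi,\bar{A},v_\delta) \ge \tfrac12\mu(\Omega;\chi,\bar{A},v) > 0$, which is exactly~\eqref{eq::RemovSingLinEq}.

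The genuine difficulty has already been absorbed into Theorem~\ref{thm::NecSufCondRemovSing}; the only point needing care here is the smoothing step, which relies on the \emph{uniform} convergence $Dv_\delta \to Dv$ (valid precisely because the test function is $C^1$ with compact support) and on $\fH^{d-1}(J_\chi) < +\infty$ to control the error integral over the jump set. I do not expect any other obstacle.
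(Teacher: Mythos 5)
Your proposal is correct and follows essentially the same route as the paper: apply Theorem~\ref{thm::NecSufCondRemovSing} with $\Upsilon\equiv\bar{A}$ to obtain a $C^1_c$ test function, then mollify and control the error in $\mu$ via the uniform bound on $\chi^{\pm}$, the ellipticity bound $\bar{\Lambda}$, $\fH^{d-1}(J_\chi)<+\infty$, and the uniform convergence $Dv_\delta\to Dv$. Your explicit sign adjustment (replacing $v$ by $-v$ if necessary) is a small but correct refinement, since Theorem~\ref{thm::NecSufCondRemovSing} as stated only yields $\mu\neq 0$.
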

\begin{proof}
    Let $\Upsilon[w]=\bar{A}$ for all $w\in H_0^1(\Omega)$. By Theorem~\ref{thm::NecSufCondRemovSing} there is a $v\in C_c^1(\Omega)$ such that
    $$
        \mu(\Omega;\chi,\bar{A},v)=\int_{J_{\chi}}(\chi^+-\chi^-)\nu_{\chi}^{\T} \bar{A}Dv\diff{\fH^{d-1}}>0.
    $$
    To show~\eqref{eq::RemovSingLinEq}, we approximate $v$ by a smooth function $v_{\delta}\in C_c^\infty(\Omega)$. Similar to the proof of Lemma~\ref{lem::SmoothCutoffFunContrDer}, choose a mollifier $\rho\in C_c^{\infty}(\sR^d)$ with compact support $\overline{B_1(0)}$. For any $\delta>0$, define $\rho_{\delta}(x)=\delta^{-d}\rho(\delta^{-1}x)$, $x\in\sR^d$ and $v_{\delta} = v*\rho_{\delta}$ with $\delta<\frac{1}{4}\dist\left(\Psi(\bar{Q}_k),\partial U_z)\right)$ (See~\eqref{eq::DistCubeLin}). Thus We have $Dv_{\delta}\in C_c^{\infty}(\Omega)$ satisfying $Dv_{\delta} = v*D\rho_{\delta}$. 
    Since $v\in C_c^1(\Omega)$, we obtain 
    \begin{equation*}
        \lim_{\delta\to 0}v*D\rho_{\delta} = Dv\text{ uniformly in }\Omega.
    \end{equation*} 
    Since $\fH^{d-1}(J_{\chi})<+\infty$, we have for $\delta$ small enough
    \begin{equation*}
        \abs{\mu(\Omega;\chi,\bar{A},v_{\delta}-v)}
        \leq 2\chi_{\max}\bar{\Lambda}\fH^{d-1}(J_{\chi})\Abs{Dv_{\delta}-Dv}
        \leq \frac{1}{2}\mu(\Omega;J_{\chi},\bar{A},v).
    \end{equation*}
    Thus we complete the proof by 
    \begin{equation*}
        \mu(\Omega;\chi,\bar{A},v_{\delta})=\mu(\Omega;\chi,\bar{A},v_{\delta}-v)+\mu(\Omega;\chi,\bar{A},v)\geq \frac{1}{2}\mu(\Omega;\chi,\bar{A},v)>0.
    \end{equation*}
\end{proof}

\begin{theorem}[characterization of removable singularity with smooth function for system]\label{thm::NecSufCondRemovSingLinSys}
    Suppose that Assumption~\ref{assum::BVCoeff} holds with $d'\geq 1$. Then there exists a $v_{\delta}\in C_c^{\infty}(\Omega;\sR^{d'})$, such that 
    \begin{equation}\label{eq::RemovSingLinEqsystem}
        \sum_{\beta=1}^{d'}\mu(\Omega;\chi^{\alpha_0\beta},\bar{A}^{\alpha_0\beta},v_{\delta}^{\beta})> 0.
    \end{equation} 
\end{theorem}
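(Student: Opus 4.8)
The plan is to follow the proof of Theorem~\ref{thm::NecSufCondRemovSingLin} almost verbatim, with the scalar result Theorem~\ref{thm::NecSufCondRemovSing} replaced by its system counterpart Theorem~\ref{thm::NecSufCondRemovSing4Sys}. First I would take $\Upsilon^{\alpha\beta}[w]=\bar{A}^{\alpha\beta}$ for every $w\in H_0^1(\Omega)$ and every $\alpha,\beta$. By the ellipticity bound in Assumption~\ref{assum::BVCoeff} these maps satisfy the hypothesis of Theorem~\ref{thm::NecSufCondRemovSing4Sys} with $\lambda_0=\bar{\lambda}$ and $\Lambda_0=\bar{\Lambda}$, and each $\chi^{\alpha\beta}\in L^\infty(\Omega)\cap SBV(\Omega)$ with $\fH^{d-1}(J_{\chi^{\alpha\beta}})<+\infty$ (indeed $\chi^{\alpha\beta}\in SBV^\infty(\Omega)$). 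Assumption~\ref{assum::BVCoeff} also guarantees a pair $\alpha_0,\beta_0$ with $\fH^{d-1}(J_{\chi^{\alpha_0\beta_0}})>0$, so the left-hand side of the equivalence in Theorem~\ref{thm::NecSufCondRemovSing4Sys} fails; hence its right-hand side fails too, i.e., there exist an index $\alpha_0\in\{1,\dots,d'\}$ and a test function $v\in C_c^1(\Omega;\sR^{d'})$ with $\sum_{\beta=1}^{d'}\mu(\Omega;\chi^{\alpha_0\beta},\bar{A}^{\alpha_0\beta},v^\beta)\neq 0$. Replacing $v$ by $-v$ if necessary (the sum is linear in $v$), we may assume this sum is strictly positive.

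Second, I would upgrade $v$ to a $C_c^\infty$ function by mollification, exactly as in the proof of Theorem~\ref{thm::NecSufCondRemovSingLin}. Fix a standard mollifier $\rho\in C_c^\infty(\sR^d)$ supported in $\overline{B_1(0)}$ with $\int\rho=1$, set $\rho_\delta(x)=\delta^{-d}\rho(\delta^{-1}x)$, and let $v_\delta=v*\rho_\delta$. For $\delta<\dist(\operatorname{supp}v,\partial\Omega)$ we have $v_\delta\in C_c^\infty(\Omega;\sR^{d'})$, and since $v\in C_c^1$ it holds that $Dv_\delta=v*D\rho_\delta\to Dv$ uniformly on $\Omega$ as $\delta\to0$. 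Using the defining formula \eqref{eq::Def4mu}, the bound $\Abs{\chi^\pm}\le\chi_{\max}$ from Lemma~\ref{lem::BddJump}, the ellipticity bound on $\bar{A}^{\alpha_0\beta}$, and the finiteness of the jump sets together with $d'<\infty$, I get
\begin{equation*}
    \Bigl|\sum_{\beta=1}^{d'}\mu(\Omega;\chi^{\alpha_0\beta},\bar{A}^{\alpha_0\beta},v_\delta^\beta-v^\beta)\Bigr|
    \le 2\chi_{\max}\bar{\Lambda}\Bigl(\sum_{\beta=1}^{d'}\fH^{d-1}(J_{\chi^{\alpha_0\beta}})\Bigr)\norm{Dv_\delta-Dv}_{L^\infty(\Omega)},
\end{equation*}
which is at most $\tfrac12\sum_{\beta=1}^{d'}\mu(\Omega;\chi^{\alpha_0\beta},\bar{A}^{\alpha_0\beta},v^\beta)$ once $\delta$ is small enough. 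Adding and subtracting $v$ then yields
\begin{equation*}
    \sum_{\beta=1}^{d'}\mu(\Omega;\chi^{\alpha_0\beta},\bar{A}^{\alpha_0\beta},v_\delta^\beta)
    \ge\tfrac12\sum_{\beta=1}^{d'}\mu(\Omega;\chi^{\alpha_0\beta},\bar{A}^{\alpha_0\beta},v^\beta)>0,
\end{equation*}
which is exactly \eqref{eq::RemovSingLinEqsystem}.

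On difficulty: there is essentially no new obstacle here. All the geometric work — straightening the $C^1$ jump hypersurface, the dyadic-cube reduction, and the cutoff construction — has already been carried out inside Theorems~\ref{thm::NecSufCondRemovSing} and~\ref{thm::NecSufCondRemovSing4Sys}; in this proof we merely invoke the latter as a black box and perform a routine mollification, parallel to Theorem~\ref{thm::NecSufCondRemovSingLin}. The only two points that deserve a word of care are (i) that the contrapositive of the equivalence in Theorem~\ref{thm::NecSufCondRemovSing4Sys} genuinely delivers a single index $\alpha_0$ for which the $\beta$-sum is nonzero (so that the sign flip makes it positive), and (ii) that the mollification parameter $\delta$ is chosen small enough both to preserve compact support in $\Omega$ and to absorb the error term; neither of these is serious, so I expect the write-up to be a short one-paragraph adaptation.
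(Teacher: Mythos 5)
Your proof is correct and follows essentially the same route as the paper's: apply Theorem~\ref{thm::NecSufCondRemovSing4Sys} with $\Upsilon^{\alpha\beta}[w]=\bar{A}^{\alpha\beta}$, then mollify exactly as in Theorem~\ref{thm::NecSufCondRemovSingLin}. The paper's proof is two sentences that simply cite those two steps; you have written out the mollification estimate and the sign-flip explicitly, but the argument is the same.
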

\begin{proof}
     Apply Theorem~\ref{thm::NecSufCondRemovSing4Sys} with $\Upsilon^{\alpha\beta}[w]=\bar{A}^{\alpha\beta}$ for all $\alpha,\beta\in\{1,\ldots,d'\}$ and $w\in H_0^1(\Omega)$. Therefore, the proof is similar to that of Theorem~\ref{thm::NecSufCondRemovSingLin}. 
\end{proof}

\section{Deviation and implicit bias of RM methods }\label{sec::Lin}
Based on the characterization of the singularities, we are ready to study the deviation and implicit bias of RM formulation for linear elliptic equations and systems. In Section~\ref{sec::ExistAprioEstLinEllipEq}, we state some preliminary results on the existence and \emph{a priori} estimates of linear elliptic equations and systems which will be applied to equations~\eqref{eq::OriginEq} and \eqref{eq::ModEq}. In Section~\ref{sec::WorstCaseDevRMSol}, we show that the solution to the modified equation is deviated from the one to the original equation, which is a theoretical explanation to experiments mentioned in Section~\ref{sec::FailExPINN}. Consequently, we ask what kind of data $f$ gives rise to such deviation and to what extent does such $f$ occupy in $L^2$ space. We give a characterization by utilizing the RM-transformation $T$ in Section~\ref{sec::RMTransModEqLin}, which gives a complete answer to the above question. In addition to the deviation in Section~\ref{sec::WorstCaseDevRMSol}, we also study relative deviation in Section~\ref{sec::RelaDevia}.  Finally, in Section~\ref{sec::ImplicitBias}, we prove the implicit bias of RM method towards the solution to the modified equation. In fact, even if we choose initialization $\theta(0)$ such that the output function, $u_{\theta(0)}$, is close enough to the true solution of equation~\eqref{eq::OriginEq}, $u$, after training via gradient flow with RM risk, the parameters $\theta$ will evolve and converge to some $\theta(\infty)$ with $u_{\theta(\infty)}$ which is very close to the solution to equation~\eqref{eq::ModEq} $\tilde{u}$. This shows the RM methods at the exact solution $u$ is unstable under small perturbation, and also the RM methods implicitly biases towards the solution to the modified equation.

\subsection{Existence and \emph{a priori} estimates}\label{sec::ExistAprioEstLinEllipEq}
We prove the existence of solutions to the original and modified equations together with their \emph{a priori} estimates. These results are standard, and we include them here for completeness.

\begin{theorem}[existence and \emph{a priori} estimates for linear elliptic equations and systems]\label{thm::ExistAprioEstLinEllipEq&Sys}
~
    \begin{enumerate}[(i).]
        \item Suppose that Assumption~\ref{assum::BVCoeff} holds with $d'\geq  1$. For any $f\in L^2(\Omega;\sR^{d'})$, system~\eqref{eq::OriginEq} has a unique solution $u\in H^1_0(\Omega;\sR^{d'})$. Moreover, there exists a constant $C>0$ such that 
        \begin{equation*}
            \norm{u}_{H^1(\Omega;\sR^{d'})}\leq  C\norm{f}_{L^2(\Omega;\sR^{d'})}.
        \end{equation*}
        
        \item Suppose that Assumption~\ref{assum::BVCoeff} holds with $d'\geq 1$. For any $f\in H^{-1}(\Omega;\sR^{d'})$,  system~\eqref{eq::OriginEq} has a unique solution $u\in H^1_0(\Omega;\sR^{d'})$. Moreover, there exists a constant $C>0$ such that 
        \begin{equation*}
            \frac{1}{C}\norm{f}_{H^{-1}(\Omega;\sR^{d'})}\le\norm{u}_{H^1(\Omega;\sR^{d'})}\leq  C\norm{f}_{H^{-1}(\Omega;\sR^{d'})}.
        \end{equation*}
        
        \item Suppose that Assumption~\ref{assum::BVCoeff} and \ref{assum::AprioriEstLinSys} hold with $d'\geq 1$. For any $f\in X$, system~\eqref{eq::ModEq} has a unique solution $\tilde{u} \in H^1_0(\Omega;\sR^{d'})\cap H^2(\Omega;\sR^{d'})$.  Moreover, there exists a constant $C>0$ such that 
        \begin{equation*}
            \frac{1}{C}\norm{f}_{L^{2}(\Omega;\sR^{d'})}\le\norm{u}_{H^2(\Omega;\sR^{d'})}\leq  C\norm{f}_{L^{2}(\Omega;\sR^{d'})}.
        \end{equation*}
    \end{enumerate}
    The constants $C$'s is independent of and $f$.
\end{theorem}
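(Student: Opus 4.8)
The three statements follow from standard variational and elliptic-regularity arguments. For \emph{parts (i) and (ii)}, I would first record that, by Remark~\ref{rmk::HadamardLegendreCond}, Assumption~\ref{assum::BVCoeff} implies the Hadamard--Legendre condition \eqref{eq::HadamardLegendreCond}, and associate to $L$ the bilinear form
\begin{equation*}
    \mathcal{B}[u,v]=\sum_{\alpha,\beta=1}^{d'}\int_{\Omega}(Dv^{\alpha})^{\T}A^{\alpha\beta}(x)Du^{\beta}\diff{x},\qquad u,v\in H_0^1(\Omega;\sR^{d'}).
\end{equation*}
The upper bound in \eqref{eq::HadamardLegendreCond} together with $A^{\alpha\beta}\in L^{\infty}(\Omega;S^{d\times d})$ gives boundedness $\abs{\mathcal{B}[u,v]}\le C\norm{Du}_{L^2}\norm{Dv}_{L^2}$, while inserting $\xi^{\alpha}=Du^{\alpha}(x)$ into the lower bound and integrating gives $\mathcal{B}[u,u]\ge\lambda\norm{Du}_{L^2(\Omega;\sR^{d'})}^2$, which is coercive on $H_0^1$ by the Poincar\'e inequality. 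For $f\in L^2(\Omega;\sR^{d'})$ (resp.\ $f\in H^{-1}(\Omega;\sR^{d'})$) the map $v\mapsto\langle f,v\rangle$ is a bounded linear functional on $H_0^1$, so the Lax--Milgram theorem yields a unique weak solution $u\in H_0^1(\Omega;\sR^{d'})$ of \eqref{eq::OriginEq}; testing with $v=u$ and combining coercivity with the Cauchy--Schwarz (resp.\ duality) inequality gives $\norm{u}_{H^1}\le C\norm{f}_{L^2}$ (resp.\ $\le C\norm{f}_{H^{-1}}$). The remaining lower bound in (ii) is immediate: $\norm{f}_{H^{-1}}=\sup_{\norm{v}_{H^1}\le1}\mathcal{B}[u,v]\le C\norm{u}_{H^1}$ by boundedness of $\mathcal{B}$.

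For \emph{part (iii)} existence is already contained in the hypothesis: by the definition \eqref{eq::Range4data} of $X$, $f\in X$ means precisely that $\tilde{L}w=f$ for some $w\in H_0^1(\Omega;\sR^{d'})\cap H^2(\Omega;\sR^{d'})$, so I set $\tilde{u}=w$. Uniqueness follows by applying Assumption~\ref{assum::AprioriEstLinSys} to the difference of two solutions, whose $\tilde{L}$-image vanishes. The upper estimate $\norm{\tilde{u}}_{H^2}\le C\norm{f}_{L^2}$ is exactly Assumption~\ref{assum::AprioriEstLinSys}, and the lower estimate $\norm{f}_{L^2}=\norm{\tilde{L}\tilde{u}}_{L^2}\le C\norm{\tilde{u}}_{H^2}$ is the boundedness of $\tilde{L}\colon H^2(\Omega;\sR^{d'})\to L^2(\Omega;\sR^{d'})$, which holds because every coefficient $A_{ij}^{\alpha\beta}$ and $D_i^{\mathrm a}A_{ij}^{\alpha\beta}$ lies in $L^{\infty}(\Omega)$ --- the former since $\chi^{\alpha\beta}\in SBV^{\infty}(\Omega)\subset L^{\infty}(\Omega)$ and $\bar{A}^{\alpha\beta}\in C^{1}(\bar{\Omega})$, the latter by the defining property of $SBV^{\infty}$.

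The one genuinely nontrivial point --- and the step I expect to cost the most work --- is the verification, when $d'=1$, that Assumption~\ref{assum::BVCoeff} alone already yields Assumption~\ref{assum::AprioriEstLinSys} (equivalently, that $X=L^2(\Omega)$ in the scalar case). Here the key observation is that although the principal coefficients $A_{ij}=\chi\bar{A}_{ij}$ of $\tilde{L}$ are discontinuous, dividing $\tilde{L}\tilde{u}=f$ by the bounded and uniformly positive factor $\chi$ converts it into the non-divergence-form scalar equation $-\sum_{i,j}\bar{A}_{ij}D_{ij}\tilde{u}-\sum_{j}b_{j}D_{j}\tilde{u}=f/\chi$, whose principal coefficients $\bar{A}_{ij}\in C^{1}(\bar{\Omega})$ are continuous and uniformly elliptic and whose lower-order coefficients $b_{j}$ (built from $D^{\mathrm a}\chi$ and $D\bar{A}$) lie in $L^{\infty}(\Omega)$. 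One may then invoke the classical $W^{2,2}$ \emph{a priori} estimate for such operators on the $C^{1,1}$ domain $\Omega$, absorb the residual $\norm{\tilde{u}}_{L^{2}}$ term by a compactness argument using uniqueness (which holds because $\tilde{L}$ has no zeroth-order term, via the maximum principle), and deduce surjectivity of $\tilde{L}\colon H_0^1(\Omega)\cap H^2(\Omega)\to L^2(\Omega)$ by the method of continuity along $(1-t)(-\Delta)+t\tilde{L}$, whose principal coefficients $(1-t)\delta_{ij}+t\bar{A}_{ij}$ remain $C^{1}$ and uniformly elliptic so that the \emph{a priori} estimate is uniform in $t$. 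For $d'>1$ no such scalar reduction is available, which is exactly why Assumption~\ref{assum::AprioriEstLinSys} must be posited; beyond this point, the whole proof is routine bookkeeping of the boundedness, coercivity, and Poincar\'e constants.
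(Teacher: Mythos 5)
Your proof follows essentially the same route as the paper's: parts (i) and (ii) are coercivity-plus-Lax--Milgram under the Hadamard--Legendre condition, the lower bound in (ii) is the same duality estimate, and part (iii) reduces the scalar case to a non-divergence equation with $C^1(\bar\Omega)$ principal coefficients by dividing out $\chi$. The only genuine difference is that where the paper delegates to external references --- Theorems~\ref{thm::ExistThmLinProbDivForm} and~\ref{thm::ExistThmLinSysProbDivForm} for (i)--(ii), and Theorem~\ref{thm::ExistThmLinProbNonDivForm} (GT Lemma~9.17) for the verification that $X=L^2(\Omega)$ when $d'=1$ --- you unpack those black boxes with a direct Lax--Milgram argument and a method-of-continuity sketch. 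That buys self-containedness at the cost of length, and your observation that for $f\in X$ one simply takes $\tilde{u}=w$ from the definition of $X$ and pins uniqueness on Assumption~\ref{assum::AprioriEstLinSys} is exactly the paper's logic for $d'>1$. One inaccuracy to fix: you justify $\chi^{\alpha\beta}\in L^\infty(\Omega)$ by writing $SBV^\infty(\Omega)\subset L^\infty(\Omega)$, but that inclusion is false as stated --- $SBV^\infty$ only requires the \emph{density} of the absolutely continuous part of $D\chi$ to be in $L^\infty$, not $\chi$ itself (see Definition~\ref{def::SBVFun}). Boundedness of $\chi$ comes from the explicit inequality $\chi_{\min}\le\chi^{\alpha\beta}(x)\le\chi_{\max}$ in Assumption~\ref{assum::BVCoeff}, so the conclusion is still right but the attribution is wrong. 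Your appeal to $SBV^\infty$ for $D^{\mathrm{a}}\chi\in L^\infty$ is correct; note that boundedness of $D^{\mathrm{a}}A^{\alpha\beta}_{ij}$ also uses $\bar{A}^{\alpha\beta}\in C^1(\bar\Omega)$ (so that $D\bar{A}$ is bounded) together with the bound on $\chi$, via the product rule.
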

    
\begin{proof}
~
\begin{enumerate}[(i).]
    \item See Theorem~\ref{thm::ExistThmLinProbDivForm} which is rephrased from standard textbook such as~\cite{gilbarg2001elliptic} for $d'=1$, and Theorem~\ref{thm::ExistThmLinSysProbDivForm} which is rephrased from standard textbook such as~\cite{ambrosio2018lectures} for $d'\ge 1$.
    \item The existence and the second inequality is also standard and can be found in Theorems~\ref{thm::ExistThmLinProbDivForm} and~\ref{thm::ExistThmLinSysProbDivForm}.   To show the first inequality, we notice for all $\varphi\in H^1_0(\Omega;\sR^{d'})$ with $d'\ge 1$ and $\norm{\varphi}_{H^1(\Omega;\sR^{d'})}=1$ 
    \begin{equation*}
        \langle f, \varphi\rangle_{H^{-1}(\Omega;\sR^{d'}),H^1_0(\Omega;\sR^{d'})}
        \leq C\norm{D\varphi}_{L^2(\Omega;\sR^{d'})}\norm{Du}_{L^2(\Omega;\sR^{d'})}\leq C\norm{Du}_{L^2(\Omega;\sR^{d'})},
    \end{equation*}
    where $C$ depends on $\Omega$ and $A$. Taking supremum, we obtain $\norm{f}_{H^{-1}(\Omega;\sR^{d'})}\le C\norm{\tilde{u}}_{H^1(\Omega;\sR^{d'})}$.
    \item For $d'=1$, $X=L^2(\Omega)$, Theorem~\ref{thm::ExistThmLinProbNonDivForm} with $p=2$ guarantees the existence of the solution $\tilde{u}$ to the equation
    \begin{equation*}
        -\sum_{i,j=1}^d\left(\bar{A}_{ij}D_{ij}\tilde{u}+\chi^{-1}D_{i}^{\mathrm{a}}A_{ij}D_{j}\tilde{u}\right)=\chi^{-1}f.
    \end{equation*}
    Its solution $\tilde{u}$ satisfies
    \begin{equation}\label{eq::PrioEstiNonDivLin}
        \norm{\tilde{u}}_{H^{2}(\Omega)}\leq C\norm{\chi^{-1}f}_{L^2(\Omega)}\leq \chi_{\min}^{-1}C\norm{f}_{L^2(\Omega)},
    \end{equation}
    where $C$ depends on $\Omega$, $\chi$, and $\bar{A}$.
    Recall $A=\chi\bar{A}$. Obviously, $\tilde{u}$ is also the solution to
    \begin{equation*}
        \tilde{L}\tilde{u}=-\sum_{i,j=1}^d\left(A_{ij}D_{ij}\tilde{u}+D_{i}^{\mathrm{a}}A_{ij}D_{j}\tilde{u}\right) =f.
    \end{equation*}

    Note that $\tilde{L}\tilde{u}$ is the weighted sum of $D_j\tilde{u}$ and $D_{ij}\tilde{u}$ with $L^\infty$ coefficients $A_{ij}$ and $D_{i}^{\mathrm{a}}A_{ij}$, respectively. We have the following inequality
    \begin{equation*} 
        \norm{f}_{L^2(\Omega)}= \norm{\tilde{L}\tilde{u}}_{L^2(\Omega)} \leq C\norm{\tilde{u}}_{H^{2}(\Omega)},
    \end{equation*}
    where $C$ is independent of $f$.

    For $d'> 1$, the set $X$ in \eqref{eq::Range4data} is well-defined, and hence the existence to the equation \eqref{eq::Range4data} holds. 
    And since for all $\tilde{u}\in X$, $\tilde{L}w$ is a linear combination of $\tilde{u}$ up to second order and $A^{\alpha\beta}\in L^{\infty}(\Omega;S^{d\times d})$ and $D^{\mathrm{a}}A^{\alpha\beta}\in L^{\infty}(\Omega;S^{d\times d})$ for all $\alpha, \beta$. Thus for all $\tilde{u}\in X$, there is $C>0$ such
    \begin{equation*}
        \norm{f}_{L^2(\Omega;\sR^{d'})}\le C\norm{\tilde{u}}_{H^2(\Omega;\sR^{d'})}.
    \end{equation*}
    and by Assumption~\ref{assum::AprioriEstLinSys}, second inequality in (\romannumeral3) and the uniqueness hold.
\end{enumerate}
\end{proof}

\subsection{Deviation occurs}\label{sec::WorstCaseDevRMSol}

In this subsection, we prove that, for some specific $f$, the distance between the solutions to the original and modified equations is non-zero. According to our modelling in Section \ref{sec::DeriveModEq} and numerical simulation, the solutions $u_{\theta}$ and $\tilde{u}$ are very close to each other. Therefore, in the worst case, the deviation of RM solution is not negligible for the elliptic equations. And this explains why PINN sometimes may fail as shown in Section \ref{sec::FailExPINN}. 

The goal is clear, and essential we need to find a particular data $f$ such that the corresponding $u$ is not equal to $\tilde{u}$. In fact, as we mentioned previously, the set of $f$ which induces a gap between $u$ and $\tilde{u}$ is identified by the kernel $\Ker(T-I)$. Hence the transformation $T$ is informative and required to be investigated. We will, in particular, show the compactness and the fact that it is generically not equal to identity. This is given by the following proposition. 
\begin{prop}[properties of RM-transformation]\label{prop::RMTransModEq}
     Suppose that Assumption~\ref{assum::BVCoeff} holds with $d'\ge 1$. Let $u$ and $\tilde{u}$ be solutions to the original equation/system~\eqref{eq::OriginEq} and modified equation/system~\eqref{eq::ModEq} with data $f\in X$, respectively. Let $T$ be the operator defined in~\eqref{eq::RMTransformation}. Then
    \begin{enumerate}[(i).]
        \item $L\colon\,\left(H^1_0, \norm{\cdot}_{H^1}\right)\to \left( H^{-1},\norm{\cdot}_{H^{-1}}\right)$
        is a bounded linear operator,
        \item $T\colon\,\left(X, \norm{\cdot}_{L^{2}}\right)\to \left( H^{-1},\norm{\cdot}_{H^{-1}}\right)$
        is a bounded linear operator,
        \item $(Tf-f)^{\alpha}=(L\tilde{u}-\tilde{L}\tilde{u})^{\alpha}=-\sum_{\beta=1}^{d'}(\chi^{\alpha\beta+}-\chi^{\alpha\beta-})\nu_{\chi^{\alpha\beta}}\bar{A}^{\alpha\beta}D\tilde{u}^{\beta}\diff \fH^{d-1}$ for all $\alpha\in \{1,\ldots,d'\}$ is a Radon measure and $Tf-f$ is in $H^{-1}(\Omega;\sR^{d'})$
        \item $T\neq I$, that is, $T$ is not the identity on $L^2(\Omega)$.
    \end{enumerate}
\end{prop}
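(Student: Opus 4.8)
The plan is to dispatch (i) and (ii) as soft boundedness statements, spend the effort on (iii) (the computational heart), and then deduce (iv) quickly from (iii) together with Theorem~\ref{thm::NecSufCondRemovSingLin}/\ref{thm::NecSufCondRemovSingLinSys}. For (i), linearity of $L$ is immediate, and for $w\in H_0^1(\Omega;\sR^{d'})$, $\varphi\in H_0^1(\Omega;\sR^{d'})$ the weak pairing is $\langle (Lw)^{\alpha},\varphi^{\alpha}\rangle_{H^{-1},H_0^1}=\int_{\Omega}\sum_{\beta,i,j}A_{ij}^{\alpha\beta}D_jw^{\beta}D_i\varphi^{\alpha}\diff{x}$, which is bounded by $\norm{A}_{L^{\infty}}\norm{Dw}_{L^2}\norm{D\varphi}_{L^2}\le \chi_{\max}\bar{\Lambda}\norm{w}_{H^1}\norm{\varphi}_{H^1}$ using Assumption~\ref{assum::BVCoeff}; taking the supremum over $\norm{\varphi}_{H^1}=1$ gives $\norm{Lw}_{H^{-1}}\le C\norm{w}_{H^1}$. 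For (ii), the solution map $f\mapsto\tilde{u}$ of $\tilde{L}\tilde{u}=f$ is linear (as $\tilde{L}$ is linear) and, by Theorem~\ref{thm::ExistAprioEstLinEllipEq&Sys}(iii), bounded from $(X,\norm{\cdot}_{L^2})$ into $H^1_0(\Omega;\sR^{d'})\cap H^2(\Omega;\sR^{d'})$; composing with the bounded linear $L$ from (i) shows $T=L\circ(\tilde{L})^{-1}$ is bounded and linear, with $\norm{Tf}_{H^{-1}}\le C\norm{\tilde{u}}_{H^1}\le C\norm{f}_{L^2}$.

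The substance is (iii). Since $\tilde{u}\in H^2(\Omega;\sR^{d'})$, we compute $(L\tilde{u}-\tilde{L}\tilde{u})^{\alpha}$ by testing against $\varphi^{\alpha}\in C_c^{\infty}(\Omega)$: writing $L\tilde{u}$ in its weak form and $\tilde{L}\tilde{u}\in L^2$ directly, the difference equals $\int_{\Omega}\sum_{\beta,i,j}\big(A_{ij}^{\alpha\beta}D_j\tilde{u}^{\beta}D_i\varphi^{\alpha}+A_{ij}^{\alpha\beta}D_{ij}\tilde{u}^{\beta}\varphi^{\alpha}+D_i^{\mathrm{a}}A_{ij}^{\alpha\beta}D_j\tilde{u}^{\beta}\varphi^{\alpha}\big)\diff{x}$, and the first two terms combine into $\int_{\Omega}\sum_{\beta,i,j}A_{ij}^{\alpha\beta}D_i\big(D_j\tilde{u}^{\beta}\varphi^{\alpha}\big)\diff{x}$, where $g:=D_j\tilde{u}^{\beta}\varphi^{\alpha}\in H^1_0(\Omega)$ with compact support. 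Integrating by parts against the $SBV^{\infty}$ coefficient $A_{ij}^{\alpha\beta}=\chi^{\alpha\beta}\bar{A}_{ij}^{\alpha\beta}$, whose distributional gradient splits into $D_i^{\mathrm{a}}A_{ij}^{\alpha\beta}\diff{x}$ plus the jump part $(\chi^{\alpha\beta+}-\chi^{\alpha\beta-})\bar{A}_{ij}^{\alpha\beta}(\nu_{\chi^{\alpha\beta}})_i\diff{\fH^{d-1}}$ carried on $J_{\chi^{\alpha\beta}}$ (here $\bar{A}^{\alpha\beta}\in C^1(\bar{\Omega};S^{d\times d})$ contributes no jump, and on $J_{\chi^{\alpha\beta}}$ the factor $g$ is replaced by its two-sided trace $D_j\tilde{u}^{\beta}\varphi^{\alpha}$), the absolutely continuous contribution cancels the $D^{\mathrm{a}}A$ term already present in $\tilde{L}$, leaving exactly $-\sum_{\beta}\int_{J_{\chi^{\alpha\beta}}}(\chi^{\alpha\beta+}-\chi^{\alpha\beta-})\nu_{\chi^{\alpha\beta}}^{\T}\bar{A}^{\alpha\beta}D\tilde{u}^{\beta}\varphi^{\alpha}\diff{\fH^{d-1}}$. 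This identifies $(Tf-f)^{\alpha}=(L\tilde{u}-\tilde{L}\tilde{u})^{\alpha}$ with the claimed measure. Its total variation is bounded by $\sum_{\beta}2\chi_{\max}\bar{\Lambda}\int_{J_{\chi^{\alpha\beta}}}\Abs{D\tilde{u}^{\beta}}\diff{\fH^{d-1}}<+\infty$, because the trace of $D\tilde{u}^{\beta}\in H^1(\Omega)$ on the rectifiable set $J_{\chi^{\alpha\beta}}$ (covered by countably many $C^1$-hypersurfaces of finite total $\fH^{d-1}$-measure, by Theorem~\ref{thm::PropOfApproxJumpSetForBV}) is $\fH^{d-1}$-integrable; hence $Tf-f$ is a Radon measure. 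Finally, $Tf-f\in H^{-1}(\Omega;\sR^{d'})$ is automatic, since $L\tilde{u}\in H^{-1}$ by (i) and $f\in X\subseteq L^2(\Omega;\sR^{d'})\subseteq H^{-1}(\Omega;\sR^{d'})$.

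For (iv) we argue by contradiction. If $T=I$, take $v_{\delta}\in C_c^{\infty}(\Omega)$ (when $d'=1$; $v_{\delta}\in C_c^{\infty}(\Omega;\sR^{d'})$ when $d'>1$) provided by Theorem~\ref{thm::NecSufCondRemovSingLin} (resp.\ Theorem~\ref{thm::NecSufCondRemovSingLinSys}) with $\mu(\Omega;\chi,\bar{A},v_{\delta})>0$ (resp.\ $\sum_{\beta}\mu(\Omega;\chi^{\alpha_0\beta},\bar{A}^{\alpha_0\beta},v_{\delta}^{\beta})>0$ for some $\alpha_0$). Set $f:=\tilde{L}v_{\delta}$; then $f\in X$ by definition of $X$, and by uniqueness in Theorem~\ref{thm::ExistAprioEstLinEllipEq&Sys}(iii) the solution of $\tilde{L}\tilde{u}=f$ is $\tilde{u}=v_{\delta}$, so $T=I$ would force $Lv_{\delta}-\tilde{L}v_{\delta}=Tf-f=0$. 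But $Dv_{\delta}$ has compact support in $\Omega$, so the measure $(Tf-f)^{\alpha_0}$ of part (iii) is compactly supported, and evaluating it on $\Omega$ gives total mass $-\sum_{\beta}\mu(\Omega;\chi^{\alpha_0\beta},\bar{A}^{\alpha_0\beta},v_{\delta}^{\beta})\neq 0$ (equal to $-\mu(\Omega;\chi,\bar{A},v_{\delta})\neq 0$ when $d'=1$), contradicting $Tf-f=0$. Hence $T\neq I$.

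I expect the main obstacle to be the rigorous justification of the Leibniz/trace identity in (iii) when one of the two factors, $D\tilde{u}$, lies only in $H^1$ rather than in $L^{\infty}$: one must verify that the correct object to pair with the jump measure $D^{\mathrm{j}}A_{ij}^{\alpha\beta}$ is the (two-sided, coinciding) trace of $D\tilde{u}$ on $J_{\chi^{\alpha\beta}}$, and that the resulting surface integrals are finite, which is precisely where the integrability of this trace on the rectifiable jump set is used. The remaining parts are bookkeeping on top of Theorem~\ref{thm::ExistAprioEstLinEllipEq&Sys}, Theorems~\ref{thm::NecSufCondRemovSingLin} and~\ref{thm::NecSufCondRemovSingLinSys}, and the structure of $SBV$ functions; the system case ($d'\ge 1$) is notationally heavier but identical, carried out componentwise in $\alpha$.
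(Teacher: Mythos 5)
Your proof is correct and follows essentially the same route as the paper: (i)--(ii) via the a priori estimates of Theorem~\ref{thm::ExistAprioEstLinEllipEq&Sys}, (iii) via the SBV structure theorem and integration by parts, and (iv) by plugging $f=\tilde{L}v_{\delta}$ from Theorem~\ref{thm::NecSufCondRemovSingLin}/\ref{thm::NecSufCondRemovSingLinSys}. The only stylistic difference is in (iv): you evaluate the total mass of the compactly supported Radon measure $(Tf-f)^{\alpha_0}$ directly, whereas the paper pairs it against a cutoff $\zeta_n$ with $\Omega_n\prec\zeta_n\prec\Omega$ and lets $n\to\infty$ --- these are equivalent, and yours is a mild streamlining of the same idea.
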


\begin{proof}
    Clearly, $L$ and $T$ are both linear.
    \begin{enumerate}[(i).]
        \item This is proved by Theorem~\ref{thm::ExistAprioEstLinEllipEq&Sys} (\romannumeral2).
        \item  Estimate the $H^{-1}$ norm of $T f$ by Theorem~\ref{prop::RMTransModEq} (\romannumeral1)
        \begin{equation}\label{eq::PrioEstiDivLinDualSoblev}
         \norm{T f}_{H^{-1}(\Omega;\sR^{d'})}= \norm{L \tilde{u}}_{H^{-1}(\Omega;\sR^{d'})}\leq C\norm{\tilde{u}}_{H^1(\Omega;\sR^{d'})},  
        \end{equation}
        where $C$ depends on $\Omega$ and $A$. The natural embedding $H^1(\Omega)\hookrightarrow H^2(\Omega)$ with estimates~\eqref{eq::PrioEstiNonDivLin} and~\eqref{eq::PrioEstiDivLinDualSoblev} leads to
        the boundedness of $T$, that is,
        \begin{equation*}
            \norm{T f}_{H^{-1}(\Omega;\sR^{d'})}\leq C \norm{\tilde{u}}_{H^1(\Omega;\sR^{d'})}\leq C\norm{\tilde{u}}_{H^2(\Omega;\sR^{d'})} \leq C\norm{f}_{L^2(\Omega;\sR^{d'})},
        \end{equation*}
        where $C$'s are different from one inequality to another and depending on $\Omega$, $\chi$, $\bar{A}$.
        \item By direct calculation with the structure theorem (Theorem \ref{thm::StructureThmBV}), $Tf-f$ satisfies for all $\alpha\in\{1,\ldots,d'\}$
    \begin{equation*}
        (Tf-f)^{\alpha}=(L\tilde{u}-\tilde{L}\tilde{u})^{\alpha}=-\sum_{\beta=1}^{d'}(\chi^{\alpha\beta+}-\chi^{\alpha\beta-})\nu_{\chi^{\alpha\beta}}\bar{A}^{\alpha\beta}D\tilde{u}^{\beta}\diff \fH^{d-1}
    \end{equation*} 
    in the sense of Radon measure. Moreover, $Tf-f\in H^{-1}(\Omega;\sR^{d'})$ as a consequence of the Theorem~\ref{prop::RMTransModEq} (\romannumeral1) and (\romannumeral2). In fact, we have with some $C$ depending on $\Omega$, $\chi$, $\bar{A}$:
    \begin{equation*}
        \norm{Tf-f}_{H^{-1}(\Omega;\sR^{d'})}\le \norm{Tf}_{H^{-1}(\Omega;\sR^{d'})}+\norm{f}_{H^{-1}(\Omega;\sR^{d'})}\le C\norm{f}_{L^2(\Omega;\sR^{d'})}.
    \end{equation*}
    \item  By Theorem~\ref{thm::NecSufCondRemovSingLin} and~\ref{thm::NecSufCondRemovSingLinSys}, there are $\alpha_0\in \{1,\ldots,d'\}$ and $v_{\delta}\in C_c^{\infty}(\Omega;\sR^{d'})$ such that 
    \begin{equation*}
        \sum_{\beta=1}^{d'}\mu(\Omega;\chi^{\alpha_0\beta},\bar{A}^{\alpha_0\beta},v_{\delta}^{\beta})>0.
    \end{equation*}
    By setting $f=\tilde{L}v_{\delta}\in L^2(\Omega;\sR^{d'})$, we have 
    \begin{equation*}
        (Tf-f)^{\alpha_0}=-\sum_{\beta=1}^{d'}(\chi^{\alpha_0\beta+}-\chi^{\alpha_0\beta-})\nu_{\chi^{\alpha_0\beta}}\bar{A}^{\alpha_0\beta}Dv_{\delta}^{\beta}\diff \fH^{d-1}
    \end{equation*} in the sense of Radon measure. Let $\Omega_{n}=\{x\in \Omega\colon\,\dist(x,\partial\Omega)>\frac{1}{n}\}$ and use Lemma~\ref{lem::SmoothCutoffFunContrDer} to choose a cutoff function $\zeta_{n}$ such that $\Omega_{n}\prec \zeta_{n}\prec\Omega$. Then for sufficiently large $n$
    \begin{align*}
        \langle (f-Tf)^{\alpha_0}, \zeta_{n}\rangle_{H^{-1}(\Omega),H^{1}_0(\Omega)}
        &= \sum_{\beta=1}^{d'}\mu(\Omega;\chi^{\alpha_0\beta},\zeta_n \bar{A}^{\alpha_0\beta},v_{\delta}^{\beta})\\
        &=\sum_{\beta=1}^{d'}\mu(\Omega;\chi^{\alpha_0\beta}, \bar{A}^{\alpha_0\beta},v_{\delta}^{\beta})-\sum_{\beta=1}^{d'}\mu(\Omega;\chi^{\alpha_0\beta},(1-\zeta_n) \bar{A}^{\alpha_0\beta},v_{\delta}^{\beta})\\
        &\ge \sum_{\beta=1}^{d'}\mu(\Omega;\chi,\bar{A},v_{\delta})-2d'\chi_{\max}\bar{\Lambda}\norm{Dv_{\delta}}_{L^{\infty}(\Omega)}\fH^{d-1}((\Omega\backslash\Omega_{n})\cap J_{\chi})\\
        &>0,
    \end{align*}
    which implies $Tf\neq f$.
    \end{enumerate}
\end{proof}
The last statement immediately implies that for specific $f$, the deviation of solution $\tilde{u}$ from $u$ occurs as follows.

\begin{theorem}[deviation occurs for linear elliptic equations]\label{thm::WorstCaseDeviLinEllipEq}
    Suppose that Assumption~\ref{assum::BVCoeff} holds with $d'\ge1$. Then there exists $f\in L^2(\Omega;\sR^{d'})$ such that $\norm{u-\tilde{u}}_{H^1(\Omega;\sR^{d'})}>0$, where $u$ and $\tilde{u}$ are solutions to the original equation/system~\eqref{eq::OriginEq} and modified equation/system~\eqref{eq::ModEq}, respectively.
\end{theorem}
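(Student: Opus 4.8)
The statement follows immediately from Proposition~\ref{prop::RMTransModEq}(iv), which asserts $T\neq I$; the plan is simply to unwind that fact. Pick $f\in X$ with $Tf\neq f$. For this $f$, let $u$ and $\tilde{u}$ solve~\eqref{eq::OriginEq} and~\eqref{eq::ModEq} respectively, so that $u,\tilde{u}\in H_0^1(\Omega;\sR^{d'})$ and, weakly, $Lu=f$ while $L\tilde{u}=Tf$ by the very definition~\eqref{eq::RMTransformation} of $T$. Were $u=\tilde{u}$, we would get $f=Lu=L\tilde{u}=Tf$, contradicting $Tf\neq f$. Hence $u\neq\tilde{u}$, and since $\norm{\cdot}_{H^1(\Omega;\sR^{d'})}$ is a genuine norm on $H_0^1(\Omega;\sR^{d'})$, this already yields $\norm{u-\tilde{u}}_{H^1(\Omega;\sR^{d'})}>0$.

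To keep the argument self-contained, and to make the bound quantitative, I would recall from the proof of Proposition~\ref{prop::RMTransModEq}(iv) how the datum is produced. Theorems~\ref{thm::NecSufCondRemovSingLin} and~\ref{thm::NecSufCondRemovSingLinSys} furnish an index $\alpha_0$ and a function $v_\delta\in C_c^{\infty}(\Omega;\sR^{d'})$ with $\sum_{\beta=1}^{d'}\mu(\Omega;\chi^{\alpha_0\beta},\bar{A}^{\alpha_0\beta},v_\delta^{\beta})>0$, and one sets $f:=\tilde{L}v_\delta$. Since $v_\delta\in C_c^{\infty}(\Omega;\sR^{d'})\subseteq H_0^1(\Omega;\sR^{d'})\cap H^2(\Omega;\sR^{d'})=\operatorname{dom}(\tilde{L})$, this $f$ lies in $X$ and is a genuine $L^2(\Omega;\sR^{d'})$ function --- which is precisely why the smoothing step in Theorem~\ref{thm::NecSufCondRemovSingLin} was required --- so Theorem~\ref{thm::ExistAprioEstLinEllipEq&Sys}(iii) applies and, by uniqueness of the modified solution, $\tilde{u}=v_\delta$. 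Then $L(\tilde{u}-u)=L\tilde{u}-Lu=Tf-f$, and Proposition~\ref{prop::RMTransModEq}(iii)--(iv) identifies $Tf-f$ as a nonzero element of $H^{-1}(\Omega;\sR^{d'})$ (its non-vanishing being certified by pairing against cutoffs $\zeta_n$ with $\Omega_n\prec\zeta_n\prec\Omega$ and letting $n\to\infty$). Feeding this into the lower a priori estimate $\tfrac{1}{C}\norm{g}_{H^{-1}}\le\norm{w}_{H^1}$ for $Lw=g$ from Theorem~\ref{thm::ExistAprioEstLinEllipEq&Sys}(ii), with $w=\tilde{u}-u$ and $g=Tf-f$, gives the explicit bound $\norm{u-\tilde{u}}_{H^1(\Omega;\sR^{d'})}\ge\tfrac{1}{C}\norm{Tf-f}_{H^{-1}(\Omega;\sR^{d'})}>0$. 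In the scalar case $d'=1$ one drops the $\alpha,\beta$ superscripts, uses $X=L^2(\Omega)$, and the argument is unchanged.

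There is essentially no obstacle to overcome at this stage: all the analytic content --- the removable-versus-non-removable dichotomy, the construction of the smooth witness $v_\delta$, the identification of $Tf-f$ with a jump Radon measure, and above all the non-triviality $T\neq I$ --- has already been established in Theorems~\ref{thm::NecSufCondRemovSing}--\ref{thm::NecSufCondRemovSingLinSys} and Proposition~\ref{prop::RMTransModEq}. What remains is purely a matter of bookkeeping: checking that the chosen $f$ simultaneously lands in $X$ and realizes the strict positivity of $\mu$, reading off $\tilde{u}=v_\delta$ by uniqueness before invoking any estimate, and being careful to use the $H^{-1}$--$H^1$ estimate rather than the $L^2$--$H^2$ one in order to convert $Tf\neq f$ into a positive lower bound on $\norm{u-\tilde{u}}_{H^1}$.
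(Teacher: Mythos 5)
Your proposal is correct and follows essentially the same route as the paper: invoke Proposition~\ref{prop::RMTransModEq}(iv) to get $f$ with $Tf\neq f$, note $L(u-\tilde{u})=f-Tf$, and apply the lower $H^{-1}$–$H^1$ a priori estimate of Theorem~\ref{thm::ExistAprioEstLinEllipEq&Sys}(ii) to obtain $\norm{u-\tilde{u}}_{H^1}\ge\frac{1}{C}\norm{Tf-f}_{H^{-1}}>0$. Your first paragraph gives a slightly more elementary qualitative version (contradiction via uniqueness), but the quantitative argument in your second paragraph is identical to the paper's proof.
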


\begin{proof}
    By Proposition~\ref{prop::RMTransModEq} (\romannumeral4), there is $f\in L^2(\Omega;\sR^{d'})$ such that $\norm{Tf-f}_{H^{-1}(\Omega;\sR^{d'})}>0$, combining Theorem~\ref{thm::ExistAprioEstLinEllipEq&Sys} (\romannumeral2), there is $C>0$ such that 
    \begin{equation*}
        \norm{u-\tilde{u}}_{H^1(\Omega;\sR^{d'})}\ge \frac{1}{C}\norm{Tf-f}_{H^{-1}(\Omega;\sR^{d'})}>0,
    \end{equation*}
    and this completes the proof.
\end{proof}
Obviously, when $d'=1$, we only need Assumption~\ref{assum::BVCoeff}.

\subsection{Deviation occurs generically}\label{sec::RMTransModEqLin}

Based on Proposition~\ref{prop::RMTransModEq}, we further study the eigenvalue and eigenspace of $T$ in this subsection. 
We recall that an eigenvalue could be a complex number in general. Hence, for the full consideration of the eigenvalue problem, we would like to enlarge the space of $f$ from $X$ to the complex-valued one, namely $\bar{X}:=\{\tilde{L}w\colon\,w\in H^1_0(\Omega;\sC^{d'})\cap H^{2}(\Omega;\sC^{d'}) \}$. Meanwhile, we extend the operator $T$ to $\bar{T}: \bar{X}\to H^{-1}(\Omega;\sC^{d'})$. But for notational simplicity, we still write $T$ for $\bar{T}$. This will not cause any ambiguity, since we only need this extension for the discussion on eigenvalues. Similar remark applies to the system case in Theorem~\ref{thm::EigenValueEigenSpace}.

No matter whether we consider the eigenvalue problem over the complex field or the real field, the only eigenvalue of $T$ is $1$, as shown in Theorem~\ref{thm::EigenValueEigenSpace}. Hence there is no harm to regard all functions to be real-valued, and it is consistent to our situation, that is, to study the real-valued PDE problems. 
Next, to characterize the implicit bias via RM-transformation, we only need to consider the real-valued kernel $\Ker(T-I)$ which consists of all RM-invariant data $f$. Thus the $X\backslash\Ker(T-I)$ consists of data $f$ satisfying $u\neq \tilde{u}$. In the following theorems (Theorem~\ref{thm::EigenValueEigenSpace}, we show that the latter is more generic, and hence RM method fail for almost all data $f$ in equations considered in this paper.

\begin{theorem}[eigenvalue and eigenspace of $T$ for linear elliptic equation/system]\label{thm::EigenValueEigenSpace}
    Suppose that Assumption~\ref{assum::BVCoeff} and~\ref{assum::AprioriEstLinSys} hold with $d'\geq 1$ and $\bigcup_{\alpha,\beta=1}^{d'}J_{\chi^{\alpha\beta}}$ is not dense in $\Omega$. Then we have
    \begin{enumerate}[(i).]
        \item $\sigma(T)=\{1\}$;
        \item $\Ker(T-I)=\{f\in X\colon\,\exists w \text{ such that } \tilde{L} w=f,\  \forall \alpha\ \sum_{\beta=1}^{d'}\mu(\cdot;\chi^{\alpha\beta},\bar{A}^{\alpha\beta},w)=0\}$;
        \item $L^2(\Omega;\sR^{d'})\backslash \Ker(T-I)$ is dense in $L^2(\Omega;\sR^{d'})$;
        \item $X$ is a closed subspace of $L^2(\Omega;\sR^{d'})$ and $X\backslash \operatorname{Ker}(T-I)$ is relatively open in $L^2(\Omega;\sR^{d'})$.
    \end{enumerate}
\end{theorem}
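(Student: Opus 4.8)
The plan is to obtain (i)--(iv) by combining the structural description of $T$ in Proposition~\ref{prop::RMTransModEq} with the two-sided a priori estimate of Theorem~\ref{thm::ExistAprioEstLinEllipEq&Sys}(iii); the only genuinely new ingredient is a singular-versus-absolutely-continuous dichotomy applied to the eigenvalue equation. For part (i), I would first verify that $1$ is an eigenvalue: since $\bigcup_{\alpha,\beta=1}^{d'}J_{\chi^{\alpha\beta}}$ is not dense in $\Omega$, choose an open ball $B\subseteq\Omega$ disjoint from it and a nonzero $w\in C_c^{\infty}(B;\sR^{d'})$; ellipticity of the principal part of $\tilde L$ together with uniqueness (Theorem~\ref{thm::ExistAprioEstLinEllipEq&Sys}(iii)) give $f:=\tilde Lw\neq 0$ and $\tilde u=w$, so by Proposition~\ref{prop::RMTransModEq}(iii) the measure $Tf-f$ has density assembled from $Dw^{\beta}$ on the sets $J_{\chi^{\alpha\beta}}$, which vanishes there because $Dw\equiv 0$ on $B^{c}$; hence $Tf=f$. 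To exclude $\lambda\neq 1$: if $Tf=\lambda f$ with $0\neq f=\tilde L\tilde u\in X$, then $(\lambda-1)f=Tf-f$, and by Proposition~\ref{prop::RMTransModEq}(iii) the right-hand side is a Radon measure concentrated on $\bigcup_{\alpha,\beta}J_{\chi^{\alpha\beta}}$, which is $\fL^d$-null since each $J_{\chi^{\alpha\beta}}$ has finite $\fH^{d-1}$ measure; as $f\in L^2(\Omega;\sR^{d'})$ induces a measure absolutely continuous with respect to $\fL^d$, the two can agree only if both vanish, forcing $(\lambda-1)f=0$, i.e.\ $f=0$, a contradiction. Thus the set of eigenvalues is exactly $\{1\}$, which is what $\sigma(T)=\{1\}$ means here; if in addition one wants the resolvent statement, one may note that $T-I$ is compact from $(X,\norm{\cdot}_{L^{2}})$ into $H^{-1}$ (elliptic regularity $L^2\to H^2$ composed with the compact trace onto the rectifiable sets $J_{\chi^{\alpha\beta}}$) and invoke the Riesz--Schauder theorem.

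Part (ii) is then read off Proposition~\ref{prop::RMTransModEq}(iii): for $f\in X$ let $w$ be the unique solution of $\tilde Lw=f$; then $Tf=f$ iff the Radon measure $Tf-f$ vanishes, iff for each $\alpha$ the set function $B\mapsto\sum_{\beta=1}^{d'}\mu(B;\chi^{\alpha\beta},\bar A^{\alpha\beta},w^{\beta})$ is identically zero, where on $J_{\chi^{\alpha\beta}}$ the factor $Dw^{\beta}$ is interpreted as the trace of the $H^{1}$ function $Dw^{\beta}$. For part (iii), observe that $\Ker(T-I)$ is a linear subspace of $X$, and by Proposition~\ref{prop::RMTransModEq}(iv) a proper one, so fix $f_{0}\in X\setminus\Ker(T-I)$; given any $g\in\Ker(T-I)$, the points $g+tf_{0}$ lie in $X$ and, for $t\neq 0$, outside $\Ker(T-I)$ (else $f_{0}\in\Ker(T-I)$), while $g+tf_{0}\to g$ in $L^{2}$ as $t\to0$. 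Since every $g\in L^{2}(\Omega;\sR^{d'})\setminus X$ is trivially outside $\Ker(T-I)\subseteq X$, it follows that $L^{2}(\Omega;\sR^{d'})\setminus\Ker(T-I)$ is dense.

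For part (iv), I would show $X=\tilde L\big(H^{1}_{0}(\Omega;\sR^{d'})\cap H^{2}(\Omega;\sR^{d'})\big)$ is closed in $L^{2}$: the coefficients $A^{\alpha\beta}_{ij}$ and $D^{\mathrm a}_{i}A^{\alpha\beta}_{ij}$ are in $L^{\infty}$, so $\tilde L\colon H^{2}(\Omega;\sR^{d'})\to L^{2}(\Omega;\sR^{d'})$ is bounded, and the two-sided estimate of Theorem~\ref{thm::ExistAprioEstLinEllipEq&Sys}(iii) shows that if $f_{n}\in X$ with $f_{n}\to f$ in $L^{2}$, then the solutions $w_{n}$ of $\tilde Lw_{n}=f_{n}$ are Cauchy in $H^{2}$, converge to some $w_{*}\in H^{1}_{0}\cap H^{2}$, whence $f=\tilde Lw_{*}\in X$. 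Then $T-I\colon X\to H^{-1}(\Omega;\sR^{d'})$ is bounded (Proposition~\ref{prop::RMTransModEq}(ii) together with $X\hookrightarrow L^{2}\hookrightarrow H^{-1}$), so $\Ker(T-I)$ is closed in $X$, hence closed in $L^{2}$; therefore $X\setminus\Ker(T-I)$ is open in the topology $X$ inherits from $L^{2}$, which for $d'=1$ (where $X=L^{2}$) is simply openness in $L^{2}$.

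The main obstacle is the eigenvalue analysis in part (i): one must correctly see $Tf-f$ as a measure singular with respect to $\fL^d$ (being concentrated on the $\fH^{d-1}$-finite, hence Lebesgue-null, union of jump sets) while $f$ itself is absolutely continuous, so that the identity $(\lambda-1)f=Tf-f$ can hold only with $f=0$ unless $\lambda=1$; and, dually, one must use the non-density hypothesis precisely to exhibit a nonzero fixed vector. The remaining parts are essentially bookkeeping on top of Proposition~\ref{prop::RMTransModEq} and Theorem~\ref{thm::ExistAprioEstLinEllipEq&Sys}, the only care being the trace interpretation of $\mu(\cdot;\chi^{\alpha\beta},\bar A^{\alpha\beta},w^{\beta})$ for $w\in H^{2}\setminus C^{1}$ in part (ii).
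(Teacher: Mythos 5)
Your proof follows essentially the same route as the paper on all four parts: (i) the mutually-singular dichotomy between the absolutely continuous part $(\lambda-1)f$ and the jump measure $Tf-f$ concentrated on the $\fH^{d-1}$-finite jump sets, together with the non-density hypothesis to exhibit a fixed vector; (ii) reading the kernel off Proposition~\ref{prop::RMTransModEq}(iii); (iii) perturbing each kernel element by a fixed $f_0\in X\setminus\Ker(T-I)$ produced via Theorem~\ref{thm::NecSufCondRemovSingLinSys}; and (iv) closedness of $X$ via the two-sided $H^2$ estimate. The only stylistic variation is in the closedness of $\Ker(T-I)$: you observe it is the preimage of $\{0\}$ under the bounded map $T-I\colon(X,\norm{\cdot}_{L^2})\to H^{-1}$ (a slightly cleaner argument), whereas the paper re-runs the Cauchy-sequence estimate to show directly that limits of solutions with $u_k=\tilde u_k$ still satisfy $u=\tilde u$; both are correct.
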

As above, we also note that $\sum_{\beta=1}^{d'}\mu(\cdot;\chi^{\alpha\beta},\bar{A}^{\alpha\beta},w)=0$, $\forall \alpha\in\{1,\ldots,d'\}$ for all $\fH^{d-1}$ measurable set $B$ is equivalent to $\sum_{\beta=1}^{d'}\bar{A}^{\alpha\beta}Dw^{\beta}\cdot D^{\mathrm{j}}\chi^{\alpha\beta}=0$, $\forall \alpha\in\{1,\ldots,d'\}$ as Radon measures.
\begin{proof}
~
\begin{enumerate}[(i).]
    \item For any $f\in \bar{X}$, let $\tilde{u}$ be the solution to~\eqref{eq::ModEq} with data $f$. 
    
    For $z\in \sC\backslash\{1\}$, we have for all $\alpha\in\{1,\ldots,d'\}$
    \begin{equation*}
        Tf^{\alpha}-zf^{\alpha}=(1-z)f^{\alpha}+\sum_{\beta=1}^{d'}\bar{A}^{\alpha\beta}D\tilde{u}^{\alpha}\cdot D^{\mathrm{j}}\chi^{\alpha\beta}\quad \text{in the sense of Radon measure}.
    \end{equation*}
    Notice that the measures $(1-z)f^{\alpha}$ and $\sum_{\beta=1}^{d'}\bar{A}^{\alpha\beta}D\tilde{u}^{\alpha}\cdot D^{\mathrm{j}}\chi^{\alpha\beta}$ are mutually singular to each other. Hence
    \begin{equation*}
        Tf-zf=0 \text{ is equivalent to } (1-z)f^{\alpha}=0 \text{ and } \sum_{\beta=1}^{d'}\bar{A}^{\alpha\beta}D\tilde{u}^{\alpha}\cdot D^{\mathrm{j}}\chi^{\alpha\beta}=0,\quad \forall \alpha\in\{1,\ldots,d'\}.
    \end{equation*}
    Therefore for all $\alpha$, $f^{\alpha}=0$. By Assumption~\ref{assum::AprioriEstLinSys}, we have $\tilde{u} =0 $.

    For $z=1$, we obtain for all $\alpha\in\{1,\ldots,d'\}$
    \begin{equation*}
        Tf^{\alpha}-f^{\alpha}=\sum_{\beta=1}^{d'}\bar{A}^{\alpha\beta}D\tilde{u}^{\beta}\cdot D^{\mathrm{j}}\chi^{\alpha\beta}.
    \end{equation*}
    Since $\bigcup_{\alpha,\beta=1}^{d'}J_{\chi^{\alpha\beta}}$ is not dense in $\Omega$, there is a open ball $B_r(x)$ such that $B_r(x)\cap (\bigcup_{\alpha,\beta=1}^{d'}J_{\chi^{\alpha\beta}})=\emptyset$. By choosing $B_{\frac{r}{2}}(x)\prec \tilde{u} \prec B_{r}(x)$ and $f=\tilde{L}\tilde{u}$, we obtain
    \begin{equation*}
        Tf-f=0.
    \end{equation*}
    Hence $\sigma(T)=\{1\}$.
    \item This is obvious.
    \item  By Theorem~\ref{thm::NecSufCondRemovSing4Sys}, there are $\alpha_0$ and $v_{\delta}\in C_c^{\infty}(\Omega;\sR^{d'})$ such that
    \begin{equation*}
        \sum_{\beta=1}^{d'}\mu(\Omega;\chi^{\alpha_0\beta},\bar{A}^{\alpha_0\beta},v_{\delta}^{\beta})\neq 0.
    \end{equation*}
    By setting $g=\tilde{L}v_{\delta}$, it is easy to see that $Tg\neq g$. For each $f\in \Ker(T-I)$, let $f_{\eps}=f+\eps\frac{g}{\norm{g}_{L^2(\Omega;\sR^{d'})}}$. We have $f_{\eps}\notin \Ker(T-I)$ for all $\eps>0$ and that $\lim_{\eps\to 0}f_{\eps}=f$. Hence $X\backslash \Ker(T-I)$ is dense in $X$. Therefore $L^2(\Omega;\sR^{d'})\backslash \Ker(T-I)$ is dense in $L^2(\Omega;\sR^{d'})$.
    \item  We show that $X$ is closed under $L^2(\Omega;\sR^{d'})$ norm. By Theorem~\ref{thm::ExistAprioEstLinEllipEq&Sys}, there is $C>0$
    \begin{equation*}
        \norm{\tilde{L}w}_{L^2(\Omega;\sR^{d'})}\le C\norm{w}_{H^2(\Omega;\sR^{d'})}.
    \end{equation*}
    By choosing a Cauchy sequence $\{f_k\}_{k=1}^{\infty}\subseteq X$ with $f_k=\tilde{L}\tilde{u}_k$, there is $f\in L^2(\Omega;\sR^{d'})$ and $\tilde{u}\in H^1_0(\Omega;\sR^{d'})\cap H^2(\Omega;\sR^{d'})$ such that
    \begin{align*}
        \lim_{k\to\infty}\norm{f-f_k}_{L^2(\Omega;\sR^{d'})}
        &=0,\\
        \lim_{k\to\infty}\norm{\tilde{u}-\tilde{u}_k}_{H^2(\Omega;\sR^{d'})}
        &=0.
    \end{align*}
    Now we show that $\tilde{L}\tilde{u}=f$, $\fL^{d}$-a.e. and that $\norm{\tilde{u}}_{H^2(\Omega;\sR^{d'})}\le C \norm{f}_{L^2(\Omega;\sR^{d'})}$.

    Note that 
    \begin{align*}
        \norm{\tilde{L}\tilde{u}-f}_{L^2(\Omega;\sR^{d'})}
        &\le \norm{\tilde{L}\tilde{u}-\tilde{L}\tilde{u}_k}_{L^2(\Omega;\sR^{d'})}+\norm{\tilde{L}\tilde{u}_k-f}_{L^2(\Omega;\sR^{d'})}
        \\
        &\le C\norm{\tilde{u}-\tilde{u}_k}_{H^2(\Omega;\sR^{d'})}+\norm{f_k-f}_{L^2(\Omega;\sR^{d'})}.
    \end{align*}
    Taking $k\to\infty$, we obtain $\norm{\tilde{L}\tilde{u}-f}_{L^2(\Omega;\sR^{d'})}=0$. Moreover,
    \begin{equation}\label{eq::conrtolofsystem}
        \begin{aligned}
            \norm{\tilde{u}}_{H^2(\Omega;\sR^{d'})}&\le \norm{\tilde{u}_k-\tilde{u}}_{H^2(\Omega;\sR^{d'})}+\norm{\tilde{u}_k}_{H^2(\Omega;\sR^{d'})}
            \\&\le C\norm{f_k}_{L^2(\Omega;\sR^{d'})}+ \norm{\tilde{u}_k-\tilde{u}}_{H^2(\Omega;\sR^{d'})}
            \\&\le C\norm{f}_{L^2(\Omega;\sR^{d'})}+C\norm{f-f_k}_{L^2(\Omega;\sR^{d'})}+ \norm{\tilde{u}_k-\tilde{u}}_{H^2(\Omega;\sR^{d'})}.
        \end{aligned}
    \end{equation}
    Taking $k\to \infty$ again, we have $\norm{\tilde{u}}_{H^2(\Omega;\sR^{d'})}\le C\norm{f}_{L^2(\Omega;\sR^{d'})}$. Hence $C$ is a closed set.

    Now by letting $\{f_k\}_{k=1}^{\infty}\subseteq X\cap \Ker(T-I)$ be a Cauchy sequence under $L^2$ norm,  there is an $f$ such that
    \begin{equation*}
        \lim_{k\to\infty}f_k=f\in X.
    \end{equation*}
    In the rest, we show $f\in \Ker(T-I)$.

    Also let $u$ and $u_k$ be the solution to~\eqref{eq::OriginEq} with data $f$ and $f_k$, respectively.
    Since $f_k\in \Ker(T-I)$ for all $k$, then $u_k=\tilde{u}_k$, $\fL^d$-a.e..

    By~\eqref{eq::conrtolofsystem}, we have:
    \begin{equation*}
        \begin{aligned}
            \norm{u-u_k}_{L^2(\Omega;\sR^{d'})}&\le C\norm{f-f_k}_{L^2(\Omega;\sR^{d'})},
            \\ \norm{\tilde{u}-\tilde{u}_k}_{L^2(\Omega;\sR^{d'})}&\le C\norm{f-f_k}_{L^2(\Omega;\sR^{d'})},
        \end{aligned}
    \end{equation*}
    which indicates that $u=\tilde{u}$, $\fL^d$-a.e.. Thus 
    \begin{equation*}
        Tf=L\tilde{u}=Lu=f,
    \end{equation*}
    which means $Tf=f$ and hence $f\in \Ker(T-I)$.
\end{enumerate}
\end{proof}
When $d'=1$, we have $X=L^2(\Omega)$ and when $d'\ge 1$, $X$ is the largest space that makes equation~\eqref{eq::ModEq} solvable. Theorem~\ref{thm::EigenValueEigenSpace} (\romannumeral3) and (\romannumeral4) together claim that most of $f$ in $X$ make deviation occur.

To illustrate Theorem~\ref{thm::EigenValueEigenSpace}, we show in the following example that for $d'=1$ and $f\in \Ker(T-I)$, we have $u=\tilde{u}$ and the numerical solution $u_{\theta}$ matches them quite accurately. Of course, such $f$ should be very rare according to Theorem~\ref{thm::EigenValueEigenSpace}.

\begin{example}[back to 1-d]
    We again consider~\eqref{eq::1dEq} with coefficients and right hand side as follows:
    \begin{equation}\label{eq::OneDimNonDeviate}
        A(x)=\left\{
        \begin{aligned}
            & \tfrac{1}{2}, & & x\in (-1,0), \\
            &1, & & x\in [0,1),
        \end{aligned}
        \right.\quad 
        f(x)=\left\{
        \begin{aligned}
            & -1, & & x\in (-1,0), \\
            & -2, & & x\in [0,1).
        \end{aligned}
        \right.
    \end{equation}
    For this problem, we have $u=\tilde{u}=x^2-1$ and hence $D_{x}\tilde{u} = 2x$ which satisfies $D_{x}\tilde{u}(0) =0$. Thus by Theorem~\ref{thm::EigenValueEigenSpace}, $Tf=f$, which means the equalities hold: $u=\tilde{u}$. Here the numerical simulation is under the same setting as that of Section~\ref{sec::FailExPINN}. 
    
    \begin{figure}[H]
        \centering	
        \subfigure[Comparison of solutions]{\includegraphics[scale=0.35]{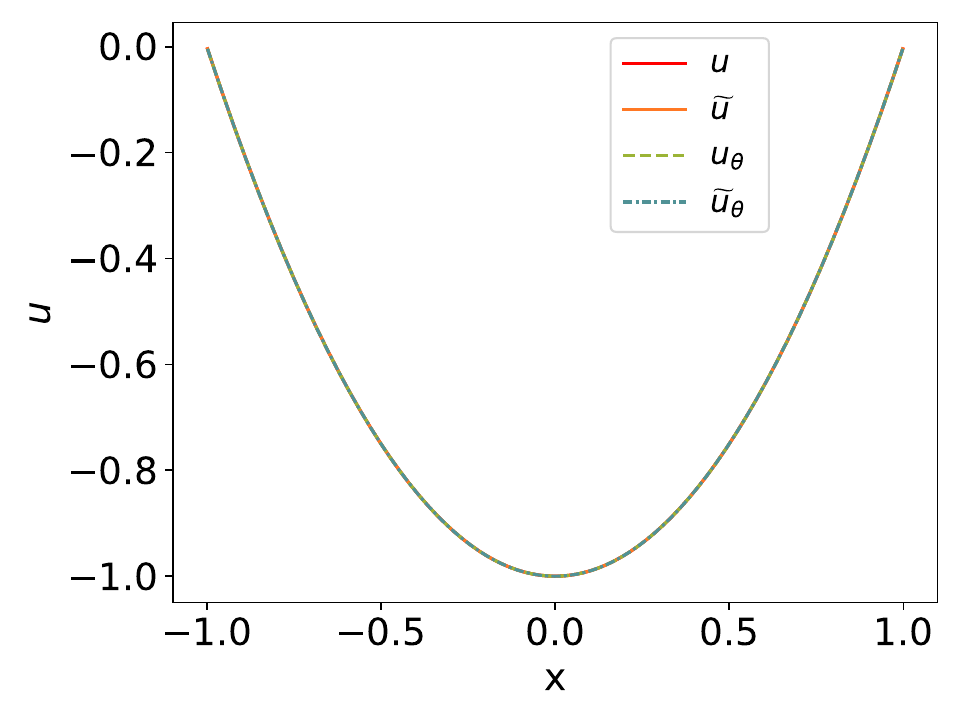}}
    	\subfigure[Comparison of gradients]{
    	\includegraphics[scale=0.35]{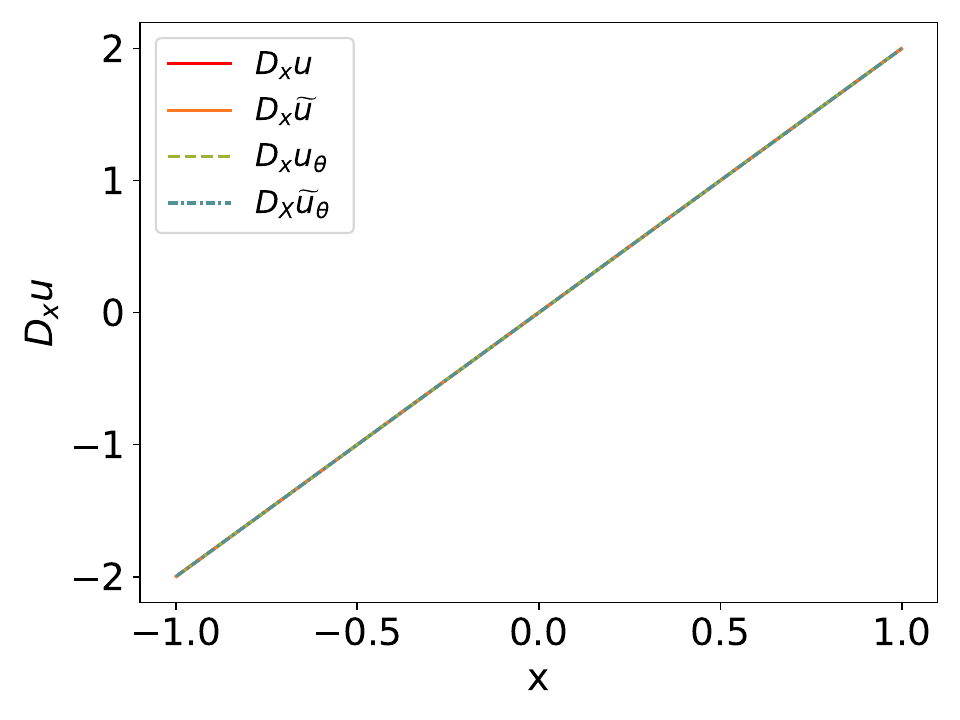}}
    	\caption{PINN can fit the exact solution to~\eqref{eq::1dEq} with coefficients as~\eqref{eq::OneDimNonDeviate}.}
    	\label{fig::OneDimNonDeviate}
    \end{figure}
    The numerical simulation indicates $u=\tilde{u}\approx u_{\theta}$, which verifies Theorem~\ref{thm::EigenValueEigenSpace}, though only for very rare $f$.
\end{example}

The behavior of RM method solving linear PDEs satisfying Assumption~\ref{assum::BVCoeff} can be summarized as follows: if $\mu(\cdot;\chi,\bar{A},\tilde{u})=0$, then $u_{\theta}\approx \tilde{u}_{\theta}\approx \tilde{u} = u$,  if $\mu(\cdot;\chi,\bar{A},\tilde{u})\neq 0$, then $u_{\theta}\approx \tilde{u}_{\theta}\approx \tilde{u} \neq u$.

\subsection{Deviation occurs severely}\label{sec::RelaDevia}

As we studied in Section~\ref{sec::WorstCaseDevRMSol}, for $f\notin \Ker(T-I)$, the deviation occurs, that is $\norm{u-\tilde{u}}_{H^1(\Omega)}>0$. From this, it is still unknown whether the relative deviation $\sup_{f\in L^2(\Omega)}\frac{\norm{\tilde{u}-u}_{H^1(\Omega)}}{\norm{\tilde{u}}_{H^1(\Omega)}}$ is bounded or not. 
In this section, we step further and prove that the supremum could be infinity, i.e., $\sup_{f\in L^2(\Omega)}\frac{\norm{\tilde{u}-u}_{H^1(\Omega)}}{\norm{\tilde{u}}_{H^1(\Omega)}}=+\infty$ (See Theorem~\ref{thm::UnbddRLinEllipEq}). Furthermore, we will obtain a stronger result (See Proposition~\ref{prop::relativedeviation}) showing that even for data $f$ sufficiently close to the RM-invariant subspace $\Ker(T-I)$, the relative deviation can still achieve have a finite value. 

\begin{theorem}[Unboundedness of relative deviation for linear elliptic equation]\label{thm::UnbddRLinEllipEq}
     Suppose that Assumption~\ref{assum::BVCoeff} holds with $d'\ge 1$. For the countably many $(d-1)$-dimensional $C^1$ manifolds $\{S_i^{\alpha\beta}\}_{i=1}^{\infty}$ such that $\fH^{d-1}\left(\bigcup_{\alpha,\beta=1}^{d'}\left(J_{\chi^{\alpha\beta}}-\bigcup_{i=1}^{\infty}S^{\alpha\beta}_i\right)\right)=0$ (see in Theorem~\ref{thm::PropOfApproxJumpSetForBV}), suppose that there is a constant $r_0>0$ such that for $(i,\alpha_1,\beta_1) \neq (j,\alpha_2,\beta_2)$, $\dist(S^{\alpha_1\beta_1}_i,S^{\alpha_2\beta_2}_j)=\inf_{x\in S^{\alpha_1\beta_1}_i,y\in S^{\alpha_2\beta_2}_j}\abs{x-y}\ge r_0$. Then 
     \begin{equation}
         \sup_{f\in L^2(\Omega;\sR^{d'})}\frac{\norm{\tilde{u}-u}_{H^1(\Omega;\sR^{d'})}}{\norm{\tilde{u}}_{H^1(\Omega;\sR^{d'})}}=+\infty.
     \end{equation}
     where $u$ and $\tilde{u}$ are solutions to the original system~\eqref{eq::OriginEq} and modified system~\eqref{eq::ModEq} with data $f\in L^2(\Omega;\sR^{d'})$, respectively.
\end{theorem}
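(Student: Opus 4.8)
The plan is to exhibit a sequence $\{\tilde u_n\}\subseteq C_c^{\infty}(\Omega;\sR^{d'})$ that concentrates normally onto a single sheet of one jump set: the bulk $H^1$ norm of $\tilde u_n$ will vanish while the measure $L\tilde u_n-\tilde L\tilde u_n$ keeps a fixed positive mass in $H^{-1}$. Set $f_n:=\tilde L\tilde u_n\in X\subseteq L^2(\Omega;\sR^{d'})$ and let $u_n\in H^1_0(\Omega;\sR^{d'})$ solve \eqref{eq::OriginEq} with data $f_n$ (Theorem~\ref{thm::ExistAprioEstLinEllipEq&Sys}~(i)). Since $u_n-\tilde u_n\in H^1_0(\Omega;\sR^{d'})$ solves $L(u_n-\tilde u_n)=\tilde L\tilde u_n-L\tilde u_n$, the lower bound in Theorem~\ref{thm::ExistAprioEstLinEllipEq&Sys}~(ii) gives
\[
    \norm{\tilde u_n-u_n}_{H^1(\Omega;\sR^{d'})}\ \ge\ \tfrac1C\norm{L\tilde u_n-\tilde L\tilde u_n}_{H^{-1}(\Omega;\sR^{d'})}.
\]
Thus it suffices to build $\tilde u_n$ with $\norm{\tilde u_n}_{H^1(\Omega;\sR^{d'})}\to0$ and $\norm{L\tilde u_n-\tilde L\tilde u_n}_{H^{-1}(\Omega;\sR^{d'})}\ge c_0>0$; the ratio then exceeds $c_0/(C\norm{\tilde u_n}_{H^1})\to+\infty$. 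Note that merely rescaling one fixed datum $f\mapsto tf$ cannot achieve this, since it scales numerator and denominator equally — genuine concentration near the singular set is needed, which is exactly where the separation hypothesis on $\{S^{\alpha\beta}_i\}$ enters.

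For the construction, fix $\alpha_0,\beta_0$ with $\fH^{d-1}(J_{\chi^{\alpha_0\beta_0}})>0$ and, exactly as in the proof of Theorem~\ref{thm::NecSufCondRemovSing}, choose a density point $z$ of $S\cap J_{\chi^{\alpha_0\beta_0}}$ on one sheet $S=S^{\alpha_0\beta_0}_{i_0}$, a $C^1$ chart $\Phi$ on a neighbourhood $U_z\ni z$ with $\abs{\det D\Phi}=1$ flattening $S$ to $\{y_d=0\}$ (coordinates $y=\Phi(x)=(y',y_d)$), and $V:=\Phi_d$, so that $V=0$ and $DV=\abs{DV}\nu_{\chi^{\alpha_0\beta_0}}$ on $S$ with $0<c_1\le\abs{DV}\le c_2$ on $U_z$. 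Using $\dist(S^{\alpha_1\beta_1}_i,S^{\alpha_2\beta_2}_j)\ge r_0$, shrink $U_z$ into $B_{r_0/2}(z)$; then $U_z$ meets no other sheet, so within $U_z$ the only jump contributing to $L(\cdot)-\tilde L(\cdot)$ is that of $\chi^{\alpha_0\beta_0}$ along $S$, and after splitting $J^{\pm}$ and shrinking further we may assume $\chi^{\alpha_0\beta_0+}-\chi^{\alpha_0\beta_0-}\ge\kappa>0$ on $S\cap U_z\cap J_{\chi^{\alpha_0\beta_0}}$, a set of positive $\fH^{d-1}$ measure. Now set $\tilde u_n^{\beta_0}(x):=\tfrac1n\,\eta(y')\,g(ny_d)$ and $\tilde u_n^{\beta}:=0$ for $\beta\ne\beta_0$, where $\eta\in C_c^{\infty}(\sR^{d-1})$ is a fixed bump with $\eta\ge0$ and $\eta\equiv1$ on a neighbourhood of $y'=0$ small enough that $\{\eta=1,\,y_d=0\}$ still lies in $U_z$ and meets $J_{\chi^{\alpha_0\beta_0}}$ in positive $\fH^{d-1}$ measure, and $g\in C_c^{\infty}(\sR)$ is fixed with $g(0)=0$, $g'(0)=1$. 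Then $\tilde u_n\in C_c^{\infty}(\Omega;\sR^{d'})\subseteq H^1_0\cap H^2$, so $f_n\in L^2$.

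A change of variables (Jacobian $1$, $D\Phi$ bounded) gives $\norm{\tilde u_n^{\beta_0}}_{L^2}^2=O(n^{-3})$ and $\norm{D\tilde u_n^{\beta_0}}_{L^2}^2=O(n^{-1})$: the tangential part of $D\tilde u_n^{\beta_0}$ carries the factor $g(ny_d)$ and contributes only $O(n^{-3})$, while the normal part carries $\eta(y')g'(ny_d)$ and yields the dominant $O(n^{-1})$. Hence $\norm{\tilde u_n}_{H^1(\Omega;\sR^{d'})}=O(n^{-1/2})\to0$. On $S$, however, $g(ny_d)=g(0)=0$ annihilates the tangential part while $g'(0)=1$, so $D\tilde u_n^{\beta_0}\big|_{S}=\eta(y')\,DV$, \emph{independent of} $n$. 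By Proposition~\ref{prop::RMTransModEq}~(iii), $L\tilde u_n-\tilde L\tilde u_n$ is an $H^{-1}$ Radon measure; near $\operatorname{supp}\tilde u_n$ it lives only on $S$, and only its $\alpha_0$-th component is nonzero. Pairing against the test field whose single nonzero component is a fixed $\phi\in C_c^{\infty}(U_z)$ in slot $\alpha_0$, with $\phi\equiv1$ on a fixed neighbourhood of $\{y_d=0\}\cap\operatorname{supp}\eta$, yields
\[
    \bigl|\langle L\tilde u_n-\tilde L\tilde u_n,\phi\,e_{\alpha_0}\rangle\bigr|
    =\int_{S\cap J_{\chi^{\alpha_0\beta_0}}}(\chi^{\alpha_0\beta_0+}-\chi^{\alpha_0\beta_0-})\,\eta\,\nu_{\chi^{\alpha_0\beta_0}}^{\T}\bar A^{\alpha_0\beta_0}\nu_{\chi^{\alpha_0\beta_0}}\abs{DV}\diff\fH^{d-1}
    \ \ge\ \kappa\,\bar\lambda\,c_1\,\fH^{d-1}\!\bigl(\{\eta=1\}\cap S\cap J_{\chi^{\alpha_0\beta_0}}\bigr)=:c_0>0,
\]
uniformly in $n$, using ellipticity $\nu^{\T}\bar A^{\alpha_0\beta_0}\nu\ge\bar\lambda$ and $\eta\ge0$. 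Therefore $\norm{L\tilde u_n-\tilde L\tilde u_n}_{H^{-1}(\Omega;\sR^{d'})}\ge c_0/\norm{\phi}_{H^1_0(\Omega)}>0$ for all $n$, and the reduction in the first paragraph finishes the proof.

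The crux is arranging the two competing properties of $\tilde u_n$ simultaneously: the bulk $H^1$ norm must tend to $0$, yet the \emph{trace of the normal derivative on the singular hypersurface} must stay bounded away from $0$ — the profile $\tfrac1n\eta(y')g(ny_d)$ with $g(0)=0$, $g'(0)=1$ is designed precisely for this. The second delicate point is to ensure the localization near one sheet $S$ collects no competing, possibly cancelling, jump contributions from other sheets of the same or a different $J_{\chi^{\alpha\beta}}$; this is exactly the purpose of the uniform separation $\dist(S^{\alpha_1\beta_1}_i,S^{\alpha_2\beta_2}_j)\ge r_0$. Everything else — the change-of-variables estimates, the flattening chart borrowed from the proof of Theorem~\ref{thm::NecSufCondRemovSing}, and the passage from an $H^{-1}$ lower bound to the $H^1$ lower bound via Theorem~\ref{thm::ExistAprioEstLinEllipEq&Sys}~(ii) — is routine.
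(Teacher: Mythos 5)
Your strategy is a genuine alternative to the paper's: rather than anisotropically rescaling a fixed $v=\zeta V\in C_c^1(\Omega)$ (and then splitting into two cases according to whether the un-mollified measure $(\chi^+-\chi^-)\nu_\chi\bar A Dv\,\diff\fH^{d-1}$ has finite $H^{-1}$ norm, as the paper does), you construct a one-parameter family $\tilde u_n$ with an explicit separable profile $\tfrac1n\eta(y')g(ny_d)$, $g(0)=0$, $g'(0)=1$. This cleanly decouples the two competing requirements — vanishing bulk $H^1$ norm versus a fixed nonzero normal trace on the sheet — and sidesteps the paper's case analysis entirely. The reduction via the lower bound in Theorem~\ref{thm::ExistAprioEstLinEllipEq&Sys}~(ii) and the role of the separation hypothesis are identified exactly as in the paper; the scaling $\norm{\tilde u_n}_{H^1}=O(n^{-1/2})$ with the $H^{-1}$ mass of the jump measure bounded below is the same asymptotic as the paper's $r\to0^+$ computation after the normalization $\tv\sim n\,\tilde u_n$. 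In that sense this is a cleaner route to the same estimate.

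There are, however, two steps that do not hold as written and must be patched. First, the claim $\tilde u_n\in C_c^\infty(\Omega;\sR^{d'})\subseteq H^1_0\cap H^2$ is false: the flattening chart $\Phi$ is only $C^1$ because $S$ is only a $C^1$ hypersurface (its graph representation $h$ is merely $C^1$), so $\tilde u_n(x)=\tfrac1n\eta(\Phi'(x))g(n\Phi_d(x))$ is only $C^1_c(\Omega)$; it need not lie in $H^2(\Omega)$, in which case $\tilde L\tilde u_n$ is not defined in $L^2$ and $f_n\notin L^2$, and Proposition~\ref{prop::RMTransModEq}~(iii) does not apply. You must mollify, as the paper does: set $\tilde u_n^\delta=\rho_\delta*\tilde u_n\in C_c^\infty(\Omega)$ and, for each $n$, take $\delta=\delta(n)$ small enough so that $\norm{\tilde u_n^\delta}_{H^1}\le 2\norm{\tilde u_n}_{H^1}$ and the pairing $\langle L\tilde u_n^\delta-\tilde L\tilde u_n^\delta,\phi e_{\alpha_0}\rangle$ is within half of the pairing for $\tilde u_n$; this is possible because $D\tilde u_n^\delta\to D\tilde u_n$ uniformly and $\fH^{d-1}(S\cap J_{\chi^{\alpha_0\beta_0}}\cap\operatorname{supp}\eta)<\infty$.

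Second, the assertion that, after shrinking $U_z$, one may assume $\chi^{\alpha_0\beta_0+}-\chi^{\alpha_0\beta_0-}\ge\kappa>0$ on \emph{all} of $S\cap U_z\cap J_{\chi^{\alpha_0\beta_0}}$ is not literally achievable: the set $J^-$ where the jump has the opposite sign need not become empty under shrinking, only small, so the integrand in your lower bound is not nonnegative on the full domain of integration and the negative part could a priori eat the positive part. The fix is standard and is what the paper does with dyadic cubes: pick $z$ to be an $\fH^{d-1}\lfloor_S$-density point of $J^+_\kappa:=\{x\in J^+: \chi^{\alpha_0\beta_0+}(x)-\chi^{\alpha_0\beta_0-}(x)\ge\kappa\}$ (which has positive measure for some $\kappa>0$), so that after shrinking $\operatorname{supp}\eta$ the negative contribution from $J^-\cap S\cap\operatorname{supp}\eta$ (bounded by $2\chi_{\max}\bar\Lambda\norm{DV}_{L^\infty}\fH^{d-1}(J^-\cap S\cap\operatorname{supp}\eta)$ via Lemma~\ref{lem::BddJump}) is at most half the positive contribution from $J^+_\kappa\cap S\cap\{\eta=1\}$. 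With these two repairs your argument closes, and the rest — the $O(n^{-1/2})$ scaling, the $n$-independence of $D\tilde u_n^{\beta_0}|_S$, and the role of $r_0$ in killing contributions from other sheets — is correct.
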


\begin{proof} Let's start with the case $d'=1$: By      Theorem~\ref{thm::ExistAprioEstLinEllipEq&Sys},     we have
    \begin{equation}\label{eq::LowBounByDualSoblev}
        \frac{\norm{\tilde{u}-u}_{H^1(\Omega)}}{\norm{\tilde{u}}_{H^1(\Omega)}}\ge C \frac{\norm{Tf-f}_{H^{-1}(\Omega)}}{\norm{\tilde{u}}_{H^1(\Omega)}}=C\frac{\norm{(\chi^+-\chi^-)\nu_{\chi}\bar{A}D\tilde{u}\diff \fH^{d-1}}_{H^{-1}(\Omega)}}{\norm{\tilde{u}}_{H^1(\Omega)}},
    \end{equation}
    where for the last equality, we refer readers to Proposition~\ref{prop::RMTransModEq}.
    Combining Theorems~\ref{thm::NecSufCondRemovSing} and~\ref{thm::NecSufCondRemovSingLin}, there is a $v_{\delta}=\rho_{\delta}*v\in C_c^{\infty}(\Omega)$ with
    $v=\zeta V\in C_c^1(\Omega)$, 
    where $\rho_{\delta}$, $\zeta$ and $V$ are defined according to the proofs of Theorems~\ref{thm::NecSufCondRemovSing} and~\ref{thm::NecSufCondRemovSingLin}. Since
    $\norm{v}_{H^1(\Omega)}>0$ and $\mu(\Omega;\chi,\bar{A},v)>0$,
    we have
    \begin{equation}\label{eq::NonZeroLowBounOfOrigFunc}
        \norm{(\chi^+-\chi^-)\nu_{\chi}\bar{A}Dv\diff \fH^{d-1}}_{H^{-1}(\Omega)}>0.
    \end{equation}

    \begin{case}
        Proof of the theorem when $\norm{(\chi^+-\chi^-)\nu_{\chi}\bar{A}Dv\diff \fH^{d-1}}_{H^{-1}(\Omega)}=+\infty$.
    \end{case}
    For each $k\in \sN^+$, there is a $\varphi_k\in C_c^{\infty}(\Omega)$ such that
    \begin{equation*}
        \mu(\Omega; \chi, \varphi_k\bar{A}, v)\ge k\norm{\varphi_k}_{H^1(\Omega)}.
    \end{equation*}
    We can choose sufficiently small $\delta$ such that 
    \begin{equation*}
        \mu(\Omega; \chi, \varphi_k\bar{A},v_{\delta}-v)\le \norm{\varphi_k}_{H^1(\Omega)},
    \end{equation*}
    because $v_{\delta}=\rho_{\delta}*v\to v$ uniformly and $\norm{\varphi_k}_{L^{\infty}(\Omega)}<+\infty$. Hence
    \begin{equation*}
        \mu(\Omega; \chi, \varphi_k\bar{A}, v_{\delta})\ge (k-1)\norm{\varphi_k}_{H^1(\Omega)}.
    \end{equation*}
    Therefore
    \begin{equation}
        \norm{(\chi^+-\chi^-)\nu_{\chi}\bar{A}Dv_{\delta}\diff \fH^{d-1}}_{H^{-1}(\Omega)}\geq k-1.
    \end{equation}
    Moreover $\lim_{\delta\to 0}\norm{v_{\delta}}_{H^1(\Omega)}=\norm{v}_{H^1(\Omega)}$, which indicates that
    \begin{equation*}
        \lim_{\delta\to 0}\frac{\norm{\tilde{u}-u}_{H^1(\Omega)}}{\norm{\tilde{u}}_{H^1(\Omega)}}=+\infty,
    \end{equation*}
    where $u, \tilde{u}$ are solutions to equation~\eqref{eq::OriginEq}
    and~\eqref{eq::ModEq} respectively with the data $f=\tilde{L}v_{\delta}$.

    \begin{case}
        Proof of the theorem when $\norm{(\chi^+-\chi^-)\nu_{\chi}\bar{A}Dv\diff \fH^{d-1}}_{H^{-1}(\Omega)}<+\infty$.
    \end{case}
     Recall that
    \begin{equation}\label{eq::LevSetFunc}
        V(x)=(x_{\tau})_{d}-h(x'_{\tau}),
    \end{equation}
    where $x'_{\tau}=((x_{\tau})_1,\ldots,(x_{\tau})_{d-1})$, $h$ is a $C^1$ function, and for all $x\in S\cap U_z$, $V(x)=0$, and the unit normal vector at $x\in S\cap U_z$ is $\nu_{\chi}=\frac{DV}{\abs{DV}}$ by proof of Theorem~\ref{thm::NecSufCondRemovSing}.

    \begin{claim_03}
        For each $r\in (0,1]$, there is a function $\tv$ such that
        \begin{equation*}
            D\tv=\frac{1}{r}Dv, \text{\ \  for all $x\in S\cap U_z.$}  
        \end{equation*}
    \end{claim_03}
    By our construction in the proof of Theorems~\ref{thm::NecSufCondRemovSing} and~\ref{thm::NecSufCondRemovSingLin}, $v=\zeta V$ is supported in $O^{\eps}$ and $v_{\delta}$ is supported in $O^{2\eps}$ with
    \begin{equation*}
        \eps=\delta < \frac{1}{4}\min \left(\dist\left(\Psi(\bar{Q}_k), \partial U_z\right), r_0\right).
    \end{equation*}
    Since $V(x)=0$, $x\in S\cap U_z$ and $\norm{DV}_{L^{\infty}(O^{\eps})}\leq C$ according to~\eqref{eq::V(x)UpperBound}, then $\abs{V(x)}\leq 2C\eps$, $x\in O^{2\eps}$. 
    By choosing $\eps\leq \eps_1$ with a sufficiently small $\eps_1>0$, we can define for $0<r\leq 1$
    \begin{align*}
        \tC_r
        &=\{(y',y_d)\colon\,\exists\omega\in\sR\text{ such that } \Psi(y',\omega)\in O^{2\eps},\  \abs{y_d}\le 2Cr\eps\}\\
        &=\tB \times [-2Cr\eps,2Cr\eps],
    \end{align*}
    where $\tB=\{y'\colon\,\exists\omega\in\sR\text{ such that } \Psi(y',\omega)\in O^{2\eps}\}$ and
    $O^{2\eps}\subseteq \Psi(\tC_1) \subseteq U_z$. Thus we have $\norm{v_{\delta}}_{H^{1}(\Omega)}=\norm{v_{\delta}}_{H^1(\Psi(\tC_1))}$.

    Consider the integration over $\tC_1$
    \begin{align*}
        \int_{\Psi(\tC_1)}v_{\delta}(x)\diff{x}
        &=\int_{\Psi(\tC_1)}[\rho_{\delta}*(\zeta V)](x)\diff{x}
        \\ 
        &=\int_{\tB}\int_{-2C\eps}^{2C\eps}[\rho_{\delta}*(\zeta V)]\circ\tau^{-1}(y',y_d+h(y'))\diff{y_d}\diff{y'}.
    \end{align*}
    Denote $\tQ(y)=[\rho_{\delta}*(\zeta V)]\circ\tau^{-1}\circ\psi(y)$ with $y=(y',y_d)$ and consider the rescaled function $\tF(y)$ defined on $\tC_r$ and $\tv(x)$ defined on $x\in U_z$ respectively as
    \begin{equation}
        \tF(y)=\tF(y',y_d)=\tQ\left(y',\frac{1}{r}y_d\right)=[\rho_{\delta}*v]\circ\tau^{-1}\left(y', \frac{1}{r}y_d+h(y')\right)
    \end{equation}
    and
    \begin{equation*}
        \tv(x)
        =v\circ \tau^{-1} \left(x_{\tau}',\frac{1}{r}(x_{\tau})_{d}+\frac{r-1}{r}h(x_{\tau}')\right).
    \end{equation*}
    It is easy to verify that $\tv\in H_0^1(\Omega)$. For simplicity, we denote
    \begin{align*}
        \tz(x)&=\zeta\circ \tau^{-1} \left(x_{\tau}',\frac{1}{r}(x_{\tau})_{d}+\frac{r-1}{r}h(x_{\tau}')\right),\\
        \tV(x)&=V\circ \tau^{-1} \left(x_{\tau}',\frac{1}{r}(x_{\tau})_{d}+\frac{r-1}{r}h(x_{\tau}')\right).
    \end{align*}

    By the definition of $V$ as in~\eqref{eq::LevSetFunc}, we have for $x\in U_z$
    \begin{align*}
        \tV(x) &=V\circ \tau^{-1}\left(x'_{\tau},\frac{1}{r}(x_{\tau})_{d}+\frac{r-1}{r}h(x'_{\tau})\right)\\
        &=\frac{1}{r}(x_{\tau})_{d}+\frac{r-1}{r}h(x'_{\tau})-h(x'_{\tau})
        =\frac{1}{r}((x_{\tau})_{d}-h(x'_{\tau}))=\frac{1}{r}V(x).
    \end{align*}
    Hence $\tV(x)=V(x)=0$ and $D\tV(x)=\frac{1}{r}DV(x)$ for all $x\in S\cap U_z$. For $x\in S\cap U_z$, $(x_{\tau})_{d}=h(x'_{\tau})$ implies that for each $x\in S\cap U_z$,
    \begin{equation}
        \tz(x)=\zeta\circ\tau^{-1}\left(x'_{\tau},\frac{1}{r}(x_{\tau})_{d}+\frac{r-1}{r}h(x'_{\tau})\right)=\zeta\circ\tau^{-1}(x_{\tau})=\zeta(x).
    \end{equation}
    Thus by the product rule, we have for $x\in S\cap U_z$
    \begin{equation}\label{eq::PropOfRescFuncVOnMaini}
        D\tv=\tV D\tz+\tz D\tV=\tz D\tV=\frac{1}{r}\zeta DV=\frac{1}{r}Dv(x).
    \end{equation}

    \begin{claim_03}
    For $r\in (0,1]$, $\norm{\tv}_{H^1(\Omega)}\le \frac{C}{\sqrt{r}}$ for some constant $C>0$ independent of $r.$ 
    \end{claim_03}
    
    For all $x\in O^{2\eps}$, we have
    \begin{equation}\label{eq::BounOfRescFuncEtaDp}
        \Abs{\tz D\tV}=\frac{1}{r}\Abs{\tz DV}
        \leq \frac{1}{r}\sup_{x\in U_z}\Abs{DV(x)}\leq \frac{C}{r}.
    \end{equation}

    With a little bit abuse of notation, we 
    use $\tau(i)$ to denote the $i$-th entry of $\tau((1,\ldots,d)^{\T})$. For $i\in \{1,\ldots,d-1\}$
    \begin{align*}
        D_{\tau(i)}\tz&=D_{\tau(i)}[\zeta]\left(\tau^{-1}\left(x'_{\tau},\frac{1}{r}(x_{\tau})_{d}+\frac{r-1}{r}h(x'_{\tau})\right)\right)\\
        &~~~+\frac{r-1}{r}D_{\tau(d)}[\zeta]\left(\tau^{-1}\left(x'_{\tau},\frac{1}{r}(x_{\tau})_{d}+\frac{r-1}{r}h(x'_{\tau})\right)\right)D_{i}[h](x_{\tau}'),\\
        D_{\tau(d)}\tz
        &=\frac{1}{r}D_{\tau(d)}[\zeta]\left(\tau^{-1}\left(x'_{\tau},\frac{1}{r}(x_{\tau})_{d}+\frac{r-1}{r}h(x'_{\tau})\right)\right).
    \end{align*}
    Since $h$ is $C^1$ function in $U_z$ and $\tau^{-1}\left(x'_{\tau},\frac{1}{r}(x_{\tau})_{d}+\frac{r-1}{r}h(x'_{\tau})\right)\in O^{2\eps}\subseteq U_z$, combining~\eqref{eq::D_etaUpperBound}, we have for all $x\in \tC_r$,
    \begin{equation}
        \Abs{D\tz}\le \frac{d}{r}\frac{C}{\eps}.
    \end{equation}
    Thus 
    \begin{equation}\label{eq::BounOfRescFuncPDeat}
        \Abs{\tV D\tz}=\Abs{\frac{1}{r}V D\tz}
        \le \frac{1}{r}2Cr\eps\frac{d}{r}\frac{C}{\eps}\le \frac{C}{r}.
    \end{equation}
    Combining~\eqref{eq::BounOfRescFuncEtaDp} and $\eqref{eq::BounOfRescFuncPDeat}$, we obtain $\abs{D\tv(x)}\le \frac{2C}{r}$ for all $x\in \tC_r$. Therefore
    \begin{equation*}
        \norm{\tv}_{H^1(\Omega)}^2
        =\norm{\tv}_{L^2(\Omega)}^2+\norm{D\tv}_{L^2(\Omega)}^2
        \le r\int_{H^1(\Psi(\tC_1))}v^2\diff{x}+\frac{C}{r}\le \frac{C}{r},
    \end{equation*}
    where $C$ is independent of $r$.
    \begin{claim_03}
        For each $r\in (0,1]$, there is $\delta>0$ such that the convolution $\tv_{\delta}=\rho_{\delta}*\tv\in H_0^1(\Omega)\cap H^2(\Omega)$ satisfying
        \begin{equation*}
            \norm{(\chi^+-\chi^-)\nu_{\chi}\bar{A}D\tv_{\delta}\diff \fH^{d-1}}_{H^{-1}(\Omega)}>\frac{C}{\sqrt{r}}
        \end{equation*}
        for some constant $C>0$ independent of $r$.
    \end{claim_03}
    Note that
    \begin{equation*}
        \norm{(\chi^+-\chi^-)\nu_{\chi}\bar{A}D\tv\diff \fH^{d-1}}_{H^{-1}(\Omega)}=\frac{1}{r}\norm{(\chi^+-\chi^-)\nu_{\chi}\bar{A}Dv\diff \fH^{d-1}}_{H^{-1}(\Omega)}>0.
    \end{equation*}
    Hence there is $\varphi\in C_c^{\infty}(\Omega)$ such that
    \begin{equation*}
        \mu(\Omega;\chi,\varphi\bar{A},\tv)
        \ge \frac{3}{4}\norm{(\chi^+-\chi^-)\nu_{\chi}\bar{A}D\tv\diff \fH^{d-1}}_{H^{-1}(\Omega)} \norm{\varphi}_{H^1(\Omega)}.
    \end{equation*}
    Let $\tv_{\delta}=\rho_{\delta}*\tv\in H^1_0(\Omega)\cap H^2(\Omega)$, by the property of mollifier $\lim_{\delta\to 0}\norm{\tv_{\delta}-\tv}_{H^1(\Omega)}=0$. We thus can choose $\delta\le \eps_2$ with sufficiently small $\eps_2$ such that
    \begin{equation*}
        \frac{\norm{\tv}_{H^1(\Omega)}}{\norm{\tv_{\delta}}_{H^1(\Omega)}}\geq \frac{1}{2} \text{\ and \ }
        \mu(\Omega;\chi,\varphi\bar{A},\tv_{\delta})
        \ge \frac{1}{2}\norm{(\chi^+-\chi^-)\nu_{\chi}\bar{A}D\tv\diff \fH^{d-1}}_{H^{-1}(\Omega)} \norm{\varphi}_{H^1(\Omega)},
    \end{equation*}
    which implies
    \begin{equation*}
        \norm{(\chi^+-\chi^-)\nu_{\chi}\bar{A}D\tv_{\delta}\diff \fH^{d-1}}_{H^{-1}(\Omega)}
        \ge \frac{1}{2}\norm{(\chi^+-\chi^-)\nu_{\chi}\bar{A}D\tv\diff \fH^{d-1}}_{H^{-1}(\Omega)}.
    \end{equation*}
    Hence there is $\delta=\eps<\min (\frac{\dist(Q_k,\partial U_z)}{4},r_0,\eps_1,\eps_2)$ such that
    \begin{equation}
        \begin{aligned}
            &~~~~\frac{\norm{(\chi^+-\chi^-)\nu_{\chi}\bar{A}D\tv_{\delta}\diff \fH^{d-1}}_{H^{-1}(\Omega)}}{\norm{\tv_{\delta}}_{H^1(\Omega)}}\\
            & =\frac{\norm{(\chi^+-\chi^-)\nu_{\chi}\bar{A}D\tv_{\delta}\diff \fH^{d-1}}_{H^{-1}(\Omega)}}{\norm{(\chi^+-\chi^-)\nu_{\chi}\bar{A}D\tv\diff \fH^{d-1}}_{H^{-1}(\Omega)}}
            \times \frac{\norm{(\chi^+-\chi^-)\nu_{\chi}\bar{A}D\tv\diff \fH^{d-1}}_{H^{-1}(\Omega)}}{\norm{\tv}_{H^1(\Omega)}}\times\frac{\norm{\tv}_{H^1(\Omega)}}{\norm{\tv_{\delta}}_{H^1(\Omega)}}\\
            &\ge \frac{1}{4r}\frac{\norm{(\chi^+-\chi^-)\nu_{\chi}\bar{A}Dv\diff \fH^{d-1}}_{H^{-1}(\Omega)}}{\norm{\tv}_{H^1(\Omega)}}\\
            &\ge \frac{C}{\sqrt{r}}\norm{(\chi^+-\chi^-)\nu_{\chi}\bar{A}Dv\diff \fH^{d-1}}_{H^{-1}(\Omega)}.
        \end{aligned}
    \end{equation}
    Recalling~\eqref{eq::NonZeroLowBounOfOrigFunc} and taking $r\to 0^+$, we obtain
    \begin{equation}\label{eq::UnbddRelErrV_delta}
        \lim_{r\to 0^+}\frac{\norm{(\chi^+-\chi^-)\nu_{\chi}\bar{A}D\tv_{\delta}\diff \fH^{d-1}}_{H^{-1}(\Omega)}}{\norm{\tv_{\delta}}_{H^1(\Omega)}}
        =+\infty.
    \end{equation}  
    Let $\tilde{u}=\tv_{\delta}$ and $f=\tilde{L}\tv_{\delta}$ for sufficiently small $r$. Also recall the definition of $u$. Thus  
   ~\eqref{eq::LowBounByDualSoblev} and~\eqref{eq::UnbddRelErrV_delta} leads to the desired unboundedness of $\frac{\norm{\tilde{u}-u}_{H^1(\Omega)}}{\norm{\tilde{u}}_{H^1(\Omega)}}$.

    For the case of $d'> 1$, since Assumption~\ref{assum::BVCoeff} holds, by Theorem~\ref{thm::ExistAprioEstLinEllipEq&Sys} (\romannumeral2), there is constant $C>0$ such that $\norm{u}_{H^1(\Omega;\sR^{d'})}>\frac{1}{C}\norm{f^{\alpha}}_{H^{-1}(\Omega;\sR^{d'})}$.

    Thus we have
    \begin{align}
        \frac{\norm{\tilde{u}-u}_{H^1(\Omega;\sR^{d'})}}{\norm{\tilde{u}}_{H^1(\Omega;\sR^{d'})}}
        &\ge  \frac{\norm{(Tf)^{\alpha_0}-f^{\alpha_0}}_{H^{-1}(\Omega;\sR^{d'})}}{C\norm{\tilde{u}}_{H^1(\Omega;\sR^{d'})}}\nonumber\\
        &=\frac{\norm{\sum_{\beta=1}^{d'}(\chi^{\alpha_0\beta+}-\chi^{\alpha_0\beta-})\nu_{\chi^{\alpha_0\beta}}\bar{A}D\tilde{u}^{\beta}\diff \fH^{d-1}}_{H^{-1}(\Omega;\sR^{d'})}}{C\norm{\tilde{u}}_{H^1(\Omega;\sR^{d'})}}.\label{eq::LowBounByDualSoblevSys}
    \end{align}

    Recall the Claim 1 of proof of Theorem~\ref{thm::NecSufCondRemovSing4Sys}. Let $V=(0,\ldots,V^{\beta_0}(x),\ldots,0)^{\T}\in C^1(U_z;\sR^{d'})$, $v=\zeta V\in C^1(U_z;\sR^{d'})$ where $\zeta\in C_c^{\infty}(\Omega;\sR^{d'})$ and that $v_{\delta}=\rho_{\delta}*v$. Let $\tilde{u}=v_{\delta}$ and $u$ be the solutions to equation~\eqref{eq::OriginEq} and~\eqref{eq::ModEq} with $f=\tilde{L}\tilde{u}$, we obtain
    \begin{align*}
        \frac{\norm{\tilde{u}-u}_{H^1(\Omega;\sR^{d'})}}{\norm{\tilde{u}}_{H^1(\Omega;\sR^{d'})}}&\ge \frac{\norm{\sum_{\beta=1}^{d'}(\chi^{\alpha_0\beta+}-\chi^{\alpha_0\beta-})\nu_{\chi^{\alpha_0\beta}}\bar{A}D\tilde{u}^{\beta}\diff \fH^{d-1}}_{H^{-1}(\Omega;\sR^{d'})}}{C\norm{\tilde{u}}_{H^1(\Omega;\sR^{d'})}}\\
        &=\frac{\norm{(\chi^{\alpha_0\beta_0+}-\chi^{\alpha_0\beta_0-})\nu_{\chi^{\alpha_0\beta_0}}\bar{A}D\tilde{u}^{\beta_0}\diff \fH^{d-1}}_{H^{-1}(\Omega;\sR^{d'})}}{C\norm{\tilde{u}^{\beta_0}}_{H^1(\Omega;\sR^{d'})}}.
    \end{align*}
    The rest of that follows the previous one, and hence we omits it.
\end{proof}

\begin{prop}[relative deviation is large even for nearly RM-invariant data]\label{prop::relativedeviation}
    Suppose that Assumption~\ref{assum::BVCoeff} holds with $d'\ge 1$ and that $J_{\chi}$ is not dense in $\Omega$. For each $f\in L^2(\Omega;\sR^{d'})$, we define $f_{\parallel}$ and $f_{\bot}$ to be the projections of $f$ onto the closed spaces $\Ker(T-I)$ and $(\Ker(T-I))^{\bot}$, respectively.
    Then there exist constants $\eps_0>0$ and $C>0$ such that for any $0<\eps\leq \eps_0$, we have
    \begin{equation}
        \sup_{\norm{f_{\parallel}}_{L^2(\Omega;\sR^{d'})}=1,\norm{f_{\bot}}_{L^2(\Omega;\sR^{d'})}=\eps}\frac{\norm{\tilde{u}-u}_{H^1(\Omega;\sR^{d'})}}{\norm{\tilde{u}}_{H^1(\Omega;\sR^{d'})}}\ge C,
    \end{equation}
    where $u$ and $\tilde{u}$ are solutions to the original equation~\eqref{eq::OriginEq} and modified equation~\eqref{eq::ModEq}, respectively, with data $f=f_{\parallel}+f_{\bot}$.
\end{prop}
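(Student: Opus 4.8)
The plan is to exhibit, for each small $\eps>0$, an explicit admissible datum $f=f_\parallel+f_\bot$ realising the bound, obtained by superposing a \emph{fixed} ``bad'' perturbation rescaled to $L^2$-norm $\eps$ onto a highly oscillatory RM-invariant datum whose modified solution has $H^1$-norm at most $\eps$. The two structural facts that make this work are the linearity of $L$ and $\tilde L$ and the fact that $f_\parallel\in\Ker(T-I)$ forces the original and modified solutions of the $\parallel$-part to coincide, so that the whole deviation $\tilde u-u$ comes from $f_\bot$, while the oscillatory $\parallel$-part inflates the denominator $\norm{\tilde u}_{H^1}$ by no more than $\eps$. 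First I would fix, via Proposition~\ref{prop::RMTransModEq}(iv), a datum $g\in X$ with $Tg\neq g$ and split it orthogonally in $L^2$ as $g=g_\parallel+g_\bot$ with $g_\parallel\in\Ker(T-I)$ and $g_\bot\in(\Ker(T-I))^{\bot}$; since $X$ is a linear subspace containing $\Ker(T-I)$ we get $g_\bot\in X$, and linearity of $T$ together with $Tg_\parallel=g_\parallel$ gives $Tg_\bot-g_\bot=Tg-g\neq0$, so $g_\bot\neq0$. Writing $a=\norm{g_\bot}_{L^2}>0$ and $b=\norm{Tg_\bot-g_\bot}_{H^{-1}}>0$, and letting $u^{\bot},\tilde u^{\bot}$ solve~\eqref{eq::OriginEq} and~\eqref{eq::ModEq} with datum $g_\bot$, the estimates of Theorem~\ref{thm::ExistAprioEstLinEllipEq&Sys} applied to $\tilde u^{\bot}-u^{\bot}$ (which solves $L(\cdot)=Tg_\bot-g_\bot$) give $\norm{\tilde u^{\bot}-u^{\bot}}_{H^1}\ge \tfrac1C b$ and $\norm{\tilde u^{\bot}}_{H^1}\le\norm{\tilde u^{\bot}}_{H^2}\le Ca$.

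Next I would construct the oscillatory RM-invariant piece. As $\bigcup_{\alpha,\beta}J_{\chi^{\alpha\beta}}$ is not dense, pick an open ball $B$ with $\overline{B}\subseteq\Omega$ and $\overline{B}\cap\bigcup_{\alpha,\beta}J_{\chi^{\alpha\beta}}=\emptyset$; on $B$ the jump parts of the coefficients vanish, so $\tilde L=L$ there. Fix $\phi\in C_c^{\infty}(B)\setminus\{0\}$ and put $\tilde w_k=\phi\,\sin(k x_1)\,e_1$ for $k\in\sN^+$ (the oscillation placed in one component when $d'>1$, and $\tilde w_k=\phi\sin(kx_1)$ when $d'=1$). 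A direct computation using uniform ellipticity of the principal coefficient gives $\norm{\tilde L\tilde w_k}_{L^2}\asymp k^2$ and $\norm{\tilde w_k}_{H^1}\asymp k$, so $w_k:=\tilde w_k/\norm{\tilde L\tilde w_k}_{L^2}$ satisfies $\norm{\tilde L w_k}_{L^2}=1$ and $\norm{w_k}_{H^1}\le C/k\to0$. Set $f_\parallel^{(k)}:=\tilde L w_k$. Because $w_k\in C_c^{\infty}(B;\sR^{d'})$ and $\tilde L=L$ on $B$, uniqueness of solutions (Theorem~\ref{thm::ExistAprioEstLinEllipEq&Sys}) shows that $w_k$ is simultaneously the modified and the original solution with datum $f_\parallel^{(k)}$, whence $Tf_\parallel^{(k)}=Lw_k=f_\parallel^{(k)}$, i.e.\ $f_\parallel^{(k)}\in\Ker(T-I)$ with $\norm{f_\parallel^{(k)}}_{L^2}=1$.

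Finally I would combine the pieces: given $0<\eps\le1$, choose $k=k(\eps)$ with $\norm{w_k}_{H^1}\le\eps$ and take $f_\parallel=f_\parallel^{(k)}$, $f_\bot=\tfrac{\eps}{a}g_\bot$, so $f=f_\parallel+f_\bot$ is admissible ($f_\parallel\in\Ker(T-I)$, $f_\bot\in(\Ker(T-I))^{\bot}$, $\norm{f_\parallel}_{L^2}=1$, $\norm{f_\bot}_{L^2}=\eps$). By linearity and uniqueness, $u=w_k+\tfrac{\eps}{a}u^{\bot}$ and $\tilde u=w_k+\tfrac{\eps}{a}\tilde u^{\bot}$, hence
\begin{equation*}
    \norm{\tilde u-u}_{H^1(\Omega;\sR^{d'})}=\tfrac{\eps}{a}\norm{\tilde u^{\bot}-u^{\bot}}_{H^1(\Omega;\sR^{d'})}\ge\frac{b}{aC}\,\eps,
    \qquad
    \norm{\tilde u}_{H^1(\Omega;\sR^{d'})}\le\norm{w_k}_{H^1}+\tfrac{\eps}{a}\norm{\tilde u^{\bot}}_{H^1}\le(1+C)\eps,
\end{equation*}
so the relative deviation is at least $b/(aC(1+C))=:C_0>0$, a constant independent of $\eps$; taking $\eps_0=1$ finishes the argument, which is uniform in $d'\ge1$. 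I expect the middle step to be the crux: one must produce an RM-invariant datum of unit $L^2$-norm whose modified solution is arbitrarily small in $H^1$, and this is exactly where non-density of the jump set enters, since it furnishes a region on which $\tilde L=L$ and on which compactly supported high-frequency bumps are automatically RM-invariant.
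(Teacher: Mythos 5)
Your proof is correct and proceeds along essentially the same lines as the paper's: decompose $f$ orthogonally, use RM-invariance of $f_\parallel$ to concentrate the deviation $\tilde{u}-u$ entirely in the $f_\bot$-part, and manufacture a unit-$L^2$-norm $f_\parallel\in\Ker(T-I)$ whose modified solution has arbitrarily small $H^1$-norm by placing compactly supported data in a region missed by $J_{\chi}$. The only real difference is cosmetic: the paper drives the $H^1/H^2$ ratio to zero by shrinking the support (a product of cosines on a cube of side $r\to 0$), while you raise the frequency ($\phi\sin(kx_1)$ with $k\to\infty$) on a fixed ball --- two equivalent ways to achieve the same effect. Your explicit choice of $f_\bot$ via Proposition~\ref{prop::RMTransModEq} (iv) together with an orthogonal split of a datum $g$ with $Tg\neq g$ is slightly more careful than the paper's proof, which leaves $f_\bot$ implicit, and this is a welcome small improvement in rigor.
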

We stress that the constant $C>0$ is independent of $\eps$.

\begin{proof}
    The proof for case of equation and system are almost the same and here we provide a proof for case of equation. 
    
    Let $u_1$ and $\tilde{u}_1$ are solutions to the original equation~\eqref{eq::OriginEq} and modified equation~\eqref{eq::ModEq}, respectively, corresponding to the data $f_{\parallel}/\norm{f_{\parallel}}_{L^2(\Omega)}$.
    Let $u_2$ and $\tilde{u}_2$ are solutions to the original equation~\eqref{eq::OriginEq} and modified equation~\eqref{eq::ModEq}, respectively, corresponding to the data $f_{\bot}/\norm{f_{\bot}}_{L^2(\Omega)}$.
    Here $f_{\parallel}$ and $f_{\bot}$ will be determined later.
    
    By linearity, we have 
    \begin{equation*}
        \tilde{u}=\tilde{u}_{1}+\eps\tilde{u}_{2},\ u=u_{1}+\eps u_{2}.
    \end{equation*}    
    This gives rise to
    \begin{align}
        \frac{\norm{\tilde{u}-u}_{H^1(\Omega)}}{\norm{u}_{H^1(\Omega)}}&=\frac{\eps\norm{\tilde{u}_{2}-u_{2}}_{H^1(\Omega)}}{\norm{u}_{H^1(\Omega)}}\nonumber
        \\&\ge \frac{\eps\norm{\tilde{u}_{2}-u_{2}}_{H^1(\Omega)}}{\eps\norm{u_{2}}_{H^1(\Omega)}+\norm{u_{1}}_{H^1(\Omega)}}.\label{eq::Rel}
    \end{align}
    We then claim that we can choose $f_{\parallel}\in \Ker(T-I)$ to make $\norm{u_{1}}_{H^1(\Omega)}$ arbitrarily small. Notice that
    \begin{equation*}
        \norm{u_{1}}_{H^1(\Omega)}=\frac{\norm{u_{1}}_{H^1(\Omega)}}{\norm{u_{1}}_{H^2(\Omega)}}\norm{u_{1}}_{H^2(\Omega)}\le C\norm{f_{\parallel}}_{L^2}\frac{\norm{u_{1}}_{H^1(\Omega)}}{\norm{u_{1}}_{H^2(\Omega)}},
    \end{equation*}
    where the last inequality results from Theorem~\ref{thm::ExistAprioEstLinEllipEq&Sys}. Moreover, since $J_{\chi}$ is not dense in $\Omega$, we can choose a cube $Q_{r}(y)$ centered at $y$ with side length $r$ such that $Q_{r}(y)\cap J_{\chi} = \emptyset$ with $r$ to be determined later. We define for $x=(x_1,x_2,\ldots,x_d)\in Q_r(y)$ 
    \begin{equation*}
        u_{g}(x)=\prod_{i=1}^{d}\left[\cos \left( \frac{\pi(x_i-y_i)}{r}\right)+1\right].
    \end{equation*}
    Then
    \begin{equation*}
        \norm{u_{g}}_{H^1(\Omega)}^2\le (8r)^d+d\pi^2 (8r)^{d-1}\frac{1}{r}.
    \end{equation*}
    Since 
    \begin{equation*}
        \norm{u_{g}}_{H^2(\Omega)}^2\ge \norm{\Delta u_{g}}_{L^2(\Omega)}^2=d(3\pi r)^{d-1}\pi^4\frac{1}{r^3},
    \end{equation*}
    we have
    \begin{equation*}
        \frac{\norm{u_{g}}_{H^1(\Omega)}}{\norm{u_{g}}_{H^2(\Omega)}}\le \left(\frac{2\pi^2d8^{d-1}r^{d-2}}{\pi^{4}d(3\pi)^{d-1}r^{d-4}}\right)^{1/2}<r.
    \end{equation*}
    Let $g=\tilde{L}u_g$. It is clear that $g\in\Ker(T-I)$. By setting $f_{\parallel}=\frac{g}{\norm{g}_{L^2}}$, we have
    \begin{equation*}
        \norm{u_{1}}_{H^1(\Omega)}=\frac{\norm{u_{1}}_{H^1(\Omega)}}{\norm{u_{1}}_{H^2(\Omega)}}\norm{u_{1}}_{H^2(\Omega)}=\frac{\norm{u_{g}}_{H^1(\Omega)}}{\norm{u_{g}}_{H^2(\Omega)}}\norm{u_{1}}_{H^2(\Omega)}\le C r.
    \end{equation*}
    Choose a sufficiently small $r$, say $r\le \frac{\eps\norm{u_{2}}_{H^1(\Omega)}}{4C}$, and plugin it into \eqref{eq::Rel}. We obtain
    \begin{equation}\label{eq::RelErr_u}
        \frac{\norm{\tilde{u}-u}_{H^1(\Omega)}}{\norm{u}_{H^1(\Omega)}}>\frac{4}{5}\frac{\norm{\tilde{u}_{2}-u_{2}}_{H^1(\Omega)}}{\norm{u_{2}}_{H^1(\Omega)}}.
    \end{equation}
    Notice that
    \begin{equation*}
        \frac{\norm{u}_{H^1(\Omega)}}{\norm{\tilde{u}}_{H^1(\Omega)}}=\frac{\norm{u_1+\eps u_2}_{H^1(\Omega)}}{\norm{u_1+\eps\tilde{u}_2}_{H_1(\Omega)}}\geq \frac{\eps\norm{u_2}_{H^1(\Omega)}-\norm{u_1}_{H^1(\Omega)}}{\eps \norm{\tilde{u}_2}_{H_2(\Omega)}+\norm{u_1}_{H_1(\Omega)}}\ge \frac{\eps\norm{u_2}_{H^1(\Omega)}-Cr}{\eps\norm{\tilde{u}_2}_{H_1(\Omega)}+Cr }>\frac{1}{2}.
    \end{equation*}
    This with \eqref{eq::RelErr_u} gives rise to
    \begin{equation*}
        \frac{\norm{\tilde{u}-u}_{H^1(\Omega)}}{\norm{\tilde{u}}_{H^1(\Omega)}}>\frac{2}{5}\frac{\norm{\tilde{u}_{2}-u_{2}}_{H^1(\Omega)}}{\norm{u_{2}}_{H^1(\Omega)}}.
    \end{equation*}
    
    The last right hand side is a constant, and hence the proof is completed.
\end{proof}

\subsection{Implicit bias towards the solution to the modified equation}\label{sec::ImplicitBias}
For a given function $w$, the population risk in the residual minimization method for solving the modified equation/system reads as
\begin{equation}
    \tilde{R}(w)=\int_{\Omega}(\tilde{L}w-f)^2\diff{x} + \gamma \int_{\partial\Omega}(Bw-g)^2\diff{x}.
\end{equation}
In the following theorem and proposition, we show the implicit bias of RM methods via an energetic approach. Roughly speaking, for a function close to $u$, it has large (modified) risk $\tilde{R}$; while for any function has small (modified) risk $\tilde{R}$, it should be close to $\tilde{u}$ in the sense of $H^1$ norm.

\begin{theorem}
    [bias against the solution to the original equation]\label{thm::ImplicitBiasLinSys}
    Suppose that Assumption~\ref{assum::BVCoeff} and~\ref{assum::AprioriEstLinSys} hold with $d'\geq 1$. Let $u$ and $\tilde{u}$ be solutions to the original equation/system~\eqref{eq::OriginEq} and modified equation/system~\eqref{eq::ModEq} with data $f\in X\backslash \Ker(T-I)$, respectively. 

    There exist constants $\eps_0>0$ and $C_0>0$ such that for all $0<\eps<\eps_0$ and for all
    $v\in B_{\eps}(u)$, we have $\tilde{R}(v)\geq C_0>0$.
    Here $B_{\eps}(u)=\{w\in H^1_0(\Omega;\sR^{d'})\cap H^2(\Omega;\sR^{d'})\colon\,\norm{w-u}_{H^1(\Omega;\sR^{d'})}<\eps, \tilde{L}w\in X\}$.
\end{theorem}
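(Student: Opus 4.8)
The plan is to argue by contradiction, using the \emph{a priori} estimate for the modified operator $\tilde{L}$ as a stability estimate. First note that every competitor $v\in B_{\eps}(u)$ lies in $H^1_0(\Omega;\sR^{d'})$, so its trace on $\partial\Omega$ vanishes; since the boundary datum of the modified problem~\eqref{eq::ModEq} is $g\equiv 0$, the boundary contribution to $\tilde{R}$ is zero and hence $\tilde{R}(v)=\norm{\tilde{L}v-f}_{L^2(\Omega;\sR^{d'})}^2$. Thus it suffices to produce $\eps_0>0$ and $C_0>0$ with $\norm{\tilde{L}v-f}_{L^2(\Omega;\sR^{d'})}^2\ge C_0$ for all $v\in B_{\eps_0}(u)$; the statement for all smaller $\eps$ then follows at once from $B_{\eps}(u)\subseteq B_{\eps_0}(u)$.

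Suppose, for contradiction, that no such pair $\eps_0,C_0$ exists. Then there is a sequence $\eps_k\downarrow 0$ and functions $v_k\in B_{\eps_k}(u)$ with $\norm{\tilde{L}v_k-f}_{L^2(\Omega;\sR^{d'})}^2<\tfrac1k$. Put $g_k:=\tilde{L}v_k$; by the definition of $X$ in~\eqref{eq::Range4data} we have $g_k\in X$, while $f\in X$ by hypothesis, and $g_k\to f$ in $L^2(\Omega;\sR^{d'})$. The difference $v_k-\tilde{u}$ belongs to $H^1_0(\Omega;\sR^{d'})\cap H^2(\Omega;\sR^{d'})$ and solves $\tilde{L}(v_k-\tilde{u})=g_k-f$, so Theorem~\ref{thm::ExistAprioEstLinEllipEq&Sys}~(iii) (applicable because Assumptions~\ref{assum::BVCoeff} and~\ref{assum::AprioriEstLinSys} hold) gives
\begin{equation*}
    \norm{v_k-\tilde{u}}_{H^2(\Omega;\sR^{d'})}\le C\norm{g_k-f}_{L^2(\Omega;\sR^{d'})}\longrightarrow 0 ,
\end{equation*}
whence $v_k\to\tilde{u}$ in $H^2(\Omega;\sR^{d'})$, and in particular in $H^1(\Omega;\sR^{d'})$.

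On the other hand $v_k\in B_{\eps_k}(u)$ forces $\norm{v_k-u}_{H^1(\Omega;\sR^{d'})}<\eps_k\to 0$, so $v_k\to u$ in $H^1(\Omega;\sR^{d'})$ as well. By uniqueness of limits in $H^1(\Omega;\sR^{d'})$ we conclude $u=\tilde{u}$ almost everywhere, hence $Tf=L\tilde{u}=Lu=f$, i.e.\ $f\in\Ker(T-I)$. This contradicts $f\in X\backslash\Ker(T-I)$, and the theorem is proved.

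The only load-bearing ingredient is the \emph{a priori} $H^2$-estimate $\norm{w}_{H^2}\le C\norm{\tilde{L}w}_{L^2}$ for $w\in H^1_0\cap H^2$, which here simultaneously guarantees uniqueness of the modified solution and the quantitative closeness $v_k\to\tilde{u}$; the rest is the soft observation that $B_\eps(u)$ pins $v_k$ near $u$, so the two limits are incompatible unless $f\in\Ker(T-I)$. I expect the only points requiring care to be the remark that the boundary term drops out on $H^1_0$ and the verification that $g_k-f$ lies in the (linear, closed) space $X$ on which the a priori estimate is available; no compactness argument is needed, since the estimate is already of contraction type.
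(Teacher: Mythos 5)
Your proof is correct and rests on precisely the same key ingredient as the paper's argument: the \emph{a priori} $H^2$-estimate $\norm{w}_{H^2}\le C\norm{\tilde{L}w}_{L^2}$ from Assumption~\ref{assum::AprioriEstLinSys} applied to $w=v-\tilde{u}$, combined with the strict positivity of $\norm{u-\tilde{u}}_{H^1}$ that follows from $f\notin\Ker(T-I)$. The only difference is presentational: the paper argues directly via the triangle inequality $\norm{v-\tilde{u}}_{H^1}\ge\norm{u-\tilde{u}}_{H^1}-\norm{u-v}_{H^1}$, which yields explicit constants $\eps_0=\tfrac{1}{2}\norm{u-\tilde{u}}_{H^1}$ and $C_0=\tfrac{1}{4C^2}\norm{u-\tilde{u}}_{H^1}^2$, whereas your contradiction argument establishes existence of such constants without exhibiting them; both are valid and essentially the same proof.
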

We remark that Theorem~\ref{thm::ImplicitBiasLinSys} implies that there is $\eps>0$ such that $B_{\eps}(u)\cap \tilde{B}_{\eps}(\tilde{u})=\emptyset$, where $\tilde{B}_{\eps}(\tilde{u})=\{\tilde{v}\in H^1_0(\Omega;\sR^{d'})\cap H^2(\Omega;\sR^{d'})\colon\,\tilde{R}(\tilde{v})<\eps\}$.
\begin{proof}
    Since $f\in X\backslash \Ker(T-I)$, by Theorem~\ref{thm::EigenValueEigenSpace}, there is a constant $C_1$ such that 
    \begin{equation}
        \norm{u-\tilde{u}}_{H^1(\Omega;\sR^{d'})}\ge C_1.
    \end{equation}
    By linearity of the modified operator $\tilde{L}$, we have for all $v\in H^1_0(\Omega;\sR^{d'})\cap H^2(\Omega;\sR^{d'})$
    \begin{equation}\label{eq::BallInXSpace}
        \left(\tilde{L}v-\tilde{L}\tilde{u}\right)^{\alpha}=\left(\tilde{L}v\right)^{\alpha}-f^{\alpha}.
    \end{equation}
    Since $\tilde{L}v\in X$, by the definition of $X$ and Assumption~\ref{assum::AprioriEstLinSys}
    \begin{equation}
        \norm{\tilde{u}-v}_{H^{2}(\Omega;\sR^{d'})} \leq C\norm{f-\tilde{L}v}_{L^2(\Omega;\sR^{d'})}.
    \end{equation}
    Let $\eps_0=\frac{C_1}{2}$. The for all $\eps<\eps_0$ and $v\in B_{\eps}(u)$, we have
    \begin{equation*}
        \norm{v-\tilde{u}}_{H^{2}(\Omega;\sR^{d'})}\ge \norm{v-\tilde{u}}_{H^{1}(\Omega;\sR^{d'})}\ge \norm{\tilde{u}-u}_{H^{1}(\Omega;\sR^{d'})}-\norm{u-v}_{H^{1}(\Omega;\sR^{d'})}>\frac{C_1}{2}.
    \end{equation*}
    This together with~\eqref{eq::BallInXSpace} implies for all $v\in B_{\eps}(u)$
    \begin{equation*}
        \sqrt{\tilde{R}(v)}\geq\norm{\tilde{L}v-f}_{L^2(\Omega;\sR^{d'})}\geq \frac{1}{C}\norm{\tilde{u}-v}_{H^2(\Omega;\sR^{d'})}>\frac{C_1}{2C}.
    \end{equation*}
    The proof is completed by setting $C_0=\frac{C^2_1}{4C^2}$.
\end{proof}

\begin{proposition}[bias towards the solution to the modified equation/system]\label{prop::Bias}
    Suppose that Assumption~\ref{assum::BVCoeff} and~\ref{assum::AprioriEstLinSys} hold with $d'\geq 1$. Let $\tilde{u}$ be the solution to the modified equation~\eqref{eq::ModEq} with data $f\in X$. 
    \begin{enumerate}[(i).]
        \item Then for all $w\in H^1_0(\Omega;\sR^{d'})\cap H^2(\Omega;\sR^{d'})$, there is constant $C>0$ such that 
    \begin{equation}
        \norm{w-\tilde{u}}_{H^1(\Omega;\sR^{d'})}\le C\sqrt{\tilde{R}(w)}.
    \end{equation}
    \item Suppose that the function $w\in H_0^1(\Omega;\sR^{d'})\cap H^2(\Omega;\sR^{d'})$, the random variable $X$ is sampled from the uniform distribution over $\Omega$, the random variable $\rY=\Abs{\Omega}(Lw(\rX)-f)^2$ with covariance of $\rY$ satisfies $\sV[\rY]< +\infty$. Then for any $\delta>0$, with probability $1-\delta$ over the choice of independent uniformly distributed data $S:=\{x_i\}_{i=1}^n$ in $\Omega$, we have 
    \begin{equation*}
        \tilde{R}(w)-\tilde{R}_S(w)\le \frac{1}{\sqrt{n}}\sqrt{\frac{\sV[\rY]}{\delta}},
    \end{equation*}
    \end{enumerate}
    In particular, if $\tilde{R}(w)<\eps$ for some small $\eps$, then $\norm{w-\tilde{u}}_{H^1(\Omega;\sR^{d'})}\le C\sqrt{\eps}$.
\end{proposition}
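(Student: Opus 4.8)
The plan is to derive part~(i) directly from the \emph{a priori} estimate of Assumption~\ref{assum::AprioriEstLinSys}, to derive part~(ii) by an elementary Chebyshev argument, and then to combine the two to obtain the closing assertion. No deep new idea is needed; the proposition is essentially a repackaging of the well-posedness and regularity of the modified problem together with a one-line statistical estimate.

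For part~(i), I would first note that since $w\in H^1_0(\Omega;\sR^{d'})$ its trace vanishes, so the boundary term $\gamma\int_{\partial\Omega}(Bw-g)^2\diff{x}$ in $\tilde{R}(w)$ is zero (for the modified problem~\eqref{eq::ModEq} one has $g=0$), and hence $\tilde{R}(w)=\norm{\tilde{L}w-f}_{L^2(\Omega;\sR^{d'})}^2$. By Theorem~\ref{thm::ExistAprioEstLinEllipEq&Sys}~(iii) the modified solution satisfies $\tilde{u}\in H^1_0(\Omega;\sR^{d'})\cap H^2(\Omega;\sR^{d'})$ and $\tilde{L}\tilde{u}=f$, so $\tilde{L}w-f=\tilde{L}(w-\tilde{u})$ with $w-\tilde{u}\in H^1_0(\Omega;\sR^{d'})\cap H^2(\Omega;\sR^{d'})$; this is a genuine $L^2$ function because $w\in H^2$ while under Assumption~\ref{assum::BVCoeff} the coefficients $A_{ij}^{\alpha\beta}=\chi^{\alpha\beta}\bar{A}_{ij}^{\alpha\beta}$ and the absolutely continuous parts $D_{i}^{\mathrm{a}}A_{ij}^{\alpha\beta}$ all lie in $L^\infty(\Omega)$ (and when $d'=1$ this is already contained in Theorem~\ref{thm::ExistAprioEstLinEllipEq&Sys}). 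Applying Assumption~\ref{assum::AprioriEstLinSys} to $w-\tilde{u}$ then gives
\begin{equation*}
    \norm{w-\tilde{u}}_{H^1(\Omega;\sR^{d'})}\le\norm{w-\tilde{u}}_{H^2(\Omega;\sR^{d'})}\le C\norm{\tilde{L}(w-\tilde{u})}_{L^2(\Omega;\sR^{d'})}=C\sqrt{\tilde{R}(w)},
\end{equation*}
which is exactly the claimed inequality.

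For part~(ii), I would write $\rY_i=\Abs{\Omega}(\tilde{L}w(x_i)-f(x_i))^2$ for the sample points $x_i\in S$ (the operator here is the modified $\tilde{L}$, consistently with the definition of $\tilde{R}_S$); these are independent copies of $\rY$, the pointwise evaluations being well defined almost surely since the sampling law is absolutely continuous with respect to $\fL^d$. Since $\rX$ is uniform on $\Omega$ with density $\Abs{\Omega}^{-1}$ and the boundary term again vanishes, the mean of $\rY$ equals $\tilde{R}(w)$, so $\tilde{R}_S(w)=\tfrac1n\sum_{i=1}^n\rY_i$ is an unbiased estimator of $\tilde{R}(w)$ with variance $\sV[\rY]/n$. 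By Chebyshev's inequality, for every $t>0$ the probability that $\Abs{\tilde{R}_S(w)-\tilde{R}(w)}\ge t$ is at most $\sV[\rY]/(nt^2)$; choosing $t=\tfrac{1}{\sqrt n}\sqrt{\sV[\rY]/\delta}$ makes this bound equal to $\delta$, so with probability at least $1-\delta$ one has $\Abs{\tilde{R}_S(w)-\tilde{R}(w)}<\tfrac{1}{\sqrt n}\sqrt{\sV[\rY]/\delta}$, and in particular $\tilde{R}(w)-\tilde{R}_S(w)\le\tfrac{1}{\sqrt n}\sqrt{\sV[\rY]/\delta}$. The final assertion is then immediate: if $\tilde{R}(w)<\eps$, combining with part~(i) gives $\norm{w-\tilde{u}}_{H^1(\Omega;\sR^{d'})}\le C\sqrt{\tilde{R}(w)}<C\sqrt{\eps}$.

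I do not expect a genuine obstacle here once Assumption~\ref{assum::AprioriEstLinSys} is available; the only points demanding care are the bookkeeping that both the population and empirical boundary contributions cancel (which is why $w$ is taken in $H^1_0$), and the hypothesis $\sV[\rY]<+\infty$, which is precisely what makes $\rY$ an admissible random variable for Chebyshev's inequality. If one wanted a marginally sharper constant one could replace the two-sided Chebyshev bound by the one-sided Chebyshev--Cantelli inequality, but the two-sided version already yields the stated estimate.
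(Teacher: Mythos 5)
Your proposal is correct and follows the same route as the paper, whose own proof is simply ``(i) is a direct consequence from Theorem~\ref{thm::ExistAprioEstLinEllipEq&Sys}. And (ii) is a direct consequence from Monte Carlo integration.'' You have merely unpacked these two sentences: (i) is the linearity identity $\tilde{L}w-f=\tilde{L}(w-\tilde{u})$ combined with the \emph{a priori} bound of Assumption~\ref{assum::AprioriEstLinSys} (and the vanishing boundary term for $w\in H^1_0$), and (ii) is the standard Chebyshev bound for the sample mean of i.i.d.\ copies of $\rY$, which is exactly the Monte Carlo integration error estimate the authors have in mind; your observation that the $L$ in the definition of $\rY$ may be read as $\tilde{L}$ (they coincide at $\fL^d$-a.e.\ point for $w\in H^2$) is a sensible resolution of a small notational inconsistency in the statement.
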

\begin{proof}
    (\romannumeral1) is a direct consequence from Theorem~\ref{thm::ExistAprioEstLinEllipEq&Sys}. And (\romannumeral2) is a direct consequence from Monte Carlo integration.
\end{proof}

We remark that in Proposition~\ref{prop::Bias}, for $w\in H_0^1(\Omega;\sR^{d'})\cap H^2(\Omega;\sR^{d'})$ with $\sV[\rY]< +\infty$, if $\tilde{R}(w)\le \eps$, for $n$ sufficiently large, we have $\norm{w-\tilde{u}}_{H^1(\Omega;\sR^{d'})}\le C\sqrt{\eps}$. Moreover, the condition $\sV[\rY]< +\infty$ is equivalent to the inequality $\norm{(D_x^2w- D_x^2\tilde{u})^2}_{L^2(\Omega;\sR^{d'})}<+\infty.$
\begin{remark}
     Theorem~\ref{thm::ImplicitBiasLinSys} and Proposition~\ref{prop::Bias} together lead to the implicit bias of RM method in solving PDE: it will bias the numerical solution against the solution to original equation~\eqref{eq::OriginEq} and towards the solution to modified equation~\eqref{eq::ModEq}.
\end{remark}
\section{Discussion: extension to quasilinear elliptic equations}\label{sec::ExtQausiLinEllipEq}

In this section, we show that some of our results still work for the case of quasilinear elliptic equation. In Section~\ref{sec::equationQuasLin}, we introduce the equation as the Euler--Lagrange equation of a variational problem which arise typically in the materials science, together with its modified equation, and also provide some assumptions to ensure the existence. Besides, some auxiliary lemmas and discussions can be found in this subsection for later analysis. The techniques we developed in Section~\ref{sec::RemovSing} is also applicable to the case of quasilinear equation (See Section~\ref{sec::SingQuasiEllipEq}) and the deviation still occur in the case of quasilinear equation (See Section~\ref{sec::DeviQuasLin}).  

\subsection{Quasilinear elliptic equations with BV coefficients}\label{sec::equationQuasLin}
Assume that $\Omega \subseteq \sR^d$ is a bounded domain with $C^{1,1}$ boundary $\partial\Omega$, and $L: \sR^d \times \sR\times \Omega \to \sR$ is a function taking the form
\begin{equation*} 
    L(\xi,z,x) = \chi(x)W(\xi,x) - f(x)z.
\end{equation*}

Consider the minimization problem of the energy functional $I[u]=\int_{\Omega}L(D u,u,x)\diff{x}$, that is,
\begin{align}\label{eq::CBEnergyFun}
    \min_{u\in H^1_0(\Omega)} \int_{\Omega}\left(\chi W(D u,x) - fu\right)\diff{x}.
\end{align}
This type of problem arises from, for example, the coexistence of multiple phases in solids. In such case, $u$ is the displacement field of a solid, $f$ is the body force due to some external field, and $\chi(x)$ is a piece-wise constant function indicating different phases. In each phase, the stored energy density $W(D u, x)$ can be derived from the molecular potentials by using the Cauchy--Born rule. See for example~\cite{born1954dynamical,ming2007cauchy} and the references therein. We also remark that the displacement $u$ is usually a vector field, however, for simplicity, we assume it is a scalar field in this work. The results can be extended to the vector field setting.

The equation for the Euler--Lagrange equation of~\eqref{eq::CBEnergyFun} reads as
\begin{equation}\label{eq::QuasiLinOriginEq}
     \left\{\begin{aligned}
    Q[u]&=f & & \text {in}\  \Omega, \\
    u &=0 & & \text {on}\  \partial\Omega,
    \end{aligned}\right.
\end{equation}
where $Q$ is the quasilinear operator defined as
\begin{equation}
    Q[u]=-\divg\cdot\left(\chi D_{\xi}W(D u, x)\right).
\end{equation}
We emphasize that $Q[u] -Q[v] \neq Q[u-v]$ in general.

\begin{remark}[recover the linear case]\label{rmk::RecoverLin}
    Notice that~\eqref{eq::QuasiLinOriginEq} recovers the linear elliptic equation~\eqref{eq::OriginEq} when we set
    \begin{equation*} 
        W(\xi,x)= \frac{1}{2}\xi^\T \bar{A}\xi=\frac{1}{2}\sum_{i,j=1}^d\bar{A}_{ij}\xi_{i}\xi_{j}.
    \end{equation*}
\end{remark}
In the spirit of modified equation derived in Section~\ref{sec::DeriveModEq}, we shall consider the equation for the modified equation in non-divergence form as follows.

\begin{equation}\label{eq::QuasLinModEq}
\left\{\begin{aligned}
    \tilde{Q}[u]&=f & & \text {in}\  \Omega, \\
    u &= 0 & & \text{on}\  \partial\Omega,
\end{aligned}\right.
\end{equation}
where
\begin{equation}
    \tilde{Q}[u]=-\left(\sum_{i,j=1}^dD_{\xi_i}D_{\xi_j}W(Du,x)D_{ij}u+\sum_{i=1}^{d}D_{\xi_i}D_{x_i}W(Du,x)\right)\chi-\sum_{i=1}^{d}D_{i}^{\mathrm{a}}\chi D_{\xi_i}W(Du,x).
\end{equation}

To work on these quasilinear equation, we require the following technical assumptions.
\begin{assumption}[BV coeffcients]\label{assum::BvCoeQuasLin}
    Assume $\chi\in SBV^{\infty}(\Omega)$, $0<\fH^{d-1}(J_{\chi})<+\infty$, $f\in L^p(\Omega)$ with $p>d$, and $W\in C^3(\sR^d\times \Omega)$. Also assume there are constants $\chi_{\min},\chi_{\max}>0$ such that for all $x\in \Omega$
    \begin{equation*}
        \chi_{\min}\leq \chi(x)\leq \chi_{\max}.
    \end{equation*}
\end{assumption}

\begin{assumption}[coercivity, convexity, boundedness]\label{assum::CoerCvxBddQuasLin}
    Let $Q$ be the operator defined in~\eqref{eq::QuasiLinOriginEq}. Assume the following.
    \begin{enumerate}[(i)]
        \item (coercivity) There exist constants $c_1,c_2>0$ such that for all $\xi\in\sR^d, x\in\Omega$
        \begin{equation*} 
            W(\xi,x)\geq c_1\Abs{\xi}^2-c_2
        \end{equation*}
        and $2\chi_{\min}c_1\geq \sup_{u\in H^1_0(\Omega)}\frac{\norm{u}_{L^2(\Omega)}}{\norm{Du}_{L^2(\Omega)}}$ where the right hand side is the inverse of the best Poincare constant. 
        \item  (uniform convexity in $\xi$) There exist constants $\lambda,\Lambda>0$ such that for all $\xi, \xi' \in \sR^{d}, x \in \Omega$
        \begin{equation*}  
            \lambda\abs{\xi'}^{2}\leq(\xi')^\T D_\xi^2 W(\xi,x)\xi' \leq \Lambda\abs{\xi'}^{2}.
        \end{equation*}
        \item (boundedness) There exists $c_3>0$ such that for all  $\xi \in \sR^{d}, x \in \Omega$.
            \begin{align*}
                \abs{W(\xi, x)}&\leq c_3(\abs{\xi}^2+1),\\
                \Abs{D_{\xi} W(\xi, x)}&\leq c_3\left(\abs{\xi}+1\right).
            \end{align*} 
    \end{enumerate} 
\end{assumption}

\begin{assumption}[boundedness, Lipschitz continuity, growth condition]\label{assum::BddLipGrowQuasiLinNonDiv}
    Let $\tilde{Q}$ be the operator defined in~\eqref{eq::QuasLinModEq}. Also let $p>d$. Assume the following.
    \begin{enumerate}[(i)]
        \item (boundedness) There exists a non-negative function $b_1\in L^{2p}(\Omega)$ satisfying 
        \begin{equation*}
            \Abs{D_{x}\odot D_\xi W(\xi,x)}\leq b_1(x)(1+\Abs{\xi})
        \end{equation*}
        for all $\xi\in \sR^d, x\in \Omega$;
        \item (Lipschitz continuity) $D_{x}\odot D_\xi W(\xi,x)$ and $D_\xi W(\xi,x)$ are $c_4$-Lipschitz continuous in $\xi$, that is, 
        \begin{equation*} 
            \Abs{D_{x}\odot D_\xi W(\xi,x)- D_{x}\odot D_\xi W(\xi',x)} +\Abs{D_\xi W(\xi,x)-D_\xi W(\xi',x)}\leq c_4\Abs{\xi-\xi'}
        \end{equation*}
        for all $\xi,\xi'\in\sR^{d}$ and $\fL^d$-a.e. $x\in \Omega$;
        \item (growth condition) There exists a constant $c_5>0$ satisfying  
        \begin{equation*}
            \Abs{D_{x} D_\xi^2 W(\xi,x)}+
            \Abs{D_\xi^3 W(\xi,x)}\leq c_5\Abs{\xi}
        \end{equation*}
        for all $\xi\in \sR^d, x\in \Omega$.
    \end{enumerate}
\end{assumption}

\begin{theorem}[existence of solution to original quasilinear elliptic equation]\label{thm::ExistQuasiLinDivForm}
    Suppose that Assumption~\ref{assum::BvCoeQuasLin} and~\ref{assum::CoerCvxBddQuasLin} hold. Then for any $f\in L^2(\Omega)$, there is a solution $u\in H_0^1(\Omega)$ to the original equation~\eqref{eq::QuasiLinOriginEq}. 
\end{theorem}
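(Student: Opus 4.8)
The plan is to invoke the direct method of the calculus of variations for the energy functional
\begin{equation*}
    I[u]=\int_{\Omega}\bigl(\chi W(Du,x)-fu\bigr)\diff{x}
\end{equation*}
on $H^1_0(\Omega)$, and then to recover~\eqref{eq::QuasiLinOriginEq} as the Euler--Lagrange equation satisfied by a minimizer. First I would verify that $I$ is well defined and finite on $H^1_0(\Omega)$: by Assumption~\ref{assum::CoerCvxBddQuasLin}(iii) one has $\Abs{W(\xi,x)}\le c_3(\abs{\xi}^2+1)$, so together with $\chi\le\chi_{\max}$ the nonlinear term is bounded by $c_3\chi_{\max}(\norm{Du}_{L^2(\Omega)}^2+\abs{\Omega})$, and the linear term is controlled by Cauchy--Schwarz and the Poincaré inequality. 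Next, coercivity: from $W(\xi,x)\ge c_1\abs{\xi}^2-c_2$, $\chi\ge\chi_{\min}$, and the Poincaré inequality $\norm{u}_{L^2(\Omega)}\le C_P\norm{Du}_{L^2(\Omega)}$ (where $C_P$ is exactly the supremum appearing in Assumption~\ref{assum::CoerCvxBddQuasLin}(i)), we obtain
\begin{equation*}
    I[u]\ge \chi_{\min}c_1\norm{Du}_{L^2(\Omega)}^2-c_2\abs{\Omega}-C_P\norm{f}_{L^2(\Omega)}\norm{Du}_{L^2(\Omega)},
\end{equation*}
so $I$ is coercive on $H^1_0(\Omega)$ and $\inf_{H^1_0(\Omega)}I>-\infty$ (the precise constant in Assumption~\ref{assum::CoerCvxBddQuasLin}(i) is more than enough here).

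The core of the argument is sequential weak lower semicontinuity of $I$ on $H^1_0(\Omega)$. The integrand $(x,\xi)\mapsto\chi(x)W(\xi,x)$ is a Carathéodory function (measurable in $x$ since $\chi\in SBV^\infty(\Omega)\subseteq L^1(\Omega)$ and $W\in C^3(\sR^d\times\Omega)$, continuous in $\xi$), it is bounded below by the constant $-\chi_{\min}c_2$, and it is convex in $\xi$ because $D_\xi^2 W(\xi,x)\ge\lambda I>0$ by Assumption~\ref{assum::CoerCvxBddQuasLin}(ii) and $\chi\ge 0$. Hence, by the classical lower semicontinuity theorem for integral functionals whose integrand is convex in the gradient variable, $u\mapsto\int_\Omega\chi W(Du,x)\diff{x}$ is sequentially weakly lower semicontinuous on $H^1_0(\Omega)$, while $u\mapsto-\int_\Omega fu\,\diff{x}$ is weakly continuous as a bounded linear functional. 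Thus $I$ is weakly lower semicontinuous. Taking a minimizing sequence $\{u_k\}\subseteq H^1_0(\Omega)$, coercivity forces $\sup_k\norm{u_k}_{H^1(\Omega)}<+\infty$, so after passing to a subsequence $u_k\rightharpoonup u$ in $H^1_0(\Omega)$, and weak lower semicontinuity gives $I[u]\le\liminf_k I[u_k]=\inf_{H^1_0(\Omega)}I$; therefore $u\in H^1_0(\Omega)$ is a minimizer.

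It remains to derive the weak form of~\eqref{eq::QuasiLinOriginEq}. Fix $\varphi\in C_c^\infty(\Omega)$ and set $g(t)=I[u+t\varphi]$. The difference quotients of $t\mapsto\chi W(D(u+t\varphi),x)$ are, by Assumption~\ref{assum::CoerCvxBddQuasLin}(iii), dominated in absolute value by $\chi_{\max}c_3(\abs{Du}+\abs{D\varphi}+1)\abs{D\varphi}\in L^1(\Omega)$, so dominated convergence justifies differentiation under the integral sign and $g'(0)=\int_\Omega\chi\,D_\xi W(Du,x)\cdot D\varphi\,\diff{x}-\int_\Omega f\varphi\,\diff{x}$. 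Since $u$ minimizes $I$, $g'(0)=0$. Moreover the map $\varphi\mapsto\int_\Omega\chi\,D_\xi W(Du,x)\cdot D\varphi\,\diff{x}-\int_\Omega f\varphi\,\diff{x}$ is a bounded linear functional on $H^1_0(\Omega)$, because $\chi D_\xi W(Du,x)\in L^2(\Omega;\sR^d)$ (again by the growth bound) and $f\in L^2(\Omega)\subseteq H^{-1}(\Omega)$; being zero on the dense subspace $C_c^\infty(\Omega)$, it vanishes on all of $H^1_0(\Omega)$. That is precisely the statement that $u$ solves $Q[u]=-\divg\cdot\bigl(\chi D_\xi W(Du,x)\bigr)=f$ weakly with $u=0$ on $\partial\Omega$.

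I expect the main obstacle to be the weak lower semicontinuity step: although it is a textbook fact for convex integrands, one must state it in a form that tolerates both the explicit $x$-dependence and the merely $SBV^\infty$ (in particular discontinuous) coefficient $\chi$; once the Carathéodory property, the uniform convexity in $\xi$, and the lower bound coming from coercivity are in hand, the remaining ingredients---finiteness of $I$, coercivity, extraction of a weakly convergent minimizing subsequence, and differentiation under the integral producing the Euler--Lagrange equation---are all routine consequences of Assumptions~\ref{assum::BvCoeQuasLin} and~\ref{assum::CoerCvxBddQuasLin}. (Note that only $C^1$-type regularity of $W$ in $\xi$ together with the convexity and growth bounds is used here; the full $W\in C^3$ hypothesis will be needed later for the modified equation~\eqref{eq::QuasLinModEq}.)
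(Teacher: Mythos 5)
Your proof is correct and follows essentially the same route as the paper's: the direct method of the calculus of variations applied to $I[u]=\int_\Omega(\chi W(Du,x)-fu)\diff{x}$, coercivity from Assumption~\ref{assum::CoerCvxBddQuasLin}(i), extraction of a weakly convergent minimizing subsequence, and derivation of the Euler--Lagrange equation by differentiating $t\mapsto I[u+t\varphi]$ under the integral using the growth bound in Assumption~\ref{assum::CoerCvxBddQuasLin}(iii). The only difference is that you spell out the sequential weak lower semicontinuity step (via convexity of $\xi\mapsto\chi(x)W(\xi,x)$ and the Carathéodory property) that the paper leaves implicit by deferring to Evans; your version also observes, correctly, that coercivity here can be had by a simple Cauchy--Schwarz bound on the linear term, without invoking the sharp Poincaré constant hypothesis.
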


\begin{theorem}[existence of solution to modified  quasilinear elliptic equation]\label{thm::ExistQuasiLinNonDivForm}
    Suppose that Assumption~\ref{assum::BvCoeQuasLin},~\ref{assum::CoerCvxBddQuasLin} and~\ref{assum::BddLipGrowQuasiLinNonDiv} hold with $p>d$. For any $f\in L^p(\Omega)$,   there is a solution $u\in W_{0}^{1,p}(\Omega) \cap W^{2,p}(\Omega)$ to the modified equation~\eqref{eq::QuasLinModEq}.
\end{theorem}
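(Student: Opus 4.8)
The plan is to rewrite the modified equation~\eqref{eq::QuasLinModEq} as a uniformly elliptic quasilinear equation in non-divergence form with \emph{continuous} principal coefficients, and then invoke the standard $W^{2,p}$ solvability theory for such equations via a fixed-point argument. First I would divide~\eqref{eq::QuasLinModEq} through by $\chi$. By Assumption~\ref{assum::BvCoeQuasLin}, $\chi\in SBV^\infty(\Omega)$ with $\chi_{\min}\le\chi\le\chi_{\max}$, so $\chi^{-1}\in L^\infty(\Omega)$ with $\|\chi^{-1}\|_{L^\infty(\Omega)}\le\chi_{\min}^{-1}$, and $\chi^{-1}D^{\mathrm a}_i\chi\in L^\infty(\Omega)$ because the absolutely continuous part of $D\chi$ has an $L^\infty$ density. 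Hence the equation becomes
\begin{equation*}
  -\sum_{i,j=1}^d D_{\xi_i}D_{\xi_j}W(Du,x)\,D_{ij}u = b(x,Du),\qquad u|_{\partial\Omega}=0,
\end{equation*}
with $b(x,\xi):=\chi^{-1}f+\sum_{i}D_{\xi_i}D_{x_i}W(\xi,x)+\chi^{-1}\sum_i D^{\mathrm a}_i\chi\,D_{\xi_i}W(\xi,x)$. The principal coefficients $a_{ij}(x,\xi):=D_{\xi_i}D_{\xi_j}W(\xi,x)$ are uniformly elliptic by Assumption~\ref{assum::CoerCvxBddQuasLin}(ii) and lie in $C^1(\sR^d\times\bar\Omega)$ since $W\in C^3$; the point is that the discontinuity of $\chi$ is now harmless, as $\chi$ enters only through the bounded functions $\chi^{-1}$ and $\chi^{-1}D^{\mathrm a}\chi$. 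By Assumptions~\ref{assum::CoerCvxBddQuasLin}(iii) and~\ref{assum::BddLipGrowQuasiLinNonDiv}, the lower-order term obeys the linear growth bound $|b(x,\xi)|\le \chi_{\min}^{-1}|f(x)|+\bigl(b_1(x)+C\bigr)(1+|\xi|)$ with $f\in L^p(\Omega)$, $b_1\in L^{2p}(\Omega)\subseteq L^p(\Omega)$, $p>d$, and $b(\cdot,\xi)$ is Lipschitz in $\xi$.

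Next I would set up a Leray--Schauder fixed-point scheme on the Banach space $E=\{v\in C^{1,\beta}(\bar\Omega):v|_{\partial\Omega}=0\}$ with $0<\beta<1-d/p$. Given $v\in E$, I freeze $Du=Dv$ in the coefficients and let $\mathcal T(v)=u$ be the solution of the \emph{linear} problem $-\sum_{i,j}a_{ij}(x,Dv)D_{ij}u=b(x,Dv)$, $u|_{\partial\Omega}=0$: since $Dv$ is continuous the coefficient $x\mapsto a_{ij}(x,Dv(x))$ is continuous, so by the Calder\'on--Zygmund theory for non-divergence equations with continuous coefficients in a $C^{1,1}$ domain there is a unique $u\in W^{2,p}(\Omega)\cap W^{1,p}_0(\Omega)$ with $\|u\|_{W^{2,p}}\le C\,\|b(\cdot,Dv)\|_{L^p}$ (uniqueness, hence absence of a lower-order correction, following from the Aleksandrov maximum principle). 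Because $v\mapsto Dv$ is bounded in $C^{0,\beta}$ on bounded subsets of $E$, the coefficients $a_{ij}(\cdot,Dv)$ admit a common modulus of continuity there, so $\mathcal T$ maps bounded subsets of $E$ into bounded subsets of $W^{2,p}$, which embed compactly into $E$; continuity of $\mathcal T$ follows from stability of the linear theory under uniform convergence of coefficients and $L^p$ convergence of right-hand sides (dominated by $b_1\in L^{2p}$). Thus $\mathcal T:E\to E$ is compact and continuous, and any fixed point $u=\mathcal T(u)$ is precisely a solution of~\eqref{eq::QuasLinModEq} in $W^{2,p}(\Omega)\cap W^{1,p}_0(\Omega)$.

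To run Leray--Schauder I then need a uniform a priori bound: some $M$ such that every $u\in E$ with $u=t\,\mathcal T(u)$, $t\in[0,1]$ — equivalently every $W^{2,p}$ solution of $-a_{ij}(x,Du)D_{ij}u=t\,b(x,Du)$ — satisfies $\|u\|_{C^{1,\beta}}\le M$. This is obtained in the usual three stages: (i) an $L^\infty$ bound on $u$ from the Aleksandrov--Bakelman--Pucci maximum principle together with the linear growth of $b$ and zero boundary data; (ii) a global gradient bound $\|Du\|_{L^\infty(\Omega)}\le C_1$, via interior and boundary (barrier) gradient estimates for uniformly elliptic non-divergence equations with this controllable structure; (iii) a global H\"older bound $\|Du\|_{C^{0,\alpha}(\bar\Omega)}\le C_2$ by differentiating the equation and applying Krylov--Safonov / $C^{1,\alpha}$ estimates, after which the Calder\'on--Zygmund estimate upgrades this to $\|u\|_{W^{2,p}}\le C_3$; all constants depend only on $d,p,\lambda,\Lambda,\Omega,\chi_{\min},\|f\|_{L^p},\|b_1\|_{L^{2p}}$ and the $C^3$-norm of $W$ on the relevant compact set, hence are independent of $t$. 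The Leray--Schauder theorem then yields the desired solution. I expect stage (ii), the global gradient estimate, to be the main obstacle: it is the step where the quasilinear character genuinely bites and where one must either construct boundary barriers by hand or invoke the heavier machinery of Ladyzhenskaya--Uralt\-seva or Gilbarg--Trudinger, whereas everything involving the singular coefficient $\chi$ is elementary once the equation has been divided by $\chi$. (Alternatively, after the reduction of the first paragraph one may simply quote the existence theorem for quasilinear non-divergence equations recorded in the appendix and check its hypotheses against the estimates above.)
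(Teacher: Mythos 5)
Your first paragraph — divide through by $\chi$ to trade the $SBV$ coefficient for the bounded functions $\chi^{-1}$ and $\chi^{-1}D^{\mathrm a}\chi$, so that the principal part $a_{ij}(x,\xi)=D_{\xi_i}D_{\xi_j}W(\xi,x)$ becomes $C^1$ and uniformly elliptic — is exactly the reduction the paper performs. After that you diverge: your main argument is a from-scratch Leray--Schauder scheme in $C^{1,\beta}$ with frozen coefficients, whereas the paper simply verifies that, after this reduction, the hypotheses of the quoted existence theorem (Theorem~\ref{thm::QuasLinNonDiv}, Palagachev's result recorded in Appendix~\ref{sec::QuasLinNonDiv} under Assumption~\ref{assum::QuasLinNonDiv}) are satisfied; this is precisely the ``alternatively'' route you mention in your closing parenthesis. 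The two approaches buy different things. Yours is self-contained in spirit and makes the structure transparent, but it leaves genuine work unexposed in stage~(ii), the global gradient bound, and in stage~(iii), where ``differentiating the equation'' for a non-divergence operator with merely $L^\infty$ lower-order coefficients (coming from $\chi^{-1}D^{\mathrm a}\chi$) needs a Safonov-type $C^{1,\alpha}$ theorem rather than a naive argument — you acknowledge the first gap but understate the second. The paper's route avoids reproving any of this: it delegates the a~priori estimates entirely to the cited theorem and reduces the proof to a hypothesis check (quadratic/linear gradient growth of $b$, monotonicity in $z$ which is vacuous since $W$ does not depend on $z$, uniform ellipticity from Assumption~\ref{assum::CoerCvxBddQuasLin}(ii), the growth/commutation conditions on $A_{ij}$ from $W\in C^3$ and Assumption~\ref{assum::BddLipGrowQuasiLinNonDiv}(iii), and local uniform $L^p$-continuity of $b$ in $\xi$ from the Lipschitz hypothesis). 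One small point in your favor: you correctly note that $b$ in fact has \emph{linear} gradient growth under the stated assumptions, which is strictly better than the quadratic growth the quoted theorem tolerates and slightly simplifies the hypothesis verification. Overall: correct in outline, genuinely different primary route, with the paper's actual proof being the lighter-weight alternative you flagged at the end.
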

The proofs of these two theorems are left to Appendix~\ref{sec::ExisProof4QuasEq}. We emphasize that these two existence theorems are essentially proved in literature (for which we refer the readers to Appendix~\ref{sec::ExisThm4QuasLinInLitera}) under very general settings. Thus we check that the required assumptions are satisfied in our proofs of Theorems~\ref{thm::ExistQuasiLinDivForm} and~\ref{thm::ExistQuasiLinNonDivForm} in Appendix~\ref{sec::ExisProof4QuasEq}.

Note that in the quasilinear case the solution to~\eqref{eq::QuasiLinOriginEq} and~\eqref{eq::QuasLinModEq} may not be unique, and hence we have to use the notion of solutions sets. These solution sets as well as their Hausdorff distance are defined as follows. 

\begin{definition}[solution sets]
    Suppose that Assumptions~\ref{assum::BvCoeQuasLin},~\ref{assum::CoerCvxBddQuasLin} and~\ref{assum::BddLipGrowQuasiLinNonDiv} hold with $p>\max\{2,d\}$. We define the \textbf{solution sets} to the original equation~\eqref{eq::QuasiLinOriginEq} and modified equation~\eqref{eq::QuasLinModEq} with data $f\in L^p(\Omega)$. More precisely, we define
    \begin{align*}
        \tU 
        &= \{u\colon\,u\in H^1_0(\Omega) \text{ solves equation~\eqref{eq::QuasiLinOriginEq} with data $f$}\},\\
        \widetilde{\tU} 
        &= \{\tilde{u}\colon\,\tilde{u}\in W^{1,p}_0(\Omega)\cap W^{2,p}(\Omega) \text{ solves equation~\eqref{eq::QuasLinModEq} with data $f$}\}.
    \end{align*}
\end{definition}

\begin{definition}[Hausdorff distance from set of solutions of equation~\eqref{eq::QuasLinModEq} to equation~\eqref{eq::QuasiLinOriginEq}]
    \begin{equation*} 
        \dist_{\mathrm{H}}(\widetilde{\tU},\tU) = \sup_{\tilde{u}\in \widetilde{\tU}}\inf_{u\in \tU}\norm{u-\tilde{u}}_{H^1(\Omega)},
    \end{equation*}
\end{definition}
As we will see in Proposition~\ref{prop::Close}, for a given $f\in L^p(\Omega)$, $\tU$ is closed in $H^1(\Omega)$ norm. Thus we conclude that
(\romannumeral1) if $\dist_{\mathrm{H}}(\widetilde{\tU},\tU)=0$, then for each $\tilde{u}\in \widetilde{\tU}$, by the closeness of $\tU$ there is a $u\in \tU$ such that $u=\tilde{u}$ $\fL^d$-a.e. $x\in\Omega$, which means the $\tilde{u}$ will not deviate from $u$;
(\romannumeral2) if $\dist_{\mathrm{H}}(\tU,\widetilde{\tU})>0$, which means $\widetilde{\tU}\backslash \tU\neq \emptyset$, then there is $\tilde{u}$ deviated from $u$.

Therefore, $\dist_{\mathrm{H}}(\widetilde{\tU},\tU)$ characterize how much $\widetilde{\tU}$ deviates from $\tU$. Moreover, if $\widetilde{\tU},\tU$ are both singletons, the above discussion reduces to the deviation results studied in Section~\ref{sec::Lin}.

\subsection{Singularity in quasilinear elliptic equation}\label{sec::SingQuasiEllipEq}
In this section, we apply the removable singularity (Theorem \ref{thm::NecSufCondRemovSing}) to the case of quasilinear equation, and obtain Theorem~\ref{thm::NecSufCondRemovSingQuasiLin} which is parallel to Theorems~\ref{thm::NecSufCondRemovSingLin} and \ref{thm::NecSufCondRemovSing4Sys}. And to obtain the estimates in the quasilinear case, we need the following assumption.
\begin{assumption}\label{assum::Boun} Assume that there exists a constant $c_6>0$ such that for all $\xi\in\sR^d$, $x\in \Omega$
    \begin{equation*} 
        \Abs{D_{\xi}W(\xi,x)}\leq c_6\Abs{\xi}.
    \end{equation*}
    Also assume that the original equation with data $f=0$, 
    \begin{equation}\label{eq::UniqForDataZero}
        \left\{
        \begin{aligned}
        -\divg\cdot\left(\chi D_{\xi}W(Du,x)\right)&=0 & & \text {in}\  \Omega, \\
        u &= 0 & & \text{on}\  \partial\Omega,
        \end{aligned}
        \right.
    \end{equation}
    has a unique solution, namely $u_0= 0$ on $\Omega$. 
\end{assumption}

\begin{lemma}
    Suppose the Assumption~\ref{assum::BvCoeQuasLin} and~\ref{assum::Boun} hold. Then $u_0=0$ is also a solution to the modified equation~\eqref{eq::QuasLinModEq} with data $f=0$.
\end{lemma}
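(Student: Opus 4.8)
The plan is to substitute the candidate $u_0\equiv 0$ directly into the modified operator $\tilde{Q}$ and check that $\tilde{Q}[u_0]=0=f$ pointwise in $\Omega$; the boundary condition and the regularity $u_0\in W^{1,p}_0(\Omega)\cap W^{2,p}(\Omega)$ are then immediate. Recall that
\begin{equation*}
    \tilde{Q}[u]=-\Bigl(\sum_{i,j=1}^dD_{\xi_i}D_{\xi_j}W(Du,x)D_{ij}u+\sum_{i=1}^{d}D_{\xi_i}D_{x_i}W(Du,x)\Bigr)\chi-\sum_{i=1}^{d}D_{i}^{\mathrm{a}}\chi\, D_{\xi_i}W(Du,x).
\end{equation*}
Since $Du_0=0$ and $D_{ij}u_0=0$, the Hessian sum inside the bracket vanishes identically, so it suffices to prove that $D_{\xi_i}W(0,x)=0$ and $\sum_{i=1}^d D_{\xi_i}D_{x_i}W(0,x)=0$ for every $x\in\Omega$.

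The first of these follows at once from Assumption~\ref{assum::Boun}: the bound $\Abs{D_\xi W(\xi,x)}\le c_6\Abs{\xi}$ evaluated at $\xi=0$ gives $D_\xi W(0,x)=0$ for all $x\in\Omega$, which kills the last term $\sum_i D_i^{\mathrm{a}}\chi\, D_{\xi_i}W(0,x)$. For the mixed-derivative term I would argue as follows: the map $x\mapsto D_{\xi_i}W(0,x)$ is identically zero on $\Omega$, hence differentiating in $x_i$ yields $D_{x_i}D_{\xi_i}W(0,x)=0$; and since $W\in C^3(\sR^d\times\Omega)$ by Assumption~\ref{assum::BvCoeQuasLin}, the mixed second-order partials commute, so $D_{\xi_i}D_{x_i}W(0,x)=D_{x_i}D_{\xi_i}W(0,x)=0$ for every $x\in\Omega$. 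Summing over $i$ gives $\tilde{Q}[u_0]\equiv 0$ in $\Omega$.

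Combining the two observations, $\tilde{Q}[u_0]=0=f$ in $\Omega$ and $u_0=0$ on $\partial\Omega$, so $u_0=0\in W^{1,p}_0(\Omega)\cap W^{2,p}(\Omega)$ is indeed a solution of the modified equation~\eqref{eq::QuasLinModEq} with data $f=0$. The only step that is not a pure substitution is the interchange of the $\xi$- and $x$-derivatives in $D_{\xi_i}D_{x_i}W(0,x)$ together with the differentiation of the identity $D_{\xi_i}W(0,\cdot)\equiv 0$; this is exactly the place where the $C^3$ hypothesis on $W$ (rather than merely $C^2$) is invoked, and it is comfortably sufficient, so I do not expect any genuine obstacle here.
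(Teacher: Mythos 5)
Your proof is correct, but it takes a genuinely different route than the paper. The paper's argument starts from the fact that $u_0=0$ solves the \emph{original} equation $Q[u_0]=0$ (which holds by Assumption~\ref{assum::Boun}), expands that divergence using the product rule and the SBV structure theorem (Theorem~\ref{thm::StructureThmBV}), so that $Q[u_0]$ decomposes into $\tilde{Q}[u_0]$ plus the jump term $-\sum_i D_i^{\mathrm{j}}\chi\,D_{\xi_i}W(Du_0,x)$; it then notes that $D_\xi W(0,\cdot)\equiv 0$ (from the bound $\Abs{D_\xi W(\xi,x)}\le c_6\Abs{\xi}$) kills the jump term, so $\tilde{Q}[u_0]=Q[u_0]=0$. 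In contrast, you never invoke $Q[u_0]=0$ or the structure theorem at all: you substitute directly into $\tilde{Q}$ and deal with the one nontrivial term, $\sum_i D_{\xi_i}D_{x_i}W(0,x)$, by differentiating the identity $D_{\xi_i}W(0,\cdot)\equiv 0$ in $x$ and using Schwarz's theorem (justified since $W\in C^3$, so $C^2$ suffices for the interchange). Your route is more elementary and self-contained — it also produces the slightly stronger conclusion that each $D_{\xi_i}D_{x_i}W(0,x)$ vanishes separately, not merely the sum — whereas the paper's route is shorter given the SBV machinery already built up, and it highlights the key structural relation $Q-\tilde{Q}=\text{(jump term)}$ that drives the rest of Section~\ref{sec::ExtQausiLinEllipEq}. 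Both are valid.
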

\begin{proof}
    Consider the divergence in~\eqref{eq::UniqForDataZero} and note that $\chi D_{\xi_i}W(Du_0,x)\in SBV(\Omega)$ for all $i\in\{1,\ldots,d\}$. By the structure theorem, namely Theorem~\ref{thm::StructureThmBV}, we have:
    \begin{equation*} 
        -\left(\sum_{i,j=1}^dD_{\xi_{i}}D_{\xi_{j}}W(Du_0,x)D_{ij}u_0+\sum_{i=1}^{d}D_{\xi_i}D_{x_i}W(Du_0,x)\right)\chi-\sum_{i=1}^{d}D_{i}\chi D_{\xi_{i}}W(Du_0,x)=0,
    \end{equation*}
    where $D_{i}\chi=D_{i}^{\mathrm{a}}\chi+D_{i}^{\mathrm{j}}\chi$. Since $\Abs{D_{\xi}W(\xi,x)}\leq c_6\Abs{\xi}$, this leads to $D_{\xi_i}W(Du_0,x)=0$. Thus
        \begin{equation*} 
        -\left(\sum_{i,j=1}^dD_{\xi_{i}}D_{\xi_{j}}W(Du_0,x)D_{ij}u_0+\sum_{i=1}^{d}D_{\xi_i}D_{x_i}W(Du_0,x)\right)\chi-\sum_{i=1}^{d}D^{\mathrm{a}}_{i}\chi D_{\xi_{i}}W(Du_0,x)=0,
    \end{equation*}
    and the proof is completed.
\end{proof}

\begin{theorem}[characterization of $\chi$ with test function $W(Du,x)$]\label{thm::NecSufCondRemovSingQuasiLin}
    Suppose that Assumption~\ref{assum::BvCoeQuasLin},~\ref{assum::CoerCvxBddQuasLin},~\ref{assum::BddLipGrowQuasiLinNonDiv} and~\ref{assum::Boun} hold. Then there exists $v_{\delta}\in C_c^{\infty}(\Omega)$ such that 
    \begin{equation}\label{eq::removablequasi}
        \int_{J_{\chi}}(\chi^+-\chi^-)\nu_{\chi}^{\T} D_{\xi}W(Dv_{\delta},x) \diff{\fH^{d-1}}> 0.
    \end{equation} 
\end{theorem}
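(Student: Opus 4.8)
The plan is to mimic the linear case argument (Theorems~\ref{thm::NecSufCondRemovSingLin} and~\ref{thm::NecSufCondRemovSing4Sys}), treating the nonlinear term $D_\xi W(Dv,x)$ as playing the role of $\bar A Dv$, and invoking the general removable-singularity dichotomy of Theorem~\ref{thm::NecSufCondRemovSing} with a cleverly chosen operator $\Upsilon$. First I would set $\Upsilon[w](x) = \int_0^1 D_\xi^2 W(t\,Dw(x),x)\,\diff t$, so that by the fundamental theorem of calculus $D_\xi W(Dw,x) - D_\xi W(0,x) = \Upsilon[w](x)\,Dw(x)$; under Assumption~\ref{assum::Boun} one has $D_\xi W(0,x)=0$ (since $\abs{D_\xi W(\xi,x)}\le c_6\abs{\xi}$ forces $D_\xi W(0,x)=0$), hence $D_\xi W(Dw,x)=\Upsilon[w](x)Dw(x)$. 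The uniform convexity hypothesis in Assumption~\ref{assum::CoerCvxBddQuasLin}(ii), namely $\lambda\abs{\xi'}^2\le (\xi')^\T D_\xi^2 W(\xi,x)\xi'\le\Lambda\abs{\xi'}^2$, then transfers verbatim to $\Upsilon$: for all $w\in H_0^1(\Omega)$, $\xi'\in\sR^d$, $x\in\Omega$ we get $\lambda_0\abs{\xi'}^2\le (\xi')^\T\Upsilon[w](x)\xi'\le\Lambda_0\abs{\xi'}^2$ with $\lambda_0=\lambda$, $\Lambda_0=\Lambda$. I must also check that $\Upsilon$ actually maps into $L^\infty(\Omega;S^{d\times d})$, which follows from the upper bound and symmetry of $D_\xi^2 W$ (symmetry because $W\in C^3$, so mixed partials commute).

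With this $\Upsilon$ in hand, Theorem~\ref{thm::NecSufCondRemovSing} applies to $\chi$: since $\fH^{d-1}(J_\chi)>0$ by Assumption~\ref{assum::BvCoeQuasLin}, the theorem yields a $v\in C_c^1(\Omega)$ with
\begin{equation*}
    \mu(\Omega;\chi,\Upsilon,v)=\int_{J_\chi}(\chi^+-\chi^-)\nu_\chi^\T\Upsilon[v]Dv\,\diff{\fH^{d-1}}
    =\int_{J_\chi}(\chi^+-\chi^-)\nu_\chi^\T D_\xi W(Dv,x)\,\diff{\fH^{d-1}}>0.
\end{equation*}
The remaining task is to replace this $C_c^1$ function by a $C_c^\infty$ function $v_\delta$ without destroying positivity, exactly as in the proof of Theorem~\ref{thm::NecSufCondRemovSingLin}: mollify, $v_\delta=\rho_\delta*v$, so that $v_\delta\in C_c^\infty(\Omega)$ and $Dv_\delta\to Dv$ uniformly on $\Omega$ as $\delta\to0$. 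Then one estimates the difference
\begin{equation*}
    \Abs{\int_{J_\chi}(\chi^+-\chi^-)\nu_\chi^\T\big(D_\xi W(Dv_\delta,x)-D_\xi W(Dv,x)\big)\diff{\fH^{d-1}}}
    \le 2\chi_{\max}\,c_4\,\fH^{d-1}(J_\chi)\,\norm{Dv_\delta-Dv}_{L^\infty(\Omega)},
\end{equation*}
where the Lipschitz constant $c_4$ of $\xi\mapsto D_\xi W(\xi,x)$ comes from Assumption~\ref{assum::BddLipGrowQuasiLinNonDiv}(ii) and $\fH^{d-1}(J_\chi)<+\infty$ from Assumption~\ref{assum::BvCoeQuasLin}. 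Choosing $\delta$ small enough that this is at most $\tfrac12\mu(\Omega;\chi,\Upsilon,v)$ gives $\int_{J_\chi}(\chi^+-\chi^-)\nu_\chi^\T D_\xi W(Dv_\delta,x)\diff{\fH^{d-1}}\ge\tfrac12\mu(\Omega;\chi,\Upsilon,v)>0$, which is~\eqref{eq::removablequasi}. One should also keep $\delta$ below the threshold used in Theorem~\ref{thm::NecSufCondRemovSingLin} (i.e.\ $\delta<\tfrac14\dist(\Psi(\bar Q_k),\partial U_z)$) so that $\operatorname{supp}v_\delta\subseteq\Omega$.

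The main obstacle — and the only genuinely new point over the linear argument — is verifying that the representation $D_\xi W(Dw,x)=\Upsilon[w]Dw$ together with the ellipticity bounds on $\Upsilon$ holds in the form Theorem~\ref{thm::NecSufCondRemovSing} demands, in particular that $\Upsilon[w]\in L^\infty(\Omega;S^{d\times d})$ for every $w\in H_0^1(\Omega)$ (not merely smooth $w$) and that no additional regularity of $Dw$ is secretly needed. This is handled by noting that the integrand $D_\xi^2 W(t\,Dw(x),x)$ is bounded by $\Lambda$ pointwise for every measurable $Dw$, so measurability of $x\mapsto\Upsilon[w](x)$ (via Fubini in $t$) plus the uniform bound is all that is required; the symmetry and ellipticity are pointwise in $x$ and hence unaffected. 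A minor secondary point is that Theorem~\ref{thm::NecSufCondRemovSing} is stated for $\chi\in L^\infty(\Omega)\cap SBV(\Omega)$, which is guaranteed here since $\chi\in SBV^\infty(\Omega)$ and $\chi_{\min}\le\chi\le\chi_{\max}$. Everything else is a routine transcription, and Assumptions~\ref{assum::CoerCvxBddQuasLin} (via (iii), the growth bounds) and~\ref{assum::Boun} are exactly what make the nonlinear flux behave like a uniformly elliptic linear one on the jump set.
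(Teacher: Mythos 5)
Your proposal is correct and follows essentially the same route as the paper's proof: both use $D_\xi W(0,x)=0$ (from Assumption~\ref{assum::Boun}), the Newton--Leibniz representation $D_\xi W(Dw,x)=\Upsilon[w]Dw$ with $\Upsilon[w]=\int_0^1 D_\xi^2 W(t\,Dw,x)\,\diff t$, transfer the bounds from Assumption~\ref{assum::CoerCvxBddQuasLin}(ii) to $\Upsilon$, invoke Theorem~\ref{thm::NecSufCondRemovSing}, and then mollify. The only cosmetic divergence is in the mollification error estimate: the paper runs Newton--Leibniz a second time and bounds $\abs{\int_0^1 A_{v,v_\delta}^t\,D(v-v_\delta)\,\diff t}$ by $\Lambda\abs{Dv-Dv_\delta}$, whereas you invoke the $c_4$-Lipschitz continuity of $\xi\mapsto D_\xi W(\xi,x)$ from Assumption~\ref{assum::BddLipGrowQuasiLinNonDiv}(ii) directly --- both give the same type of bound, and your version is a notch more direct.
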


\begin{proof}
    Since Assumption~\ref{assum::Boun} holds, we have $D_{\xi}W(0,x) =0$ for all $x\in\Omega$. By using the Newton–Leibniz theorem
    \begin{equation}
        D_{\xi}W(Du,x)=D_{\xi}W(0,x)+\int_{0}^{1}\frac{\D}{\diff{t}}D_{\xi}W(tDu,x)\diff{t}=\int_{0}^1A_u^tDu\diff{t},
    \end{equation}
    where in the second equality we use the fact that $D_{\xi}W(0,x)=0$, and $A_u^t$ is a matrix-valued function with entries defined as $[A_u^t]_{ij} = [D_{\xi_{i}}D_{\xi_{j}}W(tDu,x)]_{ij}$, $t\in[0,1]$. Thus 
    \begin{equation*}
        \int_{J_{\chi}}(\chi^+-\chi^-)\nu_{\chi}^{\T} D_{\xi}W(Du,x) \diff{\fH^{d-1}}=\mu(\Omega; \chi, \int_{0}^{1}A_u^t\diff{t}, Du).
    \end{equation*}
    Notice that, by Assumption~\ref{assum::CoerCvxBddQuasLin} (\romannumeral2), we have for all $\xi\in \sR^d,\ t\in [0,1]$
    \begin{equation*}
        \lambda\abs{\xi}^2
        \leq \xi^{\T}A_u^t\xi
        \leq \Lambda \abs{\xi}^2.   
    \end{equation*}
    By letting $\Upsilon[u]=\int_{0}^{1}A_u^t\diff{t}$ and Theorem~\ref{thm::NecSufCondRemovSing}, there is $v\in C_c^1(\Omega)$ such that
    \begin{equation}
        \mu(\Omega; \chi, \int_{0}^{1}A_v^t\diff{t}, Dv)>0,
    \end{equation}
    and this is equivalent to
    \begin{equation}\label{eq::RemovSingQuasLin}
        \int_{J_{\chi}}(\chi^+-\chi^-)\nu_{\chi}^{\T} D_{\xi}W(Dv,x) \diff{\fH^{d-1}}>0.
    \end{equation}
    Let $v_{\delta} =v *\rho_{\delta}$ with $\delta<\frac{1}{4}\dist\left(\Psi(\bar{Q}_k),\partial U_z)\right)$, we claim that
    \begin{equation}\label{eq::ApproxMolliQuasLin}
       \lim_{\delta\to 0
       } \int_{J_{\chi}}(\chi^+-\chi^-)\nu_{\chi}^{\T} \left(D_{\xi}W(Dv,x)-D_{\xi}W(Dv_{\delta},x)\right) \diff{\fH^{d-1}}=0.
    \end{equation}
    To prove \eqref{eq::ApproxMolliQuasLin}, we use the Newton–Leibniz theorem again:
    \begin{align}
    & D_{\xi_{i}}W\left(D v, x\right)=D_{\xi_{i}}W\left(D v_{\delta}, x\right)+\int_{0}^{1} \frac{d}{d t} D_{\xi_i}W\left(D v_{\delta}+t\left(D v-D v_{\delta}, x\right) d t\right.\nonumber\\
    =& D_{\xi_{i}}W\left(D v_{\delta}, x\right)+\int_{0}^{1} \sum_{j=1}^{d} D_{\xi_{i}}D_{\xi_{j}}W\left(D v_{\delta}+t\left(D v-D v_{\delta}\right), x\right)\left(D_{j} v -D_{j} v_{\delta}\right)\diff{t}.
    \end{align}
    Hence
    \begin{equation}
        D_{\xi}W(Dv,x)-D_{\xi}W(Dv_{\delta},x)=\int_{0}^{1}A_{v,v_{\delta}}^t D(v-v_{\delta})\diff{t},
    \end{equation}
    where $A_{v,v_{\delta}}^t= \{D_{\xi_{i}}D_{\xi_{j}}W\left(D v_{\delta}+t\left(D v-D v_{\delta}\right), x\right)\}_{ij}$. Recall that $Dv_{\delta}\to Dv$ uniformly as $\delta\to 0$. Thus
    \begin{align}
             &\int_{J_{\chi}}(\chi^+-\chi^-)\nu_{\chi}^{\T} \left(D_{\xi}W(Dv,x)-D_{\xi}W(Dv_{\delta},x)\right) \diff{\fH^{d-1}}\nonumber\\
             =& \int_{J_{\chi}}(\chi^+-\chi^-)\nu_{\chi}^{\T} \left(\int_{0}^{1}A_{v,v_{\delta}}^t D(v-v_{\delta})\diff{t}\right) \diff{\fH^{d-1}}\nonumber\\
             \le& 2\norm{\chi}_{L^{\infty}(\Omega)}\fH^{d-1}(J_{\chi})\Lambda\norm{Dv-Dv_{\delta}}_{L^\infty(\Omega)},
    \end{align}
    which means 
    \begin{equation}\label{eq::UniApprox4MolliQuasLin}
       \lim_{\delta\to 0
       } \int_{J_{\chi}}(\chi^+-\chi^-)\nu_{\chi}^{\T} \left(D_{\xi}W(Dv,x)-D_{\xi}W(Dv_{\delta},x)\right) \diff{\fH^{d-1}}=0.
    \end{equation}
    Combining \eqref{eq::RemovSingQuasLin} and \eqref{eq::UniApprox4MolliQuasLin}, we have
    \begin{equation*}
        \int_{J_{\chi}}(\chi^+-\chi^-)\nu_{\chi}^{\T} D_{\xi}W(Dv_{\delta},x) \diff{\fH^{d-1}}> 0,
    \end{equation*}
    for $\delta$ small enough. Hence the proof is completed.
\end{proof}

\subsection{Deviation occurs for quasilinear elliptic equations}\label{sec::DeviQuasLin}
We show that the deviation is not exclusive for the linear problems. The deviation occurs for the quasilinear elliptic equations. We start with some properties of the solution set $\tU$.

\begin{prop}\label{prop::Close}
   Suppose that Assumption~\ref{assum::BvCoeQuasLin},~\ref{assum::CoerCvxBddQuasLin} and~\ref{assum::BddLipGrowQuasiLinNonDiv} hold with $p>\max\{2,d\}$. Given any $f\in L^p(\Omega)$, we have $\tU$ is closed under $H^1$ norm. 
\end{prop}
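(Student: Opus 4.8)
The plan is to show $\tU$ is sequentially closed in $H^1(\Omega)$. Take a sequence $\{u_k\}_{k=1}^{\infty}\subseteq \tU$ with $u_k\to u$ in $H^1(\Omega)$; we must verify $u\in\tU$, i.e.\ that $u\in H_0^1(\Omega)$ and $u$ solves the weak form of~\eqref{eq::QuasiLinOriginEq} with data $f$. Membership in $H_0^1(\Omega)$ is immediate since $H_0^1(\Omega)$ is closed in $H^1(\Omega)$. For the equation, recall that each $u_k$ satisfies, for all $\varphi\in H_0^1(\Omega)$,
\begin{equation*}
    \int_{\Omega}\chi(x)\, D_{\xi}W(Du_k,x)\cdot D\varphi\diff{x}=\int_{\Omega}f\varphi\diff{x},
\end{equation*}
and the goal is to pass to the limit $k\to\infty$ inside the left-hand integral.

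The key step is to control $D_{\xi}W(Du_k,x)$. Since $Du_k\to Du$ in $L^2(\Omega;\sR^d)$, by passing to a subsequence (not relabeled) we may assume $Du_k\to Du$ pointwise $\fL^d$-a.e.\ and that $\abs{Du_k}\le h$ for some $h\in L^2(\Omega)$ (the standard $L^p$-convergence dominated-subsequence argument). By continuity of $D_{\xi}W(\cdot,x)$ (Assumption~\ref{assum::BvCoeQuasLin} gives $W\in C^3$), we get $D_{\xi}W(Du_k,x)\to D_{\xi}W(Du,x)$ pointwise a.e. The Lipschitz bound in Assumption~\ref{assum::CoerCvxBddQuasLin}(ii) (or the growth bound in~\ref{assum::CoerCvxBddQuasLin}(iii), $\abs{D_{\xi}W(\xi,x)}\le c_3(\abs{\xi}+1)$) yields $\abs{D_{\xi}W(Du_k,x)}\le c_3(h(x)+1)\in L^2(\Omega)$, so by dominated convergence $D_{\xi}W(Du_k,\cdot)\to D_{\xi}W(Du,\cdot)$ strongly in $L^2(\Omega;\sR^d)$. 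Since $\chi\in L^\infty(\Omega)$ and $D\varphi\in L^2(\Omega;\sR^d)$, we may pass to the limit:
\begin{equation*}
    \int_{\Omega}\chi\, D_{\xi}W(Du,x)\cdot D\varphi\diff{x}=\lim_{k\to\infty}\int_{\Omega}\chi\, D_{\xi}W(Du_k,x)\cdot D\varphi\diff{x}=\int_{\Omega}f\varphi\diff{x},
\end{equation*}
for every $\varphi\in H_0^1(\Omega)$. Hence $u\in\tU$, and since the original sequence converges to $u$, the whole sequence (not just a subsequence) has its limit in $\tU$; this proves $\tU$ is closed.

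I expect the main obstacle to be the uniform integrability / domination of $D_{\xi}W(Du_k,x)$: one must be careful that the subsequence extraction (pointwise convergence plus an $L^2$ dominating function) is legitimate and that the growth hypotheses on $W$ actually supply an $L^2$-integrable dominating envelope rather than merely an $L^1$ one, so that testing against arbitrary $D\varphi\in L^2$ is valid. A secondary point worth a line is that passing to a subsequence is harmless for proving closedness: the limit $u$ is already fixed by the $H^1$-convergence of the full sequence, so identifying it as an element of $\tU$ along any subsequence suffices.
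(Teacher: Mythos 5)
Your proposal is correct, and the subsequence subtlety you flag is indeed harmless for the reason you give. However, the paper takes a more direct and quantitative route that sidesteps both the subsequence extraction and the dominated-convergence argument. It writes, via Newton--Leibniz,
\begin{equation*}
D_{\xi_i}W(Du,x)-D_{\xi_i}W(Du_k,x)=\int_0^1\sum_{j}D_{\xi_i}D_{\xi_j}W\bigl(Du_k+t(Du-Du_k),x\bigr)\,(D_j u-D_j u_k)\diff t,
\end{equation*}
and then uses the uniform bound $\lambda\le D_\xi^2 W\le\Lambda$ from Assumption~\ref{assum::CoerCvxBddQuasLin}(ii) together with Cauchy--Schwarz to get, directly for every test function $\varphi\in H_0^1(\Omega)$,
\begin{equation*}
\Abs{\langle Q[u]-f,\varphi\rangle_{H^{-1},H_0^1}}=\Abs{\int_\Omega \chi\,D\varphi\cdot[D_\xi W(Du,x)-D_\xi W(Du_k,x)]\diff x}\le C\,\norm{\varphi}_{H_0^1(\Omega)}\norm{u-u_k}_{H_0^1(\Omega)},
\end{equation*}
which tends to zero as $k\to\infty$. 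This is the same conclusion you reach, but via a single Lipschitz-type estimate rather than pointwise a.e.\ convergence plus domination; the paper's approach is tighter (no subsequence, no envelope $h$), and it also yields the quantitative dependence $\norm{Q[u]-f}_{H^{-1}}\lesssim\norm{u-u_k}_{H^1}$ that the paper reuses elsewhere (compare Propositions~\ref{prop::L2control} and~\ref{prop::H-1control}). In your writeup you could have reached the same shortcut by invoking the Lipschitz continuity of $D_\xi W$ in $\xi$ from Assumption~\ref{assum::BddLipGrowQuasiLinNonDiv}(ii), which gives $\norm{D_\xi W(Du_k,\cdot)-D_\xi W(Du,\cdot)}_{L^2}\le c_4\norm{Du_k-Du}_{L^2}$ directly; note also that the ``Lipschitz bound'' you attribute to Assumption~\ref{assum::CoerCvxBddQuasLin}(ii) is really the Hessian bound, which implies Lipschitz continuity but is not stated as such there.
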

\begin{proof}
   Let $\{u_k\} \subseteq{\tU} \subseteq H^1_0(\Omega)$ be a convergent sequence which belong to the solution set to equation~\eqref{eq::QuasiLinOriginEq} with data $f$. 
   
   According to the completeness of $H_0^1(\Omega)$, there exists a $u\in H_0^1(\Omega)$ such that \begin{equation*}
       \lim_{k\to\infty}\norm{u-u_k}_{H^1(\Omega)}=0.
   \end{equation*}
   Let $g = Q[u]\in H^{-1}(\Omega)$. To prove $\tU$ is closed under $H^1$ norm, we only require $g=f$.
    
    Using the Newton–Leibniz theorem
    \begin{equation*} 
        D_{\xi_{i}}W(Du,x)-D_{\xi_{i}}W(Du_k,x)=\int_{0}^{1}A_{u,u_k}^t D(u-u_k)\diff{t}.
    \end{equation*}
    Thus for each $\varphi\in H^1_0(\Omega)$
    
    \begin{align*} 
        \langle g-f,\varphi\rangle_{H^{-1}(\Omega),H^1_0(\Omega)}
        &=\int_{\Omega}\chi(x)D\varphi[D_{\xi}W(Du,x)-D_{\xi}W(Du_k,x)]\diff{x}\\
        &\leq C \norm{\varphi}_{H^1_0(\Omega)}\norm{u-u_k}_{H^1_0(\Omega)},
    \end{align*}
    with right hand term tends to 0 as $k\to \infty$. Hence $u$ is a solution to equation~\eqref{eq::QuasiLinOriginEq} with right hand term as $f$. 
\end{proof}

\begin{theorem}[deviation occurs for quasilinear elliptic equation]\label{thm::DeviQuasLin}
    Suppose that Assumption~\ref{assum::BvCoeQuasLin},~\ref{assum::CoerCvxBddQuasLin},~\ref{assum::BddLipGrowQuasiLinNonDiv} and~\ref{assum::Boun} hold with $p>d$. Then there exists $f\in L^p(\Omega)$ such that $ \dist_{\mathrm{H}}(\tU,\widetilde{\tU})>0$ where $\tU$ and $\widetilde{\tU}$ are solution sets to the original equation~\eqref{eq::QuasiLinOriginEq} and modified equation~\eqref{eq::QuasLinModEq}, respectively.
\end{theorem}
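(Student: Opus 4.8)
The plan is to reproduce the strategy of Theorem~\ref{thm::WorstCaseDeviLinEllipEq}, substituting the quasilinear removability statement Theorem~\ref{thm::NecSufCondRemovSingQuasiLin} for its linear counterparts. First I would apply Theorem~\ref{thm::NecSufCondRemovSingQuasiLin} to fix a function $v_{\delta}\in C_c^{\infty}(\Omega)$ with
\[
    \int_{J_{\chi}}(\chi^+-\chi^-)\nu_{\chi}^{\T}D_{\xi}W(Dv_{\delta},x)\diff{\fH^{d-1}}>0 ,
\]
and then choose the candidate data $f:=\tilde{Q}[v_{\delta}]$. Since $v_{\delta}$ is smooth and compactly supported, $W\in C^{3}$, $\chi\in SBV^{\infty}(\Omega)$, and $D_{\xi}W(0,\cdot)=0$ by Assumption~\ref{assum::Boun}, the explicit formula for $\tilde{Q}$ together with Assumptions~\ref{assum::BvCoeQuasLin} and~\ref{assum::BddLipGrowQuasiLinNonDiv} shows $f\in L^{p}(\Omega)\cap L^{2}(\Omega)$; hence $v_{\delta}\in\widetilde{\tU}$ and $\tU\neq\emptyset$ (Theorem~\ref{thm::ExistQuasiLinDivForm}), both relative to this $f$. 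The goal then reduces to proving $\inf_{u\in\tU}\norm{u-v_{\delta}}_{H^{1}(\Omega)}>0$, which gives $\dist_{\mathrm{H}}(\tU,\widetilde{\tU})\ge\inf_{u\in\tU}\norm{u-v_{\delta}}_{H^{1}(\Omega)}>0$.

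The heart of the argument is to identify the defect $Q[v_{\delta}]-f=Q[v_{\delta}]-\tilde{Q}[v_{\delta}]$ with the jump measure. Writing $Q[v_{\delta}]=-\sum_{i}D_{i}\bigl(\chi\,D_{\xi_{i}}W(Dv_{\delta},x)\bigr)$ and noting that each $\chi\,D_{\xi_{i}}W(Dv_{\delta},\cdot)\in SBV(\Omega)$ (because $v_{\delta}$ is smooth and $W\in C^{3}$), the Leibniz and chain rules for $BV$ functions and the structure theorem Theorem~\ref{thm::StructureThmBV} decompose $Q[v_{\delta}]$ into the $\chi$-weighted second-order terms, the absolutely continuous part $\sum_{i}D_{i}^{\mathrm{a}}\chi\,D_{\xi_{i}}W(Dv_{\delta},x)$, and the jump part; the first two are precisely what $\tilde{Q}[v_{\delta}]$ retains, so
\[
    Q[v_{\delta}]-\tilde{Q}[v_{\delta}]=-\sum_{i=1}^{d}D_{i}^{\mathrm{j}}\chi\,D_{\xi_{i}}W(Dv_{\delta},x)=-(\chi^+-\chi^-)\nu_{\chi}^{\T}D_{\xi}W(Dv_{\delta},x)\diff{\fH^{d-1}}
\]
in the sense of Radon measures, exactly paralleling Proposition~\ref{prop::RMTransModEq}(iii). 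Moreover, since $D_{\xi}W(0,\cdot)=0$, the vector field $\chi D_{\xi}W(Dv_{\delta},\cdot)$ is bounded and supported in $\operatorname{supp}v_{\delta}$, so $Q[v_{\delta}]\in H^{-1}(\Omega)$ and therefore $Q[v_{\delta}]-f\in H^{-1}(\Omega)$.

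Next I would show this defect is non-zero in $H^{-1}(\Omega)$. The measure above is supported in the compact set $\overline{\operatorname{supp}v_{\delta}}\cap J_{\chi}\subseteq\Omega$, so choosing a cut-off $\zeta\in C_c^{\infty}(\Omega)$ with $\zeta\equiv1$ on a neighbourhood of it gives
\[
    \langle f-Q[v_{\delta}],\zeta\rangle_{H^{-1}(\Omega),H_0^{1}(\Omega)}=\int_{J_{\chi}}(\chi^+-\chi^-)\nu_{\chi}^{\T}D_{\xi}W(Dv_{\delta},x)\diff{\fH^{d-1}}>0 ,
\]
so that $c_{0}:=\norm{Q[v_{\delta}]-f}_{H^{-1}(\Omega)}>0$. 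Finally I close with the Lipschitz estimate for $Q$ already established inside the proof of Proposition~\ref{prop::Close}: using the Newton--Leibniz theorem on $D_{\xi}W(\cdot,x)$ and the upper bound in Assumption~\ref{assum::CoerCvxBddQuasLin}(ii) one gets $\norm{Q[w_{1}]-Q[w_{2}]}_{H^{-1}(\Omega)}\le C\norm{w_{1}-w_{2}}_{H^{1}(\Omega)}$ for all $w_{1},w_{2}\in H_0^{1}(\Omega)$. Taking $w_{1}=u\in\tU$ (so $Q[u]=f$) and $w_{2}=v_{\delta}$ yields $c_{0}=\norm{Q[v_{\delta}]-Q[u]}_{H^{-1}(\Omega)}\le C\norm{u-v_{\delta}}_{H^{1}(\Omega)}$, hence $\norm{u-v_{\delta}}_{H^{1}(\Omega)}\ge c_{0}/C$ for every $u\in\tU$, completing the proof.

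The main obstacle is the second step: carrying out the $BV$ bookkeeping that splits $Q[v_{\delta}]$ into absolutely continuous, $\chi$-weighted Hessian, and jump parts, checking that the first two coincide with $\tilde{Q}[v_{\delta}]$, and verifying that the surviving jump term is a bona fide element of $H^{-1}(\Omega)$. Once this identification is available the rest is a direct transcription of the linear arguments, with $D_{\xi}W(Dv_{\delta},x)$ playing the role of $\bar{A}D\tilde{u}$.
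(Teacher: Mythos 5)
Your proposal is correct, and it follows the same overall skeleton as the paper's argument: apply Theorem~\ref{thm::NecSufCondRemovSingQuasiLin} to obtain $v_{\delta}\in C_c^{\infty}(\Omega)$ with a nonvanishing jump integral, take $f=\tilde{Q}[v_{\delta}]$ so that $v_{\delta}\in\widetilde{\tU}$, and deduce from $Q[v_{\delta}]\neq\tilde{Q}[v_{\delta}]$ (as Radon measures) that $v_{\delta}\notin\tU$. Where you part ways with the paper is the final step. The paper invokes the statement of Proposition~\ref{prop::Close} (that $\tU$ is closed in $H^{1}$) and concludes purely topologically that $\dist(v_{\delta},\tU)>0$ because $v_{\delta}$ lies outside a closed set. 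You instead unpack the proof of Proposition~\ref{prop::Close}: you show the defect $Q[v_{\delta}]-f$ is a nonzero element of $H^{-1}(\Omega)$ with norm $c_{0}>0$ by pairing against a cut-off, and then use the Lipschitz estimate $\norm{Q[w_{1}]-Q[w_{2}]}_{H^{-1}}\le C\norm{w_{1}-w_{2}}_{H^{1}}$ (which is exactly the estimate appearing inside that proof, and in Proposition~\ref{prop::H-1control}) to get $\norm{u-v_{\delta}}_{H^{1}(\Omega)}\ge c_{0}/C$ uniformly over $u\in\tU$. The two routes are of comparable difficulty, but yours delivers a quantitative lower bound on $\dist_{\mathrm{H}}(\widetilde{\tU},\tU)$ rather than bare positivity, at the cost of slightly more bookkeeping (verifying the jump measure lies in $H^{-1}(\Omega)$ and that $f\in L^{p}(\Omega)$). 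Both are sound; the paper's version is terser because it offloads the analytic work to Proposition~\ref{prop::Close}.
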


\begin{proof}
    By Thoerem~\ref{thm::NecSufCondRemovSingQuasiLin}, there is a $v_{\delta}\in C_c^{\infty}(\Omega)$ such that 
    \begin{equation*}
        \int_{J_{\chi}}(\chi^+-\chi^-)\nu_{\chi}^{\T} D_{\xi}W(Dv_{\delta},x)\diff{\fH^{d-1}}> 0,
    \end{equation*}
    which means $\left(\tilde{Q}[v_{\delta}]-Q[v_{\delta}]\right)(\Omega)=\int_{J_{\chi}}(\chi^+-\chi^-)\nu_{\chi}^{\T} D_{\xi}W(Dv_{\delta},x)\diff{\fH^{d-1}}> 0$. Here we regard $\tilde{Q}[v_{\delta}]-Q[v_{\delta}]$ as a Radon measure.
    
    Let $f= Q[v_{\delta}]$ and $g=\tilde{Q}[v_{\delta}]$. We claim that $v_{\delta}$ is not any solution to
    \begin{equation}
        \begin{cases}
            Q [u] =g ,\\
            u = 0.
        \end{cases}
    \end{equation}
    Otherwise, $v_{\delta}$ is a solution to the above equation and we have $f=Q[v_{\delta}]=g$, which leads to a contradiction. Hence $v_{\delta}\notin \tU(g)$. By Proposition~\ref{prop::Close}, $\tU(g)$ is closed under $H^1$ norm. Thus $\dist(v_{\delta},\tU(g))>0$, and hence $\dist_{\mathrm{H}}(\widetilde{\tU}(g),\tU(g))>0$.
\end{proof}

Now we conclude this work with a theorem showing the relative deviation can be very large, for the quasilinear elliptic equations, just as the linear case.

\begin{theorem}[Infinite relative deviation for quasilinear elliptic equation]\label{thm::UnbddRQuasLinEllipEq}
     Suppose that Assumption~\ref{assum::BvCoeQuasLin},~\ref{assum::CoerCvxBddQuasLin},~\ref{assum::BddLipGrowQuasiLinNonDiv} and~\ref{assum::Boun} hold with $p>d$. Also suppose that there are a constant $r_0>0$ and a countably many $(d-1)$-dimensional $C^1$ manifolds $\{S_i\}_{i=1}^{\infty}$ such that $\fH^{d-1}(J_{\chi}\backslash(\bigcup\nolimits_{i=1}^{\infty}S_i))=0$ and that for $i\neq j$, $\dist(S_i,S_j)=\inf_{x\in S_i,y\in S_j}\abs{x-y}\ge r_0>0$. Then 
     \begin{equation}
         \sup_{f\in L^p(\Omega)}\sup_{\tilde{u}\in \widetilde{\tU}}\inf_{u\in \tU}\frac{\norm{\tilde{u}-u}_{H^1(\Omega)}}{\norm{\tilde{u}}_{H^1(\Omega)}}=+\infty,
     \end{equation}
     where $\tU$ and $\widetilde{\tU}$ are the solution sets to the original equation~\eqref{eq::QuasiLinOriginEq} and modified equation~\eqref{eq::QuasLinModEq} with data $f\in L^p(\Omega)$, respectively. 
\end{theorem}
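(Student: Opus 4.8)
The plan is to transplant the rescaling argument of Theorem~\ref{thm::UnbddRLinEllipEq} to the quasilinear setting, with two modifications forced by the nonlinearity of $W$: the linear a priori estimates are replaced by a Newton--Leibniz linearization, and the exact $1/r$--scaling of the jump measure is replaced by a one-sided bound coming from the uniform convexity of $W$. First I would invoke Theorem~\ref{thm::NecSufCondRemovSingQuasiLin}, and more precisely its proof via Theorem~\ref{thm::NecSufCondRemovSing} with $\Upsilon[w]=\int_0^1 D_\xi^2 W(tDw,\cdot)\diff{t}$: there is a fixed $v=\zeta V\in C_c^1(\Omega)$ --- $V$ the level-set function of a single $C^1$-hypersurface $S$, $\zeta$ the cutoff and $\eps>0$ the small radius built in that proof --- such that $\int_{J_{\chi}}(\chi^+-\chi^-)\nu_{\chi}^{\T}D_{\xi}W(Dv,x)\diff{\fH^{d-1}}>0$, with $Dv=\zeta\Abs{DV}\nu_{\chi}$ normal to $J_{\chi}$ on $J_{\chi}\cap\operatorname{supp}(v)$; in particular, as in the proof of Theorem~\ref{thm::NecSufCondRemovSing}, there is a compact $K\subseteq J_{\chi}^{+}\cap S\cap\{\zeta=1\}$ (coming from $\Sigma\cap\Psi(\bar{Q}_k)$) with $\fH^{d-1}(K)>0$ whose contribution dominates that integral. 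For smooth $w$, let $\nu_w$ denote the scalar measure $(\chi^+-\chi^-)\bigl(\nu_{\chi}^{\T}D_{\xi}W(Dw,x)\bigr)\diff{\fH^{d-1}}$ carried by $J_{\chi}$, so the displayed inequality says $\nu_v\neq0$. If $\norm{\nu_v}_{H^{-1}(\Omega)}=+\infty$ one concludes exactly as in Case~1 of the proof of Theorem~\ref{thm::UnbddRLinEllipEq} (mollify $v$, use a sequence of test functions and the Lipschitz continuity of $D_\xi W$); so from now on I assume $c_v:=\norm{\nu_v}_{H^{-1}(\Omega)}\in(0,+\infty)$.

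Next I would rescale. Using Claims~1--2 in the proof of Theorem~\ref{thm::UnbddRLinEllipEq}, for each $r\in(0,1]$ there is $\tilde{v}\in C_c^1(\Omega)$, supported in a fixed ($r$-independent) neighbourhood $W_\eps\subseteq U_z$ of $\Psi(\bar{Q}_k)$, with $D\tilde{v}=\tfrac1r Dv$ on $S\cap U_z$ and $\norm{\tilde{v}}_{H^1(\Omega)}\le C r^{-1/2}$, $C$ independent of $r$; choosing $\eps<r_0/4$ forces $J_{\chi}\cap\operatorname{supp}(\tilde{v})\subseteq S$ up to an $\fH^{d-1}$-null set. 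On that set, since $D_\xi W(0,x)=0$ by Assumption~\ref{assum::Boun}, Newton--Leibniz together with uniform convexity (Assumption~\ref{assum::CoerCvxBddQuasLin}(ii)) give $\nu_{\chi}^{\T}D_{\xi}W(D\tilde{v},x)=\tfrac1r\zeta\Abs{DV}\int_0^1\nu_{\chi}^{\T}D_\xi^2 W(\tfrac{t}{r}\zeta\Abs{DV}\nu_\chi,x)\nu_\chi\diff{t}\ge\tfrac{\lambda}{r}\zeta\Abs{DV}\ge0$. Splitting $\nu_{\tilde{v}}(\Omega)$ over $K$ and its complement in $J_{\chi}\cap W_\eps$, bounding the remainder by $\tfrac{C'}{r}\fH^{d-1}\!\bigl((J_{\chi}\cap W_\eps)\setminus K\bigr)$ (using $\Abs{D_\xi W(\xi,x)}\le c_6\Abs{\xi}$ and $\Abs{D\tilde{v}}\lesssim r^{-1}$), and shrinking the fixed $\eps$ so that this $\fH^{d-1}$-measure is small enough, one gets $\nu_{\tilde{v}}(\Omega)\ge c_1/r$ with $c_1>0$ fixed. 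Pairing $\nu_{\tilde{v}}$ with an $r$-independent cutoff equal to $1$ on $W_\eps$ (so the pairing equals $\nu_{\tilde{v}}(\Omega)$) yields $\norm{\nu_{\tilde{v}}}_{H^{-1}(\Omega)}\ge c_2/r$. Finally, mollifying $\tilde{v}_\delta=\rho_\delta*\tilde{v}\in C_c^\infty(\Omega)$ and choosing $\delta$ small (depending on $r$), the uniform convergence $D\tilde{v}_\delta\to D\tilde{v}$, the Lipschitz continuity of $D_\xi W$ and $\fH^{d-1}(J_{\chi})<+\infty$ give $\norm{\nu_{\tilde{v}_\delta}}_{H^{-1}(\Omega)}\ge c_2/(2r)$ while $\norm{\tilde{v}_\delta}_{H^1(\Omega)}\le 2Cr^{-1/2}$.

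To close the argument, as in the proof of Theorem~\ref{thm::DeviQuasLin} I would set $f:=\tilde{Q}[\tilde{v}_\delta]\in L^\infty(\Omega)\subseteq L^p(\Omega)$, so that $\tilde{v}_\delta\in\widetilde{\tU}$, and by Theorem~\ref{thm::ExistQuasiLinDivForm} $\tU\neq\emptyset$. For any $u\in\tU$, the structure-theorem identity $\tilde{Q}[w]-Q[w]=\nu_w$ and $Q[u]=f=\tilde{Q}[\tilde{v}_\delta]$ give $Q[\tilde{v}_\delta]-Q[u]=-\nu_{\tilde{v}_\delta}$ in $H^{-1}(\Omega)$; writing $Q[\tilde{v}_\delta]-Q[u]=-\divg\cdot\bigl(\chi\,B\,D(\tilde{v}_\delta-u)\bigr)$ with $B=\int_0^1 D_\xi^2 W\bigl(Du+t(D\tilde{v}_\delta-Du),x\bigr)\diff{t}$ uniformly elliptic yields $\norm{\nu_{\tilde{v}_\delta}}_{H^{-1}(\Omega)}\le\chi_{\max}\Lambda\norm{\tilde{v}_\delta-u}_{H^1(\Omega)}$, hence $\norm{\tilde{v}_\delta-u}_{H^1(\Omega)}\ge c_3/r$ with $c_3>0$ independent of $u$ and $r$. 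Therefore $\inf_{u\in\tU}\norm{\tilde{v}_\delta-u}_{H^1(\Omega)}/\norm{\tilde{v}_\delta}_{H^1(\Omega)}\ge (c_3/r)/(2Cr^{-1/2})=\tfrac{c_3}{2C}\,r^{-1/2}$, which tends to $+\infty$ as $r\to0^{+}$; taking the supremum over $f\in L^p(\Omega)$ and over $\tilde{u}\in\widetilde{\tU}$ gives the claim.

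The hard part will be the middle step: because $w\mapsto D_{\xi}W(Dw,x)$ is nonlinear, the jump measure no longer scales exactly like $1/r$, so recovering $\norm{\nu_{\tilde{v}}}_{H^{-1}(\Omega)}\gtrsim 1/r$ requires exploiting \emph{both} the uniform convexity of $W$ and the geometric fact that $Dv$ is normal to $J_{\chi}$, and checking that the ``bad set'' correction can be absorbed by a choice of $\eps$ that does not depend on $r$. The non-additivity $Q[\tilde{v}_\delta]-Q[u]\neq Q[\tilde{v}_\delta-u]$ in the last step is a minor point handled by the Newton--Leibniz linearization above, and the possibility $\norm{\nu_v}_{H^{-1}(\Omega)}=+\infty$ is either vacuous under the separation hypothesis or dealt with as in Case~1 of Theorem~\ref{thm::UnbddRLinEllipEq}.
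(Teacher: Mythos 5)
Your proof is correct and follows essentially the same route as the paper's: express the quasilinear jump discrepancy $Q[\tilde{u}]-Q[u]$ via the structure theorem, linearize $D_\xi W$ by Newton--Leibniz using $D_\xi W(0,\cdot)=0$, control $\norm{\tilde{u}-u}_{H^1}$ from below by the $H^{-1}$-norm of the jump measure (Proposition~\ref{prop::H-1control}), and then transplant the $1/r$-rescaling of Theorem~\ref{thm::UnbddRLinEllipEq}. The paper's proof stops after three displays with ``the rest of the proof is omitted and readers can refer to the proof of Theorem~\ref{thm::UnbddRLinEllipEq}'', so what you supply is exactly the step you flag as the hard part: since $w\mapsto D_\xi W(Dw,x)$ is nonlinear, the exact identity $\norm{\nu_{\tv}}_{H^{-1}}=r^{-1}\norm{\nu_v}_{H^{-1}}$ of the linear case fails, and you correctly replace it by a one-sided bound, using uniform convexity (Assumption~\ref{assum::CoerCvxBddQuasLin}(ii)) together with the normality of $D\tv$ to $J_\chi$ to obtain $\nu_\chi^{\T}D_\xi W(D\tv,x)\ge\tfrac{\lambda}{r}\zeta\Abs{DV}$ on $S$, then absorb the bad-set correction by an $r$-independent choice of $\eps$. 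This fills in, in a sound way, exactly what the paper leaves implicit; your construction of the data $f=\tilde{Q}[\tv_\delta]$ and the in-line Newton--Leibniz derivation of the final $H^{-1}$-to-$H^1$ comparison are equivalent to the paper's appeal to Proposition~\ref{prop::H-1control}.
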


\begin{proof}
    Recall that
    \begin{equation*}
        Q[\tilde{u}]-Q[u]=(\chi^+-\chi^-)\nu_{\chi}D_{\xi}W(D\tilde{u},x)\diff{\fH^{d-1}}.
    \end{equation*}
    which is non-zero.
    As above, by using the Newton--Leibniz theorem again, we have
    \begin{equation*}
        D_{\xi}W(D\tilde{u},x)=D_{\xi}W(D\tilde{u},x)-D_{\xi}W(0,x)=\int_{0}^{1}A_{\tilde{u}}^{t}D\tilde{u}\diff{t},
    \end{equation*}
    where $[A_{\tilde{u}}^t]_{ij} = [D_{\xi_{i}}D_{\xi_{j}}W(tD\tilde{u},x)]_{ij}$, $t\in[0,1]$.

    By using Proposition~\ref{prop::H-1control}, combining the above two equations, we obtain
    \begin{equation*}
        \frac{\norm{\tilde{u}-u}_{H^1(\Omega)}}{\norm{\tilde{u}}_{H^1(\Omega)}}>C\frac{\norm{(\chi^+-\chi^-)\nu_{\chi}\int_{0}^{1}A_{\tilde{u}}^{t}D\tilde{u}\diff{t}\diff{\fH^{d-1}}}_{H^{-1}(\Omega)}}{\norm{\tilde{u}}_{H^1(\Omega)}}.
    \end{equation*}
    The rest of the proof is omitted and readers' can refer to the proof of Theorem~\ref{thm::UnbddRLinEllipEq}.
\end{proof}



        
\newpage
\appendix
\section{Notation}
For the readers' convenience, we collect the frequently-used notations and list them in Table~\ref{table:ListNotation}. 
Also for notation simplicity, unless specifically claimed, we abbreviate $g(x)$ as $g$ for univariate functions. In the table, equation is the abbreviation of boundary value problem and RM is the abbreviation of residual minimization.

\begin{center}\label{table:ListNotation} 
    \renewcommand\arraystretch{1.5}
    \begin{longtable}{m{3cm}<{\centering}|m{9cm}<{\centering}|m{2cm}<{\centering}} 
        \caption{List of notations.}
        \\ \toprule[1.5pt]  
        \textbf{Notation} 
        & \textbf{Definition/Meaning} 
        & \textbf{Refer to}  
        \\ \hline
        $(Y,\norm{\cdot}_{Y})$ & Banach space equipped with norm $\norm{\cdot}_{Y}$ &
        \\ \hline
        $S^{d\times d}$ 
        & set of $d\times d$ symmetric real-valued matrices
        &
        \\ \hline 
        $a\odot b$ 
        & entry-wise multiplication, e.g., if $a,b\in \sR^d$, then $c=a\odot b$ means $c_i=a_ib_i$, $i=\{1,\ldots,d\}$
        &
        \\ \hline
        $D_{\xi}\odot D_x (F)$ 
        &  entry-wise differentiation, e.g., for $F(\xi,x)$ with $\xi,x\in \sR^{d}$, $D_{\xi}\odot D_x (F)= (D_{\xi_1}D_{x_1}F,\ldots,D_{\xi_d}D_{x_d}F)$
        &
        \\ \hline
        $f$ 
        & right-hand-side of the PDE, (interior) data
        &
        \\ \hline
        $F \circ G$ 
        & composition of functions: $(F\circ G)(x)=F(G(x))$
        &
        \\ \hline
        $D_i[F](x)$, $D_{i}F$
        & $i$-th partial derivative of $F$ evaluated at $x$;
        if no confusion, simply write $D_i F=D_i[F](x)$ 
        \\ \hline
        $DF$ 
        & $DF=(D_1F,\ldots,D_d F)^{\T}$
        \\ \hline
        $J_{\chi},\chi^{\pm},v_{\chi}$  
        & set of approximate jump points and related functions 
        & Def.~\ref{def::ApproxJump}
        \\ \hline $D^{\mathrm{j}}\chi,D^{\mathrm{a}}\chi,D^{\mathrm{c}}\chi$
        & absolutely continuous/jump/Cantor part of $D \chi$
        & Thm.~\ref{thm::StructureThmBV}
        \\ \hline
        $d,d'$
        & input/output dimension
        & \multirow{4}{*}{Sec.~\ref{sec::LinEllipEqSysBVCoeff}}
        \\ \cline{1-2}
        $\chi_{\min},\chi_{\max}$  
        & minimum/maximum of $\chi^{\alpha\beta}$
        & \multirow{4}{*}{}
        \\ \cline{1-2}
        $\bar{\lambda},\bar{\Lambda}$ 
        & lower/upper bound of $\bar{A}^{\alpha\beta}$
        & \multirow{4}{*}{}
        \\ \cline{1-2}
        $\lambda,\Lambda$ 
        & lower/upper bound of $A$
        & \multirow{4}{*}{}
        \\ \hline
        $L$, $\tilde{L}$  
        & original/modified operator
        & \multirow{3}{*}{Sec.~\ref{sec::DeriveModEq}}
        \\ \cline{1-2}
        $u$, $\tilde{u}$ 
        & solution to original/modified equation
        & \multirow{3}{*}{}
        \\ \cline{1-2}
        $u_{\theta}$,$\tilde{u}_{\theta}$ 
        & RM solution to original/modified equation 
        & \multirow{3}{*}{}
        \\ \hline
        $\mu(B;\chi,\Upsilon,\varphi)$
        &
        $\mu(B;\chi,\Upsilon,\varphi)=\int_{B\cap J_{\chi}}(\chi^+-\chi^-)\nu_{\chi}^{\T} \Upsilon[\varphi]D\varphi\diff{\fH^{d-1}}$
        & Sec.~\ref{sec::CharacterRemovSing}
        \\ \hline
        $X$ 
        & $X=\{\tilde{L}w\colon\,w\in H^1_0(\Omega;\sR^{d'})\cap H^{2}(\Omega;\sR^{d'}) \}$
        & \multirow{5}{*}{Sec. \ref{sec::RMInvSubSpace}}
        \\ \cline{1-2}
        $Tf$ 
        & RM-transformed data $Tf=L\tilde{u}$
        & \multirow{5}{*}{}
        \\ \cline{1-2}
        $T$ 
        & RM-transformation $f\mapsto L\tilde{u}$
        & \multirow{5}{*}{}
        \\ \cline{1-2}
        $\sigma(T)$ 
        & spectrum of $T$
        & \multirow{5}{*}{}
        \\ \cline{1-2}
        $\Ker(T-I)$
        & $\Ker(T-I)=\Ker_{X}(T-I)=\{f\in X:Tf=f\}$
        & \multirow{5}{*}{}
        \\ \hline
        $\theta^{\mathrm{SV}}$ 
        & parameter obtained by supervised learning with target function $u$
        & \multirow{3}{*}{Sec. \ref{sec::ImplicitBiasness}}
        \\ \cline{1-2}
        $u_{\theta}^{\mathrm{SV}}$ 
        & neural network function with parameter $\theta=\theta^{\mathrm{SV}}$
        & \multirow{3}{*}{}
        \\ \cline{1-2}
        $u_{\theta}^{\mathrm{SV}\to \mathrm{RM}}$,$\tilde{u}_{\theta}^{\mathrm{SV}\to \mathrm{RM}}$ 
        & RM solution to original/modified equation with initial parameter $\theta_0=\theta^{\mathrm{SV}}$
        & \multirow{3}{*}{}
        \\ \hline        
        $x'$ 
        & $x'=(x_1,\ldots,x_{d-1})$ for $x=(x_1,\ldots,x_d)\in \sR^d$
        & \multirow{5}{*}{Sec.~\ref{sec::ThmRemovSing}}
        \\ \cline{1-2}
        $B_r(x)$ 
        & ball centered at $x$ with radius $r$
        & \multirow{5}{*}{}
        \\ \cline{1-2}
        $J_{\chi}^{\pm}$
        & $J_{\chi}^{\pm}=\{x\in J_{\chi}:\chi^+(x)-\chi^-(x)\gtrless 0\}$
        & \multirow{5}{*}{}
        \\ \cline{1-2}
        $U \prec\zeta\prec U'$ 
        & $\zeta\geq 0$ in $\sR^d$ with $\zeta=1$ in $U$ and $\zeta=0$ in $\sR^d\backslash U'$
        & \multirow{5}{*}{}
        \\ \cline{1-2}
        $\dist(U,U')$
        & Euclidean distance between two sets $U$ and $U'$
        & \multirow{3}{*}{}
        \\ \hline 
        $\bar{X}$ &  $\bar{X}=\{\tilde{L}w\colon\,w\in H^1_0(\Omega;\sC^{d'})\cap H^{2}(\Omega;\sC^{d'}) \}$ & Sec.~\ref{sec::RMTransModEqLin}
        \\ \hline
        $\tU,\widetilde{\tU}$
        & set of solutions to original/modified equation of quasilinear equation
        & \multirow{3}{*}{Sec.~\ref{sec::equationQuasLin}}
        \\ \cline{1-2}
        $\dist_{\mathrm{H}}(\widetilde{\tU},\tU)$
        & Hausdorff distance from $\widetilde{\tU}$ to $\tU$
        & \multirow{3}{*}{}
        \\ \cline{1-2}
        $A_{v,u}^t$ 
        & $[A_{u,v}^t]_{ij}=W_{\xi_{i} \xi_{j}}\left(D u+t\left(D v-D u\right), x\right)$ and if $u=0$, for simplicity, write $A_{v,u}^t$ as $A_{v}^t$
        & \multirow{3}{*}{}
        \\ \bottomrule[1.5pt]
    \end{longtable}   
\end{center}

\section{Functions of bounded variation}\label{sec::BVFun}
Here we list several definitions and theorems used in the main text. 

\begin{definition}[\textbf{function of bounded variation} (BV), pp.117--118~\cite{ambrosio2001functions}]\label{def::BVFun}
    Let $\Omega$ be an open subset of $\sR^d$ and $\chi \in L^{1}(\Omega)$. We say that $\chi$ is a \textbf{function of bounded variation} on $\Omega$ if the distributional derivative of $\chi$ is representable by a finite Radon measure in $\Omega$, that is, if
    \begin{equation*} 
        \int_{\Omega} \chi D_{i} \varphi \diff{x}=-\int_{\Omega} \varphi d D_{i} \chi \quad \forall \varphi \in C_{c}^{\infty}(\Omega), \quad i=1, \ldots, d
    \end{equation*}
    for some $\sR^{d}$-valued measure $D \chi=\left(D_{1} \chi \ldots . D_{d} \chi\right)^\T$ in $\Omega$. The vector space of all functions of bounded variation on $\Omega$ is denoted by $BV(\Omega)$.
\end{definition}

\begin{definition}[\textbf{approximate limit}, rephrased from pp.160~\cite{ambrosio2001functions}]\label{def::ApproxLim}
    Let $\chi \in L_{\mathrm{loc}}^{1}(\Omega)$. We say that $\chi$ has an \textbf{approximate limit} at $x \in \Omega$ if there exists $z \in \sR$ such that
\begin{equation}\label{eq::ApproxLim}
    \lim _{\rho \to 0} \frac{1}{\Abs{B_{\rho}(x)}}\int_{B_{\rho}(x)}\abs{\chi(y)-z} \diff{y}=0.
\end{equation}
The set $S_{\chi}$ of points where this property does not hold is called the \textbf{approximate discontinuity set}. For any $x \in \Omega \backslash S_{\chi}$, $z$ is uniquely determined by~\eqref{eq::ApproxLim}, denoted by $\chi^{\mathrm{ap}}(x)$, and called the \textbf{approximate limit} of $\chi$ at $x$.
\end{definition}

\begin{definition}[\textbf{approximate jump points}, pp.163~\cite{ambrosio2001functions}]\label{def::ApproxJump}
     Let $\chi \in L_{\mathrm{loc}}^{1}(\Omega)$ and $x \in \Omega$. We say that $x$ is an \textbf{approximate jump point} of $\chi$ if there exist $\chi^\pm(x) \in \sR$ and a unit vector $\nu_\chi(x) \in S^{d-1}$ such that $\chi^+(x) \neq \chi^-(x)$ and
    \begin{equation}\label{eq::ApproxJump}
        \lim _{\rho \to 0} \frac{1}{B_{\rho}^\pm(x,\nu)}\int_{B_{\rho}^\pm(x, v)}\abs{\chi(y)-\chi^\pm(x)} \diff{y}=0,
    \end{equation}
    where $B^\pm_{\rho}(x,\nu)=\{y\in B_{\rho}(x)\colon\,\pm\langle x-y,\nu\rangle>0\}$.
    The triplet $\left(\chi^{+}(x), \chi^{-}(x), \nu_{\chi}(x)\right)$ is uniquely determined by~\eqref{eq::ApproxJump} up to a permutation of $(\chi^{+}(x), \chi^{-}(x))$ and a change of sign of $\nu_{\chi}(x)$.
    The \textbf{set of approximate jump points} is denoted by $J_{\chi}$.
\end{definition}

\begin{theorem}[structure theorem of BV function, rephrased from  pp.183--185 \& pp.212--213~\cite{ambrosio2001functions}]\label{thm::StructureThmBV}
    By Radon--Nikodym theorem, we have the representation $D\chi= D^{\mathrm{a}}\chi +D^{\mathrm{s}}\chi$, where $D^{\mathrm{a}}\chi$ is the absolutely continuous part with respect to $\fL^d$ and $D^{\mathrm{s}}\chi$ is the \textbf{singular part with respect to $\fL^d$}. 
    For any $\chi \in BV(\Omega)$, the \textbf{jump part of the derivative} $D^{\mathrm{j}} \chi$ and the \textbf{Cantor part of the derivative} $D^{\mathrm{c}} \chi$ are defined as
    \begin{equation*} 
    D^{\mathrm{j}} \chi=D^{\mathrm{s}} \chi \lfloor_{J_{\chi}}, \quad D^{\mathrm{c}} \chi=D^{\mathrm{s}} \chi \lfloor_{\Omega \backslash S_{\chi}},
    \end{equation*}
    respectively.  Then we have 
    \begin{equation*} 
        D\chi=D^{\mathrm{a}}\chi+D^{\mathrm{j}}\chi+D^{\mathrm{c}}\chi,
    \end{equation*}
    where $D^{\mathrm{a}} \chi=\nabla \chi \fL^d$, $\nabla \chi$ is the Radon--Nykodim density of $D^{\mathrm{a}}\chi$ with respect to $\fL^d$ and $D^{\mathrm{j}} \chi= \left(\chi^{+}-\chi^{-}\right) \nu_{\chi}^{\T} \fH^{d-1}\lfloor_{J_{\chi}}$.
\end{theorem}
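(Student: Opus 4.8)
The plan is to reconstruct the classical fine-structure theory of $BV$ functions: since the statement is a verbatim rephrasing of results in~\cite{ambrosio2001functions}, the proof amounts to isolating the genuinely substantial inputs and then assembling them by measure-theoretic bookkeeping. First I would apply the Radon--Nikodym/Lebesgue decomposition of the finite $\sR^d$-valued measure $D\chi$ with respect to $\fL^d$, obtaining $D\chi = D^{\mathrm{a}}\chi + D^{\mathrm{s}}\chi$ with $D^{\mathrm{a}}\chi = \nabla\chi\,\fL^d$ for a density $\nabla\chi\in L^1(\Omega;\sR^d)$ and $D^{\mathrm{s}}\chi\perp\fL^d$; this already yields the stated form of the absolutely continuous part.

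Next I would observe, using the Lebesgue differentiation theorem applied to the $L^1$ function $\chi$, that the approximate-discontinuity set $S_\chi$ is $\fL^d$-negligible, so $D^{\mathrm{a}}\chi\lfloor S_\chi = 0$ and hence $D\chi\lfloor S_\chi = D^{\mathrm{s}}\chi\lfloor S_\chi$. I then split $D^{\mathrm{s}}\chi = D^{\mathrm{s}}\chi\lfloor S_\chi + D^{\mathrm{s}}\chi\lfloor(\Omega\setminus S_\chi)$ and define $D^{\mathrm{c}}\chi := D^{\mathrm{s}}\chi\lfloor(\Omega\setminus S_\chi)$. Because $J_\chi\subseteq S_\chi$, the piece $D^{\mathrm{s}}\chi\lfloor S_\chi$ further splits as $D^{\mathrm{s}}\chi\lfloor J_\chi + D^{\mathrm{s}}\chi\lfloor(S_\chi\setminus J_\chi)$, and I would set $D^{\mathrm{j}}\chi := D^{\mathrm{s}}\chi\lfloor J_\chi = D\chi\lfloor J_\chi$. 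With these definitions the additive identity $D\chi = D^{\mathrm{a}}\chi + D^{\mathrm{j}}\chi + D^{\mathrm{c}}\chi$ holds once one knows that the leftover $D^{\mathrm{s}}\chi\lfloor(S_\chi\setminus J_\chi)$ vanishes, i.e. $|D\chi|(S_\chi\setminus J_\chi) = 0$.

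The crux, and the hard part, is exactly this vanishing together with the explicit identification of $D\chi\lfloor J_\chi$; both are the content of the Federer--Vol'pert theorem and both go through blow-up analysis. I would first establish that $|D\chi|$ does not charge $\fH^{d-1}$-negligible Borel sets --- a density/covering argument: a finite Radon measure whose upper $(d-1)$-density is finite at almost every point is absolutely continuous with respect to $\fH^{d-1}$, and for $u\in BV$ one checks this density bound at $|D\chi|$-a.e.\ point. Second, I would show that $S_\chi$ is countably $\fH^{d-1}$-rectifiable and $\fH^{d-1}(S_\chi\setminus J_\chi) = 0$: at $\fH^{d-1}$-a.e.\ point of $S_\chi$, a suitable rescaling of $\chi$ has a subsequential $L^1_{\mathrm{loc}}$ limit of the one-dimensional form $a\,\mathbf{1}_{\{y\cdot\nu>0\}}+b\,\mathbf{1}_{\{y\cdot\nu<0\}}$ with $a\neq b$, which is precisely the definition of an approximate jump point, so $\fH^{d-1}$-a.e.\ point of $S_\chi$ lies in $J_\chi$. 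Combining the two facts gives $|D\chi|(S_\chi\setminus J_\chi) = 0$ and hence the three-term decomposition.

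Finally, to obtain the formula for $D^{\mathrm{j}}\chi$ I would invoke the Besicovitch derivation theorem on the rectifiable set $J_\chi$ to compute the density $\tfrac{d(D\chi\lfloor J_\chi)}{d(\fH^{d-1}\lfloor J_\chi)}(x)$ as a pointwise limit of rescaled measures of shrinking balls; at each $x\in J_\chi$ the blow-up of $\chi$ is the step profile taking values $\chi^+(x)$ and $\chi^-(x)$ on the two sides of the hyperplane with normal $\nu_\chi(x)$, whose distributional gradient is $(\chi^+(x)-\chi^-(x))\,\nu_\chi(x)^{\T}\,\fH^{d-1}\lfloor\{y\cdot\nu_\chi(x)=0\}$, so the density equals $(\chi^+-\chi^-)\nu_\chi^{\T}$ and $D^{\mathrm{j}}\chi = (\chi^+-\chi^-)\nu_\chi^{\T}\,\fH^{d-1}\lfloor J_\chi$ as claimed. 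I expect the rectifiability/blow-up input (Federer--Vol'pert), in particular proving $|D\chi|\ll\fH^{d-1}$ and $\fH^{d-1}(S_\chi\setminus J_\chi)=0$, to be the main obstacle; the rest is bookkeeping with Radon measures.
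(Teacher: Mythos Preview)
The paper does not prove this theorem at all: it is stated in the appendix as a background result quoted from \cite{ambrosio2001functions}, with no proof given. Your sketch is a faithful outline of the standard Federer--Vol'pert argument from that reference (Radon--Nikodym decomposition, $|D\chi|\ll\fH^{d-1}$ via density bounds, rectifiability of $S_\chi$ with $\fH^{d-1}(S_\chi\setminus J_\chi)=0$ by blow-up, and identification of the jump density by Besicovitch differentiation), so it is correct and matches the cited source, but there is nothing in the paper to compare it against.
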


\begin{theorem}[Structure theorem of \textbf{approximate jump set} for BV functions, pp.150~\cite{lin2002geometric}]\label{thm::PropOfApproxJumpSetForBV}
   Let $\Omega$ be a bounded open set and $\chi\in BV(\Omega)$, then there exist countably many $C^1$-hypersurfaces  ($(d-1)$-dimensional $C^1$-manifolds)  $\left\{S_k\right\}_{k=1}^{\infty}$ such that
    \begin{equation*}
        \fH^{d-1}\left(J_{\chi}\backslash(\bigcup\nolimits_{k=1}^{\infty}S_k)\right)=0    
    \end{equation*} 
\end{theorem}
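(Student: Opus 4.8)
The plan is to reduce the statement to two classical structural facts and then combine them. Fact (i): the approximate jump set $J_{\chi}$ is countably $(\fH^{d-1},d-1)$-rectifiable. Fact (ii): any countably $\fH^{d-1}$-rectifiable subset $E\subseteq\sR^d$ can, after deleting an $\fH^{d-1}$-null subset, be covered by countably many embedded $C^1$ hypersurfaces. Granting these, the theorem is immediate: applying (ii) to $E=J_{\chi}$ from (i) produces the hypersurfaces $\{S_k\}_{k=1}^{\infty}$ with $\fH^{d-1}\bigl(J_{\chi}\setminus\bigcup_{k=1}^{\infty}S_k\bigr)=0$.

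For Fact (i) (which is essentially the Federer--Vol'pert theorem), I would slice $\chi$ into superlevel sets. Put $E_t=\{x\in\Omega\colon\chi(x)>t\}$. The coarea formula for $BV$ functions gives $\abs{D\chi}(B)=\int_{\sR}P(E_t;B)\,\diff t$ for Borel $B\subseteq\Omega$, so $E_t$ has finite perimeter for $\fL^1$-a.e.\ $t$; for each such $t$, De Giorgi's structure theorem says the reduced boundary $\partial^{*}E_t$ is countably $\fH^{d-1}$-rectifiable and carries the measure $\abs{D\sone_{E_t}}$. One then verifies that, modulo an $\fH^{d-1}$-null set, $J_{\chi}$ is contained in the countable union $\bigcup_t\partial^{*}E_t$ over rational $t$: at an approximate jump point $x$ with $\chi^{-}(x)<\chi^{+}(x)$, choosing a rational $t$ strictly between these two values, the defining averages of the triplet $(\chi^{+},\chi^{-},\nu_{\chi})$ at $x$ (Definition~\ref{def::ApproxJump}) force $E_t$ to have density $\tfrac12$ at $x$ with measure-theoretic normal $\nu_{\chi}(x)$, i.e.\ $x\in\partial^{*}E_t$. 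Since a countable union of countably $\fH^{d-1}$-rectifiable sets is again countably $\fH^{d-1}$-rectifiable, $J_{\chi}$ is too. (A coarea-free variant: by Theorem~\ref{thm::StructureThmBV}, $\abs{D^{\mathrm j}\chi}=\abs{\chi^{+}-\chi^{-}}\,\fH^{d-1}\lfloor_{J_{\chi}}$ is a finite measure, so each slice $\{x\in J_{\chi}\colon\abs{\chi^{+}(x)-\chi^{-}(x)}>1/k\}$ has finite $\fH^{d-1}$-measure and $\fH^{d-1}\lfloor_{J_{\chi}}$ is $\sigma$-finite; a blow-up of $\chi$ at $\fH^{d-1}$-a.e.\ jump point shows $J_{\chi}$ has the approximate tangent plane $\nu_{\chi}(x)^{\perp}$ there, whence rectifiability by the tangent-plane criterion for rectifiable sets.)

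For Fact (ii), I would invoke the standard upgrade from Lipschitz images to $C^1$ graphs. A countably $\fH^{d-1}$-rectifiable set is covered up to an $\fH^{d-1}$-null set by countably many graphs of Lipschitz functions over $(d-1)$-dimensional affine planes; by Rademacher's theorem together with Whitney's extension theorem, each such Lipschitz function agrees, outside a set of arbitrarily small measure, with a $C^1$ function, whose graph is an embedded $C^1$ hypersurface. Exhausting each Lipschitz graph by the coincidence sets attached to a sequence $\varepsilon_j\downarrow0$, and collecting the resulting $C^1$ graphs over all the (countably many) Lipschitz pieces, produces the required countable family, after which relabelling gives $\{S_k\}_{k=1}^{\infty}$.

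The substantive difficulty — and the only place where real work occurs — is Fact (i), or rather its backbone: De Giorgi's structure theorem for sets of finite perimeter (equivalently, the blow-up analysis showing that at $\fH^{d-1}$-a.e.\ point of a $BV$ jump set the function behaves like a two-valued step across a hyperplane). Everything downstream — the inclusion of $J_{\chi}$ into a countable union of reduced boundaries, countable stability of rectifiability, and the $C^1$-refinement in Fact (ii) — is routine measure-theoretic bookkeeping. Since the present statement is quoted directly from a standard reference, the intended proof is to cite and assemble these known ingredients rather than to reprove them from scratch.
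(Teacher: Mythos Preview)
Your outline is correct and follows the standard route (Federer--Vol'pert rectifiability of $J_{\chi}$ via coarea and De Giorgi's structure theorem, then the Lipschitz-to-$C^1$ upgrade by Rademacher--Whitney). The paper, however, does not prove this theorem at all: it is stated in the appendix as a background result quoted directly from the cited reference, so there is no in-paper argument to compare against.
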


\begin{definition}[\textbf{special function of bounded variation} (SBV), pp.212~\cite{ambrosio2001functions}]\label{def::SBVFun}
    We say that $\chi \in BV(\Omega)$ is a \textbf{special function of bounded variation} and we write $\chi \in SBV(\Omega)$, if the Cantor part of its derivative $D^{c} \chi$ is zero. Thus, for $\chi \in SBV(\Omega)$, we have
        \begin{equation*} 
            D \chi=D^{\mathrm{a}} \chi+D^{\mathrm{j}} \chi=\nabla \chi \fL^d+\left(\chi^{+}-\chi^{-}\right) \nu_{\chi} \fH^{d-1}\lfloor_{J_{\chi}}.        
        \end{equation*}
        We further say that $\chi\in SBV^p(\Omega)$, $p\in [1,\infty]$ if and only if $\chi\in SBV(\Omega)$ and $\nabla \chi \in L^p(\Omega)$.        
\end{definition}

\begin{corollary}[property of \textbf{special funtion of bounded variation}, pp.212~\cite{ambrosio2001functions}]\label{cor::PropOfSBV}
    For any $\chi \in SBV(\Omega)$, we have 
        \begin{equation*} 
            \chi \in W^{1,1}(\Omega) \quad \Longleftrightarrow \quad \fH^{d-1}\left(J_{\chi}\right)=0.
        \end{equation*}
\end{corollary}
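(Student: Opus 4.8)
The plan is to prove the equivalence by a direct bookkeeping argument on the Radon--Nikodym decomposition of $D\chi$ supplied by the structure theorem (Theorem~\ref{thm::StructureThmBV}) together with the defining property of $SBV$ (Definition~\ref{def::SBVFun}). For $\chi\in SBV(\Omega)$ the Cantor part vanishes, so we have the clean splitting
\begin{equation*}
    D\chi=D^{\mathrm{a}}\chi+D^{\mathrm{j}}\chi=\nabla\chi\,\fL^d+(\chi^+-\chi^-)\nu_{\chi}^{\T}\,\fH^{d-1}\lfloor_{J_{\chi}},
\end{equation*}
and the whole statement reduces to: $\chi\in W^{1,1}(\Omega)$ if and only if the jump part $D^{\mathrm{j}}\chi$ is the null measure, which in turn is equivalent to $\fH^{d-1}(J_{\chi})=0$.

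First I would dispatch the easy implication. If $\fH^{d-1}(J_{\chi})=0$, then $\fH^{d-1}\lfloor_{J_{\chi}}$ is the null measure, hence $D^{\mathrm{j}}\chi=0$; since $\chi\in SBV$ also kills the Cantor part, $D\chi=\nabla\chi\,\fL^d$. Because $\chi\in BV(\Omega)$ its derivative is a \emph{finite} $\sR^d$-valued Radon measure, so the density $\nabla\chi$ lies in $L^1(\Omega;\sR^d)$, and $\chi\in L^1(\Omega)$ already; this is precisely $\chi\in W^{1,1}(\Omega)$.

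For the converse, suppose $\chi\in W^{1,1}(\Omega)$. Unwinding the definition, the distributional derivative is representable by an $L^1$ vector field, so comparing with the $BV$ definition gives $D\chi=\nabla\chi\,\fL^d$, i.e. $D\chi$ is absolutely continuous with respect to $\fL^d$ and its singular part $D^{\mathrm{s}}\chi$ vanishes; in particular $D^{\mathrm{j}}\chi=D^{\mathrm{s}}\chi\lfloor_{J_{\chi}}=0$. Taking total variations and using $|\nu_{\chi}|=1$,
\begin{equation*}
    \int_{J_{\chi}}\abs{\chi^+-\chi^-}\diff{\fH^{d-1}}=\abs{D^{\mathrm{j}}\chi}(\Omega)=0.
\end{equation*}
By Definition~\ref{def::ApproxJump} the integrand $\abs{\chi^+-\chi^-}$ is strictly positive on all of $J_{\chi}$; slicing $J_{\chi}=\bigcup_{n}\{x\in J_{\chi}\colon\abs{\chi^+(x)-\chi^-(x)}>1/n\}$ shows that if $\fH^{d-1}(J_{\chi})>0$ then one slice has positive $\fH^{d-1}$ measure and the integral above would be strictly positive, a contradiction. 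Hence $\fH^{d-1}(J_{\chi})=0$.

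I do not expect a genuine obstacle: the only point needing a touch of care is the precise reading of ``$\chi\in W^{1,1}(\Omega)$'' (equivalently, within $BV(\Omega)$, that $D\chi\ll\fL^d$), after which everything is forced by the structure theorem and the built-in non-degeneracy $\chi^+\neq\chi^-$ at approximate jump points. The converse direction can equivalently be phrased contrapositively: $\fH^{d-1}(J_{\chi})>0$ produces a nontrivial singular (jump) part $\abs{D^{\mathrm{j}}\chi}(\Omega)>0$, so $D\chi$ cannot have an $L^1$ density and $\chi\notin W^{1,1}(\Omega)$.
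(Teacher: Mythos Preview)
Your argument is correct and is the standard way to establish this equivalence directly from the structure theorem and the definition of $SBV$. The paper does not supply its own proof of this corollary; it is simply quoted from \cite{ambrosio2001functions}, so there is nothing further to compare.
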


\section{Existence theorems for linear elliptic equations and systems in literature}\label{sec::ExistThmLinProb}
In this appendix, we rephrase some existence theorems for linear elliptic equation and system from the literature. Theorems~\ref{thm::ExistThmLinProbDivForm} and \ref{thm::ExistThmLinProbNonDivForm} focus on linear divergence and nondivergence equation, respectively, while Theorem~\ref{thm::ExistThmLinSysProbDivForm} works for linear divergence system.

\begin{theorem}[Theorem 8.3, rephrased from pp.181--182~\cite{gilbarg2001elliptic}]\label{thm::ExistThmLinProbDivForm}
    Let $\Omega$ be a bounded $C^{1,1}$ domain in $\sR^{d}$, and let the operator $L$
    \begin{equation*}
        Lu = -\operatorname{div}\cdot (ADu)=-\sum_{i,j=1}^{d}D_i(a_{ij}D_j u)
    \end{equation*}
    being uniformly elliptic in $\Omega$ with coefficients $a_{ij}\in L^{\infty}(\Omega)$, namely there is $\lambda>0$ satisfying $\xi^{\T}A\xi\geq\lambda\abs{\xi'}^2$ for all $\xi\in \sR^d$. 
    Then the equation: $L u=f$ in $\Omega, u=0$ on $\partial\Omega$ with $f\in L^2(\Omega)$ (or $f\in H^{-1}(\Omega)$) is uniquely solvable. Furthermore, there is a constant $C$ independent of $u$ such that
    \begin{equation*}
        \norm{u}_{H^1(\Omega)}\le C\norm{f}_{L^2(\Omega)} \text{ or } (\le C\norm{f}_{H^{-1}(\Omega)}).    
    \end{equation*}
\end{theorem}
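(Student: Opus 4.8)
The plan is to recognize this statement as a direct application of the Lax--Milgram theorem. First I would pass to the weak formulation: a function $u\in H^1_0(\Omega)$ is a weak solution of $Lu=f$, $u=0$ on $\partial\Omega$ provided
\begin{equation*}
    B(u,v):=\int_{\Omega}(Du)^{\T}A(x)Dv\diff{x}=\langle f,v\rangle\quad\text{for all }v\in H^1_0(\Omega),
\end{equation*}
where $\langle f,v\rangle=\int_{\Omega}fv\diff{x}$ when $f\in L^2(\Omega)$ and $\langle f,v\rangle$ denotes the $H^{-1}$--$H^1_0$ duality pairing when $f\in H^{-1}(\Omega)$. It then suffices to verify the three hypotheses of Lax--Milgram on the Hilbert space $H^1_0(\Omega)$, which we may equip with $\norm{\cdot}_{H^1(\Omega)}$ or, equivalently by Poincaré, with $\norm{D\cdot}_{L^2(\Omega)}$.

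\emph{Boundedness of $B$.} Since $a_{ij}\in L^{\infty}(\Omega)$, the Cauchy--Schwarz inequality gives $\abs{B(u,v)}\le C_A\norm{Du}_{L^2(\Omega)}\norm{Dv}_{L^2(\Omega)}\le C_A\norm{u}_{H^1(\Omega)}\norm{v}_{H^1(\Omega)}$, where $C_A$ depends only on $d$ and $\max_{i,j}\norm{a_{ij}}_{L^{\infty}(\Omega)}$. \emph{Coercivity.} Uniform ellipticity yields $B(u,u)=\int_{\Omega}(Du)^{\T}ADu\diff{x}\ge \lambda\norm{Du}_{L^2(\Omega)}^2$; since $\Omega$ is bounded, the Poincaré inequality gives $\norm{Du}_{L^2(\Omega)}^2\ge c_{\Omega}\norm{u}_{H^1(\Omega)}^2$, so $B(u,u)\ge \lambda c_{\Omega}\norm{u}_{H^1(\Omega)}^2$ (no symmetry of $A$ is needed, as the skew-symmetric part of $A$ does not contribute to $\xi^{\T}A\xi$). \emph{Boundedness of the functional.} For $f\in L^2(\Omega)$, $\abs{\langle f,v\rangle}\le\norm{f}_{L^2(\Omega)}\norm{v}_{L^2(\Omega)}\le\norm{f}_{L^2(\Omega)}\norm{v}_{H^1(\Omega)}$; for $f\in H^{-1}(\Omega)$ this is immediate from the definition of the dual norm.

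Lax--Milgram then produces a unique $u\in H^1_0(\Omega)$ with $B(u,\cdot)=\langle f,\cdot\rangle$, and testing against $v=u$, combined with coercivity and boundedness of the functional, gives $\lambda c_{\Omega}\norm{u}_{H^1(\Omega)}^2\le B(u,u)=\langle f,u\rangle\le \norm{f}_{L^2(\Omega)}\norm{u}_{H^1(\Omega)}$ (resp.\ with $\norm{f}_{H^{-1}(\Omega)}$), hence $\norm{u}_{H^1(\Omega)}\le C\norm{f}_{L^2(\Omega)}$ (resp.\ $\le C\norm{f}_{H^{-1}(\Omega)}$) with $C=(\lambda c_{\Omega})^{-1}$ independent of $u$. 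Since $L^2(\Omega)\hookrightarrow H^{-1}(\Omega)$ continuously, the $L^2$ assertion is in fact subsumed by the $H^{-1}$ one, but I would state both explicitly since the $L^2$ estimate is what is invoked directly elsewhere in the paper.

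There is essentially no serious obstacle here: this is Theorem~8.3 of Gilbarg--Trudinger and the argument above is its standard proof. The only points needing (minor) care are that the Poincaré inequality requires $\Omega$ to be bounded --- which the hypothesis supplies --- and that the two data classes $L^2$ and $H^{-1}$ should be kept separate in the final estimate; the $C^{1,1}$ regularity of $\partial\Omega$ assumed in the statement plays no role at this $H^1$ level and is only needed for the higher-regularity ($H^2$) a priori estimates used elsewhere in this work.
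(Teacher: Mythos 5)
Your proof is correct and is the standard argument. The paper does not actually prove this statement---it is presented as a rephrasing of Gilbarg--Trudinger Theorem~8.3 and simply cited---but your direct Lax--Milgram route is exactly what the reference (and any textbook) does in this purely leading-order, coercive case; your side remarks that neither symmetry of $A$ nor the $C^{1,1}$ boundary regularity is used at the $H^1$ level are also accurate.
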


\begin{theorem}[Lemma 9.17, rephrased from pp.241--243~\cite{gilbarg2001elliptic}]\label{thm::ExistThmLinProbNonDivForm}
    Let $\Omega$ be a bounded $C^{1,1}$ domain in $\sR^{d}$, and the operator $\tilde{L}$
    \begin{equation*}
        \tilde{L}u=\sum_{i,j=1}^da_{ij}D_{ij}u+\sum_{i=1}^{d}b_iD_{i}u+cu,
    \end{equation*}
    being uniformly elliptic in $\Omega$ with coefficients $a_{ij} \in C(\bar{\Omega}), b_i ,c \in L^{\infty}(\Omega)$, $c\le 0$, with $i, j\in\{1,\ldots,d\}$. If $f \in L^{p}(\Omega)$ for some $p\in (1,\infty)$, then the equation: $\tilde{L} u=f$ in $\Omega$, $u=0$ on $\partial\Omega$ has a unique solution $u \in W_0^{1,p}(\Omega)\cap W^{2, p}(\Omega)$. Furthermore, there is a constant $C$ independent of $u$ such that 
    \begin{equation*} 
        \norm{u}_{W^{2,p}(\Omega)}\leq C\norm{\tilde{L}u}_{L^p(\Omega)}.
    \end{equation*}
\end{theorem}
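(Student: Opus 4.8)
The plan is to deduce Theorem~\ref{thm::ExistThmLinProbNonDivForm} directly from the classical $L^p$ theory of second-order elliptic operators in non-divergence form, which is exactly the content of Lemma~9.17 in~\cite{gilbarg2001elliptic}. Since the statement is literally a rephrasing of that lemma, the ``proof'' I would present is a careful verification that the hypotheses of the cited result are met under our formulation, followed by a citation. Concretely: (i) $\Omega$ is a bounded $C^{1,1}$ domain; (ii) the principal coefficients $a_{ij}\in C(\bar\Omega)$ and the operator is uniformly elliptic with ellipticity constant $\lambda>0$ in the sense $\xi^{\T}A(x)\xi\ge\lambda|\xi|^2$ for all $\xi\in\sR^d$, $x\in\Omega$; (iii) the lower-order coefficients satisfy $b_i\in L^{\infty}(\Omega)$, $c\in L^{\infty}(\Omega)$ with $c\le 0$; and (iv) $f\in L^p(\Omega)$ for some fixed $p\in(1,\infty)$. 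Under these, Lemma~9.17 of~\cite{gilbarg2001elliptic} asserts unique solvability of the Dirichlet problem $\tilde Lu=f$ in $\Omega$, $u=0$ on $\partial\Omega$, in the space $W^{1,p}_0(\Omega)\cap W^{2,p}(\Omega)$.

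First I would recall the two ingredients that the textbook result combines: the interior-plus-boundary $W^{2,p}$ a priori estimate (Calder\'on--Zygmund estimate), which gives $\|u\|_{W^{2,p}(\Omega)}\le C(\|\tilde Lu\|_{L^p(\Omega)}+\|u\|_{L^p(\Omega)})$ for $u\in W^{1,p}_0(\Omega)\cap W^{2,p}(\Omega)$, with $C$ depending only on $d$, $p$, $\lambda$, the modulus of continuity of the $a_{ij}$, the $L^\infty$ bounds on $b_i,c$, and $\partial\Omega$; and the maximum principle, valid because $c\le 0$, which upgrades this to $\|u\|_{W^{2,p}(\Omega)}\le C\|\tilde Lu\|_{L^p(\Omega)}$ by absorbing the zeroth-order term (the sign condition $c\le 0$ rules out nontrivial kernel, so uniqueness holds and the lower-order norm can be dropped). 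Existence then follows from the method of continuity, deforming $\tilde L$ to the Laplacian $\Delta$ through the family $\tilde L_t=(1-t)\Delta+t\tilde L$: each $\tilde L_t$ satisfies the same structural hypotheses with uniform constants, so the a priori estimate is uniform in $t\in[0,1]$; invertibility of $\tilde L_0=\Delta$ on $W^{1,p}_0\cap W^{2,p}$ is classical, and the method of continuity propagates invertibility to $t=1$.

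The step I would spend the most words on is confirming the $c\le 0$ hypothesis and the continuity of $a_{ij}$ in the intended application (equation~\eqref{eq::QuasLinModEq} and, in the linear reduction, equation~\eqref{eq::ModEq}): there $\tilde L$ has no zeroth-order term at all, so $c\equiv 0\le 0$ is immediate, and the principal part carries coefficients $\bar A_{ij}\in C^1(\bar\Omega)\subseteq C(\bar\Omega)$, with the first-order coefficients being $\chi^{-1}D^{\mathrm a}_iA_{ij}\in L^\infty$ by Assumption~\ref{assum::BVCoeff} (using $\chi\ge\chi_{\min}>0$ and $\chi\in SBV^\infty$ so that $D^{\mathrm a}A$ has bounded density), and uniform ellipticity $\bar\lambda|\xi|^2\le\xi^{\T}\bar A\xi$. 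Thus no genuine obstacle arises: the main ``difficulty'' is purely bookkeeping---tracking that the constants in the a priori estimate are independent of $u$ (and, where we later need it, of $f$), which follows because they depend only on the fixed structural data just listed. I would close by stating that the constant $C$ in $\|u\|_{W^{2,p}(\Omega)}\le C\|\tilde Lu\|_{L^p(\Omega)}$ inherits this independence, which is exactly what downstream results (e.g.\ Theorem~\ref{thm::ExistAprioEstLinEllipEq&Sys}(iii)) invoke.
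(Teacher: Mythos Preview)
Your proposal is correct and aligned with the paper's treatment: the paper does not supply its own proof of this theorem but simply records it as a rephrasing of Lemma~9.17 in~\cite{gilbarg2001elliptic}, so your plan to cite that result after checking the hypotheses is exactly what is intended. Your additional sketch of the Calder\'on--Zygmund estimate plus method-of-continuity argument goes beyond what the paper provides, but is accurate and harmless.
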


\begin{theorem}[Theorem 1.3, rephrased from pp.9--14~\cite{ambrosio2018lectures}]\label{thm::ExistThmLinSysProbDivForm}
    Let $\Omega$ be a bounded $C^{1,1}$ domain in $\sR^{d}$. Also define the operator $L$ as follows
    \begin{equation*}
     (L u)^{\alpha} = -\sum_{\beta=1}^{d'}\divg\cdot(A^{\alpha\beta}(x)Du^{\beta})
     =-\sum_{\beta=1}^{d'}\sum_{i,j=1}^{d}D_{i}(A_{ij}^{\alpha\beta}D_{j}u^{\beta}),
    \end{equation*}
    where $\alpha\in \{1,2,\ldots,d'\}$. Suppose that measurable functions $A^{\alpha\beta}\in S^{d\times d}$ are symmetric in $\alpha,\beta$, namely $A^{\alpha\beta}=A^{\beta\alpha}$, and that the Hadamard--Legendre condition holds, namely there is $\lambda>0$ satisfying $ \sum_{\alpha,\beta=1}^{d'}(\xi^{\alpha})^\T A^{\alpha\beta}(x)\xi^{\beta} \geq \lambda\abs{\xi}^{2}$ for all $\xi^{\alpha}\in \sR^d$, where $\abs{\xi}^2=\sum_{\alpha=1}^{d'}\abs{\xi^{\alpha}}^2$. 
    Then the equation: $L u=f$ in $\Omega, u=0$ on $\partial\Omega$ with $f\in L^2(\Omega;\sR^{d'})$ (or $f\in H^{-1}(\Omega;\sR^{d'})$) is uniquely solvable. Moreover there is a constant $C$ independent of $u$ such that
    \begin{equation*}
        \norm{u}_{H^1(\Omega;\sR^{d'})}\le C\norm{f}_{L^2(\Omega;\sR^{d'})}\quad  (\text{or } \le C\norm{f}_{H^{-1}(\Omega;\sR^{d'})}).    
    \end{equation*}
\end{theorem}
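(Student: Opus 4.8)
This is a restatement of the classical existence, uniqueness, and \emph{a priori} bound for divergence-form linear elliptic \emph{systems} under the strong Legendre (here called Hadamard--Legendre) condition, and the plan is to deduce it from the Lax--Milgram theorem, mimicking the scalar case of Theorem~\ref{thm::ExistThmLinProbDivForm}. First I would pass to the weak formulation: $u\in H^1_0(\Omega;\sR^{d'})$ solves the boundary value problem if and only if
\begin{equation*}
    B[u,v]:=\sum_{\alpha,\beta=1}^{d'}\int_{\Omega}(Dv^{\alpha})^{\T}A^{\alpha\beta}(x)Du^{\beta}\diff{x}=\langle f,v\rangle\qquad\text{for all }v\in H^1_0(\Omega;\sR^{d'}),
\end{equation*}
where $\langle f,v\rangle$ denotes $\int_{\Omega}f\cdot v\diff{x}$ when $f\in L^2$ and the duality pairing when $f\in H^{-1}$. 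The map $v\mapsto\langle f,v\rangle$ is a bounded linear functional on $H^1_0(\Omega;\sR^{d'})$: for $f\in H^{-1}$ by definition, and for $f\in L^2$ by $\abs{\langle f,v\rangle}\le\norm{f}_{L^2}\norm{v}_{L^2}\le\norm{f}_{L^2}\norm{v}_{H^1}$. Boundedness of $B$, namely $\abs{B[u,v]}\le C\norm{Du}_{L^2}\norm{Dv}_{L^2}\le C\norm{u}_{H^1}\norm{v}_{H^1}$, uses the (implicit, and present in the cited source) bound $\norm{A^{\alpha\beta}}_{L^{\infty}(\Omega)}\le\Lambda$ together with the finiteness of $d'$.

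The essential point is coercivity, and this is exactly where the \emph{strong} Legendre condition is needed rather than the weaker rank-one Legendre--Hadamard one: applying the hypothesis pointwise with the (arbitrary) vectors $\xi^{\alpha}=Du^{\alpha}(x)$ gives
\begin{equation*}
    B[u,u]=\sum_{\alpha,\beta=1}^{d'}\int_{\Omega}(Du^{\alpha})^{\T}A^{\alpha\beta}(x)Du^{\beta}\diff{x}\ge\lambda\int_{\Omega}\abs{Du}^2\diff{x}=\lambda\norm{Du}_{L^2(\Omega;\sR^{d'})}^2.
\end{equation*}
Since $\Omega$ is bounded, the Poincar\'e inequality turns this into $B[u,u]\ge c\norm{u}_{H^1(\Omega;\sR^{d'})}^2$ with $c=c(\lambda,\Omega)>0$. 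The Lax--Milgram theorem then provides a unique $u\in H^1_0(\Omega;\sR^{d'})$ with $B[u,\cdot]=\langle f,\cdot\rangle$, i.e., a unique weak solution. For the \emph{a priori} estimate I would test with $v=u$: coercivity and the bound on the functional give $c\norm{u}_{H^1}^2\le B[u,u]=\langle f,u\rangle\le\norm{f}_{L^2}\norm{u}_{H^1}$ (respectively $\le\norm{f}_{H^{-1}}\norm{u}_{H^1}$), and dividing by $\norm{u}_{H^1}$ yields the claim with $C=c^{-1}$.

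There is no real obstacle here --- the argument is textbook. The one subtlety I would flag is that coercivity genuinely requires the full Hadamard--Legendre inequality over \emph{all} $(\xi^{\alpha})_{\alpha}$, not merely rank-one configurations: under the sole Legendre--Hadamard condition one would have to replace the direct estimate by G\aa rding's inequality and unique solvability can fail. I would also note explicitly that $A^{\alpha\beta}\in L^{\infty}$ is implicitly assumed (it is needed already to make $B$ well defined), and that this is precisely the regime in which this theorem is invoked in the main text, where $A^{\alpha\beta}=\chi^{\alpha\beta}\bar A^{\alpha\beta}$ with $\chi^{\alpha\beta}\in SBV^{\infty}(\Omega)$ bounded and $\bar A^{\alpha\beta}\in C^1(\bar\Omega;S^{d\times d})$.
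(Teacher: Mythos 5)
Your proof is correct and is exactly the standard Lax--Milgram argument that the cited source~\cite{ambrosio2018lectures} uses; the paper itself does not prove this theorem but only cites it, so there is nothing further to compare. Your observation that the theorem statement as written omits the upper bound $A^{\alpha\beta}\in L^{\infty}$ (needed for boundedness of $B$, and indeed present both in the cited source and in the paper's Remark~\ref{rmk::HadamardLegendreCond} via the constant $\Lambda$) is accurate, and your remark distinguishing the strong Legendre condition from the rank-one Legendre--Hadamard condition is the right thing to flag: coercivity, and hence unique solvability without a G\aa rding-type zeroth-order correction, really does require the former.
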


\section{Existence theorems for quasilinear elliptic equations in literature}\label{sec::ExisThm4QuasLinInLitera}
In this appendix, we collect some well-known results on the existence of solutions to the quasilinear elliptic equations from the literature. In particular, we summarize these results for elliptic problems in both divergence form and non-divergence form.

\subsection{Divergence form}

Suppose now $\Omega \subseteq \sR^{d}$ is a bounded, open set with $C^{1,1}$ boundary $\partial\Omega$, and we are given a Lagrangian
\begin{equation*} 
    L: \sR^{d} \times \sR \times \bar{\Omega} \to \sR.
\end{equation*}
We call $L=L(\xi,z,x)$ the Lagrangian and define the functional: 
\begin{equation*} 
    I[w]=\int_{\Omega} L(D w(x), w(x), x) \diff{x}.
\end{equation*} 

Consider the Euler--Lagrange equation:
\begin{equation}\label{eq::AppQuasLinDiv}
    \left\{
    \begin{aligned}
        -\divg\cdot\left(D_{\xi} L(D u, u, x)\right)+D_{z}L(D u, u, x) &=0 & & \text {in}\  \Omega, \\
        u &=0 & & \text {on}\  \partial\Omega,
    \end{aligned}
    \right.
\end{equation}

To guarantee the existence and uniqueness of solution, we assume the operator $L$ taking the following properties:
\begin{assumption}\label{assum::QuasLinDiv} Assume that the following hold.
    \begin{enumerate}[(i)]
    \item (coercivity for $L$) 
    There exist constants $c'_1>0$ and $c'_2>0$ such that for all $\xi\in\sR^{d}$, $z\in\sR$, $x\in\Omega$
    \begin{equation*} 
        L(\xi,z,x)\geq c'_1\Abs{\xi}^p-c'_2.
    \end{equation*}
    \item 
    (convexity in $\xi$) For all $\xi,\xi'\in \sR^d$, we have 
    \begin{equation*}  
            (\xi')^\T D_\xi^2L(\xi, x) \xi' \geq 0.
        \end{equation*}
    
    \item (growth condition of $L$) There exists $c'_3>0$ such that for all $\xi \in \sR^{d}, \ z \in \sR, \ x \in \Omega$
    \begin{align*} 
        \abs{L(\xi, z, x)} 
        &\leq c'_3\left(\abs{\xi}^{p}+\abs{z}^{p}+1\right),\\
        \Abs{D_{\xi} L(\xi, z, x)} 
        &\leq c'_3\left(\abs{\xi}^{p-1}+\abs{z}^{p-1}+1\right) ,\\
        \Abs{D_{z} L(\xi, z, x)} 
        &\leq c'_3\left(\abs{\xi}^{p-1}+\abs{z}^{p-1}+1\right).
    \end{align*}
    \end{enumerate}    
\end{assumption}

\begin{remark}
~
    \begin{enumerate}[(i)]
        \item The coercivity assumption will lead to the coervicity condition on $I[\cdot]$
        \begin{equation*} 
            I[w] \geq c'_1\norm{D w}_{L^{p}(\Omega)}^{p}-c'_2.
        \end{equation*}
        \item For the convexity assumption, we say $L$ is uniformly convex in the variable $\xi$, if $L=L(\xi, x)$ does not depend on $z$
        and there is $\lambda>0$ such that for all $\xi, \xi'\in \sR^d$ and $x\in \Omega$
        \begin{equation*}  
            (\xi')^\T D_\xi^2L_{\xi_{i} \xi_{j}}(\xi, x) \xi' \geq \lambda\abs{\xi'}^{2} .
        \end{equation*}
        
    \end{enumerate}
\end{remark}

\begin{theorem}[Theorem 7, pp.454--455~\cite{evans2010partial}] Suppose that Assumption~\ref{assum::QuasLinDiv} holds. Then there is a $u\in W^{1,p}_0$, $p\geq 2$, which minimizes $I[\cdot]$, and is the weak solution to~\eqref{eq::AppQuasLinDiv}. 
\end{theorem}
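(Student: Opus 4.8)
The plan is to invoke the \emph{direct method of the calculus of variations}, exactly as in~\cite{evans2010partial}. First I would fix the admissible class $\mathcal{A}=W^{1,p}_0(\Omega)$ and set $m=\inf_{w\in\mathcal{A}}I[w]$. The coercivity hypothesis in Assumption~\ref{assum::QuasLinDiv}(i), together with the Poincar\'e inequality, yields the coercivity estimate $I[w]\ge c'_1\norm{Dw}_{L^p(\Omega)}^p-c'_2$ noted in the remark, so $m>-\infty$ and any minimizing sequence $\{u_k\}\subseteq\mathcal{A}$ with $I[u_k]\to m$ is bounded in $W^{1,p}_0(\Omega)$. Since $1<p<\infty$, this space is reflexive, so after passing to a subsequence (not relabelled) there is $u\in W^{1,p}_0(\Omega)$ with $u_k\rightharpoonup u$ weakly in $W^{1,p}_0(\Omega)$, and by Rellich--Kondrachov $u_k\to u$ strongly in $L^p(\Omega)$ (and a.e.\ along a further subsequence).

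The core step is to show $I[\cdot]$ is sequentially weakly lower semicontinuous along this sequence, i.e.\ $I[u]\le\liminf_k I[u_k]=m$, which combined with $u\in\mathcal{A}$ forces $I[u]=m$. Here I would use the convexity of $\xi\mapsto L(\xi,z,x)$ from Assumption~\ref{assum::QuasLinDiv}(ii): for a.e.\ $x$ and the relevant $z$-values one has the subgradient inequality $L(Du_k,u_k,x)\ge L(Du,u_k,x)+D_\xi L(Du,u_k,x)\cdot(Du_k-Du)$. Integrating, the first term on the right converges to $\int_\Omega L(Du,u,x)\,dx$ by the a.e.\ convergence $u_k\to u$, the continuity of $L$, and the growth bound $|L(\xi,z,x)|\le c'_3(|\xi|^p+|z|^p+1)$ (dominated convergence, using $u_k\to u$ in $L^p$ up to a subsequence and Egorov/equi-integrability on the set where $Du$ is large); the integrated subgradient term $\int_\Omega D_\xi L(Du,u_k,x)\cdot(Du_k-Du)\,dx$ tends to $0$ because $D_\xi L(Du,u_k,\cdot)\to D_\xi L(Du,u,\cdot)$ strongly in $L^{p'}(\Omega)$ (again by the growth bound $|D_\xi L|\le c'_3(|\xi|^{p-1}+|z|^{p-1}+1)$ and dominated convergence) while $Du_k-Du\rightharpoonup0$ weakly in $L^p(\Omega)$. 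Taking $\liminf$ gives the desired inequality.

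Finally, to identify $u$ as a weak solution of~\eqref{eq::AppQuasLinDiv}, I would compute the first variation: for $\varphi\in C_c^\infty(\Omega)$ the function $i(\tau)=I[u+\tau\varphi]$ is differentiable at $\tau=0$, and differentiation under the integral sign is justified by the growth bounds on $D_\xi L$ and $D_z L$ in Assumption~\ref{assum::QuasLinDiv}(iii) (they give an $L^1$-dominating function for the difference quotients). Since $u$ minimizes, $i'(0)=0$, which reads
\begin{equation*}
    \int_\Omega\Bigl(D_\xi L(Du,u,x)\cdot D\varphi+D_z L(Du,u,x)\,\varphi\Bigr)\,dx=0,
\end{equation*}
the weak formulation of~\eqref{eq::AppQuasLinDiv}. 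I expect the main obstacle to be the weak lower semicontinuity argument, specifically making the passage to the limit in the lower-order $z$-dependence rigorous: one must control $D_\xi L(Du,u_k,\cdot)$ via the growth condition and extract the right (sub)sequences so that a.e.\ convergence of $u_k$ combines with weak convergence of $Du_k$; the convexity in $\xi$ (rather than joint convexity) is precisely what makes this work and is the structural point to emphasize.
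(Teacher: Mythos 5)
This statement is quoted from Evans' textbook (Theorem 7, pp.~454--455 of~\cite{evans2010partial}) in Appendix~D, and the paper does \emph{not} give its own proof of it --- it is cited as a black-box existence result. So there is no paper proof to compare against line by line. Your proposal is a correct reproduction of the direct method of the calculus of variations that Evans uses for exactly this statement: coercivity plus Poincar\'e gives a bounded minimizing sequence, reflexivity of $W^{1,p}_0(\Omega)$ (with Mazur for weak closedness) gives a weak limit $u\in W^{1,p}_0(\Omega)$, convexity in $\xi$ via the subgradient inequality together with Rellich and equi-integrability/Egoroff gives sequential weak lower semicontinuity, and the growth bounds in Assumption~\ref{assum::QuasLinDiv}(iii) justify differentiating $\tau\mapsto I[u+\tau\varphi]$ under the integral to obtain the Euler--Lagrange equation. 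This is essentially the same argument the paper itself carries out for the closely related Theorem~\ref{thm::ExistQuasiLinDivForm} in Appendix~F (specialized to $p=2$ and $L=\chi W - fz$), except that the paper's Appendix~F proof jumps from the Mazur step directly to ``the existence of the minimizer is established'' without spelling out the weak lower semicontinuity argument (implicitly deferring to the Evans citation); you correctly flag that step as the genuine technical content and sketch how convexity in $\xi$, rather than joint convexity in $(\xi,z)$, is what makes it work. One small loose end to tidy when filling in details: for the strong $L^{p'}$ convergence of $D_\xi L(Du,u_k,\cdot)$ you need not just the growth bound and pointwise convergence but also equi-integrability of $|u_k|^{p-1}$ in $L^{p'}$, which follows from $u_k\to u$ in $L^p(\Omega)$ via Vitali's convergence theorem --- worth stating explicitly rather than relying on ``dominated convergence,'' since no single dominating function is immediately available.
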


\subsection{Non-divergence form}\label{sec::QuasLinNonDiv}

\begin{definition}[Carathéodory function]\label{def::Cara}
Let $g:\sR^q\times\Omega  \to \sR$, $\Omega\subseteq \sR^d$. Then $g$ is a \textbf{Carathéodory function} if
\begin{enumerate}[(i)]
    \item $x\mapsto g(\xi,x)$ is $\fL^d$ measurable on $\Omega$ for any $\xi\in \sR^q$,
    \item $\xi\mapsto g(\xi,x)$ is continuous on $\sR^q$ for $\fL^d$-a.e. $x\in\Omega$.
\end{enumerate}
\end{definition}

  Consider the general quasilinear equation
    \begin{equation}\label{eq::AppQuasLinNonDiv} 
        \left\{\begin{aligned}
        Q[u] = A_{ij}(Du, u, x) D_{ij} u+b(Du, u, x) &=0 & & \text {in}\ \Omega, \\
        u &=0 & & \text{on}\  \partial\Omega,
        \end{aligned}\right.
    \end{equation}
Suppose $A_{ij}:\sR^{d} \times \sR\times \Omega  \to \sR$ are $C^{1}$ functions, $b: \sR^{d}\times \sR \times \Omega  \to \sR$ is a Carathéodory function and

\begin{assumption}\label{assum::QuasLinNonDiv}
~
    \begin{enumerate}[(i)]
    \item (quadratic gradient growth of $b$) There exists $p>d$, $b_{1} \in L^{p}(\Omega)$ and a non-decreasing function $v:[0, \infty) \to(0, \infty)$ such that
    \begin{equation*} 
        \abs{b(\xi, z, x)} \leq v(\abs{z})\left(b_{1}(x)+\abs{\xi}^{2}\right)
    \end{equation*}
    for $\fL^d$-a.e. $x \in \Omega, \forall(\xi, z) \in \sR^d \times \sR$.
    \item (monotonicity of $b$ with respect to $u$) There exists a non-negative function $b_{2} \in L^{d}(\Omega)$ such that
    \begin{equation*} 
        \operatorname{sign} z \cdot b(\xi, z, x) \leq \lambda(\abs{z}) b_{2}(x)(1+\abs{\xi})
    \end{equation*}
    for $\fL^d$-a.e. $x \in \Omega$ and for all $(\xi,z) \in \sR^{d} \times \sR$.
    \item (uniform ellipticity of $Q$) There exists a non-increasing function $\lambda$ : $[0,+\infty)\to (0,+\infty)$ such that for $\fL^d$-a.e. $x \in \Omega$, for all $z \in \sR$ and for all $\xi, \xi' \in \sR^{d}$
    \begin{equation*} 
        \lambda(\abs{z})\abs{\xi'}^{2} \leq (\xi')^{\T} A(\xi, z, x) \xi' \leq \frac{1}{\lambda(\abs{z})}\abs{\xi'}^{2}.
    \end{equation*}
    \item (growth conditions of $A_{ij}$) There exist $p>d, b_3 \in L^{p}(\Omega)$ and a non-decreasing function $\eta:[0,+\infty)\to (0,+\infty)$ such that for all $(\xi, z, x) \in$ $\sR^{d} \times \sR \times \Omega$
    \begin{align*}
        \Abs{D_{z} A_{ij}(\xi, z, x)}+\Abs{D_{x_{k}} A_{ij}(\xi, z, x)} 
        & \leq \eta(\abs{z}+\abs{\xi}) b_3(x) \\
        \Abs{D_{\xi_{k}} A_{ij}(\xi, z, x)} & \leq \eta(\abs{z}+\abs{\xi}) \\
        \Abs{D_{\xi_{k}} A_{ij}(\xi, z, x)-D_{\xi_{j}} A_{ik}(\xi, z, x)} 
        & \leq \eta(\abs{z})\left(1+\abs{\xi}^{2}\right)^{-1 / 2},
    \end{align*}
    \begin{align*}
        &\Abs{\sum_{k=1}^{d}\left(D_{z} A_{ij}(\xi, z, x) \xi_{k} \xi_{k}-D_{z} A_{kj}(\xi, z, x) \xi_{k} \xi_{i}
        +D_{x_{k}} A_{ij}(\xi, z, x) \xi_{k}-D_{x_{k}} A_{kj}(\xi, z, x) \xi_{i}\right)}\\
        &\leq \eta(\abs{z})\left(1+\abs{\xi}^{2}\right)^{1 / 2}(\abs{\xi}+b_3(x)).
    \end{align*}
    \item (local uniform continuity of $b$ with respect to $(\xi, z)$) There exists $p>d$ such that $b(\xi, z, \cdot) \in L^{p}(\Omega)$ for all $(\xi, z) \in \sR^d \times \sR$, and for all $M, \eps>0$ there exists $\delta>0$ such that
    \begin{equation*} 
        \int_{\Omega}\Abs{b(\xi, z, x)-b\left(\xi', z', x\right)}^{p} \diff{x} <\eps    
    \end{equation*}
    for $\fL^d$-a.e. $x \in \Omega$ and all $(\xi, z),\left(\xi', z'\right) \in \sR^d \times \sR$ with $\Abs{z-z'}+\Abs{\xi-\xi'}<\delta$ and $\Abs{z},\Abs{z'},\Abs{\xi},\Abs{\xi'} \leq M$.
    \end{enumerate}
\end{assumption}

\begin{theorem}[Theorem 3~\cite{palagachev2006applications}]\label{thm::QuasLinNonDiv}
    Suppose that Assumption~\ref{assum::QuasLinNonDiv} holds. Then there exists a solution $u \in W^{1, p}_{0}(\Omega)\cap W^{2, p}(\Omega)  $ of the problem~\eqref{eq::AppQuasLinNonDiv}.   
\end{theorem}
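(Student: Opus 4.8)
The statement is the classical strong ($W^{2,p}$) solvability of the quasilinear, uniformly elliptic, non-divergence-form Dirichlet problem~\eqref{eq::AppQuasLinNonDiv} with quadratic gradient growth, and the plan is to run the continuity method, i.e.\ a Leray--Schauder fixed point argument, with the monotonicity condition~(ii), the uniform ellipticity~(iii) and the structure conditions~(iv) of Assumption~\ref{assum::QuasLinNonDiv} used to close the a priori estimates. I would first set up the solution operator, then establish the a priori bounds, and finally invoke the fixed point theorem.

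\textbf{The fixed point map.} Since $p>d$ one has the compact embedding $W^{2,p}(\Omega)\hookrightarrow\hookrightarrow C^1(\bar\Omega)=:B$. For $v\in B$ and $\sigma\in[0,1]$ define $\mathcal{T}(v,\sigma)\in W^{2,p}(\Omega)\cap W^{1,p}_0(\Omega)$ to be the unique solution of the \emph{linear} Dirichlet problem
\[
  A_{ij}(Dv,v,x)\,D_{ij}u = -\,\sigma\, b(Dv,v,x)\quad\text{in }\Omega,\qquad u=0\ \text{on }\partial\Omega .
\]
The coefficients $x\mapsto A_{ij}(Dv(x),v(x),x)$ are continuous on $\bar\Omega$ and uniformly elliptic by~(iii) (ellipticity constant depending only on $\norm{v}_{C^0}$), while the right-hand side lies in $L^p(\Omega)$ by~(i); thus solvability and the estimate $\norm{u}_{W^{2,p}(\Omega)}\le C\norm{b(Dv,v,\cdot)}_{L^p(\Omega)}$ follow from Theorem~\ref{thm::ExistThmLinProbNonDivForm}. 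Using~(i),(iv),(v) together with the compact embedding, one verifies that $\mathcal{T}\colon B\times[0,1]\to B$ is continuous and compact, that $\mathcal{T}(\cdot,0)\equiv 0$, and that every fixed point of $\mathcal{T}(\cdot,1)$ is a solution of~\eqref{eq::AppQuasLinNonDiv} in $W^{1,p}_0(\Omega)\cap W^{2,p}(\Omega)$.

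\textbf{A priori estimates.} Let $u\in W^{2,p}(\Omega)\cap W^{1,p}_0(\Omega)$ satisfy $u=\mathcal{T}(u,\sigma)$, i.e.\ $A_{ij}(Du,u,x)D_{ij}u+\sigma b(Du,u,x)=0$. (a) An $L^\infty$ bound: multiplying by $\operatorname{sign}(u)$ and using the monotonicity condition~(ii) converts the equation into the differential inequality $a_{ij}D_{ij}|u|\ge -\lambda(\norm{u}_\infty)b_2(x)(1+|Du|)$ with $b_2\in L^d$, and the Alexandrov--Bakelman--Pucci maximum principle (with the linear-in-$|Du|$ term absorbed, if necessary after an exponential change of unknown) yields $\norm{u}_{L^\infty(\Omega)}\le M_0$ depending only on the data. (b) A global gradient bound $\norm{Du}_{L^\infty(\Omega)}\le M_1$, with $M_1$ depending only on $M_0$ and the data: since $b$ is allowed to grow quadratically in $Du$, this is the crux. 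I would use the Bernstein technique -- differentiate the equation, test $w=\tfrac12|Du|^2$ (plus a boundary barrier) against a cutoff, and exploit the structure conditions in~(iv): the antisymmetry bounds on $D_{\xi_k}A_{ij}-D_{\xi_j}A_{ik}$ and the cancellation in the displayed double sum are exactly what force the quadratic gradient terms to cancel or be dominated by the good term; the $C^{1,1}$ regularity of $\partial\Omega$ gives the boundary gradient estimate. (c) Once $|u|\le M_0$ and $|Du|\le M_1$ pointwise, the equation is genuinely uniformly elliptic with right-hand side $-\sigma b(Du,u,\cdot)$ bounded in $L^p(\Omega)$ independently of $\sigma$, so the linear $W^{2,p}$ estimate of the previous step gives $\norm{u}_{W^{2,p}(\Omega)}\le M_2$, hence $\norm{u}_{C^1(\bar\Omega)}\le CM_2$, uniformly in $\sigma$. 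The Leray--Schauder fixed point theorem (see~\cite{gilbarg2001elliptic}) then produces a fixed point of $\mathcal{T}(\cdot,1)$, which is the desired solution in $W^{1,p}_0(\Omega)\cap W^{2,p}(\Omega)$.

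The main obstacle is step~(b): the global gradient estimate under only quadratic growth of $b$ in $Du$. The intricate structure conditions in Assumption~\ref{assum::QuasLinNonDiv}(iv) are designed precisely so that the Bernstein argument (or, alternatively, a $W^{2,p}$ argument under Cordes-- or Miranda--Talenti-type bounds) closes; the delicate bookkeeping is the boundary gradient bound via a suitable barrier and the tracking of the dependence of all constants on $\norm{b_1}_{L^p}$, $\norm{b_2}_{L^d}$, $\norm{b_3}_{L^p}$ and the monotone functions $\lambda,\eta,v$. All the remaining ingredients -- linear non-divergence $W^{2,p}$ theory, the ABP maximum principle, compactness of $\mathcal{T}$ -- are standard.
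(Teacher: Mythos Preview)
The paper does not prove Theorem~\ref{thm::QuasLinNonDiv} at all: it is quoted verbatim from~\cite{palagachev2006applications} (Theorem~3 there) as a black-box existence result, and the appendix merely ``collects some well-known results \ldots\ from the literature'' without reproducing their proofs. So there is no paper proof to compare against.

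That said, your outline is a faithful sketch of the standard route to such results and is essentially what Palagachev does. The Leray--Schauder scheme with the linear non-divergence $W^{2,p}$ theory as the solution operator, the ABP maximum principle for the $L^\infty$ bound from the sign condition~(ii), and the Bernstein-type gradient estimate exploiting the structural hypotheses~(iv) are precisely the ingredients of~\cite{palagachev2006applications}. You correctly identify the gradient bound~(b) as the heart of the matter and that the peculiar form of the conditions in~(iv) --- in particular the antisymmetry bound and the cancellation inequality --- is tailored to make the quadratic terms in the Bernstein computation close. One small caveat: your parenthetical ``after an exponential change of unknown'' in step~(a) is not quite how the ABP step is handled here (the linear-in-$|Du|$ right-hand side with $b_2\in L^d$ is absorbed directly into the ABP constant, not transformed away), but this is a minor point and does not affect the overall correctness of the plan.
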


\section{A short proof of the failure of PINN in $1$-d}

To fix ideas and to understand the failure example of PINN given in Section~\ref{sec::FailExPINN}, we provide in this appendix a succinct explanation to the failure phenomenon with one-dimensional setting $\Omega=(-1,1)$. More general results and the complete analysis for the case of higher dimensional problem and even for quasilinear elliptic equations are given in the main text.

\begin{assumption}[uniform ellipticity condition and piece-wise continuous coefficients]\label{assum::UniEllipPWContCoeff}
    Assume there are constants $0<\lambda<\Lambda$ satisfying $\lambda\leq A(x)\leq \Lambda$ for all $x\in\Omega$ and
    \begin{equation*}
        A(x) = \bar{a}(x) + \sum_{k=1}^{n_{\mathrm{jump}}} a_k H(x-j_k),
    \end{equation*} 
    where $\bar{a}$ is an absolutely continuous function on $\Omega$, weights $a_k\in\sR$, and discontinuities $\{j_k\}_{k=1}^{n_{\mathrm{jump}}}\subseteq\Omega$ for $n_{\mathrm{jump}}\in \sN^+$. Here $H$ is the Heaviside step function.
\end{assumption}

As the general case in the main text, we introduce the modified problem:
\begin{equation}\label{eq::ModEq1D}
    \left\{
    \begin{aligned}
    -(\bar{a} D^2_{x}u+(D_{x}\bar{a}) D_{x} u)&=f & & \text{in}\  \Omega, \\
    u(\pm 1) &=0.
    \end{aligned}
    \right.
\end{equation}
and denote its solution as $\tilde{u}\in H^1_0(\Omega)\cap H^{2}(\Omega)$. The existence and regularity are guaranteed by Theorem~\ref{thm::ExistAprioEstLinEllipEq&Sys} in the general setting. 

The following proposition shows that the original operator $L$ maps $\tilde{u}$ to the RM-transformed data $f -\sum_{k=1}^{n_{\mathrm{jump}}}a_kD_{x}\tilde{u}(j_k)\delta_{j_k}$. Thus it is clear that the latter equals $f$ if and only if $D_{x}\tilde{u}$ vanishes at all jump points $j_k$.

\begin{proposition}[representation by RM-transformed data for 1-d linear elliptic equation]\label{prop::RepRMTransData1DEllipEq}
    Suppose that Assumption~\ref{assum::UniEllipPWContCoeff} holds and $f\in L^2$. Let $\tilde{u}$ be the solution to problem~\eqref{eq::ModEq1D}. Then $\tilde{u}$ is the weak solution to 
    \begin{equation}
        \left\{
        \begin{aligned}
            L\tilde{u} &= f -\sum_{k=1}^{n_{\text{jump}}}a_k D_{x}\tilde{u}(j_k)\delta_{j_k} & & \text{in}\ \Omega, \\
            \tilde{u}(\pm 1) &=0,
        \end{aligned}
        \right.
    \end{equation}
    where $\delta_{j_k}$ is the Dirac measure satisfying $\langle\delta_{j_k},\varphi\rangle=\delta_{j_k}(\varphi)=\varphi(j_k)$ for any $\varphi \in H^1_0(\Omega)$.
\end{proposition}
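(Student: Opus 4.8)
The plan is to verify the weak formulation of the asserted equation directly. Recall that $w\in H^1_0(\Omega)$ is a weak solution of $Lw=g$ exactly when $\int_\Omega A\,D_x w\,D_x\varphi\diff{x}=\langle g,\varphi\rangle$ for every $\varphi\in H^1_0(\Omega)$; for $g=f-\sum_{k=1}^{n_{\mathrm{jump}}}a_k D_x\tilde u(j_k)\delta_{j_k}$ this pairing is $\int_\Omega f\varphi\diff{x}-\sum_k a_k D_x\tilde u(j_k)\varphi(j_k)$, which is well defined since $H^1_0(\Omega)\hookrightarrow C^0(\bar\Omega)$ in one dimension and since, by Theorem~\ref{thm::ExistAprioEstLinEllipEq&Sys}, $\tilde u\in H^2(\Omega)$, so $D_x\tilde u\in H^1(\Omega)\hookrightarrow C^{0,1/2}(\bar\Omega)$ has a continuous representative at which the point evaluations $D_x\tilde u(j_k)$ make sense (and $g\in H^{-1}(\Omega)$). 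Thus it suffices to establish, for all $\varphi\in H^1_0(\Omega)$, the identity $\int_\Omega A\,D_x\tilde u\,D_x\varphi\diff{x}=\int_\Omega f\varphi\diff{x}-\sum_k a_k D_x\tilde u(j_k)\varphi(j_k)$; the boundary condition $\tilde u(\pm1)=0$ is inherited from~\eqref{eq::ModEq1D}.

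I would prove this identity by a piecewise integration by parts. Order the discontinuities $-1=j_0<j_1<\cdots<j_{n_{\mathrm{jump}}}<j_{n_{\mathrm{jump}}+1}=1$, take first $\varphi\in C^\infty_c(\Omega)$, and split
\[
  \int_\Omega A\,D_x\tilde u\,D_x\varphi\diff{x}=\sum_{\ell=0}^{n_{\mathrm{jump}}}\int_{j_\ell}^{j_{\ell+1}}A\,D_x\tilde u\,D_x\varphi\diff{x}.
\]
On each open interval $(j_\ell,j_{\ell+1})$ the function $A$ differs from $\bar a$ only by an additive constant, hence is absolutely continuous there with $D_xA=D_x\bar a$ a.e.; therefore $A\,D_x\tilde u\in H^1(j_\ell,j_{\ell+1})$ and integration by parts gives
\[
  \int_{j_\ell}^{j_{\ell+1}}A\,D_x\tilde u\,D_x\varphi\diff{x}=\bigl[A\,D_x\tilde u\,\varphi\bigr]_{j_\ell^+}^{j_{\ell+1}^-}-\int_{j_\ell}^{j_{\ell+1}}\bigl(A\,D_x^2\tilde u+(D_x\bar a)\,D_x\tilde u\bigr)\varphi\diff{x}.
\]
Summing over $\ell$: the boundary terms at $\pm1$ vanish because $\varphi(\pm1)=0$, whereas the two boundary contributions meeting at an interior node $j_k$ collapse to $\bigl(A(j_k^-)-A(j_k^+)\bigr)D_x\tilde u(j_k)\varphi(j_k)=-a_k D_x\tilde u(j_k)\varphi(j_k)$, using the continuity of $D_x\tilde u$ and $\varphi$ across $j_k$. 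The remaining volume integral is $-\int_\Omega\bigl(A\,D_x^2\tilde u+(D_x\bar a)\,D_x\tilde u\bigr)\varphi\diff{x}$, and this equals $\int_\Omega f\varphi\diff{x}$ because $\tilde u$ satisfies the modified equation~\eqref{eq::ModEq1D} pointwise a.e.\ (it lies in $H^2(\Omega)$). Collecting terms gives the identity for $\varphi\in C^\infty_c(\Omega)$, and since both sides are continuous in $\varphi$ with respect to the $H^1$ norm, a density argument extends it to all $\varphi\in H^1_0(\Omega)$.

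An equivalent and slightly slicker route, which also matches the higher-dimensional statements in the main text, is to compute $D_x(A\,D_x\tilde u)$ as a Radon measure in one step via the structure/Leibniz theorem for BV functions (Theorem~\ref{thm::StructureThmBV}): since $A\in SBV(\Omega)$ with $D_x^{\mathrm a}A=(D_x\bar a)\,\fL^1$ and $D_x^{\mathrm j}A=\sum_k a_k\delta_{j_k}$, and since $D_x\tilde u$ is continuous, one gets $D_x(A\,D_x\tilde u)=\bigl(A\,D_x^2\tilde u+(D_x\bar a)\,D_x\tilde u\bigr)\fL^1+\sum_k a_k D_x\tilde u(j_k)\delta_{j_k}$; together with~\eqref{eq::ModEq1D} this yields $L\tilde u=-D_x(A\,D_x\tilde u)=f-\sum_k a_k D_x\tilde u(j_k)\delta_{j_k}$ immediately, and one recognizes the one-dimensional instance of the decomposition $Tf-f$ from Proposition~\ref{prop::RMTransModEq}(iii).

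The only genuinely delicate point is the product rule across the jumps, i.e.\ identifying the jump part of $D_x(A\,D_x\tilde u)$ as $\sum_k a_k D_x\tilde u(j_k)\delta_{j_k}$ with $D_x\tilde u$ taken in its continuous (precise) representative; this is exactly where the regularity $\tilde u\in H^2(\Omega)$ enters. I would prefer the piecewise integration-by-parts argument precisely because it sidesteps the BV Leibniz rule and reduces this point to the elementary observation that at each $j_k$ the coefficient $A$ jumps by exactly $a_k$ while $D_x\tilde u$ and $\varphi$ are continuous; the rest is bookkeeping over finitely many subintervals.
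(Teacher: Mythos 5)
Your proposal is correct and follows essentially the same route as the paper's own proof: order the jump points, split $\int_\Omega A\,D_x\tilde u\,D_x\varphi$ over the subintervals $(j_\ell,j_{\ell+1})$, integrate by parts on each (where $A\,D_x\tilde u\in H^1$), use the pointwise modified equation to replace the volume term by $\int f\varphi$, and collect the interior boundary terms into $-\sum_k a_k D_x\tilde u(j_k)\varphi(j_k)$. Your write-up is only marginally more elaborate than the paper's (explicit density step from $C_c^\infty$ to $H^1_0$, which the paper skips since $H^1_0\hookrightarrow C^0$ in 1D makes the pairing well-defined directly, and a mention of the alternative BV-Leibniz route); the core argument is identical.
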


\begin{proof} 
    Without loss of generality, we assume that $j_k<j_{k'}$ if $1\leq k< k'\leq n_{\mathrm{jump}}$, and set $j_0=-1$, $j_{n_{\mathrm{jump}}+1}=1$. The solution $\tilde{u}\in H^{2}(\Omega)$ implies that $D_{x}\tilde{u}\in C(\Omega)$ and $AD_{x}\tilde{u} \in H^1((j_k,j_{k+1}))$ for $k=0,\ldots,n_{\mathrm{jump}}$. Using Lebesgue integral theorem, we have
    \begin{equation*}
        \int_{j_k}^{j_{k+1}}D_{x}(AD_{x}\tilde{u})\varphi \diff{x}+\int_{j_k}^{j_{k+1}}AD_{x}\tilde{u}D_{x}\varphi\diff{x} = AD_{x}\tilde{u} \varphi\Big|_{j_k^+}^{j_{k+1}^-},\quad k=0,\ldots,n_{\mathrm{jump}},
    \end{equation*}
    where 
    \begin{equation*}
        AD_{x}\tilde{u} \varphi\Big|_{j_k^+}^{j_{k+1}^-}=\lim_{\eps\to 0}[A(j_{k+1}-\eps)D_{x}\tilde{u}(j_{k+1}-\eps) \varphi(j_{k+1}-\eps)-A(j_{k}+\eps)D_{x}\tilde{u}(j_{k}+\eps) \varphi(j_{k}+\eps)].
    \end{equation*}
    Since there is no jump discontinuity of $A(x)$ for $x\in \Omega\backslash \{j_k\}_{k=1}^{n_{\mathrm{jump}}}$, we have $-D_{x}(AD_{x}\tilde{u}) = f$ on $(j_k,j_{k+1})$ and
    \begin{equation*}
        \int_{j_k}^{j_{k+1}}AD_{x}\tilde{u}D_{x}\varphi\diff{x} = \int_{j_k}^{j_{k+1}}f\varphi\diff{x} + AD_{x}\tilde{u}\varphi\Big|_{j_k}^{j_{k+1}},\quad \forall \varphi \in H^1_0(\Omega).
    \end{equation*}
    Thus the proof is completed by the following calculation
    \begin{align*}
        \int_{-1}^{1}(AD_{x}\tilde{u})D_{x}\varphi\diff{x} =\sum_{k=0}^{n_{\mathrm{jump}}}\int_{j_k}^{j_{k+1}}AD_{x}\tilde{u}D_{x}\varphi\diff{x}
        &= \sum_{k=0}^{n_{\mathrm{jump}}}\int_{j_k}^{j_{k+1}}f\varphi\diff{x} + AD_{x}\tilde{u}\varphi\Big|_{j_k^+}^{j_{k+1}^-} \nonumber\\
        &= \int_{-1}^{1}f\varphi\diff{x} -\sum_{k=1}^{n_{\mathrm{jump}}}a_kD_{x}\tilde{u}(j_k)\left\langle  \delta_{j_k} , \varphi \right\rangle.
    \end{align*}
\end{proof}

Next we describe the gap between $u$ and $\tilde{u}$ in terms of the $L^2$ norm. As in the main text, we will show this is non-zero.

\begin{proposition}[deviation occurs for 1-d linear elliptic equation]\label{prop::WorstCaseDev1DLinEllipEq}
    Suppose that Assumption~\ref{assum::UniEllipPWContCoeff} holds and $f\in L^2(\Omega)$. Let $u$ and $\tilde{u}$ be the solution to the problem~\eqref{eq::1dEq} and~\eqref{eq::ModEq1D}, respectively. If there exists $k\in \{1,\ldots,n_{\mathrm{jump}}\}$ such that $D_{x}\tilde{u}(j_k) \neq 0$, then we have that: 
    \begin{equation*}
        \norm{u -\tilde{u} }_{L^2(\Omega)}>0.    
    \end{equation*}
\end{proposition}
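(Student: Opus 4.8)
The plan is to argue by contradiction using the representation from Proposition~\ref{prop::RepRMTransData1DEllipEq} together with uniqueness of the weak solution to the original boundary value problem~\eqref{eq::1dEq}. Suppose, for contradiction, that $\norm{u-\tilde{u}}_{L^2(\Omega)}=0$, so that $u=\tilde{u}$ $\fL^1$-a.e.\ on $\Omega$. Since $u$ is the weak solution to~\eqref{eq::1dEq}, it satisfies $\int_{-1}^1 A D_x u\, D_x\varphi\diff x = \int_{-1}^1 f\varphi\diff x$ for all $\varphi\in H^1_0(\Omega)$. On the other hand, Proposition~\ref{prop::RepRMTransData1DEllipEq} tells us that $\tilde{u}$ satisfies $\int_{-1}^1 A D_x\tilde{u}\, D_x\varphi\diff x = \int_{-1}^1 f\varphi\diff x - \sum_{k=1}^{n_{\mathrm{jump}}} a_k D_x\tilde{u}(j_k)\langle\delta_{j_k},\varphi\rangle$ for all $\varphi\in H^1_0(\Omega)$. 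Since $u=\tilde{u}$ as elements of $H^1_0(\Omega)$ (they agree a.e.\ and both lie in $H^1_0(\Omega)$), subtracting the two identities gives
\begin{equation*}
    \sum_{k=1}^{n_{\mathrm{jump}}} a_k D_x\tilde{u}(j_k)\,\varphi(j_k)=0\quad\text{for all }\varphi\in H^1_0(\Omega).
\end{equation*}

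The next step is to extract from this the vanishing of each coefficient $a_k D_x\tilde{u}(j_k)$. Because the jump points $\{j_k\}_{k=1}^{n_{\mathrm{jump}}}$ are finitely many distinct interior points of $(-1,1)$, for each fixed index $m$ I can choose a test function $\varphi_m\in C_c^\infty(\Omega)\subseteq H^1_0(\Omega)$ that is a bump with $\varphi_m(j_m)=1$ and $\varphi_m(j_k)=0$ for all $k\neq m$ (supported in a small neighbourhood of $j_m$ that excludes the other jump points and the boundary). Plugging $\varphi_m$ into the displayed identity forces $a_m D_x\tilde{u}(j_m)=0$. Since this holds for every $m\in\{1,\ldots,n_{\mathrm{jump}}\}$ and the weights $a_k$ are the (nonzero) jump heights of $A$ at $j_k$ by Assumption~\ref{assum::UniEllipPWContCoeff}---indeed $a_k\neq 0$ is what makes $j_k$ a genuine discontinuity---we conclude $D_x\tilde{u}(j_k)=0$ for all $k$. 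This contradicts the hypothesis that there is some $k$ with $D_x\tilde{u}(j_k)\neq 0$, completing the proof.

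Two small technical points deserve care but are routine. First, the pairing $\langle\delta_{j_k},\varphi\rangle=\varphi(j_k)$ is well-defined on $H^1_0(\Omega)$ since in one dimension $H^1_0(\Omega)\hookrightarrow C(\bar\Omega)$; and $D_x\tilde{u}(j_k)$ makes pointwise sense because $\tilde{u}\in H^2(\Omega)$ gives $D_x\tilde{u}\in C(\bar\Omega)$, so the quantities in the displayed identity are genuine numbers. Second, one should note that if $a_k=0$ for some $k$ then that point is not actually a discontinuity of $A$ and contributes nothing; the statement of Assumption~\ref{assum::UniEllipPWContCoeff} together with the hypothesis of the proposition ensures there is at least one index $k$ where both $a_k\neq 0$ and $D_x\tilde{u}(j_k)\neq 0$, so the contradiction is real. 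I do not anticipate a serious obstacle here: the only mildly delicate point is making sure the bump functions $\varphi_m$ lie in $H^1_0(\Omega)$ and isolate the individual jump points, which is immediate since the $j_k$ are finite in number, interior, and distinct.
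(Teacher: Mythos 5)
Your proof is correct and takes essentially the same approach as the paper's: both rest on Proposition~\ref{prop::RepRMTransData1DEllipEq} to express $L\tilde{u}-f$ as a signed sum of weighted Dirac masses at the jump points, and both test against a bump function that isolates a jump where $D_x\tilde{u}$ does not vanish. The only distinction is stylistic --- you argue by contradiction and show all coefficients $a_k D_x\tilde{u}(j_k)$ must vanish, whereas the paper reads the same identity against a single bump and uses the ellipticity bound $\Lambda$ to convert it into a direct quantitative lower bound on $\norm{D_x(u-\tilde u)}_{L^2(\Omega)}$; your handling of the implicit $a_k\neq 0$ convention is also consistent with the paper's own reading of Assumption~\ref{assum::UniEllipPWContCoeff}.
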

The existence and regularity are guaranteed by Theorem~\ref{thm::ExistThmLinProbDivForm} in a more general setting.
\begin{proof}
    By Theorem~\ref{prop::RepRMTransData1DEllipEq}, we have
    \begin{equation*}
        L(u - \tilde{u} ) = f-L\tilde{u} = \sum_{k=1}^{n_{\mathrm{jump}}}a_k D_{x}\tilde{u}(j_k)\delta_{j_k}.
    \end{equation*}
    Without loss of generality, we can choose some function $\varphi\in C_c^{\infty}(\Omega)$ such that $\varphi(j_k) = 1$ and $\varphi(a_{k'}) = 0$ for $k'\in \{1,\ldots,n_{\mathrm{jump}}\}\backslash \{k\}$ and $\norm{\varphi}_{H^1(\Omega)}>0$.
    
    Since there are $0<\lambda<\Lambda$ such that $\lambda \leq A(x) \leq \Lambda$ for all $x\in\Omega$, we have
    \begin{align*}
        \Lambda\norm{D_{x} u -D_{x}\tilde{u}}_{L^2(\Omega)}\norm{\varphi}_{H^1(\Omega)}
        \geq& \Abs{\int_{-1}^{1}(AD_{x}(u-\tilde{u}))D_{x}\varphi\diff{x}}\\
        =& \Abs{\int_{-1}^{1}L(u-\tilde{u})\varphi\diff{x}}
        = \Abs{a_kD_{x}\tilde{u}(j_k)}>0.
    \end{align*}
    This implies $\norm{u -\tilde{u} }_{L^2(\Omega)}>0$.
\end{proof}

\section{Proof of Theorems~\ref{thm::ExistQuasiLinDivForm} and~\ref{thm::ExistQuasiLinNonDivForm}}\label{sec::ExisProof4QuasEq}

\begin{proof}[Proof of Theorem~\ref{thm::ExistQuasiLinDivForm}]
    We use the calculus of variation and the proof is mainly based on Evans~\cite{evans2010partial} pp. 451--454.
    
    (\romannumeral1). First, we prove there is a minimizer $u\in H^1_0(\Omega)$ of the energy functional $I[u]$ defined as 
    \begin{equation*} 
        I[u] = \inf_{w\in H^1_0(\Omega)}I[w].
    \end{equation*}
    
    Set $m = \inf_{w\in H^1_0(\Omega)}I[w]$. If $m=+\infty$, we are done. We henceforth consider that $m$ is finite. Choosing a sequence $\{u_k\}_{k=1}^{\infty}$ such that: 
    \begin{equation*} 
        I[u_k]\to m.
    \end{equation*}
    
    By the coercivity condition, we have 
    \begin{equation*} 
        \begin{aligned}
            I[u_k] &= \int_{\Omega}\chi(x)W(Du_k,x)-fw\diff{x} \\ &\geq \chi_{\min}(c_1\norm{Du_k}^{2}_{L^2(\Omega)}-c_2\Abs{\Omega})-\frac{1}{2}\norm{u_k}_{L^2(\Omega)}^2-\frac{1}{2}\norm{f}_{L^2(\Omega)}^2\\ 
            &\geq \left(\chi_{\min}c_1-\frac{1}{2}\sup_{w\in H^1_0(\Omega)}\frac{\norm{w}_{L^2(\Omega)}}{\norm{Dw}_{L^2(\Omega)}}\right)\norm{Du_k}_{L^2(\Omega)}^{2}-\chi_{\min}c_2\Abs{\Omega}-\frac{1}{2}\norm{f}_{L^2(\Omega)}^2.
        \end{aligned}
    \end{equation*}
    Recall that $\chi_{\min}c_1-\frac{1}{2}\sup_{w\in H^1_0(\Omega)}\frac{\norm{w}_{L^2(\Omega)}}{\norm{Dw}_{L^2(\Omega)}}>0$ by Assumption \ref{assum::CoerCvxBddQuasLin}.
    Since $m$ is finite, we conclude that 
    \begin{equation*} 
        \sup_{k} \norm{Du_k}_{L^2(\Omega)}<+\infty.
    \end{equation*}
    By Poincare inequality, we have $\norm{u_k}_{L^2(\Omega)}\leq C\norm{Du_k}_{L^2(\Omega)}<+\infty$, and hence $\{u_k\}_{k=1}^{\infty}$ is bounded in $H^1(\Omega)$. Thus there is a subsequence $\{u_{k_j}\}_{j=1}^{\infty}$ such that 
    \begin{equation*} 
        u_{k_j}\rightharpoonup u\ \text{weakly in $H^1(\Omega)$}.
    \end{equation*}
    Notice that $u_k\in H^1_0(\Omega)$ and $H^1_0(\Omega)$ a closed linear subspace of $H^1(\Omega)$, hence, by Mazur's Theorem, is weakly closed. Hence, $u\in H^1_0(\Omega)$, which means $u=0$ on $\partial\Omega$ in the sense of trace. The existence of the minimizer is established.
    
    (\romannumeral2). By the way, we also have the uniqueness of minimizer due to the uniform convexity in Assumption~\ref{assum::CoerCvxBddQuasLin}. We omit the details since it is exactly the same as the one in Evans~\cite{evans2010partial} pp. 451--454.
    
    (\romannumeral3). Finally, we claim that the minimizer is indeed a weak solution to the equation \eqref{eq::QuasLinModEq}.
    
    For any $\varphi\in H^1_0(\Omega)$, define 
    \begin{equation*} 
        J(t)= I[u+t \varphi],\ (t\in \sR).
    \end{equation*}
    The difference quotient reads as
    \begin{equation*} 
        \begin{aligned}
            \frac{J(t)-J(0)}{t}&= \frac{1}{t}\int_{\Omega}\chi(x)(W(Du+t D\varphi,x)-W(Du,x))-ft \varphi\diff{x}\\ &=\int_{\Omega}W^{t}(x)\diff{x}-\int_{\Omega}f\varphi \diff{x},
        \end{aligned}
    \end{equation*}
    where 
    \begin{equation*} 
        W^{t}(x)=\frac{1}{t}\chi(x)(W(Du+t D\varphi,x)-W(Du,x)).
    \end{equation*}
    Thus 
    \begin{equation*} 
        W^{t}(x) \to \sum_{i=1}^{d}\chi(x)D_{\xi_{i}}W(Du,x)\varphi_{x_{i}}
    \end{equation*} 
    for $\fL^d$-a.e. $x\in \Omega$ as $t\to 0$. Furthermore:
    \begin{equation*} 
        \begin{aligned}
            W^{t}(x)=\frac{1}{t}\int_{0}^{t}\frac{\D}{\D t' }\chi(x)W(Du+t'D\varphi,x)\diff{t'}=\frac{1}{t}\int_{0}^{t}\sum_{i=1}^{d}\chi(x)D_{\xi_{i}}W(Du+t'D\varphi,x)\varphi_{x_{i}}\diff{t'}.
        \end{aligned}
    \end{equation*}
    By the growth condition on $D_{\xi_{i}}W$ and Cauchy--Schwartz inequality, we then have for all $u, \varphi\in H^1(\Omega)$
    \begin{equation*} 
        W^{t}(x)\leq C\norm{\chi}_{L^{\infty}(\Omega)}(\Abs{Du}^2+\Abs{D\varphi}^2+1),
    \end{equation*}
    where $C$ depends on $c_3$.
    Thus by dominated convergence theorem, we have
    \begin{equation*} 
        \frac{\D}{\D t}J(0) = \int_{\Omega}\sum_{i=1}^{d}\chi D_{\xi_{i}}W(Du,x)v_{x_{i}}-fv\diff{x}.
    \end{equation*}
    By our previous discussion on existence of minimizer and the existence of $\frac{\D}{\D t}J(0)$, we hence have $\frac{\D}{\D t}J(0)=0$. Therefore, $u$ is a weak solution to the equation \eqref{eq::QuasLinModEq}.
\end{proof}

\begin{remark}
    Notice that this proof still holds even if $f\in H^{-1}(\Omega)$, since we have
    \begin{equation*} 
        \langle f,\varphi\rangle_{H^{-1}(\Omega),H^1_0(\Omega)} \leq \norm{f}_{H^{-1}(\Omega)}\norm{\varphi}_{H^1(\Omega)}
    \end{equation*}
    for any $\varphi\in H^1_0(\Omega)$. Then the statement as well as the proof works for the energy functional defined by
    \begin{equation*} 
        I[w] = \int_{\Omega}\chi(x)W(Dw,x)\diff{x} - \langle f,w\rangle_{H^{-1}(\Omega),H^1_0(\Omega)}, \ \text{for $w\in H^1_0(\Omega)$}.
    \end{equation*}
\end{remark}

\begin{prop}[dependence on data in $L^2(\Omega)$]\label{prop::L2control}
    Suppose that Assumption~\ref{assum::CoerCvxBddQuasLin} holds. For any $f, f'\in L^2(\Omega)$, let $u,u' \in H^1_0(\Omega)$ be the corresponding solutions to the equation~\eqref{eq::QuasiLinOriginEq}. Then we have
    \begin{equation*} 
        \norm{u-u'}_{H^1(\Omega)} \leq C\norm{f-f'}_{L^2(\Omega)},
    \end{equation*}
    where $C$ depends on $\Omega,\chi$ and $\lambda$.
\end{prop}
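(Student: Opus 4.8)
The plan is to exploit the strong monotonicity of the quasilinear operator $w\mapsto -\divg\cdot(\chi D_{\xi}W(Dw,x))$, which follows from the uniform convexity of $W$ in the variable $\xi$ (Assumption~\ref{assum::CoerCvxBddQuasLin}~(\romannumeral2)), together with the Poincaré inequality. First I would write the weak formulations: for all $\varphi\in H^1_0(\Omega)$,
\begin{equation*}
    \int_{\Omega}\chi D_{\xi}W(Du,x)\cdot D\varphi\diff{x}=\langle f,\varphi\rangle_{H^{-1}(\Omega),H^1_0(\Omega)},
\end{equation*}
and the analogous identity with $u',f'$. These integrals are well-defined because Assumption~\ref{assum::CoerCvxBddQuasLin}~(\romannumeral3) gives $\Abs{D_{\xi}W(Dw,x)}\leq c_3(\Abs{Dw}+1)$, so $\chi D_{\xi}W(Dw,\cdot)\in L^2(\Omega;\sR^d)$ whenever $w\in H^1_0(\Omega)$.

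Subtracting the two identities and testing with $\varphi=u-u'\in H^1_0(\Omega)$ gives
\begin{equation*}
    \int_{\Omega}\chi\big(D_{\xi}W(Du,x)-D_{\xi}W(Du',x)\big)\cdot D(u-u')\diff{x}=\langle f-f',u-u'\rangle_{H^{-1}(\Omega),H^1_0(\Omega)}.
\end{equation*}
Next I would apply the Newton--Leibniz theorem exactly as in the proof of Theorem~\ref{thm::NecSufCondRemovSingQuasiLin}:
\begin{equation*}
    D_{\xi}W(Du,x)-D_{\xi}W(Du',x)=\int_{0}^{1}A_{u,u'}^t\,D(u-u')\diff{t},\quad \fL^d\text{-a.e.},
\end{equation*}
with $[A_{u,u'}^t]_{ij}=D_{\xi_i}D_{\xi_j}W(Du'+t(Du-Du'),x)$. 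By Assumption~\ref{assum::CoerCvxBddQuasLin}~(\romannumeral2), $\lambda\abs{\zeta}^2\leq\zeta^{\T}A_{u,u'}^t\zeta$ for all $\zeta\in\sR^d$, $t\in[0,1]$, $x\in\Omega$, so after interchanging the $t$- and $x$-integrations (Fubini) the left-hand side is bounded below by $\chi_{\min}\lambda\norm{D(u-u')}_{L^2(\Omega)}^2$. For the right-hand side, Cauchy--Schwarz and Poincaré yield $\langle f-f',u-u'\rangle\leq\norm{f-f'}_{L^2(\Omega)}\norm{u-u'}_{L^2(\Omega)}\leq C_P\norm{f-f'}_{L^2(\Omega)}\norm{D(u-u')}_{L^2(\Omega)}$, where $C_P$ is the Poincaré constant of $\Omega$.

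Combining the two estimates and dividing by $\norm{D(u-u')}_{L^2(\Omega)}$ (there is nothing to prove if it vanishes) gives $\norm{D(u-u')}_{L^2(\Omega)}\leq\frac{C_P}{\chi_{\min}\lambda}\norm{f-f'}_{L^2(\Omega)}$, and one more application of Poincaré bounds $\norm{u-u'}_{H^1(\Omega)}$ by $C\norm{f-f'}_{L^2(\Omega)}$ with $C$ depending only on $\Omega$, $\chi$ (through $\chi_{\min}$), and $\lambda$. I expect the only delicate point to be the justification of the integral representation of $D_{\xi}W(Du,x)-D_{\xi}W(Du',x)$ and the measurability of $(t,x)\mapsto[A_{u,u'}^t(x)]_{ij}$ needed for Fubini; this rests on $W\in C^3$ so that $t\mapsto D_{\xi}W(Du'+tD(u-u'),x)$ is $C^1$, while everything else is a routine monotonicity estimate.
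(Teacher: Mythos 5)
Your proof is correct and follows essentially the same route as the paper: write the difference $D_{\xi}W(Du,x)-D_{\xi}W(Du',x)$ via the Newton--Leibniz integral of $D_\xi^2 W$, test the difference equation with $u-u'$, and use the uniform convexity of $W$ (lower bound) together with Cauchy--Schwarz and Poincar\'e (upper bound). Incidentally, you correctly locate the needed uniform convexity in Assumption~\ref{assum::CoerCvxBddQuasLin}~(\romannumeral2); the paper's proof mistakenly cites Assumption~\ref{assum::BddLipGrowQuasiLinNonDiv} for ``uniform ellipticity'' at this step.
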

This means the solution $u$ is continuous with respect to the interior data $f$.
\begin{proof}
    Using the Newton--Leibniz theorem, we have for $i\in\{1,\ldots,d\}$
    \begin{align*}
        & D_{\xi_{i}}W\left(D u, x\right)=D_{\xi_{i}}W\left(D u', x\right)+\int_{0}^{1} \frac{d}{d t} D_{\xi_i}W\left(D u'+t\left(D u-D u', x\right) d t\right.\nonumber\\
        =& D_{\xi_{i}}W\left(D u', x\right)+\int_{0}^{1} \sum_{j=1}^{d} D_{\xi_{i}}D_{\xi_{j}}W\left(D u'+t\left(D u-D u'\right), x\right)\left(D_{j} u-D_{j} u'\right)\diff{t}.
    \end{align*}
    Thus
    \begin{equation}\label{eq::tay}
        \begin{aligned}
            -\divg\cdot\left[\chi(x)D_{\xi}W(Du,x)-\chi(x)D_{\xi}W(D u',x)\right] &= f-f', \\
            \Rightarrow 
            -\divg\cdot\left[\chi \int_{0}^{1}A_{u,u'}^t D(u-u')\diff{t}\right] &= f-f', 
        \end{aligned}
    \end{equation}
    where $A_{u,u'}^t$ is a matrix-valued function with entries $(A_{u,u'}^t)_{ij}= D_{\xi_{i}}D_{\xi_{j}}W\left(D u'+t\left(D u-D u'\right), x\right)$.
    Next we use the property in Assumption~\ref{assum::BddLipGrowQuasiLinNonDiv}, that is the uniform ellipticity of $L$. By multiplying $u-u'$ and integrating on both sides, we have
    \begin{equation*} 
        \norm{u-u'}_{H^1(\Omega)} \leq C\norm{f-f'}_{L^2(\Omega)},
    \end{equation*}
    where $C$ depends on $\Omega,\chi$ and $\lambda$.
\end{proof}

More generally, for the $H^{-1}(\Omega)$ data $f$, we provide a similar result.
\begin{prop}(dependence on data in $H^{-1}(\Omega)$)\label{prop::H-1control}
    Suppose that Assumption~\ref{assum::CoerCvxBddQuasLin} holds. For any $f, f'\in H^{-1}(\Omega)$, let $u,u' \in H^1_0(\Omega)$ be the corresponding solutions to the equation~\eqref{eq::QuasiLinOriginEq}. Then we have
    \begin{equation*} 
        \frac{1}{C}\norm{f-f'}_{H^{-1}(\Omega)}\le\norm{u-u'}_{H^1(\Omega)}\leq C\norm{f-f'}_{H^{-1}(\Omega)},
    \end{equation*}
    where $C$ depends on $\Omega,\chi$, $\lambda$ and $\Lambda$.
\end{prop}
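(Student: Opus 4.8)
The plan is to mimic the argument of Proposition~\ref{prop::L2control}, upgrading it to handle both the upper \emph{and} the lower bound, and to accommodate $H^{-1}(\Omega)$ data. Existence of $u,u'\in H^1_0(\Omega)$ for $f,f'\in H^{-1}(\Omega)$ is already guaranteed by Theorem~\ref{thm::ExistQuasiLinDivForm} together with the remark immediately following it, so I would start directly from the defining weak formulations. First I would apply the Newton--Leibniz theorem componentwise, exactly as in~\eqref{eq::tay}, to write
\begin{equation*}
    -\divg\cdot\Big[\chi \int_{0}^{1}A_{u,u'}^t D(u-u')\diff{t}\Big] = f-f' \quad \text{in } H^{-1}(\Omega),
\end{equation*}
where $(A_{u,u'}^t)_{ij}=D_{\xi_i}D_{\xi_j}W(Du'+t(Du-Du'),x)$. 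By Assumption~\ref{assum::CoerCvxBddQuasLin}(\romannumeral2) the symmetric matrix $M(x):=\int_0^1 A_{u,u'}^t(x)\diff{t}$ satisfies $\lambda\abs{\xi}^2\le \xi^\T M(x)\xi\le \Lambda\abs{\xi}^2$ for $\fL^d$-a.e.\ $x$, and $\chi_{\min}\le\chi\le\chi_{\max}$ by Assumption~\ref{assum::BvCoeQuasLin}. Thus the difference $u-u'\in H^1_0(\Omega)$ is the weak solution of a \emph{linear}, uniformly elliptic, bounded divergence-form equation with a frozen (merely $L^\infty$) coefficient $\chi M$.

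For the upper bound I would test this equation against the admissible function $\varphi=u-u'\in H^1_0(\Omega)$. Uniform ellipticity gives $\chi_{\min}\lambda\norm{D(u-u')}_{L^2(\Omega)}^2\le \langle f-f',u-u'\rangle_{H^{-1}(\Omega),H^1_0(\Omega)}\le \norm{f-f'}_{H^{-1}(\Omega)}\norm{u-u'}_{H^1(\Omega)}$, and then Poincar\'e's inequality on $H^1_0(\Omega)$ yields $\norm{u-u'}_{H^1(\Omega)}\le C\norm{f-f'}_{H^{-1}(\Omega)}$ with $C=C(\Omega,\chi,\lambda)$. For the lower bound I would instead test against an arbitrary $\varphi\in H^1_0(\Omega)$ with $\norm{\varphi}_{H^1(\Omega)}=1$: boundedness of $\chi M$ gives
\begin{equation*}
    \langle f-f',\varphi\rangle_{H^{-1}(\Omega),H^1_0(\Omega)}
    =\int_{\Omega}\chi D\varphi^\T M\,D(u-u')\diff{x}
    \le \chi_{\max}\Lambda\,\norm{D\varphi}_{L^2(\Omega)}\norm{D(u-u')}_{L^2(\Omega)}\le C\norm{u-u'}_{H^1(\Omega)},
\end{equation*}
and taking the supremum over all such $\varphi$ gives $\norm{f-f'}_{H^{-1}(\Omega)}\le C\norm{u-u'}_{H^1(\Omega)}$ with $C=C(\Omega,\chi,\Lambda)$. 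Combining the two inequalities completes the proof, with the final constant depending on $\Omega,\chi,\lambda,\Lambda$ as claimed.

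I do not anticipate a serious obstacle here: the only point requiring a little care is that $M$ is not continuous, so no elliptic regularity is invoked — the whole argument lives at the level of the energy (bilinear-form) estimate, for which $L^\infty$ ellipticity of $\chi M$ is exactly what is needed. One should also note explicitly that $u-u'\in H^1_0(\Omega)$ (hence a legitimate test function) because both $u$ and $u'$ lie in $H^1_0(\Omega)$, and that the pairing manipulations are justified by density of $C_c^\infty(\Omega)$ in $H^1_0(\Omega)$. These are the same routine verifications already used silently in the proof of Proposition~\ref{prop::L2control}, so the write-up can be kept brief, essentially pointing to that proof for the upper bound and spelling out only the new lower-bound computation.
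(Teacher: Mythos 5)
Your proposal is correct and follows essentially the same argument as the paper's proof: linearize the difference of the two equations via Newton--Leibniz (producing the frozen coefficient $\chi\int_0^1 A_{u,u'}^t\,\diff t$), test against $u-u'$ and use uniform ellipticity together with Poincar\'e for the upper bound on $\norm{u-u'}_{H^1}$, then test against an arbitrary normalized $\varphi\in H^1_0(\Omega)$ and use boundedness plus Cauchy--Schwarz for the lower bound. The only cosmetic difference is that you invoke Poincar\'e's inequality explicitly, which the paper leaves implicit.
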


\begin{proof}
    First, similar to the proof of Proposition \ref{prop::L2control}, we have 
    \begin{equation*} 
        \langle f-g, u-u' \rangle_{H^{-1}(\Omega),H^1_0(\Omega)} = \int_{\Omega}\chi(x)D(u-u') \int_{0}^{1}A_{u,u'}^t D(u-u')\diff{t}\diff{x} \geq C\norm{u-u'}^2_{H^1(\Omega)},
    \end{equation*}
    where the last inequality results from $\chi_{\min}(x)>0$ and the uniform ellipticity of $A_{u,u'}^t$ for all $t\in [0,1]$. Here the constant $C$ depends on $\Omega,\chi$ and $\lambda$.
    
    Next we obtain the inequality in the other direction. We have for any $\varphi \in H^1_0(\Omega)$ with $\norm{\varphi}_{H^1_0(\Omega)}=1$ 
    \begin{equation*} 
        \langle f-f', \varphi \rangle_{H^{-1}(\Omega),H^1_0(\Omega)} = \int_{\Omega}\chi(x)D\varphi \int_{0}^{1}A_{u,u'}^t D(u-u')\diff{t}\diff{x} \leq C\norm{u-u'}_{H^1(\Omega)}\norm{\varphi}_{H^1(\Omega)},
    \end{equation*}
    where the last inequality results from Cauchy--Schwartz inequality and uniform ellipticity. In particular, the constant $C$ depends on $\Omega,\chi$ and $\Lambda$.
\end{proof}

\begin{proof}[Proof of Theorem~\ref{thm::ExistQuasiLinNonDivForm}]
    According to Theorem~\ref{thm::QuasLinNonDiv}, it is sufficient to show that Assumptions~\ref{assum::BvCoeQuasLin},~\ref{assum::CoerCvxBddQuasLin} and~\ref{assum::BddLipGrowQuasiLinNonDiv} together imply Assumption~\ref{assum::QuasLinNonDiv}.
        
    equation~\eqref{eq::QuasLinModEq} is equivalent to the following problem
    
    \begin{equation*} 
        \left\{
        \begin{aligned}
        -\left(\sum_{i,j=1}^dD_{\xi_{i}}D_{\xi_{j}}W(Du,x)D_{ij}u+\sum_{i=1}^{d}D_{\xi_{i}}D_{x_{i}}W(Du,x)+\sum_{i=1}^{d}\chi^{-1} D_{i}^{\mathrm{a}}\chi D_{\xi_{i}}W(Du,x)\right)&=\chi^{-1}f & & \text {in}\  \Omega, \\
        u &= 0 & & \text{on}\  \partial\Omega,
        \end{aligned}
        \right.    
    \end{equation*}
    
    (\romannumeral1). Let for all $\xi\in\sR^d$, $x\in\Omega$
    \begin{equation*} 
        b(\xi,x)= \sum_{i=1}^{d}D_{\xi_{i}}D_{x_{i}}W(\xi,x)+\sum_{i=1}^{d}\chi^{-1}D_{i}^{\mathrm{a}}\chi D_{\xi_{i}}W(\xi,x)+\frac{f}{\chi}.
    \end{equation*}
    By Assumption \ref{assum::CoerCvxBddQuasLin} (\romannumeral3), we have that
    \begin{equation*} 
        \Abs{\sum_{i=1}^{d}\chi^{-1}D_{i}^{\mathrm{a}}\chi D_{\xi_{i}}W(\xi,x)}
        \leq \frac{1}{2}\Abs{\chi^{-1}D^{\mathrm{a}}\chi}^2+\frac{1}{2}\Abs{D_{\xi}W(\xi,x)}^2\le C\left(\Abs{\chi^{-1}D^{\mathrm{a}}\chi}^2+\abs{\xi}^2+1\right),
    \end{equation*}
    where $C$ depends on $c_3$. By Assumption~\ref{assum::BddLipGrowQuasiLinNonDiv} (\romannumeral1) and the Cauchy--Schwartz inequality, we have
    \begin{equation*} 
        \Abs{\sum_{i=1}^{d}D_{\xi_{i}}D_{x_{i}}W(\xi,x)}
        \leq \frac{1}{2}\Abs{b_1(x)}+\frac{1}{2}(\Abs{\xi}+1)^2\leq \Abs{b_1(x)}^2+\Abs{\xi}^2+1.
    \end{equation*}
    Since $f\in L^p(\Omega)$, $b_1\in L^{2p}(\Omega)$, and $D^{\mathrm{a}}\chi\in L^\infty(\Omega)$, we have
    \begin{equation*} 
        \Abs{b(\xi,x)}\leq C(q(x)+\Abs{\xi}^2),
    \end{equation*}
    where $q(x)=b_1^2(x)+\Abs{\chi_{\min}^{-1}D^{\mathrm{a}}\chi}^2+\Abs{\chi_{\min}^{-1}f}+1\in L^p(\Omega)$ and $C$ depends on $c_3$. This verifies Assumption~\ref{assum::QuasLinNonDiv} (\romannumeral1).

    (\romannumeral2). By Assumption~\ref{assum::CoerCvxBddQuasLin} (\romannumeral2), (\romannumeral3) and Assumption~\ref{assum::BddLipGrowQuasiLinNonDiv} (\romannumeral1), namely, the conrtol on $D_{\xi_{i}}W(\xi,x)$ and $D_{\xi_{i}x_{i}}W(\xi,x)$ and uniform convexity, Assumption~\ref{assum::QuasLinNonDiv} (\romannumeral2) and (\romannumeral3) are satisfied.

    (\romannumeral3). 
    By Assumption~\ref{assum::BddLipGrowQuasiLinNonDiv} (\romannumeral3) and that $D_{\xi_{i}}D_{\xi_{j}}W(\xi,x)$ are independent of variable $z$, the first and second inequalities of Assumption~\ref{assum::QuasLinNonDiv} (\romannumeral4) holds naturally.
    
    Recall that $W\in  C^3(\sR^d\times \Omega)$. Thus for all $i,j,k\in \{1,\ldots,d\}$ 
    \begin{equation*} 
        D_{\xi_{k}}D_{\xi_{i}}D_{\xi_{j}}W-D_{\xi_{j}}D_{\xi_{i}}D_{\xi_{k}}W=0,
    \end{equation*}
    which implies the third inequality in Assumption~\ref{assum::QuasLinNonDiv} (\romannumeral4).

    Now consider the last inequality in Assumption~\ref{assum::QuasLinNonDiv} (\romannumeral4) and let
    \begin{equation*} 
        M=\Abs{\sum_{k=1}^{d}\left(D_{x_{k}} D_{\xi_{i}}D_{\xi_{j}}W(\xi, x) \xi_{k}-D_{x_{k}} D_{\xi_{k}}D_{\xi_{j}}W(\xi, x) \xi_{i}\right)}.
    \end{equation*}
    Thus we have the following estimate
    \begin{equation*} 
            M \leq2dc_5\abs{\xi}^2\leq 2dc_5\abs{\xi}(1+\abs{\xi}^2)^{1/2}(\abs{\xi}+b_3(x)),
    \end{equation*}
    where we let $b_3(x)=d\in L^{\infty}(\Omega)$.
    Thus Assumption~\ref{assum::QuasLinNonDiv} (\romannumeral4) is satisfied with constant function $\eta=2dc_5$.

    (\romannumeral4). Note that
    \begin{equation*}
        \int_{\Omega}\Abs{b(\xi,x)-b(\xi',x)}^p\diff{x}\leq I_1+I_2,
    \end{equation*}
    where 
    \begin{align*}
        I_1&=2^{p-1}\sum_{i=1}^{d}\int_{\Omega}\Abs{D_{\xi_{i}}D_{x_{i}}W(\xi,x)-D_{\xi_{i}}D_{x_{i}}W(\xi,x)}^p\diff{x},\\ 
        I_2&=2^{p-1}\sum_{i=1}^{d}\int_{\Omega}\Abs{\chi^{-1}D_{i}^{\mathrm{a}}\chi}^p\Abs{D_{\xi_{i}}W(\xi,x)-D_{\xi_{i}}W(\xi',x)}^p\diff{x}.
    \end{align*}
    By Assumption~\ref{assum::BddLipGrowQuasiLinNonDiv} (\romannumeral2), namely the Lipshcitz continuity of $D_{x_{i}}W$ and $D_{\xi_{i}}D_{x_{i}}W$, we have
    \begin{align*} 
        I_1 \leq 2^{p-1}d\Abs{\Omega}c_4^p\Abs{\xi-\xi'}^p
    \end{align*}
    and
    \begin{align*} 
        I_2
        &\leq 2^{p-1}\sum_{i=1}^{d}\left(\int_{\Omega}\Abs{\chi^{-1}D_{i}^{\mathrm{a}}\chi}^{2p}\diff{x}\right)^{1/2}\left(\int_{\Omega}\Abs{D_{\xi_{i}}W(\xi,x)-D_{\xi_{i}}W(\xi',x)}^{2p}\diff{x}\right)^{1/2} \\
        &\leq 2^{p-1}d\Abs{\Omega}c_4^pC\Abs{\xi-\xi'}^p,
    \end{align*}
    where $C$ depends on $\chi$.
    By setting $\Abs{\xi-\xi'}$ small enough, we obtain
    \begin{equation*} 
        \int_{\Omega}\Abs{b(\xi,x)-b(\xi',x)}^p\diff{x}\leq \eps,
    \end{equation*}
    and hence the Assumption~\ref{assum::QuasLinNonDiv} (\romannumeral5) is satisfied.
\end{proof}

\section*{Acknowledgement}
 This work is sponsored by the National Key R\&D Program of China  Grant No. 2022YFA1008200 (T. L.), the National Natural Science Foundation of China Grant No. 12101401 (T. L.), Shanghai Municipal Science and Technology Key Project No. 22JC1401500 (T. L.), Shanghai Municipal of Science and Technology Major Project No. 2021SHZDZX0102, and the HPC of School of Mathematical Sciences and the Student Innovation Center, and the Siyuan-1 cluster supported by the Center for High Performance Computing at Shanghai Jiao Tong University. The authors thank Yingzhou Li and Zhi-Qin John Xu for helpful discussions.

\bibliographystyle{plain}
\bibliography{DLRef}
\end{document}